\theoremstyle{plain}
\newtheorem{theorem}{Theorem}[section]
\newtheorem*{theorem-nn}{Theorem}
\newtheorem{lemma}[theorem]{Lemma}
\newtheorem{proposition}[theorem]{Proposition}
\newtheorem*{proposition-nn}{Proposition}
\theoremstyle{definition}
\newtheorem{definition}[theorem]{Definition}
\theoremstyle{remark}
\newcommand{\bZ}{\mathbbm{Z}}\newcommand{\bQ}{\mathbbm{Q}}
\newcommand{\bE}{\mathbbm{E}}
\newcommand{\bC}{\mathbbm{C}}
\newcommand{\bF}{\mathbbm{F}}
\newcommand{\bk}{\mathbbm{k}}
\newcounter{sub}
{\begin{list}{(\arabic{sub})}{\usecounter{sub}%
\setlength{\leftmargin}{2em}}}{\end{list}}
\def\fn#1{\operatorname{#1}} 
\title[Degree three unramified cohomology groups]
{Degree three unramified cohomology groups
and Noether's problem for groups of order $243$}
\author[A. Hoshi]{Akinari Hoshi}
\address{Department of Mathematics, Niigata University, Niigata 950-2181,
Japan}
\email{hoshi@math.sc.niigata-u.ac.jp}
\author[M. Kang]{Ming-chang Kang}
\address{Department of Mathematics, National Taiwan University, Taipei, Taiwan}
\email{kang@math.ntu.edu.tw}
\author[A. Yamasaki]{Aiichi Yamasaki}
\address{Department of Mathematics, Kyoto University, Kyoto 606-8502, Japan}
\email{aiichi.yamasaki@gmail.com}
\thanks{{\it Key words and phrases.} Unramified cohomology groups, stable cohomology groups, rationality problems, Noether's problem, unramified Brauer groups, permutation negligible classes, Lyndon-Hochschild-Serre spectral sequence.\\
This work was partially supported by JSPS KAKENHI Grant Numbers
 25400027, 16K05059, 19K03418.
Parts of the work were finished when the
first-named author and the third-named author were visiting the
National Center for Theoretic Sciences (Taipei), whose
support is gratefully acknowledged.}
\subjclass[2010]{Primary 14E08, 20J06, 14F22, 20C10.}
\begin{document}

\begin{abstract}
Let $k$ be a field and $G$ be a finite group acting on the rational
function field $k(x_g : g\in G)$ by $k$-automorphisms defined as
$h(x_g)=x_{hg}$ for any $g,h\in G$.
We denote the fixed field $k(x_g : g\in G)^G$ by $k(G)$.
Noether's problem asks whether $k(G)$ is rational (= purely transcendental)
over $k$.
It is well-known that if $\bC(G)$ is stably rational over $\bC$, then all
the unramified cohomology groups $H_{\rm nr}^i(\bC(G),\bQ/\bZ)=0$ for
$i \ge 2$.
Hoshi, Kang and Kunyavskii \cite{HKK} showed that,
for a $p$-group of order $p^5$ ($p$: an odd prime number),
$H_{\rm nr}^2(\bC(G),\bQ/\bZ)\neq 0$ if and only if $G$ belongs to the
isoclinism family $\Phi_{10}$. When $p$ is an odd prime number, Peyre \cite{Pe3} and Hoshi, Kang and Yamasaki \cite{HKY1} exhibit some $p$-groups $G$ which are of the form of a central extension of certain elementary abelian $p$-group by another one with $H_{\rm nr}^2(\bC(G),\bQ/\bZ)= 0$ and $H_{\rm nr}^3(\bC(G),\bQ/\bZ) \neq 0$. However, it is difficult to tell whether $H_{\rm nr}^3(\bC(G),\bQ/\bZ)$ is non-trivial if $G$ is an arbitrary finite group.
In this paper, we are able to determine $H_{\rm nr}^3(\bC(G),\bQ/\bZ)$
where $G$ is any group of order $p^5$ with $p=3, 5, 7$.
Theorem 1.
Let $G$ be a group of order $3^5$.
Then $H_{\rm nr}^3(\bC(G),\bQ/\bZ)\neq 0$ if and only if
$G$ belongs to the isoclinism family $\Phi_7$.
Theorem 2. If $G$ is a group of order $3^5$, then the fixed field $\bC(G)$ is rational if and only if $G$ does not belong to the isoclinism families $\Phi_{7}$ and $\Phi_{10}$.
Theorem 3. Let $G$ be a group of order $5^5$ or $7^5$.
Then $H_{\rm nr}^3(\bC(G),\bQ/\bZ)\neq 0$ if and only if
$G$ belongs to the isoclinism families $\Phi_6$, $\Phi_7$ or $\Phi_{10}$.
Theorem 4. If $G$ is the alternating group $A_n$, the Mathieu group $M_{11}$, $M_{12}$, the Janko group $J_1$ or the group $PSL_2(\bF_q)$, $SL_2(\bF_q)$, $PGL_2(\bF_q)$ (where $q$ is a prime power), then
$H_{\rm nr}^d(\bC(G),\bQ/\bZ)= 0$ for any $d \ge 2$.
Besides the degree three unramified cohomology groups,
we compute also the stable cohomology groups.
\end{abstract}

%
\maketitle

\tableofcontents

%
\section{Introduction}\label{s1}

Let $k$ be a field and $k[x_1,\ldots,x_n]$ be the polynomial ring in $n$ variables over $k$. A rational function field over $k$, denoted by $k(x_1,\ldots,x_n)$, is a field which is $k$-isomorphic to the quotient field of $k[x_1,\ldots,x_n]$, the polynomial ring in $n$ variables over $k$ for some non-negative integer $n$.

Suppose that $L$ is a finitely generated field extension of $k$. Then
$L$ is called $k$-{\it rational} (or {\it rational over $k$}) if $L$ is purely transcendental over $k$,
i.e.\ $L$ is isomorphic to a rational function field over $k$. $L$ is called {\it stably $k$-rational} if $L(y_1,\ldots,y_m)$ is $k$-rational for some $y_1,\ldots, y_m$ which are algebraically independent over $L$.
If $k$ is an infinite field, then the field extension $L$ of $k$ is said to be
{\it retract $k$-rational} if there exists a $k$-algebra
$A$ contained in $L$ such that (i) $L$ is the quotient field of
$A$, (ii) for some non-negative integer $n$, there exist a non-zero polynomial $f\in
k[x_1,\ldots,x_n]$ and $k$-algebra homomorphisms $\varphi\colon A\to
k[x_1,\ldots,x_n][1/f]$ and $\psi\colon k[x_1,\ldots,x_n][1/f]\to
A$ satisfying $\psi\circ\varphi =1_A$ (see \cite{Sa2} and \cite{Ka} for details).
$L$ is called {\it $k$-unirational} if $L$ is $k$-isomorphic to a subfield of some $k$-rational field extension of $k$.

Two finitely generated field extensions $L_1, L_2$ of $k$ are {\it stably $k$-isomorphic} if $L_1(y_1,\ldots,y_m)$ is $k$-isomorphic to $L_2(z_1,\ldots,z_n)$ where $y_1,\ldots, y_m$ (resp. $z_1,\ldots,z_n$) are algebraically independent over $L_1$ (resp. $L_2$).

For an infinite field $k$, it is easy to see that ``$k$-rational"
$\Rightarrow$ ``stably $k$-rational"
$\Rightarrow$ ``retract $k$-rational"
$\Rightarrow$ ``$k$-unirational".

\begin{definition}\label{d1.1}
Let $G$ be a finite group acting on the rational function field
$k(x_g : g\in G)$ by $k$-automorphisms defined by $h(x_g)=x_{hg}$ for any $g,h\in G$.
We denote by $k(G)$ the fixed subfield $k(x_g : g\in G)^G$ of $k(x_g : g\in G)$.

Emmy Noether \cite{No} asked
whether $k(G)$ is rational over $k$.
This is called Noether's problem for $G$ over $k$. It is a special form of the famous L\"uroth problem in algebraic geometry. On the other hand, Noether's problem is related to the inverse Galois problem,
to the existence of generic $G$-Galois extensions over $k$, and
to the existence of versal $G$-torsors over $k$-rational field extensions. For details, see the survey papers of Swan \cite{Sw2}, Manin and Tsfasman \cite{MT}, Colliot-Th\'el\`ene and Sansuc \cite{CTS} and Serre's expository lectures \cite[pages 86--92]{GMS}.
\end{definition}

First of all, we recall some known results of Noether's problem.

\begin{theorem}[Fischer {\cite{Fi}, see also \cite[Theorem 6.1]{Sw2}}]\label{t1.2}
Let $G$ be a finite abelian group of exponent $e$ and
$k$ be a field containing a primitive $e$-th root of unity.
Then $k(G)$ is $k$-rational.
\end{theorem}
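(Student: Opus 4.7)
The plan is to diagonalize the permutation action of $G$ on the regular representation using the characters of $G$, which are available over $k$ by the hypothesis on roots of unity, and then to invoke the standard fact that a faithful diagonal action of a finite abelian group on an affine space yields a rational quotient.

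First I would pass from the permutation basis $\{x_g : g \in G\}$ to a character basis. Let $\hat G = \Hom(G, k^\times)$; since $\ch k \nmid |G|$ (forced by the presence of a primitive $e$-th root of unity) and $G$ is abelian of exponent $e$, we have $|\hat G| = |G|$. For each $\chi \in \hat G$ set
\[
y_\chi \;=\; \sum_{g \in G} \chi(g^{-1})\, x_g.
\]
A straightforward computation using orthogonality of characters shows that $\{y_\chi\}_{\chi \in \hat G}$ is a $k$-basis of $\bigoplus_g k\, x_g$, and that $h \cdot y_\chi = \chi(h)\, y_\chi$ for every $h \in G$. Thus
\[
k(x_g : g \in G) \;=\; k(y_\chi : \chi \in \hat G),
\]
and $G$ acts diagonally on the new generators.

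Next I would produce explicit rational invariants. Choose generators $\chi_1,\dots,\chi_n$ of $\hat G$ of orders $e_1,\dots,e_n$, so every $\chi \in \hat G$ has a unique expression $\chi = \chi_1^{a_1(\chi)}\cdots\chi_n^{a_n(\chi)}$ with $0 \le a_i(\chi) < e_i$. Define
\[
v_i \;=\; y_{\chi_i}^{\,e_i} \q (1 \le i \le n), \qquad
u_\chi \;=\; y_\chi \cdot \prod_{i=1}^{n} y_{\chi_i}^{-a_i(\chi)}
\q (\chi \in \hat G \setminus \{\chi_1,\dots,\chi_n\}).
\]
The transformation rule $h \cdot y_\chi = \chi(h)\,y_\chi$ shows that each $v_i$ and each $u_\chi$ is fixed by $G$. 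Moreover the change of variables
\[
(y_\chi)_{\chi \in \hat G} \;\longleftrightarrow\; (y_{\chi_1},\dots,y_{\chi_n};\; u_\chi)
\]
is invertible over the subfield $k(u_\chi)$, so
$k(y_\chi : \chi \in \hat G) = k(u_\chi)(y_{\chi_1},\dots,y_{\chi_n})$ with the $y_{\chi_i}$ algebraically independent over $k(u_\chi)$.

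Finally I would take $G$-invariants. Since $G$ acts trivially on $k(u_\chi)$ and acts on each $y_{\chi_i}$ via the independent cyclic character $\chi_i$ of order $e_i$, the problem reduces to computing the invariants of a split diagonal action of $\prod_i \bZ/e_i$ on $n$ variables; this is the classical Kummer-type calculation yielding
\[
k(y_{\chi_1},\dots,y_{\chi_n})^G \;=\; k(v_1,\dots,v_n).
\]
Combined with the previous step this gives $k(G) = k(u_\chi;\, v_1,\dots,v_n)$, a purely transcendental extension of $k$ of transcendence degree $|G|$, proving rationality. The only delicate point is verifying that the constructed invariants are algebraically independent and generate the whole fixed field, which follows from a transcendence-degree count together with the explicit inversion of the change of variables above.
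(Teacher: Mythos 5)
Your proposal is correct, and it is essentially the classical Fischer argument that the paper cites (Fischer, and Swan's survey, Theorem 6.1) rather than proves: diagonalize the regular representation by characters, then generate the invariant field by explicit monomials (your $u_\chi$ and $v_i=y_{\chi_i}^{e_i}$) and confirm equality by a degree and transcendence-degree count, which is the same diagonalization-plus-invariant-monomials route. The only points worth spelling out are that the $\chi_i$ must be chosen as an independent generating set of $\hat G$ (so the expression $\chi=\chi_1^{a_1}\cdots\chi_n^{a_n}$ with $0\le a_i<e_i$ is unique) and that $h\mapsto(\chi_1(h),\dots,\chi_n(h))$ is an isomorphism of $G$ onto $\mu_{e_1}\times\cdots\times\mu_{e_n}$, which is what legitimizes the final Kummer computation; both follow at once from duality since $\mu_e\subseteq k^\times$.
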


\begin{theorem}[Kuniyoshi \cite{Ku}, Gasch\"utz \cite{Ga}] \label{t1.3}
Let $k$ be a field with $\fn{char}k=p>0$ and $G$ be a finite $p$-group.
Then $k(G)$ is $k$-rational.
\end{theorem}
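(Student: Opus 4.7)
My plan relies on the modular representation theory of $G$. Let $V = \bigoplus_{g \in G} k \cdot x_g$ be the regular $kG$-module, so that $k(G) = k(V)^G$. The key structural input is that, since $\ch k = p$ and $G$ is a $p$-group, the group algebra $kG$ is a local Artinian ring with nilpotent augmentation ideal, and consequently the only simple $kG$-module is the trivial module. Iterating ``take a nonzero $G$-fixed vector'' yields a composition series $0 = V_0 \subsetneq V_1 \subsetneq \cdots \subsetneq V_n = V$ of $kG$-submodules with each quotient $V_i/V_{i-1}$ trivial. Choosing $e_i \in V_i \smallsetminus V_{i-1}$ produces a $k$-basis $e_1, \ldots, e_n$ with respect to which $G$ acts by upper unitriangular matrices: $g(e_i) - e_i \in \mathrm{span}_k(e_1, \ldots, e_{i-1})$ for every $g \in G$ and every $i$.

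It then suffices to show by induction on $n$ that $k(e_1, \ldots, e_n)^G$ is rational over $k$. The base case $n = 1$ is immediate since $e_1$ is $G$-fixed. For the inductive step, set $L = k(e_1, \ldots, e_{n-1})$ and let $H \trianglelefteq G$ be the kernel of the $G$-action on $L$, so that $L/L^G$ is Galois with group $G/H$. The assignment $c \colon g \mapsto g(e_n) - e_n$ is an additive $1$-cocycle $G \to L$, and the normal basis theorem presents $L$ as a free $L^G[G/H]$-module; the resulting additive form of Hilbert's Theorem~$90$ gives $H^j(G/H, L) = 0$ for $j \ge 1$. Inflation-restriction then embeds $H^1(G, L)$ into $\Hom(H, L)^{G/H}$, so after replacing $e_n$ by $e_n - z$ for a suitable $z \in L$ one may assume that $c$ is determined by its restriction to $H$, which is a $G$-equivariant homomorphism whose image is a $G$-stable $\bF_p$-subspace $W \subset L$. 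The required new invariant is then produced as $y_n := F(e_n)$, where $F(T) = \prod_{w \in W}(T - w)$ is the $\bF_p$-linearized polynomial associated to $W$; its coefficients lie in $L^G$ (because $W$ is $G$-stable and $F$ is translation-invariant by $W$), and $y_n$ is $G$-invariant because $c(g) \in W$ for all $g \in G$. Combined with the inductive hypothesis $L^G = k(y_1, \ldots, y_{n-1})$, this yields $k(e_1, \ldots, e_n)^G = k(y_1, \ldots, y_n)$.

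The principal technical obstacle is the clean disentanglement of the cocycle $c$ into its $G/H$-trivial coboundary part (killed by additive Hilbert~$90$) and its nontrivial $H$-restriction (handled by the linearized-polynomial construction). Verifying that $W$ is $G$-stable and that the coefficients of $F$ land in $L^G$ rather than merely $L^H = L$ requires a careful interplay between the $G$-action on $L$ and the conjugation action of $G$ on $H$; once this bookkeeping is organized, iterating the induction gives rationality of $k(G) = k(V)^G$ over $k$.
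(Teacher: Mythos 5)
The paper only quotes this theorem from Kuniyoshi and Gasch\"utz, so I compare your argument with the classical proofs. Your triangularization step (locality of $kG$ in characteristic $p$ forces a filtration with trivial quotients) and the one-variable-at-a-time induction are exactly the standard skeleton, but the inductive step has a genuine gap: inflation--restriction controls only the cohomology class of $c$, not a cocycle representative, and your claim that after replacing $e_n$ by $e_n-z$ one may assume $c(g)\in W=c(H)$ for \emph{all} $g\in G$ is false in general. The obstruction to finding such a $z$ is the image of the extension class of $1\to H\to G\to G/H\to 1$ under $(c|_H)_\ast$ in $H^2(G/H,W)$, and it can be nonzero. Concretely, let $G=C_{p^2}=\langle\sigma\rangle$ act on its regular representation with basis $e_1,\dots,e_{p^2}$, $\sigma(e_i)=e_i+e_{i-1}$, and consider the step adjoining $e_{p+1}$ to $L=k(e_1,\dots,e_p)$. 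Then $H=\langle\sigma^p\rangle$ acts trivially on $L$, $c(\sigma^p)=e_1$, so $W=\bF_p e_1$ (fixed pointwise by $G$), while $c(\sigma)=e_p$. If some $z\in L$ gave $c'(\sigma)=e_p+\sigma(z)-z\in\bF_p e_1$, the cocycle identity would force $c'(\sigma^p)=\sum_{i=0}^{p-1}\sigma^i(c'(\sigma))=p\,c'(\sigma)=0$, whereas $\sigma^p$ fixes $z$, so $c'(\sigma^p)=c(\sigma^p)=e_1\neq 0$. Hence no such $z$ exists and your $y_n=F(e_n)$ cannot be made invariant by this normalization. (Enlarging $W$ to the $\bF_p$-span of $c(G)$ does make $F(e_n)$ invariant, but a degree count shows $L^G(F(e_n))$ is then a \emph{proper} subfield of $L(e_n)^G$, so that does not repair the step either.)

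The step can be saved by performing your two operations in the opposite order. First apply the additive polynomial attached to $W=c(H)$: with $F_W(T)=\prod_{w\in W}(T-w)$ (coefficients in $L^G$ since $W$ is $G$-stable), the element $u:=F_W(e_n)$ is $H$-invariant and satisfies $g(u)=u+F_W(c(g))$, and $g\mapsto F_W(c(g))$ vanishes on $H$, hence is a $1$-cocycle of $G/H$ with values in $L$. \emph{Now} invoke additive Hilbert 90: $L$ is free over $L^G[G/H]$ by the normal basis theorem, so $H^1(G/H,L)=0$ and $u$ can be corrected by an element of $L$ to a $G$-invariant $y_n$ of degree $|W|$ in $e_n$; comparing $[L(e_n):L^G(y_n)]=|W|\cdot[G:H]$ with $[L(e_n):L(e_n)^G]=[G:K]$, where $K=\ker(c|_H)$, gives $L(e_n)^G=L^G(y_n)$ and the induction closes. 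Alternatively, the standard route avoids all cocycle normalizations: by Miyata's lemma, any $G$-invariant polynomial in $L[e_n]$ of minimal positive degree (one exists, e.g.\ the orbit product $\prod_{\tau\in G}(e_n+c(\tau))$) generates $L(e_n)^G$ over $L^G$.
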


\begin{theorem}[Chu and Kang \cite{CK}] \label{t1.4}
Let $p$ be any prime number and $G$ be a $p$-group of order $\le p^4$
and of exponent $e$. If $k$ is a field containing a primitive $e$-th root
of unity, then $k(G)$ is $k$-rational.
\end{theorem}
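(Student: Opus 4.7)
The plan is to reduce Noether's problem for $G$ to a linear invariant problem on a small faithful representation, and then dispatch the finitely many isomorphism types by a classification-based case analysis. First, I would invoke the ``no-name'' lemma (Miyata/Lenstra): if $V$ is any faithful $k[G]$-module, then the field $k(G)$, being the function field of the regular representation, is purely transcendental over $k(V)^G$ with transcendence basis obtained by splitting off the complementary permutation-type factors. Consequently, to prove $k(G)$ is $k$-rational it suffices to exhibit \emph{one} faithful $V$ for which $k(V)^G$ is $k$-rational, and we are free to choose $V$ as small and structured as possible.

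Since $k$ contains a primitive $e$-th root of unity with $e=\exp G$, every one-dimensional character of any subgroup of $G$ is realized over $k$, so for any abelian normal subgroup $A\trianglelefteq G$ we may build $V$ from characters of $A$ that are permuted by $G/A$. When $G$ itself is abelian this decomposes $V$ into one-dimensional pieces on which $G$ acts by roots of unity, and Theorem \ref{t1.2} (Fischer) gives rationality immediately. This disposes of all abelian groups of order $\le p^4$.

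For the non-abelian cases I would proceed along the classification: for odd $p$ there are two non-abelian groups of order $p^3$ (the Heisenberg group and the group of exponent $p^2$) and a short list of non-abelian groups of order $p^4$; the prime $2$ must be treated separately (dihedral and quaternion of order $8$, and the groups of order $16$). In each case I would pick a maximal abelian normal subgroup $A$ with $[G:A]=p$, let $V$ be a sum of characters of $A$ forming a single $G$-orbit that separates points of $G$, and use Fischer to write $k(V)^A=k(y_1,\ldots,y_n)$ where each $y_i$ is a product of eigencoordinates. The residual action of $G/A\cong\bZ/p$ on the $y_i$ is then a \emph{monomial} $\bZ/p$-action of a particularly simple shape, and standard rationality lemmas for such actions (the Hajja--Kang type criteria that cyclic monomial actions over fields with enough roots of unity yield rational invariants) finish the job. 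Iteration along a central series of length at most four covers order up to $p^4$.

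The main obstacle is the case-by-case verification for the $p$-groups of order $p^4$: for each isoclinism type one must (i) choose a convenient faithful $V$, (ii) check that after the Fischer diagonalization the resulting $\bZ/p$-monomial action falls under an applicable rationality criterion, and (iii) handle the small primes $p=2,3$, where eigenvalue collisions can force more delicate changes of variable. I expect (iii), and in particular the cases $Q_8$ and the modular group of order $16$, to be the most intricate, since for $p=2$ the only available nontrivial root of unity is $-1$ and the generic monomial-action machinery no longer applies verbatim; those groups require a bespoke reduction to invariants of a twisted multiplicative action that is nevertheless seen to be rational by an explicit change of coordinates.
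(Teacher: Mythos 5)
The paper itself does not prove Theorem \ref{t1.4}; it is quoted from Chu and Kang \cite{CK}, so your proposal has to be measured against that proof. Its overall skeleton — the no-name lemma to replace the regular representation by a small faithful $V$ (exactly the device of \cite[Theorem 1]{HK} used elsewhere in this paper), Fischer's Theorem \ref{t1.2} for the abelian part, reduction to an abelian normal subgroup $A$ of index $p$ and analysis of the residual $G/A\cong C_p$-action, carried out case by case through the classification of groups of order $p^3$ and $p^4$ — is indeed the strategy of \cite{CK}, and the structural input you need is available (every non-abelian group of order $\le p^4$ does possess an abelian normal subgroup of index $p$, a small classical argument you would still have to supply).

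There is, however, a genuine gap at the decisive step. After passing to $A$-invariants you are left with a monomial action of $C_p$ on a rational function field whose transcendence degree is $\dim V$, and for groups of order $p^4$ one cannot in general take $\dim V\le 3$: a faithful irreducible constituent induced from a character of $A$ already has dimension $p$, and when $Z(G)$ is non-cyclic (e.g.\ $G=E\times C_p$ with $E$ extraspecial of order $p^3$) no single orbit of characters of $A$ gives a faithful module, so several orbits are needed — contrary to your ``single $G$-orbit'' reduction. The ``standard rationality lemmas for cyclic monomial actions'' you invoke do not exist in that generality: the known results (Hajja in dimension $2$, Hajja--Kang and Hoshi--Kitayama--Yamasaki \cite{HKiY} in dimension $3$) stop at transcendence degree $3$, and monomial actions in higher dimension are not known to be rational. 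What actually carries the Chu--Kang proof is that the specific actions arising there are cyclic shifts of eigencoordinates twisted by roots of unity already lying in $k$, which are linearized by explicit changes of variables and then finished by Fischer-type diagonalization; this must be verified group by group, and your ``iteration along a central series'' is not a substitute. So the plan is the right one, but the key step needs these concrete linearization arguments rather than a blanket monomial-rationality criterion.
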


\begin{theorem}[Chu, Hu, Kang and Prokhorov \cite{CHKP}]\label{t1.5}
Let $G$ be a group of order $32$ and of exponent $e$.
If $k$ is a field containing a primitive $e$-th root of unity,
then $k(G)$ is $k$-rational.
\end{theorem}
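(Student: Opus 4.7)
The plan is to reduce Noether's problem for each of the $51$ groups $G$ of order $32$ via the following standard strategy. First, invoke the no-name lemma: when $\zeta_e \in k$, a faithful $k$-representation $V$ of $G$ exists, and $k(G)$ is $k$-rational if and only if $k(V)^G$ is $k$-rational, where $\dim V$ is typically much smaller than $|G|=32$. One then seeks to compute $k(V)^G$ explicitly by exploiting the structure of $G$.

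The abelian groups are disposed of immediately by Fischer's theorem (Theorem \ref{t1.2}). For each non-abelian group one iterates a descent: choose a normal subgroup $N \triangleleft G$, compute $k(V)^N$ first---using Theorem \ref{t1.4} whenever $N$ has order at most $2^4$, or the abelian case when $N$ is abelian---and then recognise the residual action of $G/N$ on $k(V)^N$ as a monomial action on a rational subfield of $k(V)^N$. Monomial rationality criteria due to Hajja--Kang and their successors then furnish rationality of the $G/N$-invariants, provided the cocycle class describing the twisted action vanishes. The hypothesis $\zeta_e \in k$ is crucial here: it diagonalises every cyclic subgroup of $G$, and is precisely what makes all of the actions that appear in the descent monomial.

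The main obstacle is the sheer number of cases. After discarding the abelian groups and the groups that are direct or central products with smaller factors already handled by Theorem \ref{t1.4}, a substantial residue remains---most notably the two extraspecial groups of order $32$ and the other groups of maximal class with non-cyclic centre and $G/Z(G)$ of rank $\geq 3$---and each of these must be treated individually. For every such $G$ one has to make a careful choice of faithful representation $V$ and of a normal series $1 \triangleleft N_1 \triangleleft \cdots \triangleleft G$ so that each successive stage of the descent falls under an existing monomial rationality theorem; locating such a $V$ and such a series, and verifying the vanishing of the relevant cocycle obstruction at each step, is the technical core of the argument. One should also verify that no group of order $32$ has a non-trivial unramified Brauer group $H^2_{\fn{nr}}(k(G),\bQ/\bZ)$, since a non-vanishing would obstruct rationality; this is a separate Bogomolov-type computation which yields zero for every group of order $32$ and so poses no contradiction to the desired conclusion.
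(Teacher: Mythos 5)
This statement is quoted in the paper from Chu--Hu--Kang--Prokhorov \cite{CHKP}; the paper itself offers no proof, so your proposal has to be measured against the argument of \cite{CHKP}, whose general shape (a faithful representation of small degree obtained via the no-name lemma, Fischer's theorem for the abelian groups, reduction through a normal/abelian subgroup, linearization and rationality results for low-dimensional monomial actions, and a case-by-case treatment of the $51$ groups) your outline does echo. The problem is that what you have written is a plan, not a proof: the entire mathematical content of the theorem lies exactly in the part you defer, namely choosing, for each non-abelian group of order $32$ that is not already covered by Theorem \ref{t1.4} or by a product decomposition, a specific faithful representation $V$ and a specific chain of subgroups, and then verifying---by explicit changes of variables, not by a generic ``cocycle vanishes'' clause---that each stage is rational. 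A blanket appeal to ``monomial rationality criteria'' does not suffice: monomial actions in dimension $\ge 3$ are not automatically rational, and the delicate cases you yourself single out (the extraspecial groups and the groups with small centre) are precisely the ones where ad hoc constructions are needed in \cite{CHKP}. Since none of these cases is actually carried out, there is a genuine gap; the proposal establishes nothing beyond the trivial reductions.

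Two smaller points. First, the no-name lemma gives that $k(G)$ is rational over $k(V)^G$ for a faithful subrepresentation $V$ of the regular representation, so rationality of $k(V)^G$ implies rationality of $k(G)$; the converse direction of your ``if and only if'' is not known and is not needed. Second, checking that $H^2_{\rm nr}(k(G)/k,\bQ/\bZ)=0$ is not a step of the proof at all: vanishing of the unramified Brauer group is a necessary consequence of rationality, not an ingredient, so including it as a required verification confuses an obstruction check with the rationality argument itself.
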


It is Swan who produced the first counter-example to Noether's problem: $\bQ(C_{47})$ is not $\bQ$-rational \cite{Sw1} where $C_{47}$ is the cyclic group of order $47$. Noether's problem for finite abelian groups over any non-closed field was studied subsequently by Voskresenskii, Endo and Miyata, Lenstra, etc. in 1970's (see \cite{Sw2}). However, it was still unknown at that time whether there was a non-abelian $p$-group $G$ such that $\bC(G)$ was not $\bC$-rational.

In 1984, Saltman found that, for any prime number $p$, there was a $p$-group $G$ with order $p^9$ such that $\bC(G)$ was not stably $\bC$-rational \cite{Sa1}. The idea of Saltman is to compute the unramified Brauer group of $\bC(G)$ over $\bC$, denoted by ${\rm Br}_{\rm nr}(\bC(G)/\bC)$, which is an obstruction to the rationality problem. It is known that, if $k$ is an infinite field and $L$ is retract $k$-rational, it is necessary that ${\rm Br}_{\rm nr}(L/k)=0$. Consequently, if ${\rm Br}_{\rm nr}(\bC(G)/\bC) \neq 0$, then $\bC(G)$ is not retract $\bC$-rational (and is not $\bC$-rational a priori). For the definition of the unramified Brauer group and its basic property, see Definition \ref{d2.1} and Proposition \ref{p2.2}.

The notion of the unramified Brauer group is generalized by
Colliot-Th\'el\`ene and Ojanguren \cite{CTO} to the higher degree unramified cohomology groups. If $K$ is a finitely generated  field extension of $\bC$, the unramified cohomology group of $K$ over $\bC$ of degree $i$ ($ i \ge 2$) with coefficients $\bQ/\bZ$ is denoted by $H_{\rm nr}^i(K/\bC, \bQ/\bZ)$; see Definition \ref{d2.6}. Note that the degree two unramified cohomology group $H_{\rm nr}^2(K/\bC, \bQ/\bZ)$ is isomorphic to ${\rm Br}_{\rm nr}(K/\bC)$, the unramified Brauer group. Moreover, if $K$ is retract $\bC$-rational, then $H_{\rm nr}^i(K/\bC, \bQ/\bZ)=0$. We emphasize that the vanishing of $H_{\rm nr}^i(K/\bC, \bQ/\bZ)$ (where $i \ge 2$) is just a necessary condition for $K$ to be retract $\bC$-rational. Thus the condition that $H_{\rm nr}^2(K/\bC, \bQ/\bZ)=H_{\rm nr}^3(K/\bC, \bQ/\bZ)=0$ is not sufficient to guarantee that $K$ is retract $\bC$-rational. See Theorem \ref{t2.9}.

The vanishing of the degree two unramified cohomology group does not imply the same result for the degree three one. In fact, Colliot-Th\'el\`ene and Ojanguren were able to prove the following result.

\begin{theorem}[{Colliot-Th\'el\`ene and Ojanguren \cite[Section 3]{CTO}}]\label{t1.6}
There is a field extension $K$ over $\bC$ with ${\rm trdeg}_{\bC}K=6$ satisfying that $H_{\rm nr}^2(K/\bC, \bQ/\bZ)=0$, but $H_{\rm nr}^3(K/\bC, \bQ/\bZ) \neq 0$. Thus $K$ is not retract $\bC$-rational.
\end{theorem}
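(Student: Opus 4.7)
The plan is to construct $K$ explicitly as the function field of a geometrically rational variety $X$ over a purely transcendental extension $F = \bC(t_1,t_2,t_3,t_4)$, arranged so that passing to $K$ kills a specific Brauer class while preserving a prescribed triple symbol in degree three. Concretely, I would pick three algebraically independent elements $a,b,c \in F$, form the symbols $\alpha = (a) \cup (b) \cup (c) \in H^3(F,\bZ/2)$ and $\beta = (b) \cup (c) \in H^2(F,\bZ/2)$, and take $X \subset \bP^3_F$ to be the smooth projective quadric attached to the $2$-fold Pfister form $\langle 1, -b, -c, bc\rangle$ whose Witt class represents $\beta$. Setting $K = F(X)$ gives $\mathrm{trdeg}_{\bC} K = 4 + 2 = 6$, and by the standard description of splitting fields of Pfister quadrics, the image $\beta_K$ of $\beta$ in $H^2(K,\bZ/2)$ vanishes.

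For the vanishing $H^2_{\rm nr}(K/\bC,\bQ/\bZ) = 0$, I would combine the Faddeev exact sequence for $\mathrm{Br}(F)$, which describes it via residues along codimension-one points of $\bA^4_{\bC}$, with the identification of $\ker(\mathrm{Br}(F) \to \mathrm{Br}(K))$ as the subgroup of order $2$ generated by $\beta$. Purity of the Brauer group on regular schemes shows that any unramified Brauer class on $K$ lifts to an unramified Brauer class on $F$; since $F$ is rational over $\bC$, its unramified Brauer group is trivial, so $H^2_{\rm nr}(K/\bC,\bQ/\bZ) = 0$.

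The main obstacle is showing that $\alpha$ represents a nonzero element of $H^3_{\rm nr}(K/\bC,\bQ/\bZ)$. Nontriviality of $\alpha_K$ in $H^3(K,\bZ/2)$ rests on Arason's theorem, which identifies $\ker(H^3(F,\bZ/2) \to H^3(K,\bZ/2))$ as the subgroup $\beta \cdot H^1(F,\bZ/2)$; by choosing $a$ sufficiently independent of $b$ and $c$, one arranges that $\alpha$ lies outside this subgroup. The harder point is verifying that $\alpha_K$ is unramified, namely that $\partial_v(\alpha_K) = 0$ at every rank-one discrete valuation $v$ of $K$ trivial on $\bC$. I would partition these valuations into geometric ones coming from divisors on a regular proper model of $X$ over $F$, where the tame symbol formula combined with the triviality of $\beta$ on $K$ forces the residues to vanish, and horizontal ones lying above divisorial valuations $w$ of $F$, where the residue reduces essentially to $\partial_w \alpha \in H^2(\kappa(w),\bZ/2)$ and can be eliminated by taking $a,b,c$ as independent coordinate functions with no common ramification divisor.

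The technical weight of the argument concentrates on horizontal valuations whose residue fields are themselves function fields of lower-dimensional quadrics, where the triviality of $\beta$ is not immediately available; here one iterates the residue analysis one degree lower, invoking purity of Galois cohomology on regular two-dimensional schemes in the style of Bloch--Ogus. Once every residue is shown to vanish, $\alpha$ defines a nonzero class in $H^3_{\rm nr}(K/\bC,\bQ/\bZ)$, and the conclusion that $K$ is not retract $\bC$-rational follows from the general principle, already recalled earlier in the paper, that non-vanishing of higher unramified cohomology obstructs retract rationality.
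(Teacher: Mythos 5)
Your construction cannot work as written: the candidate class is killed by the very extension you build. You take $X$ to be the quadric of the $2$-fold Pfister form $\langle 1,-b,-c,bc\rangle$ precisely so that $\beta=(b)\cup(c)$ dies in $K=F(X)$; but then $\alpha=(a)\cup(b)\cup(c)=(a)\cup\beta$ dies as well, since restriction commutes with cup products: $\alpha_K=(a)_K\cup\beta_K=0$. Equivalently, the Arason theorem you quote says $\ker\{H^3(F,\bZ/2\bZ)\to H^3(K,\bZ/2\bZ)\}=\beta\cup H^1(F,\bZ/2\bZ)$, and $\alpha=\beta\cup(a)$ lies in that subgroup \emph{by construction}, no matter how ``independent'' $a$ is chosen; your claim that one can arrange $\alpha$ to lie outside it is impossible. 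Hence $\alpha_K=0$, the entire residue analysis is moot, and this choice of quadric can never produce a nonzero element of $H^3_{\rm nr}(K/\bC,\bQ/\bZ)$.

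This is exactly the point where the construction of Colliot-Th\'el\`ene and Ojanguren (which the paper only quotes from \cite{CTO}) differs. There the quadric is \emph{not} the Pfister quadric of a $2$-symbol dividing $\alpha$, nor a Pfister neighbor of $\langle\langle a,b,c\rangle\rangle$, so the symbol survives in $K$; instead the coefficients are arranged so that along each divisor of a model of $F$ where $\alpha$ ramifies, the \emph{reduction} of the quadric kills the corresponding degree-two residue. That is what makes $\alpha_K$ unramified without making it zero, and the real technical input is their specialization/purity theorem for $H^3$ of function fields of quadrics over a discrete valuation ring, which also controls the degree-two computation. On that last point your sketch is also too quick: purity on models of $F$ does not show that an unramified class of $K$ comes from an unramified class of $F$; one needs the computation of the unramified cohomology of the quadric fibration relative to $F$ together with a residue comparison that handles the ambiguity coming from $\ker\{{\rm Br}(F)\to{\rm Br}(K)\}$. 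As it stands the central nonvanishing claim fails, so the proposal does not prove the theorem.
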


Thus, to prove that $\bC(G)$ is not rational, it suffices to find some integer $i$ such that $H_{\rm nr}^i(K/\bC, \bQ/\bZ)$ is non-trivial. However, we cannot conclude that $\bC(G)$ is rational even if $H_{\rm nr}^i(K/\bC, \bQ/\bZ)$ vanishes for all $i \ge 2$.

We will use the degree three unramified cohomology group to study $\bC(G)$ where $G$ is a non-abelian finite group. In case of the degree two unramified cohomology group, Bogomolov found a powerful formula for computing $H_{\rm nr}^2(\bC(G)/\bC, \bQ/\bZ)$ (the Bogomolov multiplier, see Theorem \ref{t2.3}). Saltman \cite{Sa5} and Peyre \cite{Pe1} endeavored to search for a similar formula of the degree three unramified cohomology group.

Indeed, such a formula was found by Peyre in terms of group cohomology \cite[Theorem 1, page 198]{Pe3} (also see Definition \ref{d2.11} and Theorem \ref{t2.14}). Unfortunately, this formula is so complicated that it requires more efforts to apply it to compute effectively $H_{\rm nr}^3(\bC(G)/\bC, \bQ/\bZ)$  for a specific group $G$.

Using this formula (and the ideas in deriving this formula), Peyre succeeded in constructing a ``special" subgroup of $H_{\rm nr}^3(\bC(G)/\bC, \bQ/\bZ)$ in the case when $G$ is a central extension of a vector group by another one, i.e. $0 \to V \to G \to U \to 0$ is a short exact sequence where $U$ and $V$ are elementary abelian $p$-groups, i.e. isomorphic to vector spaces over the finite field $\bF_p$ (where $p$ is an odd prime number) \cite[Theorem 2, page 210]{Pe3}. If this ``special" subgroup is non-zero, then $H_{\rm nr}^3(\bC(G)/\bC, \bQ/\bZ)$ is non-zero automatically. In this way, Peyre found a group $G$ of order $p^{12}$ such that $H_{\rm nr}^3(\bC(G)/\bC,\bQ/\bZ)\ne 0$. For emphasis, we record Peyre's theorem as follows:

\begin{theorem}[Peyre {\cite[Theorem 2]{Pe3}}]\label{t1.7}
Let $p$ be any odd prime number. Then there exists a $p$-group $G$ of order $p^{12}$ satisfying that {\rm (i)} there is a short exact sequence $0 \to V \to G \to U \to 0$ where $V$ is contained in the center of $G$, both $U$ and $V$ are elementary abelian $p$-groups with ${\rm dim}_{\bF_p}U={\rm dim}_{\bF_p}V=6$, and {\rm (ii)} $H_{\rm nr}^2(\bC(G)/\bC,\bQ/\bZ)=0$ and $H_{\rm nr}^3(\bC(G)/\bC,\bQ/\bZ)\ne 0$.

Consequently $\bC(G)$ is not retract $\bC$-rational and therefore it is not $\bC$-rational.
\end{theorem}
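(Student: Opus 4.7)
The plan is to follow Peyre's strategy of constructing a ``special'' subgroup of $H^3_{\rm nr}(\bC(G)/\bC, \bQ/\bZ)$ attached to a central extension of elementary abelian $p$-groups, and to exhibit an explicit cocycle for which this subgroup is nontrivial while simultaneously the Bogomolov multiplier vanishes.

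First I would record Peyre's group-cohomological description of $H^3_{\rm nr}$ (cited later as Theorem \ref{t2.14}) and specialize it to a central extension
\[
0 \to V \to G \to U \to 0
\]
of elementary abelian $p$-groups. The Lyndon--Hochschild--Serre spectral sequence then computes $H^*(G,\bQ/\bZ)$ from $H^*(U,\bQ/\bZ)$, $H^*(V,\bQ/\bZ)$, and the extension class (equivalently, the commutator pairing $c\colon U \wedge U \to V$). Peyre's ``special'' subgroup in degree three consists of classes represented by specific cup products built from $\alpha \in \Hom(V,\bQ/\bZ)$ and $\beta,\gamma \in \Hom(U,\bQ/\bZ)$, and showing such a class survives into $H^3_{\rm nr}$ reduces to verifying that it lies in the unramified part of Peyre's formula (the intersection of kernels of residue maps at all rank one discrete valuations) and is not a permutation-negligible class.

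Next I would take $\dim_{\bF_p} U = \dim_{\bF_p} V = 6$ together with an explicit alternating $c\colon U \wedge U \to V$ of the shape Peyre uses. The cocycle must be rich enough that some cup product of the form above gives a nonzero element of the ``special'' subgroup, yet the image of $\wedge^2 U \to V$ must be such that every class in $H^2(G,\bQ/\bZ)$ restricts nontrivially to some bicyclic subgroup. The latter is precisely the condition that the Bogomolov multiplier vanishes, by Theorem \ref{t2.3}; for a well-chosen bilinear form $c$ this reduces to a transparent piece of linear algebra on the commutator pairing. Once both conditions are verified, one obtains $H^2_{\rm nr}(\bC(G)/\bC, \bQ/\bZ) = 0$ and a nonzero class in $H^3_{\rm nr}(\bC(G)/\bC, \bQ/\bZ)$, and the non-retract-rationality (hence non-rationality) follows from the general principle recalled after Definition \ref{d2.6}.

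The main obstacle, I expect, is the simultaneous balancing of these two conditions: one needs the commutator pairing large enough to kill the Bogomolov multiplier, but sufficiently structured that the candidate degree-three class avoids the permutation-negligible relations appearing in Peyre's formula. Concretely, this amounts to choosing the $6$-dimensional image of $c$ so that it is spanned by enough rank-one products to force $B_0(G)=0$, while a complementary family of classes in $H^1(V,\bQ/\bZ) \cup H^1(U,\bQ/\bZ) \cup H^1(U,\bQ/\bZ)$ detects a nonzero unramified class. It is exactly this tension that explains why the construction requires $\dim U = \dim V = 6$, and hence a group of order $p^{12}$, rather than something smaller.
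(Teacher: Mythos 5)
The first thing to note is that the paper does not prove Theorem \ref{t1.7} at all: it is recorded purely as a quotation of Peyre's result, the proof living in \cite[Theorem 2]{Pe3}. So your proposal cannot be checked against an argument in this paper; it can only be measured against Peyre's original proof, whose general strategy you have correctly identified in outline (a central extension $0\to V\to G\to U\to 0$ of elementary abelian $p$-groups, the Lyndon--Hochschild--Serre spectral sequence, a ``special'' subgroup of $H^3_{\rm nr}$ built from cup products of characters of $U$ and $V$, and the vanishing of $B_0(G)$ checked on bicyclic subgroups via Theorem \ref{t2.3}).

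As a proof, however, the proposal has genuine gaps. First, the group is never constructed: the whole theorem rests on an explicit alternating map $c\colon U\wedge U\to V$ with $\dim_{\bF_p}U=\dim_{\bF_p}V=6$, and you only list properties such a map should have, deferring exactly the choice that makes the argument work. Second, the heart of Peyre's proof --- showing that a specific degree-three class is unramified and is \emph{not} negligible --- is not addressed; saying one must ``verify that it lies in the intersection of kernels of the residue maps'' restates the definition rather than supplying the mechanism, whereas Peyre obtains nonvanishing through his group-theoretic formula (the one recalled in Definition \ref{d2.11} and Theorem \ref{t2.14}) together with a delicate computation of the special subgroup; nothing in your sketch indicates how the nonvanishing would actually be established, and this is where the real work lies. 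Third, the closing heuristic --- that the tension between killing $B_0(G)$ and retaining a degree-three class ``explains why'' one needs order $p^{12}$ --- is not an argument and is in fact misleading: Theorem \ref{t1.8} achieves the same conclusion with $\dim_{\bF_p}V=3$ (order $p^9$), and the main theorems of this paper exhibit groups of order $p^5$ (for $p=5,7$, and the family $\Phi_7$ for $p=3$) with $H^3_{\rm nr}(\bC(G)/\bC,\bQ/\bZ)\neq 0$ and $H^2_{\rm nr}=0$. In short, your text is a reasonable reading plan for \cite{Pe3}, but it does not constitute a proof of the statement.
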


After a careful examination of Peyre's method, we may decrease the group order in Theorem \ref{t1.7} to $p^9$ by decreasing the order of $V$ to $p^3$.

\begin{theorem}[Hoshi, Kang and Yamasaki {\cite[Theorem 1.4]{HKY1}}]\label{t1.8}
Let $p$ be any odd prime number. Then there exists a $p$-group $G$ of order $p^{9}$ of the same form as in Theorem \ref{t1.7} with ${\rm dim}_{\bF_p}U=6$ and ${\rm dim}_{\bF_p}V=3$ such that
$H_{\rm nr}^2(\bC(G)/\bC,\bQ/\bZ)=0$ and $H_{\rm nr}^3(\bC(G)/\bC,\bQ/\bZ)\ne 0$. In particular, $\bC(G)$ is not retract  $\bC$-rational.
\end{theorem}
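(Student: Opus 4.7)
The plan is to follow Peyre's strategy from Theorem~\ref{t1.7} but to reduce the dimension of the central subgroup from $6$ to $3$. Specifically, I would search within the parameter space of central extensions
\[
0 \to V \to G \to U \to 0
\]
with $U = \bF_p^6$ and $V = \bF_p^3$ for a suitable alternating commutator pairing $b : U \times U \to V$, i.e.\ a triple $(b_1, b_2, b_3)$ of elements of $\wedge^2 U^*$, whose associated group $G$ satisfies both $H_{\rm nr}^2(\bC(G)/\bC, \bQ/\bZ) = 0$ and $H_{\rm nr}^3(\bC(G)/\bC, \bQ/\bZ) \neq 0$.

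First I would encode the vanishing of the unramified Brauer group by Bogomolov's formula (Theorem~\ref{t2.3}), which for a $p$-group of this shape translates into a linear-algebraic condition on $b$: every class in $H^2(G, \bQ/\bZ)$ whose restriction to each bicyclic subgroup of $G$ is trivial must itself be permutation-negligible. Since $G$ is a central extension of elementary abelian groups, this condition can be phrased purely in terms of the forms $b_1, b_2, b_3$ and is expected to hold once the $b_i$ are sufficiently independent.

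Next I would apply Peyre's formula for the degree-three unramified cohomology (Definition~\ref{d2.11} and Theorem~\ref{t2.14}) and, inside it, construct the analogue of the ``special subgroup'' of $H^3(G, \bQ/\bZ)$ used in Theorem~\ref{t1.7}. For central extensions of elementary abelian groups this subgroup is built from cup products of classes inflated from $U$ with classes pulled back from $V$, and its non-vanishing is detected by an explicit trilinear invariant attached to the pairing $b$. A single non-zero value of this invariant guarantees $H_{\rm nr}^3(\bC(G)/\bC, \bQ/\bZ) \neq 0$, from which retract non-rationality of $\bC(G)$ follows as in the discussion before Theorem~\ref{t1.7}.

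The main obstacle is exhibiting an explicit pairing $b$ at the tight dimensions $\dim U = 6$, $\dim V = 3$ for which both properties hold. The dimension balance $6 = 2 \cdot 3$ naturally suggests writing $U = H_1 \oplus H_2 \oplus H_3$ as a direct sum of symplectic planes and taking $b_i$ to be (a small perturbation of) the symplectic form on $H_i$, so that the $b_i$ are orthogonal enough for Bogomolov's criterion to force the Brauer group to vanish, yet coupled enough for a triple-cup-product test against Peyre's special subgroup to be non-zero. Threading the needle between these two opposing constraints with a target of codimension only three, and verifying the trilinear non-vanishing against Peyre's explicit formula, is the technical heart of the proof.
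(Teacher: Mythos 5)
Your outline correctly identifies the strategy that \cite{HKY1} (following Peyre) actually uses --- kill $H^2_{\rm nr}$ via Bogomolov's bicyclic criterion (Theorem \ref{t2.3}), and produce a class in $H^3_{\rm nr}$ via Peyre's special subgroup inside the formula of Definition \ref{d2.11} and Theorem \ref{t2.14} --- but what you have written is a search plan, not a proof. (Note also that the present paper does not prove Theorem \ref{t1.8} at all; it quotes it from \cite{HKY1}, so that is the proof to compare with.) The theorem is an existence statement whose entire content is an explicit commutator pairing $b\colon \wedge^2 U\to V$ with $\dim_{\bF_p}U=6$, $\dim_{\bF_p}V=3$, together with two verifications: that $B_0(G)=0$, which for such extensions amounts to showing that $\ker b\subset\wedge^2 U$ is spanned by the decomposable vectors it contains, and that the relevant quotient in Peyre's degree-three construction is nonzero. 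You exhibit no pairing and carry out neither verification, explicitly deferring ``the technical heart'' to a later step. Moreover, the assertion that ``a single non-zero value of this invariant guarantees $H^3_{\rm nr}\neq 0$'' conceals exactly the delicate point: Peyre's special subgroup is a \emph{quotient}, so one must check both that the candidate class is unramified and that it is not geometrically (permutation) negligible, i.e.\ does not die in that quotient; this is a genuine computation, not a formal consequence of one nonzero number.

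The proposed ansatz is also off target. The unperturbed choice --- $U=H_1\oplus H_2\oplus H_3$ with $b_i$ the symplectic form of the $i$-th plane --- makes $G$ the direct product of three Heisenberg groups of order $p^3$; each factor has $\bC$-rational invariant field (Theorem \ref{t1.4}), and a direct product of groups with stably $\bC$-rational invariant fields again has stably $\bC$-rational invariant field (by the no-name argument as in Step 1 of the proof of Lemma \ref{l5.4}), so for this $G$ one has $H^d_{\rm nr}(\bC(G)/\bC,\bQ/\bZ)=0$ for all $d\ge 2$. Hence the unspecified ``small perturbation'' would have to create the degree-three class while preserving $B_0(G)=0$; it would carry the whole proof, and nothing in your proposal indicates how to choose it or why such a perturbation exists. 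The example of \cite{HKY1} (refining Peyre's order-$p^{12}$ example of Theorem \ref{t1.7}) is of a different, arithmetic nature: $U\simeq\bF_{p^3}\oplus\bF_{p^3}$ and $V\simeq\bF_{p^3}$ viewed over $\bF_p$, with the alternating forms built from the multiplication of $\bF_{p^3}$ (twisted by Frobenius) and the trace, so that each of the three $\bF_p$-valued forms couples all six coordinates; the vanishing of $B_0(G)$ and the non-vanishing of the degree-three class (detected through the trace/norm trilinear form of the cubic extension) are then explicit computations with this structure. Without an explicit pairing and these two verifications, the statement remains unproved.
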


So far as we know, it was unknown how to compute $H_{\rm nr}^3(\bC(G)/\bC, \bQ/\bZ)$ by applying Peyre's formula \cite[Theorem 1, page 198]{Pe3} to groups $G$ not of the form as that in Theorem \ref{t1.7}.

Before stating the main results of this paper, we digress to recall the definition of the isoclinism family of finite groups, a notion due to P. Hall \cite[page 133]{Ha}.

\begin{definition} \label{d1.9}
Let $G$ be a finite group, $Z(G)$ be the center of $G$ and
$[G,G]$ be the commutator subgroup of $G$.
Two finite groups $G_1$ and $G_2$ are called {\it isoclinic} if there exist
group isomorphisms $\theta\colon G_1/Z(G_1) \to G_2/Z(G_2)$ and
$\phi\colon [G_1,G_1]\to [G_2,G_2]$ such that $\phi([g,h])$
$=[g',h']$ for any $g,h\in G_1$ with $g'\in \theta(gZ(G_1))$, $h'\in
\theta(hZ(G_1))$:
\[\xymatrix{
G_1/Z_1\times G_1/Z_1 \ar[d]_{[\cdot,\cdot]} \ar[r]^{(\theta,\theta)}
\ar@{}[dr]| \circlearrowleft
& G_2/Z_2\times G_2/Z_2 \ar[d]_{[\cdot,\cdot]} \\
[G_1, G_1] \ar[r]^\phi & [G_2, G_2].\\
}\]
Denote by $G_n(p)$ the set of all the non-isomorphic $p$-groups of order $p^n$.
In $G_n(p)$, consider an equivalence relation: two groups $G_1$ and $G_2$ are
equivalent if and only if they are isoclinic.
Each equivalence class of $G_n(p)$ is called an {\it isoclinism family}.
\end{definition}

For examples, by M. Hall and Senior \cite{HS}, there are $267$ groups of order $64$ in total, while there are only $27$ isoclinism families $\Phi_1,\ldots,\Phi_{27}$ for these groups
(see \cite[Table I]{JNO}).

It turns out that the unramified cohomology group of a finite group is an invariant of the isoclinism family by the following theorem of Bogomolov and B\"ohning which is the answer to \cite[Question 1.11]{HKK}.

\begin{theorem}[{Bogomolov and B\"ohning \cite[Theorem 6]{BB1}}]\label{t1.10}
If $G_1$ and $G_2$ are isoclinic finite groups,
then $\bC(G_1)$ and $\bC(G_2)$ are stably $\bC$-isomorphic.
In particular, $H_{\rm nr}^i(\bC(G_1)/\bC, \bQ/\bZ)$
$\overset{\sim}{\longrightarrow}$ $H_{\rm nr}^i(\bC(G_2)/\bC,\bQ/\bZ)$.
\end{theorem}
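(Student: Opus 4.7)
The strategy is to produce a group $H$ admitting surjective homomorphisms $p_i : H \to G_i$ with central kernels, and then to show that for any such central extension of finite groups the two fixed fields are stably $\bC$-isomorphic.

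First I would construct $H$ as a fibre product. Writing $Q = G_1/Z(G_1)$ and identifying it with $G_2/Z(G_2)$ via the isoclinism isomorphism $\theta$, and letting $\pi_i : G_i \to Q$ be the canonical projection, set
\[
H = \{(g_1, g_2) \in G_1 \times G_2 : \theta(\pi_1(g_1)) = \pi_2(g_2)\}.
\]
Each projection $p_i : H \to G_i$ is surjective, $\ker p_1 = \{1\} \times Z(G_2)$ and $\ker p_2 = Z(G_1) \times \{1\}$, and both kernels are central in $H$. A short computation using the commutator compatibility built into the definition of isoclinism shows that $Z(H) = Z(G_1) \times Z(G_2)$, that $H/Z(H) \cong Q$, and that $H$ is itself isoclinic to each $G_i$.

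The core step is the following central-extension lemma: if $1 \to A \to H \to G \to 1$ is a central extension of finite groups, then $\bC(H)$ is stably $\bC$-isomorphic to $\bC(G)$. By the no-name lemma $\bC(G)$ is stably $\bC$-isomorphic to $\bC(V)^G$ for any faithful $\bC[G]$-module $V$, and similarly $\bC(H) \sim \bC(W)^H$ stably for any faithful $\bC[H]$-module $W$. I would take $W = V \oplus U$, with $V$ inflated from $G$ and $U$ a $\bC[H]$-module on which $A$ acts faithfully, for example $U = \mathrm{Ind}_A^H(\chi_1 \oplus \cdots \oplus \chi_r)$ for characters $\chi_j$ of $A$ with trivial common kernel. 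The action of $H$ on $\bC(W) = \bC(V)(U)$ is by affine transformations of $U$ whose linear part is trivial on $A$ (since $A$ is central), so passing to $H$-invariants realizes $\bC(W)^H$ as a torsor over $\bC(V)^G = \bC(V)^{H/A}$ whose structure group is an algebraic torus with character lattice a permutation $\bC[G]$-module coming from the induced construction. Hilbert 90 together with a second application of the no-name lemma then identifies this torsor with a trivial one stably, yielding $\bC(W)^H \sim \bC(V)^G$.

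Applying the central-extension lemma to both $p_1$ and $p_2$ produces the chain $\bC(G_1) \sim \bC(H) \sim \bC(G_2)$ of stable $\bC$-isomorphisms, proving the first assertion. The conclusion on unramified cohomology is then immediate because $H_{\rm nr}^i(-/\bC,\bQ/\bZ)$ is a stable $\bC$-birational invariant. The main obstacle is the central-extension lemma itself. When the extension splits the lemma is easy: one extends $V$ to $H$ via the splitting and no further work is required. In the non-split case the induced module $U$ introduces a non-trivial torsor whose class must be shown to vanish stably, which is where Hilbert 90 and the permutation-lattice structure of the induced characters enter; isolating and carrying out this descent carefully is the technical heart of the argument.
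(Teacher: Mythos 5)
The paper offers no proof of this statement: Theorem \ref{t1.10} is quoted from Bogomolov and B\"ohning \cite[Theorem 6]{BB1}, so your proposal can only be judged on its own merits, and as written it contains a fatal gap. Your ``central-extension lemma'' --- that for \emph{any} central extension $1\to A\to H\to G\to 1$ of finite groups $\bC(H)$ is stably $\bC$-isomorphic to $\bC(G)$ --- is false, and this paper itself supplies the counterexamples. Saltman's group of order $p^9$ and Peyre's groups in Theorems \ref{t1.7} and \ref{t1.8} are central extensions of elementary abelian groups by elementary abelian groups; the quotient $G$ is abelian, so $\bC(G)$ is rational by Fischer's theorem, yet $H^2_{\rm nr}$ or $H^3_{\rm nr}$ of $\bC(H)$ is nonzero, so $\bC(H)$ is not stably rational. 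Worse, iterating your lemma along the upper central series would make $\bC(P)$ stably rational for every finite $p$-group $P$, contradicting Theorem \ref{t2.4}. The proof sketch breaks exactly where one would expect: centrality of $A$ does \emph{not} make the ``linear part'' of the action on $U=\mathrm{Ind}_A^H(\chi_1\oplus\cdots\oplus\chi_r)$ trivial on $A$ --- $A$ acts on each induced line by the nontrivial character $\chi_j$ --- and the resulting twisted structure over $\bC(V)^G$ is \emph{not} trivialized by Hilbert 90 in general. Its obstruction is a Brauer-type class attached to the extension, and when the $\chi_j$ do not extend to characters of $H$ (i.e.\ when $A\cap[H,H]\neq 1$) this class can be nonzero and unramified; that is precisely the mechanism producing $B_0(H)\neq 0$ in Saltman's and Bogomolov's examples.

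What is salvageable is your fibre-product construction, and in fact it contains the missing hypothesis you never state or use: for $H=G_1\times_Q G_2$ one checks that $\ker p_1=1\times Z(G_2)$ meets $[H,H]=\{(c,\phi(c)):c\in[G_1,G_1]\}$ trivially, and likewise for $\ker p_2$. The correct reduction lemma is: if $A\leq Z(H)$ and $A\cap[H,H]=1$, then $\bC(H)$ and $\bC(H/A)$ are stably $\bC$-isomorphic. Under that hypothesis $A$ injects into $H^{\rm ab}$, so by divisibility of $\bC^{\times}$ every character of $A$ extends to a one-dimensional character of $H$; one may then take $W=V\oplus\bC_{\eta_1}\oplus\cdots\oplus\bC_{\eta_r}$ with $V$ a faithful representation of $H/A$ inflated to $H$ and $\eta_j$ characters of $H$ whose restrictions generate $\widehat{A}$ (this is where faithfulness of $W$ requires $A\cap[H,H]=1$), pass to $A$-invariant monomials in the last $r$ coordinates, and finish with Speiser's lemma/Hilbert 90 for the faithful action of $H/A$ on $\bC(V)$, avoiding induced modules entirely. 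With the lemma so restated and proved, your chain $\bC(G_1)\sim\bC(H)\sim\bC(G_2)$ and the final remark that $H^i_{\rm nr}(-/\bC,\bQ/\bZ)$ is a stable birational invariant (Theorem \ref{t2.7}) do give the theorem; as currently written, however, the argument proves a false statement and therefore cannot be accepted.
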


Now we return to Noether's problem. By Theorem \ref{t1.4} and Theorem \ref{t1.5}, we will concentrate on the rationality problem of $\bC(G)$ where $G$ is a group of order $2^n$ with $n \ge 6$ or a group of order $p^m$ where $p$ is an odd prime number with $m \ge 5$.

Here is the result for groups of order $2^6$.

\begin{theorem}[{Chu, Hu, Kang and Kunyavskii \cite{CHKK}}]\label{t1.11}
Let $G=G(2^6,i)$, $1\leq i\leq 267$, be the $i$-th group of order $64$
in the GAP database {\rm \cite{GAP}}.\\
{\rm (1) (\cite[Theorem 1.8]{CHKK})}
$H_{\rm nr}^2(\bC(G)/\bC, \bQ/\bZ)\ne 0$ if and only if $G$ belongs to the isoclinism family
$\Phi_{16}$, i.e. $G=G(2^6,i)$ with $149\le i\le 151$,
$170\le i\le 172$, $177\le i\le 178$ or $i=182$.
Moreover, if $H_{\rm nr}^2(\bC(G)/\bC, \bQ/\bZ)\neq 0$, then $H_{\rm nr}^2(\bC(G)/\bC, \bQ/\bZ)\simeq \bZ/2\bZ$
$($see \cite[Remark, page 424]{Ka} for this statement$)$;\\
{\rm (2) (\cite[Theorem 1.10]{CHKK})}
If $H_{\rm nr}^2(\bC(G)/\bC, \bQ/\bZ)= 0$,
then $\bC(G)$ is $\bC$-rational except
possibly for five groups which belong to $\Phi_{13}$, i.e.
$G=G(2^6,i)$ with $241\le i\le 245$.
\end{theorem}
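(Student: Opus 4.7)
The plan is to exploit the Bogomolov--B\"ohning isoclinism invariance (Theorem \ref{t1.10}): since both $H_{\rm nr}^2(\bC(G)/\bC, \bQ/\bZ)$ and the stable $\bC$-isomorphism type of $\bC(G)$ depend only on the isoclinism family of $G$, the 267 groups of order $64$ collapse to $27$ representatives, one for each Hall--Senior family $\Phi_1,\ldots,\Phi_{27}$. For each $\Phi_j$ I would fix a convenient representative $G_j$ (e.g. the smallest GAP index in that family) on which explicit calculation is cheapest, then lift the verdict back to the whole family.

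For part (1), I would apply Bogomolov's formula (Theorem \ref{t2.3}), computing $H_{\rm nr}^2(\bC(G)/\bC,\bQ/\bZ)$ as the Bogomolov multiplier
\[
B_0(G) \;=\; \ker\Bigl(H^2(G,\bQ/\bZ) \longrightarrow \prod_A H^2(A,\bQ/\bZ)\Bigr),
\]
where $A$ runs over the bicyclic subgroups of $G$. For a $2$-group of order $64$ this reduces to a finite linear-algebra computation on the Schur multiplier that can be automated family-by-family. The output should identify $\Phi_{16}$ as the unique family with $B_0(G) \ne 0$ and pin the value as $\bZ/2\bZ$ (consistent with the general bound recorded in \cite{Ka}); comparison with the Hall--Senior tables then extracts the explicit GAP indices $149$--$151$, $170$--$172$, $177$--$178$, $182$ listed in the statement.

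For part (2), for each of the $26$ families $\Phi_j$ with $j \ne 16$ I would exhibit a purely transcendental generating set for $\bC(G_j)$ by hand. The strategy, in the style of Chu--Kang \cite{CK} and Chu--Hu--Kang--Prokhorov \cite{CHKP}, is to choose a faithful $\bC$-representation $V$ of $G_j$, trim redundant variables via the no-name lemma, filter $G_j$ by a normal series with cyclic quotients, and linearize each step by a Kummer-type substitution using the roots of unity in $\bC$. Fischer's theorem (Theorem \ref{t1.2}) disposes of the resulting abelian pieces, while the metacyclic or extraspecial presentations available in the bulk of the families make the descent scheme run to completion.

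The main obstacle is the family $\Phi_{13}$: its five members $G(2^6,i)$ with $241 \le i \le 245$ pass the $B_0$-test but fail to fit the descent above. By Theorem \ref{t1.10} they are mutually stably $\bC$-isomorphic, so only one stable rationality question really remains, yet rationality of each individual $\bC(G(2^6,i))$ is not decided by these techniques: a tailored birational parametrization of $V/G_j$, or a finer obstruction than $H_{\rm nr}^2$, would be required. The theorem therefore honestly carves out $\Phi_{13}$ as an exception, and the effective content of (2) is that the remaining $26$ families all yield to the Fischer--descent package.
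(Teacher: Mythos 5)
Note first that the paper you are reconstructing does not actually prove Theorem \ref{t1.11}: it is imported verbatim from Chu--Hu--Kang--Kunyavskii \cite{CHKK} (with the $\bZ/2\bZ$ refinement credited to \cite{Ka}), so there is no internal argument to compare against and your sketch must stand on its own as a reconstruction of the proof in \cite{CHKK}. For part (1) your outline is reasonable: $H^2_{\rm nr}(\bC(G)/\bC,\bQ/\bZ)\simeq B_0(G)$ by Theorem \ref{t2.3}, this is a stable-birational invariant, hence constant on isoclinism families by Theorem \ref{t1.10}, and a finite Schur-multiplier computation on $27$ representatives together with the Hall--Senior/GAP dictionary can isolate $\Phi_{16}$ and the listed indices; the only caveat is that the value $\bZ/2\bZ$ is an additional explicit computation, not a formal consequence of a ``general bound.''

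The genuine gap is in part (2). Your reduction ``fix one representative $G_j$ per family and lift the verdict back to the whole family'' is illegitimate for rationality. Theorem \ref{t1.10} only gives that isoclinic groups have \emph{stably} $\bC$-isomorphic invariant fields; rationality itself is not known to be an isoclinism invariant. So proving $\bC(G_j)$ rational for your chosen representative yields only stable $\bC$-rationality for the other members of $\Phi_j$, which is strictly weaker than the assertion of (2), namely that $\bC(G)$ is $\bC$-rational for \emph{every} group outside $\Phi_{16}$ and (possibly) $\Phi_{13}$. You make exactly this distinction for $\Phi_{13}$, but then rely on the forbidden lifting for the remaining families; the actual proof in \cite{CHKK} must treat the groups individually, or via structural features shared by all members of a family (e.g.\ abelian normal subgroups with cyclic or small quotient, to which Fischer-type and Chu--Kang-type rationality theorems apply group by group). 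Relatedly, the claim that your ``Fischer--descent package runs to completion'' for the other $26$ families conceals essentially all of the work: the existence of the unresolved family $\Phi_{13}$ shows the descent is not automatic, and nothing in your sketch identifies why $\Phi_{13}$ (and only $\Phi_{13}$) resists, nor supplies the case analysis that occupies the bulk of \cite{CHKK}.
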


In the above theorem, it is still unknown whether $\bC(G)$ is $\bC$-rational or not if $G$ belongs to $\Phi_{13}$.

\bigskip
Now consider $p$-groups of order $p^5$ where $p$ is an odd prime number.

For $p\geq 5$ (resp. $p=3$), there exist $2p+61+\gcd\{4,p-1\}+2\gcd\{3,p-1\}$
(resp. $67$) groups $G$ of order $p^5$ (resp. $3^5$) which are classified into ten
isoclinism families $\Phi_1,\ldots,\Phi_{10}$ (see \cite[Section 4]{Ja}). For groups $G$ of order $3^5$, by using computer computing, Moravec showed that $H_{\rm nr}^2(\bC(G)/\bC, \bQ/\bZ)\ne 0$ if and only if $G$ belongs to
the isoclinism family $\Phi_{10}$ \cite{Mo}. A theoretic proof was found and Moravec's result was generalized to any odd prime number $p$ by Hoshi, Kang and Kunyavskii.

\begin{theorem}[{Hoshi, Kang and Kunyavskii \cite[Theorem 1.12]{HKK}, see \cite[page 424]{Ka} for the last statement}] \label{t1.12}
Let $p$ be any odd prime number and $G$ be a group of order $p^5$. Then
$H_{\rm nr}^2(\bC(G)/\bC, \bQ/\bZ)\ne 0$ if and only if $G$ belongs to
the isoclinism family $\Phi_{10}$.
Moreover, if $H_{\rm nr}^2(\bC(G)/\bC, \bQ/\bZ)\neq 0$, then $H_{\rm nr}^2(\bC(G)/\bC, \bQ/\bZ)\simeq \bZ/p\bZ$.
\end{theorem}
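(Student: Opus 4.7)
The plan is to reduce the computation to a single representative in each isoclinism family and then exploit the Bogomolov formula
\[
B_0(G) \;=\; H^2_{\rm nr}(\bC(G)/\bC,\bQ/\bZ) \;=\; \ker\Bigl\{H^2(G,\bQ/\bZ) \longrightarrow \prod_{A} H^2(A,\bQ/\bZ)\Bigr\},
\]
where $A$ runs over all bicyclic (abelian of rank $\le 2$) subgroups of $G$. By Theorem \ref{t1.10} the unramified Brauer group is an isoclinism invariant, so it is enough to pick one convenient representative $G_i$ in each of the ten families $\Phi_1,\ldots,\Phi_{10}$ in the P. Hall--James classification of $p$-groups of order $p^5$.

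For the nine ``easy'' families $\Phi_1,\ldots,\Phi_9$ I would argue vanishing as follows. For each $i \le 9$ the explicit presentation of $G_i$ exhibits either (a) an abelian group (so $B_0=0$ by Fischer, Theorem \ref{t1.2}), (b) a metacyclic group (for which $B_0=0$ is classical), or (c) a central extension $1 \to Z \to G_i \to \bar G_i \to 1$ with $Z \cong \bZ/p\bZ$ central and $|\bar G_i| \le p^4$. In case (c) the Lyndon--Hochschild--Serre five-term sequence, together with $B_0(\bar G_i)=0$ (implied by Theorem \ref{t1.4}), reduces the question to a single transgression class, which one verifies is detected by restriction to a suitable bicyclic subgroup obtained from the commutator structure of $G_i$. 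Equivalently one may invoke Moravec's description of $B_0(G)$ in terms of $G \wedge G$ modulo commuting pairs and read off vanishing from the abundance of commuting generators in each presentation.

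For $\Phi_{10}$, the one remaining family (which exists for every odd $p$), I would fix the standard representative of class $4$ and exponent $p$ given by generators $a_1,a_2,a_3,a_4,a_5$ with the iterated commutator relations characteristic of that family. Using the Lyndon--Hochschild--Serre spectral sequence for the central subgroup $Z(G) \cong \bZ/p\bZ$, I would compute $H^2(G,\bQ/\bZ)$ and then inspect the restriction map to every bicyclic subgroup (these are enumerable up to conjugacy from the commutator relations). The goal is to exhibit a non-zero class $\alpha \in H^2(G,\bQ/\bZ)$, arising from the degree-three commutator relation in $\Phi_{10}$, whose restriction to every bicyclic subgroup vanishes, and to verify that every other class is detected by some bicyclic restriction. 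This furnishes simultaneously the ``if'' direction and the isomorphism $B_0(G) \cong \bZ/p\bZ$.

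The main obstacle is the $\Phi_{10}$ computation: one must control the differentials of the Hochschild--Serre spectral sequence, list a complete set of bicyclic subgroups up to conjugation, and show that the kernel of the restriction map has order exactly $p$. The vanishing side for the nine other families is tedious but mechanical once the normal series and the bicyclic subgroups are in place; the identification of the ``extra'' class in $\Phi_{10}$ and the proof that no other obstruction class is ignored by all bicyclic subgroups is where the real content of the theorem lies.
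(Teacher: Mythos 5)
First, note that the paper does not actually prove Theorem \ref{t1.12}: it is quoted from \cite{HKK} (with the last statement from \cite{Ka}), so your proposal can only be measured against the strategy of that cited work. Your general route --- identify $H^2_{\rm nr}(\bC(G)/\bC,\bQ/\bZ)$ with $B_0(G)$ via Theorem \ref{t2.3}, reduce to one representative per isoclinism family via Theorem \ref{t1.10}, and then decide $B_0$ family by family --- is indeed the same kind of plan. But as written there are genuine gaps. The main one is your case (c): for a central extension $1\to Z\to G\to Q\to 1$ with $Z\simeq \bZ/p\bZ$, the five-term sequence only identifies the kernel of the inflation $H^2(Q,\bQ/\bZ)\to H^2(G,\bQ/\bZ)$ with the image of the transgression $H^1(Z,\bQ/\bZ)^Q\to H^2(Q,\bQ/\bZ)$; it says nothing about which classes of $H^2(G,\bQ/\bZ)$ lie in $B_0(G)$. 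Knowing $B_0(Q)=0$ does not show that inflated classes are detected by bicyclic subgroups of $G$: if $\beta\in H^2(Q,\bQ/\bZ)$ is detected on a bicyclic $B\le Q$, its preimage $\widetilde B\le G$ is in general non-abelian and the inflation $H^2(B,\bQ/\bZ)\to H^2(\widetilde B,\bQ/\bZ)$ may kill ${\rm res}_B(\beta)$. Moreover the part of $H^2(G,\bQ/\bZ)$ not inflated from $Q$ is not ``a single transgression class'' (the transgression lands in $H^2(Q,\bQ/\bZ)$, not in $H^2(G,\bQ/\bZ)$); by the filtration of Theorem \ref{t1.17}-type arguments it is a subgroup of $H^1(Q,{\rm Hom}(Z,\bQ/\bZ))$, which is typically large. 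Since every group of order $p^5$ is such a central extension, this step is exactly where the vanishing for the nontrivial families $\Phi_5$--$\Phi_9$ has to be fought for (in \cite{HKK} this is done with specific lemmas of Bogomolov on central extensions), and your sketch does not supply a working mechanism.

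Two further points. Your chosen $\Phi_{10}$ representative ``of class $4$ and exponent $p$'' does not exist for $p=3$: groups of exponent $3$ have nilpotency class at most $3$, whereas $\Phi_{10}$ consists of groups of maximal class $4$, so for $p=3$ one must work with a representative of exponent $9$ (e.g. $G(3^5,28)$) whose relations, and hence whose bicyclic subgroup structure, are different; the theorem is claimed for all odd $p$, so this case cannot be skipped. Finally, the actual content of the theorem --- computing $H^2(G,\bQ/\bZ)$ for the $\Phi_{10}$ representative, enumerating its bicyclic subgroups up to conjugacy, exhibiting the class killed by all of them, and showing the kernel has order exactly $p$ (which is the ``moreover'' statement) --- is only announced, not carried out. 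As it stands the proposal is a program in the spirit of the cited proof rather than a proof.
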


Note that the total number of groups in the isoclinism family $\Phi_{10}$ for groups of order $p^5$ (where $p\ge5$) is
$1+\gcd\{4,p-1\}+\gcd \{3,p-1\}$ (\cite[page 621]{Ja}), while there are precisely three groups in the isoclinism family $\Phi_{10}$ for groups of order $3^5$.

\bigskip
Now it is natural to ask whether $\bC(G)$ is $\bC$-rational if $G$ is a group of order $p^5$ (where $p$ is an odd prime number) satisfying that $H_{\rm nr}^2(\bC(G)/\bC, \bQ/\bZ)= 0$. As we remarked before, this is not a routine job; it is not a corollary of the fact that $H_{\rm nr}^2(\bC(G)/\bC, \bQ/\bZ)= 0$. The following theorem almost solved the case when $G$ is a group of order $243$.

\begin{theorem}[{Chu, Hoshi, Hu and Kang \cite[Theorem 1.13]{CHHK}}]
\label{t1.13}
Let $G$ be a group of order $243$. 
If $H_{\rm nr}^2(\bC(G)/\bC, \bQ/\bZ)=0$, then $\bC(G)$ is $\bC$-rational
except possibly for the five groups $G$ of the isoclinism family $\Phi_7$,
i.e. $G=G(3^5,i)$ with $56 \le i \le 60$.
\end{theorem}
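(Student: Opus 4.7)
The plan is to proceed family by family through the $67$ groups of order $3^5$, organized by their isoclinism classes $\Phi_1,\ldots,\Phi_{10}$. By Theorem \ref{t1.12}, the hypothesis $H_{\rm nr}^2(\bC(G)/\bC,\bQ/\bZ)=0$ excludes exactly the three groups of $\Phi_{10}$, so the task reduces to establishing $\bC$-rationality for the $59$ groups lying in $\Phi_1\cup\cdots\cup\Phi_6\cup\Phi_8\cup\Phi_9$. Since $\bC$-rationality is not known to be an isoclinism invariant (Theorem \ref{t1.10} gives only stable $\bC$-isomorphism), in principle each such group must be inspected, but in practice one can usually handle an entire family by a uniform construction once a normal form for the presentation is fixed.

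The general strategy I would adopt for a single group $G$ is the following. Choose a normal abelian subgroup $A\trianglelefteq G$ of sufficiently large rank, typically with $[G:A]=3$ or $9$. Because $\bC$ contains all roots of unity, Fischer's theorem (Theorem \ref{t1.2}) diagonalizes the regular $A$-action, so $\bC(x_g:g\in G)^A$ is purely transcendental over $\bC$ with generators indexed by the characters of $A$ together with coset representatives of $A$ in $G$. The residual action of $H:=G/A$ on this fixed field becomes a monomial action whose integral data are completely determined by the characters of $A$ and the $2$-cocycle describing the central or non-central extension $A\hookrightarrow G\twoheadrightarrow H$. One then applies the standard rationality criteria for monomial actions of $p$-groups over $\bC$: if the associated $H$-lattice decomposes as (or is stably equivalent to) a permutation lattice, and the cocycle-induced twists can be killed by a Lüroth-type change of variables, rationality follows; for small $H$ and small rank this is checkable in closed form.

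The hardest part will be that this descent does not succeed uniformly across all families. For some groups (in particular those resembling $\Phi_7$) the residual monomial action has a nontrivial local obstruction in its twisted form, and the choice of $A$ and of the generating set must be refined, sometimes with an auxiliary birational linearization, before a rationality criterion applies. The isoclinism family $\Phi_7$ is expected to be precisely the one for which no such transformation exists, which explains its exclusion from the statement and is consistent with Theorem 1 of the present paper, where a nonvanishing degree-three unramified class is produced for $\Phi_7$. Thus the technical heart of the proof is the explicit verification, case by case along presentations of $G(3^5,i)$ with $i\notin\{$indices of $\Phi_7\cup\Phi_{10}\}$, that the monomial $H$-action on the intermediate rational field has rational invariant field, the delicate cases being those of the families $\Phi_6$, $\Phi_8$ and $\Phi_9$ which sit closest structurally to $\Phi_7$.
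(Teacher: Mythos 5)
Your overall strategy is aimed in the right direction, but note first that this paper does not prove Theorem \ref{t1.13} at all: it is quoted from \cite{CHHK}, and the proof there is indeed a family-by-family analysis of the kind you sketch (abelian normal subgroups, linearization via Fischer's theorem and the no-name lemma, reduction to monomial actions of small quotients). The problem is that your text is a plan rather than a proof: the entire mathematical content of the statement is the explicit verification, for each of the $59$ groups outside $\Phi_7\cup\Phi_{10}$, that the invariant field is $\bC$-rational, and you defer exactly this step ("in practice one can usually handle an entire family by a uniform construction", "the choice of $A$ \ldots must be refined"). Nothing is actually verified for a single group, so no part of the theorem is established.

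Moreover, the tool you lean on does not exist in the generality you need. Over $\bC$, rationality of monomial group actions is only known up to transcendence degree $3$ (Hajja, Hajja--Kang, and \cite{HKiY}), whereas the $A$-fixed field of the regular representation has much larger transcendence degree; getting down to dimension at most $3$, or into the range of theorems about $p$-groups with an abelian normal subgroup of index $p$ or with small cyclic quotient, is precisely the case-by-case work carried out in \cite{CHHK}. Your proposed criterion "if the associated $H$-lattice is (stably) a permutation lattice and the cocycle twists can be killed, rationality follows" is also not a theorem as stated: stable permutation data yields at best stable rationality conclusions, not $\bC$-rationality, and the twisted multiplicative invariant problem is exactly where the difficulty lies. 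Finally, your guess about which families are delicate is off: for $p=3$ the hard cases are chiefly $\Phi_5$ (the extra-special groups of order $3^5$, whose rationality is a separate nontrivial result) and $\Phi_6$, the latter genuinely special to $p=3$ since Theorem \ref{t1.15} shows that for $p=5,7$ the groups in $\Phi_6$ are not even retract $\bC$-rational; $\Phi_8$ and $\Phi_9$ are comparatively routine. So the proposal, as written, has a genuine gap: it describes the standard framework but omits both the specific rationality criteria and the exhaustive case analysis that constitute the actual proof.
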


\bigskip
The goal of this paper is to study $H_{\rm nr}^3(\bC(G)/\bC, \bQ/\bZ)$ by applying Peyre's formula \cite[Theorem 1, page 198]{Pe3}. Indeed, we are able to do it when $G$ is any group of order $3^5$, $5^5$ or $7^5$. A byproduct of it is that it enables us to finish the unsolved rationality problem of Theorem \ref{t1.13}.

The main results of our paper are the following two theorems.

\begin{theorem}\label{t1.14}
Let $G$ be a group of order $3^5$.
Then $H_{\rm nr}^3(\bC(G)/\bC,\bQ/\bZ)\neq 0$ if and only if
$G$ belongs to the isoclinism family $\Phi_7$.
Moreover, if $H_{\rm nr}^3(\bC(G)/\bC,\bQ/\bZ)\neq 0$,
then $H_{\rm nr}^3(\bC(G)/\bC,\bQ/\bZ)\simeq \bZ/3\bZ$.
\end{theorem}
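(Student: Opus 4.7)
The plan is to combine the isoclinism invariance of Theorem \ref{t1.10} with Peyre's cohomological formula for $H_{\rm nr}^3$ (Theorem \ref{t2.14}). Isoclinism reduces everything to one representative per family $\Phi_1,\ldots,\Phi_{10}$ of groups of order $3^5$. For every family other than $\Phi_7$ and $\Phi_{10}$, rationality of $\bC(G)$ is already known: by Theorem \ref{t1.12} we have $H_{\rm nr}^2(\bC(G)/\bC,\bQ/\bZ)=0$ outside $\Phi_{10}$, and then Theorem \ref{t1.13} gives $\bC$-rationality of $\bC(G)$ outside $\Phi_7 \cup \Phi_{10}$. In particular $\bC(G)$ is retract $\bC$-rational there, so $H_{\rm nr}^3(\bC(G)/\bC,\bQ/\bZ)=0$ automatically. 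The theorem thus reduces to showing $H_{\rm nr}^3(\bC(G)/\bC,\bQ/\bZ) \simeq \bZ/3\bZ$ for a representative of $\Phi_7$ and $H_{\rm nr}^3(\bC(G)/\bC,\bQ/\bZ) = 0$ for a representative of $\Phi_{10}$.

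For each of these two families I would fix an explicit presentation via the GAP small group library and apply Peyre's formula mechanically. Concretely: (i) compute $H^*(G,\bZ/3\bZ)$ through degree three as a graded ring, using the Lyndon--Hochschild--Serre spectral sequence for the central extension $1 \to Z(G) \to G \to G/Z(G) \to 1$, tracking the relevant differentials carefully; (ii) enumerate the conjugacy classes of bicyclic subgroups $B$ of $G$ (abelian subgroups of rank at most two) and compute the restrictions $H^3(G,\bQ/\bZ) \to H^3(B,\bQ/\bZ)$; (iii) determine the subgroup of permutation negligible classes in $H^3(G,\bQ/\bZ)$ to be quotiented out in Peyre's formula, namely the subgroup generated by cup products and Bocksteins of classes pulled back from abelian subquotients; (iv) take the quotient of the intersection of the kernels in (ii) by this decomposable subgroup.

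The delicate step, and the main obstacle I anticipate, is (iii), together with the construction of a non-trivial class in the $\Phi_7$ case. Identifying the permutation negligible subspace in degree three requires tracking cup products $H^1 \otimes H^2 \to H^3$ and Bockstein images arising from \emph{every} abelian subquotient of $G$, not just the obvious ones; the linear-algebraic bookkeeping is substantial. To establish the lower bound $H_{\rm nr}^3(\bC(G)/\bC,\bQ/\bZ) \neq 0$ for $\Phi_7$ one cannot merely bound a cokernel: an explicit $3$-cocycle has to be exhibited and shown to lie in the intersection of all restriction kernels yet outside the decomposable subgroup. A natural candidate is a triple Massey product supported on the exponent-nine central structure that singles out $\Phi_7$ among the ten families. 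Once the computation succeeds for a single representative of $\Phi_7$ and one of $\Phi_{10}$, Theorem \ref{t1.10} propagates the answer to the whole isoclinism family, completing the proof.
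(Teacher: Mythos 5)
Your reduction is fine: by Theorems \ref{t1.12} and \ref{t1.13}, every group of order $3^5$ outside $\Phi_7\cup\Phi_{10}$ has $\bC(G)$ rational, hence retract rational, hence $H^3_{\rm nr}(\bC(G)/\bC,\bQ/\bZ)=0$ by Theorem \ref{t2.7}, and Theorem \ref{t1.10} lets you work with representatives of $\Phi_7$ and $\Phi_{10}$ (the paper in fact computes all $67$ groups, but also invokes this shortcut). The genuine gap is in your step (ii)/(iv): you test unramifiedness by intersecting the kernels of the restrictions $H^3(G,\bQ/\bZ)\to H^3(B,\bQ/\bZ)$ over bicyclic subgroups $B$. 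That is Bogomolov's criterion for degree \emph{two} (Theorem \ref{t2.3}); it is not Peyre's formula and there is no theorem identifying that intersection with $H^3_{\rm nr}(G,\bQ/\bZ)$. Peyre's degree-three criterion (Definition \ref{d2.11}) runs over \emph{all} pairs $(H,g)$ with $H\le G$ and $g\in Z_G(H)$, and the test map $\partial_{H,g}\colon H^3(G,\bQ/\bZ)\to H^2(H,\bQ/\bZ)$ is not a restriction: one pulls back along $H\times\langle g\rangle\to G$, subtracts the $H^3(\langle g\rangle)$-component, extracts the $H^2(H,H^1(\langle g\rangle,\bQ/\bZ))$ piece of the (degenerate) Lyndon--Hochschild--Serre decomposition, and evaluates at $g$. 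Replacing this with restriction to bicyclic subgroups computes a different group, so neither the vanishing for $\Phi_{10}$ nor the nonvanishing for $\Phi_7$ would follow from your computation as described.

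Two secondary points. First, your description of the classes to be quotiented out ("cup products and Bocksteins of classes pulled back from abelian subquotients") is not the right group: since $|G|$ is odd, $H^3_{\rm n}(G,\bQ/\bZ)=H^3_{\rm p}(G,\bQ/\bZ)=\sum_{H\le G}{\rm Cores}^G_H\bigl({\rm Image}\{H^1(H,\bQ/\bZ)^{\otimes 2}\xrightarrow{\cup}H^3(H,\bQ/\bZ)\}\bigr)$ by Theorem \ref{t2.14}, i.e.\ a sum of \emph{corestrictions from subgroups} of cup products of two degree-one characters; this is what must be computed (the paper does it as $H^4_{\rm p}(G,\bZ)$ inside $H^4(G,\bZ)$). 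Second, once the correct criterion is used, no Massey-product construction is needed for the lower bound in the $\Phi_7$ case: the quotient $H^3(G,\bQ/\bZ)/H^3_{\rm p}(G,\bQ/\bZ)$ is small, and one checks directly that some coset generator lies in $\bigcap\ker\partial_{H,g}$ while a pruned list of pairs $(H,g)$ (up to conjugacy, maximal, with $H=Z_G(\langle g\rangle)$, $H^3(H,\bZ)\ne 0$, etc.) suffices to rule the others out; this is exactly how the paper obtains $H^3_{\rm nr}(\bC(G)/\bC,\bQ/\bZ)\simeq\bZ/3\bZ$ for $\Phi_7$ and $0$ for $\Phi_{10}$.
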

\begin{table}[h]\vspace*{-2mm}
\begin{tabular}{c|cccccccccc}
$|G|=3^5$ & $\Phi_1$ & $\Phi_2$ & $\Phi_3$ & $\Phi_4$ & $\Phi_5$ & $\Phi_6$ & $\Phi_7$ & $\Phi_8$ & $\Phi_9$ & $\Phi_{10}$\\\hline
$H_{\rm nr}^2(\bC(G)/\bC,\bQ/\bZ)$ & 0 & 0 & 0 & 0 & 0 & 0 & 0 & 0 & 0 & $\bZ/3\bZ$ \\
$H_{\rm nr}^3(\bC(G)/\bC,\bQ/\bZ)$ & 0 & 0 & 0 & 0 & 0 & 0 & $\bZ/3\bZ$ & 0 & 0 & 0
\end{tabular}\\
\vspace*{2mm}
Table $1$: $H_{\rm nr}^2(\bC(G)/\bC,\bQ/\bZ)$ and $H_{\rm nr}^3(\bC(G)/\bC,\bQ/\bZ)$ for groups $G$
of order $3^5$
\end{table}\vspace*{-2mm}
\begin{theorem}\label{t1.15}
Let $G$ be a group of order $p^5$ where $p=5$ or $p=7$.
Then $H_{\rm nr}^3(\bC(G)/\bC,\bQ/\bZ)\neq 0$ if and only if
$G$ belongs to the isoclinism family $\Phi_6$, $\Phi_7$ or $\Phi_{10}$.
Moreover, if $H_{\rm nr}^3(\bC(G)/\bC,\bQ/\bZ)\neq 0$,
then $H_{\rm nr}^3(\bC(G)/\bC,\bQ/\bZ)\simeq \bZ/p\bZ$.
\end{theorem}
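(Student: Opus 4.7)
The plan is to combine the isoclinism invariance of $H_{\rm nr}^3$ (Theorem \ref{t1.10}) with Peyre's cohomological formula \cite[Theorem 1]{Pe3} to reduce the problem to an explicit calculation on ten representative groups, one for each isoclinism family $\Phi_1, \ldots, \Phi_{10}$ of order $p^5$. Since $G$ is a $p$-group, only the $p$-primary part of $\bQ/\bZ$ contributes, so it suffices to compute with $\bF_p$-coefficients throughout. For each representative I would apply Peyre's formula to express $H_{\rm nr}^3(\bC(G)/\bC,\bQ/\bZ)$ as the quotient of the intersection of kernels of restriction maps $\fn{res}_A \colon H^3(G,\bQ/\bZ) \to H^3(A,\bQ/\bZ)$, where $A$ ranges over the bicyclic subgroups of $G$, by the subgroup of permutation negligible classes.

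The ten families then split into three regimes. For $\Phi_1, \ldots, \Phi_5, \Phi_8, \Phi_9$ the objective is to show $H_{\rm nr}^3 = 0$; the representatives here have either a large abelian normal subgroup or a class-two structure with controlled commutator behaviour, and one can exhibit every degree three class as either ramified on some bicyclic subgroup or as permutation negligible. For $\Phi_6$ and $\Phi_7$, when $p\geq 5$, a representative fits into a central extension $0 \to V \to G \to U \to 0$ with $V$ central and elementary abelian, placing us squarely in the framework of \cite[Theorem 2]{Pe3} as refined in Theorem \ref{t1.8}; from this we extract an explicit non-zero class in $H_{\rm nr}^3$ of order $p$. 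For $\Phi_{10}$, where the Bogomolov multiplier is already non-zero by Theorem \ref{t1.12}, we form a cup product of the non-trivial class in $H_{\rm nr}^2$ with a suitable degree one class pulled back from $G/[G,G]$, and check that the resulting element in $H^3$ lies in Peyre's unramified subquotient and is not permutation negligible.

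To get the upper bound $H_{\rm nr}^3 \simeq \bZ/p\bZ$ in the three non-vanishing cases, I would compute $H^3(G,\bF_p)$ via the Lyndon--Hochschild--Serre spectral sequence for a central cyclic subgroup, intersect with the restriction kernels, and quotient by the negligible classes; the resulting $\bF_p$-dimension should drop to exactly one in each non-zero case. The main obstacle I anticipate is the discrepancy with the $p=3$ case of Theorem \ref{t1.14}, where $\Phi_6$ and $\Phi_{10}$ contribute zero: isolating precisely why certain classes that are unramified and non-negligible for $p\geq 5$ become either ramified or negligible when $p=3$ is the subtle heart of the contrast. This discrepancy likely arises from an exceptional identity valid only at $p=3$, for instance the vanishing of a particular Bockstein cup product or an extra commutator relation among the generators, and pinpointing this mechanism cleanly is what will allow a uniform argument that handles $p=5$ and $p=7$ at once while still reflecting the absence of the $\Phi_6$ and $\Phi_{10}$ contributions at $p=3$.
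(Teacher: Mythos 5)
There is a genuine gap, and it starts with the criterion you propose to use. The description of ``Peyre's formula'' as the intersection of kernels of restriction maps to bicyclic subgroups $A\leq G$ is the degree two (Bogomolov) criterion of Theorem \ref{t2.3}; it is not known, and not claimed by Peyre, to compute the degree three unramified group. Peyre's actual formula (Definition \ref{d2.11}) runs over pairs $(H,g)$ with $H\leq G$ and $g\in Z_G(H)$, and tests the component of $m^\ast(\gamma)$ in $H^2(H,H^1(\langle g\rangle,\bQ/\bZ))$ extracted from the K\"unneth/Lyndon--Hochschild--Serre decomposition of $H^3(H\times\langle g\rangle,\bQ/\bZ)$ --- a different and strictly finer test than restriction to bicyclic subgroups. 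Building the whole case analysis on the wrong criterion can misclassify classes in both directions, so the vanishing arguments for the ``easy'' families and the non-vanishing arguments both rest on an unsupported foundation. Note also that $\Phi_5$ is not one of the easy families: the extra-special groups of order $5^5$ and $7^5$ have unknown rationality, and in the paper the vanishing of $H^3_{\rm nr}$ there requires the full computation (e.g.\ showing $H^4_{\rm p}(G,\bZ)=H^4(G,\bZ)$ for $G(5^5,76)$, and that all classes outside $H^4_{\rm p}$ for $G(5^5,75)$ are ramified).

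The non-vanishing cases are also not established by your plan. Peyre's construction \cite[Theorem 2]{Pe3} (and its refinement in Theorem \ref{t1.8}) produces a special subgroup of $H^3_{\rm nr}$ only for central extensions $0\to V\to G\to U\to 0$ with $U,V$ elementary abelian of large rank (${\rm dim}_{\bF_p}U=6$), and the paper states explicitly that it was unknown how to apply that machinery to groups not of this shape; the groups of order $p^5$ in $\Phi_6$ and $\Phi_7$ do not fit that framework, so ``extracting an explicit non-zero class'' from it is not available. For $\Phi_{10}$, cupping the non-trivial class of $H^2_{\rm nr}$ with a degree one class is purely speculative: you give no reason such a product is non-zero, unramified, or non-negligible, and the $p=3$ case (where $H^2_{\rm nr}\neq 0$ but $H^3_{\rm nr}=0$) shows that exactly this kind of expectation can fail. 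Indeed you defer the decisive point --- why $\Phi_6$ and $\Phi_{10}$ contribute for $p=5,7$ but not for $p=3$ --- to an unidentified ``exceptional identity at $p=3$,'' which is precisely the content that a proof must supply. The paper does not resolve this by a uniform structural argument at all: it computes, for each group (or one representative per family when $p=7$), the groups $H^4(G,\bZ)$ and $H^4_{\rm p}(G,\bZ)$ via Peyre's corestriction-of-cup-products formula (Theorem \ref{t2.14}(ii)), and then tests each generator of the quotient against the maps $\widetilde{\partial}_{H,g}$, using carefully justified reductions (conjugacy, maximality with fixed derived subgroup, centralizer pairs $(Z_G(\langle g\rangle),\langle g\rangle)$, discarding subgroups with $H^3(H,\bZ)=0$ or $H^4_{\rm p}(H,\bZ)=H^4(H,\bZ)$) to make the computation feasible; the isomorphism $H^3_{\rm nr}(\bC(G),\bQ/\bZ)\simeq H^3_{\rm nr}(G,\bQ/\bZ)/H^3_{\rm p}(G,\bQ/\bZ)$ for odd-order groups then yields the stated answers, including the upper bound $\bZ/p\bZ$.
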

\begin{table}[h]\vspace*{-2mm}
\begin{tabular}{c|cccccccccc}
$|G|=p^5$ $(p=5,7)$ & $\Phi_1$ & $\Phi_2$ & $\Phi_3$ & $\Phi_4$ & $\Phi_5$ & $\Phi_6$ & $\Phi_7$ & $\Phi_8$ & $\Phi_9$ & $\Phi_{10}$\\\hline
$H_{\rm nr}^2(\bC(G)/\bC,\bQ/\bZ)$ & 0 & 0 & 0 & 0 & 0 & 0 & 0 & 0 & 0 & $\bZ/p\bZ$ \\
$H_{\rm nr}^3(\bC(G)/\bC,\bQ/\bZ)$ & 0 & 0 & 0 & 0 & 0 & $\bZ/p\bZ$ & $\bZ/p\bZ$ & 0 & 0 & $\bZ/p\bZ$
\end{tabular}\\
\vspace*{2mm}
Table $2$: $H_{\rm nr}^2(\bC(G)/\bC,\bQ/\bZ)$ and $H_{\rm nr}^3(\bC(G)/\bC,\bQ/\bZ)$ for groups $G$
of order $5^5$ and $7^5$
\end{table}

{}Comparing with Theorem \ref{t1.6} and Theorem \ref{t2.9}, we find a new phenomenon from Theorem \ref{t1.14}: $H_{\rm nr}^2(\bC(G)/\bC,$ $\bQ/\bZ)\neq 0$, but $H_{\rm nr}^3(\bC(G)/\bC,\bQ/\bZ)= 0$ when the group $G$ belongs to $\Phi_7$. Also from Theorem \ref{t1.15} when the group $G$ belongs to $\Phi_{10}$, we find that both $H_{\rm nr}^2(\bC(G)/\bC,\bQ/\bZ)$ and $H_{\rm nr}^3(\bC(G)/\bC,\bQ/\bZ)$  are non-zero.

\bigskip
Theorem \ref{t1.14} enables us to solve the unsettled case of the rationality problem of $\bC(G)$ in Theorem \ref{t1.13}.

\begin{theorem}\label{t1.18}
Let $G$ be a group of order $3^5$. Then $\bC(G)$ is $\bC$-rational $($resp. retract $\bC$-rational$)$ if and only if $G$ belongs to the isoclinism family $\Phi_i$ where $1 \le i \le 6$ or $8 \le i \le 9$.
\end{theorem}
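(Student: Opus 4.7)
The plan is to combine Theorems \ref{t1.12}, \ref{t1.13}, and \ref{t1.14} and to use the standard fact that nontriviality of any degree $i\ge 2$ unramified cohomology group obstructs retract rationality (as recalled in the introduction, just after the statement of Saltman's result, and again after Theorem \ref{t1.6}).

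For the "only if" direction I treat $\Phi_{10}$ and $\Phi_7$ separately. If $G$ lies in $\Phi_{10}$, then by Theorem \ref{t1.12} one has $H^2_{\rm nr}(\bC(G)/\bC,\bQ/\bZ)\simeq\bZ/3\bZ\ne 0$, and hence $\bC(G)$ fails to be retract $\bC$-rational (and a fortiori fails to be $\bC$-rational). If $G$ lies in $\Phi_7$, then Theorem \ref{t1.14} provides $H^3_{\rm nr}(\bC(G)/\bC,\bQ/\bZ)\simeq\bZ/3\bZ\ne 0$, which again obstructs retract $\bC$-rationality. Since retract rationality is implied by rationality, this settles both non-rationality assertions for $G\in\Phi_7\cup\Phi_{10}$.

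For the "if" direction, suppose $G$ belongs to some $\Phi_i$ with $i\in\{1,2,3,4,5,6,8,9\}$. By Theorem \ref{t1.12} we have $H^2_{\rm nr}(\bC(G)/\bC,\bQ/\bZ)=0$, so Theorem \ref{t1.13} applies and yields that $\bC(G)$ is $\bC$-rational, \emph{except possibly} when $G$ is one of the five groups $G(3^5,i)$, $56\le i\le 60$, belonging to $\Phi_7$. But $G\notin\Phi_7$ by assumption, so this exceptional case is excluded, and $\bC(G)$ is actually $\bC$-rational. In particular $\bC(G)$ is retract $\bC$-rational, so both the rational and retract-rational versions of the theorem follow.

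There is no genuine obstacle here: the statement is a direct corollary, the new input being Theorem \ref{t1.14}, which precisely rules out $\Phi_7$ from the list of exceptions left open in Theorem \ref{t1.13}. Thus the only substantive work has already been done in the proof of Theorem \ref{t1.14}; the present theorem is essentially a bookkeeping assembly of the three results above, together with the retract-rationality obstruction provided by nonvanishing unramified cohomology.
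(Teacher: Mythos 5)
Your proposal is correct and follows exactly the route the paper intends: combine Theorem \ref{t1.12} (nontrivial $H^2_{\rm nr}$ for $\Phi_{10}$) and the new Theorem \ref{t1.14} (nontrivial $H^3_{\rm nr}$ for $\Phi_7$) with the fact that nonvanishing unramified cohomology obstructs retract $\bC$-rationality, and then invoke Theorem \ref{t1.13} to get rationality for the remaining families. This is precisely the bookkeeping argument the authors have in mind, so there is nothing to add.
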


Similarly, Theorem \ref{t1.15} sheds some light on the understanding of the rationality problem of $\bC(G)$ for a group $G$ of order $p^5$ where $p=5$ or $p=7$: If $G$ belongs to the isoclinism family $\Phi_i$ where $i= 6, 7, 10$, then $\bC(G)$ is not retract $\bC$-rational. Applying the same arguments as in \cite[page 238, the first paragraph]{CHHK}, it is easy to deduce that $\bC(G)$ is $\bC$-rational if $G$ is a group of order $5^5$ or $7^5$ and $G$ belongs to the isoclinism family $\Phi_i$ where $1 \le i \le 4$ or $8 \le i \le 9$.

On the other hand, we do not know whether $\bC(G)$ is $\bC$-rational or not if $G$ is a group of order $5^5$ or $7^5$ and $G$ belongs to the isoclinism family $\Phi_5$; the groups in $\Phi_5$ are the so-called extra-special $p$-groups of order $p^5$. In general, if $p$ is any prime number and $m \ge 1$, there are two non-isomorphic extra-special $p$-groups of order $p^{2m+1}$. It is known that $H_{\rm nr}^2(\bC(G)/\bC,\bQ/\bZ)=0$ if $G$ is an extra-special $p$-groups of order $p^3$, $p^5$ for any prime number $p$, or $p^{2m+1}$ with $p \ge 3$ and $m\ge 3$ by \cite[Proposition 2.1]{KK}. However, it is known that $\bC(G)$ is rational if $G$ is an extra-special $p$-groups of order $p^3$, $2^5$, or $3^5$; the answers to the rationality problem for other cases are unknown.

\bigskip
Now we turn to the non-abelian simple groups.

The answers to the rationality problem or the higher unramified cohomology group for non-abelian simple groups are rather scarce, either the affirmative answer or the negative one. The reader is referred to Theorem \ref{t5.1} and Theorem \ref{t5.2} for a brief survey. What is new in this paper is the following theorem about the unramified cohomology groups.

\begin{theorem}\label{t1.16}
Let $K$ be a finitely generated field extension of $\bC$, $\Gamma_K:= {\rm Gal}(K^{\rm sep}/K)$ and $M$ be a torsion $($discrete$)$ $\Gamma_K$-module on which $\Gamma_K$ acts trivially $($e.g. $M= \bQ/\bZ$ or $\bZ/n\bZ$ for some positive integer $n$$)$. Then $H_{\rm nr}^d(\bC(G)/\bC, M)= 0$ for all $d \ge 2$ provided that $G$ is isomorphic to one of the groups: the Mathieu groups $M_{11}, M_{12}$, the Janko group $J_1$, the alternating groups $A_n$ $($where $n \ge 3$$)$, the groups $SL_2(\bF_q)$, $PSL_2(\bF_q)$, $PGL_2(\bF_q)$ where $q$ is any prime power. We do not know the answer to the question whether $\bC(G)$ is $\bC$-rational $($resp. stably $\bC$-rational, retract $\bC$-rational$)$ for almost all the above groups.
\end{theorem}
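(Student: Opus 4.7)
The plan is to reduce the computation of $H^d_{\rm nr}(\bC(G)/\bC, M)$ for each group $G$ on the list to an analysis of its Sylow subgroups. A standard restriction--corestriction argument together with the Sylow-theoretic description of stable (and unramified) cohomology shows that the $p$-primary component of $H^d_{\rm nr}(\bC(G)/\bC, M)$ injects into an object built from the cohomology of a Sylow $p$-subgroup $G_p$ together with the action of $N_G(G_p)/G_p$. In particular, if $G_p$ is cyclic, then $\bC(G_p)$ is $\bC$-rational by Fischer's theorem (Theorem~\ref{t1.2}) and hence the $p$-primary part of $H^d_{\rm nr}(\bC(G)/\bC, M)$ vanishes. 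Thus the argument reduces to those primes $p$ for which $G_p$ is noncyclic.

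For $G$ equal to $SL_2(\bF_q)$, $PSL_2(\bF_q)$ or $PGL_2(\bF_q)$ with $q = p^n$, every Sylow $\ell$-subgroup with $\ell \neq p$ sits inside a torus and is therefore cyclic; only the characteristic prime $p$ needs work. The Sylow $p$-subgroup is the unipotent subgroup $U$, an elementary abelian $p$-group of order $q$, sitting inside the Borel $B = U \rtimes T$. One uses the action of $T$ on $H^{\ast}(U, M)$: together with Peyre's cohomological description of the unramified classes, this forces every would-be obstruction to come from an abelian subgroup of $G$, hence to be permutation-negligible.

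For $G = A_n$, one handles the small cases $n \le 5$ by hand (these groups are solvable or have order at most $60$), and for $n \ge 6$ one exploits the fact that the Sylow $p$-subgroups of $A_n$ are contained in iterated wreath products $\bZ/p \wr \cdots \wr \bZ/p$; the permutation-negligibility machinery used in \cite{Pe3} and \cite{HKY1} applies to such wreath products and yields the vanishing. For the sporadic groups $M_{11}, M_{12}, J_1$ one inspects the group order and finds that all Sylow subgroups are either cyclic or of manageable size (elementary abelian of small rank, or small $2$-groups such as the semidihedral group of order $16$); for each such noncyclic Sylow, the entire cohomology ring $H^{\ast}(G_p, M)$ is explicitly tabulated in the literature, and one checks that the $N_G(G_p)$-invariant classes all come from permutation modules.

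The hardest step is the $p$-primary case for a noncyclic Sylow $p$-subgroup $G_p$, where $H^{\ast}(G_p, M)$ is large and highly nontrivial. The work consists in showing that every potentially obstructing class is annihilated either by restriction to a larger abelian subgroup of $G$ (failing Bogomolov--Peyre detection) or by the normalizer action. For the groups of Lie type this reduces to well-understood invariant theory of $H^{\ast}(U, \bF_p)^T$; for $A_n$ it requires controlling the wreath-product cohomology and exploiting the rationality of $\bC(S_n)$; for $M_{11}, M_{12}, J_1$ the check is essentially computational once the Sylow structure and normalizer action are written down explicitly.
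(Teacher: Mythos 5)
Your opening reduction is the right one and matches the paper's Lemma \ref{l5.4} (an injection of $H_{\rm nr}^d(\bC(G)/\bC,M)$ into $\bigoplus_p H_{\rm nr}^d(\bC(G_p)/\bC,M)$ via restriction--corestriction; the normalizer action you invoke is not needed). After that, however, your plan has a structural gap: the theorem asserts vanishing for \emph{all} $d\ge 2$ and arbitrary trivial torsion coefficients $M$, whereas the tools you lean on for the noncyclic Sylow subgroups --- Peyre's description of unramified classes, permutation-negligible classes, Bogomolov-type detection by abelian subgroups, invariant theory of $H^{\ast}(U,\bF_p)^T$ --- are specific to degrees $2$ and $3$ and cannot deliver the statement in higher degrees. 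The only mechanism available (and the one the paper uses) is to prove that $\bC(G_p)$ is $\bC$-rational or stably $\bC$-rational for every Sylow subgroup $G_p$ and then quote Theorem \ref{t2.7}; once this is done no cohomological analysis of $H^{\ast}(G_p,M)$ is required at all. In the Lie-type case at the defining characteristic, for instance, $U$ is elementary abelian, so $\bC(U)$ is rational by Fischer (Theorem \ref{t1.2}) and the $T$-action plays no role.

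There are also concrete errors in the case analysis. For $SL_2(\bF_q)$, $PSL_2(\bF_q)$, $PGL_2(\bF_q)$ with $q$ odd, the Sylow $2$-subgroups are generalized quaternion resp.\ dihedral; they are not cyclic and do not lie in a torus, so your claim that all non-defining-characteristic Sylows are cyclic leaves the $2$-primary part untreated. (It is repairable: these $2$-groups have a cyclic normal subgroup of index $2$, and $\bC(H)$ is rational for any group with an abelian normal subgroup of index $\le 22$ by \cite{Haj1}, \cite{Ka1}, which is exactly the paper's route, also used for the Sylow $2$-subgroup of $M_{12}$.) For $A_n$, containment of $G_p$ in an iterated wreath product proves nothing --- rationality or vanishing does not pass from an overgroup to a subgroup, and the wreath-product ``permutation-negligibility machinery'' of \cite{Pe3}, \cite{HKY1} is again degree-$3$ material; what is actually needed is the rationality of the multiplicative invariant field $\bC(x_1,\ldots,x_n)^{G_p}$, i.e.\ Tsunogai's theorem (see \cite{KWZ}) for odd $p$ and Kriz's theorem \cite{Kr} for the Sylow $2$-subgroups of $A_n$ (which are not wreath products), combined with the no-name lemma to pass to $\bC(G_p)$. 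Finally, for $M_{11}$ and $J_1$ no cohomology tables are needed: every Sylow subgroup has order at most $p^4$, so Theorem \ref{t1.4} already gives rationality of $\bC(G_p)$.
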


As mentioned before, the condition $H_{\rm nr}^d(\bC(G)/\bC, \bQ/\bZ)= 0$ (where $d \ge 2$) is not enough to ensure that $\bC(G)$ is $\bC$-rational. However, the situation is not so discouraging, because the vanishing of  $H_{\rm nr}^3(\bC(G)/\bC, \bQ/\bZ)$ is equivalent to the fact that ${\rm Hdg}^4(X,\bZ)={\rm Hdg}^4(X,\bZ)_{\rm alg}$ by the theorem of Colliot-Th\'el\`ene and Voisin (see Theorem \ref{t2.8}). In other words, if $X$ is a smooth projective complex variety whose function field is stably isomorphic to $\bC(G)$ where $G$ is one of the groups listed in Theorem \ref{t1.16}, then every integral Hodge class of $X$ with degree $(2,2)$ is algebraic (see \cite[Section 6.2]{Vo} for details).

\bigskip
We explain some ideas in the proof of Theorem \ref{t1.14}, Theorem \ref{t1.15} and Theorem \ref{t1.16}.

In applying Peyre's formula \cite[Theorem 1, page 198]{Pe3}, we use computer computing to find subgroups  $H_{\rm p}^3(G,\bQ/\bZ)\leq H_{\rm n}^3(G,\bQ/\bZ)\leq H_{\rm nr}^3(G,\bQ/\bZ)\leq H^3(G,\bQ/\bZ)$.
{}From the works of Saltman \cite{Sa5} and Peyre \cite{Pe3}, it is known that $H_{\rm nr}^3(\bC(G)/\bC,\bQ/\bZ)$ is isomorphic to $H_{\rm nr}^3(G,\bQ/\bZ)/H_{\rm n}^3(G,\bQ/\bZ)$. A formula for $H_{\rm nr}^3(G,\bQ/\bZ)$ is proved by Peyre (see Definition \ref{d2.11}). Unfortunately, this formula is too complicated for us to determine the group $H_{\rm nr}^3(G,\bQ/\bZ)$ as a whole. In addition, we do not know how to compute $H_{\rm n}^3(G,\bQ/\bZ)$ in an effective way (see Definition \ref{d2.10}).

An encouraging message is that $H_{\rm p}^3(G,\bQ/\bZ) = H_{\rm n}^3(G,\bQ/\bZ)$ if $G$ is a group of odd order (see Theorem \ref{t2.14}). Thus we take a detour by computing $H^3(G,\bQ/\bZ)/H_{\rm p}^3(G,\bQ/\bZ)$ and then we determine which cohomology classes (in the quotient group) belong to $H_{\rm nr}^3(G,\bQ/\bZ)/H_{\rm p}^3(G,\bQ/\bZ)$ by Peyre's formula in Definition \ref{d2.11}. We would like to point out that, in using Peyre's formulae in Definition \ref{d2.11} and Theorem \ref{t2.14}, in order that the computation is feasible in a personal computer, it is necessary to throw away those subgroups which provide no essential contribution (see Step 2 and Step 5 of Section 3). In this way, we may compute $H_{\rm nr}^3(\bC(G)/\bC,\bQ/\bZ)$ if $G$ is of odd order and the quotient group $H^3(G,\bQ/\bZ)/H_{\rm p}^3(G,\bQ/\bZ)$ is ``small" enough. Fortunately, when $G$ is a group of order $3^5$, $5^5$ or $7^5$, the computing is feasible and may be implemented in a personal computer. This explains the reason why our results are restricted (at least at present) only to groups of order $3^5$, $5^5$ and $7^5$, but not groups of order $11^5$ or $2^6$.

The proof of Theorem \ref{t1.16} may be reduced to the unramified cohomology groups corresponding to Sylow subgroups $G_p$ of the groups $G$ as an application of Lemma \ref{l5.4}. We will show that $\bC(G_p)$ is $\bC$-rational to deduce that $H_{\rm nr}^3(\bC(G)/\bC, M)= 0$. A second proof of Theorem \ref{t1.16}
for $H_{\rm nr}^3(\bC(G)/\bC,\bQ/\bZ)= 0$ with $G=PSL_2(\bF_8), A_6, A_7$ will be given also for the following reasons. First, the main idea of this new proof is essentially the same as that in Theorem \ref{t1.14} and Theorem \ref{t1.15}. Second, this new proof exemplifies new computational techniques: it requires a new ingredient $N^3(G)=0$ which was proved by Saltman (see Theorem \ref{t5.3}). Moreover, in applying Definition \ref{d2.13} and Theorem \ref{t2.14} for computing $H_{\rm p}^3(G,\bQ/\bZ)$, in order that the computer computing is feasible, it is necessary to find an exact sequence of $\bZ[G]$-modules $0 \to \mu \to Q^\ast \to Q \to 0$ (where $Q$ is a permutation $\bZ[G]$-lattice and $Q^\ast$ is $H^1$-trivial) such that the $\bZ$-rank of $Q$ is small enough. See the paragraphs after Theorem \ref{t5.3}.

The computer computing of this paper uses the computer package HAP for GAP (see \cite{HAP} and \cite{GAP}). We device several functions which may be found in Section 6. For examples, HAP has no command to execute the corestriction maps of cohomology groups; so we write the function {\tt Cores} for computing corestriction maps. The function {\tt H4pFromResolution} is used to calculate $H^4_{\rm p}(G, \bZ) \simeq H^3_{\rm p}(G, \bQ/\bZ)$ which is defined in Theorem \ref{t2.14}. The two functions {\tt chooseHI} and
{\tt IsUnramifiedH3} are crucial in our proof which enables us to choose efficiently those pairs of subgroups $(H, I)$ in Definition \ref{d2.11} to decide whether a cohomology class belongs to $H_{\rm nr}^4(\bC(G)/\bC,\bZ)$ or not (see Step 5 of Section 3).

The proof of Theorem \ref{t1.14} and Theorem \ref{t1.15} is given in Section 3. In the proof, theoretic discussions together with computer commands of implementation are provided. The demonstrations for groups of order $3^5$, $5^5$ and $7^5$ are supplied after the proof. The reader may find in Section 6 the algorithm and the computer functions devised for the proofs, such as {\tt chooseHI} and
{\tt IsUnramifiedH3}, etc.

We will emphasize that the computer we use are the usual personal computer. This is the reason why we write the algorithm and the computer functions to be executed by most people in their personal computers within reasonable computing time.

\bigskip
Finally we record a form of the Lyndon-Hochschild-Serre spectral sequence for a direct product group, which will be used several times in this paper. A special case of it was given in Lyndon's paper \cite[Section 8]{Ly}. It became a folklore among experts and was difficult to locate in the literature. We found it in the book of Neukirch, Schmidt and Wingberg \cite{NSW}. Here it is.

\begin{theorem}[{\cite[page 118, Theorem 2.4.6]{NSW}}]\label{t1.17}
Let $G$ and $H$ be finite groups and $A$ be a $\bZ[G \times H]$-module. Then the Lyndon-Hochschild-Serre spectral sequence
\begin{align*}
E_2^{p,q}=H^p(G,H^q(H,A))\Rightarrow H^{p+q}(G\times H,A)
\end{align*}
degenerates at the $E_2$ term, i.e. $E_2^{p,q}=E_3^{p,q}= \cdots =E_{\infty}^{p,q}$. Furthermore,  it splits in the sense that there is a decomposition
\begin{align*}
&H^n(G\times H,A)\simeq \bigoplus_{p+q=n}H^p(G,H^q(H,A)).
\end{align*}

\end{theorem}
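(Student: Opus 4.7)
The plan is to realize $H^\ast(G \times H, A)$ as the total cohomology of a double complex obtained from a tensor product of free resolutions, to identify the $p$-filtration spectral sequence of this double complex with the Lyndon-Hochschild-Serre one, and then to derive both degeneration and splitting from the equivariant Eilenberg-Zilber equivalence that is available for direct products.

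First I would choose free resolutions $P_\bullet \twoheadrightarrow \bZ$ over $\bZ[G]$ and $Q_\bullet \twoheadrightarrow \bZ$ over $\bZ[H]$. Since $\bZ[G \times H] \simeq \bZ[G] \otimes_\bZ \bZ[H]$, the tensor product $P_\bullet \otimes_\bZ Q_\bullet$, with the factorwise action, is a free resolution of $\bZ$ over $\bZ[G \times H]$. The fact that the $G$- and $H$-actions on $A$ commute yields
\[
C^{p,q} := \Hom_{\bZ[G \times H]}(P_p \otimes_\bZ Q_q, A) \;\simeq\; \Hom_{\bZ[G]}\bigl(P_p, \Hom_{\bZ[H]}(Q_q, A)\bigr),
\]
with $H^{p+q}(G \times H, A)$ the cohomology of the total complex $\mathrm{Tot}^\bullet(C^{\bullet, \bullet})$. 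Filtering by $p$-degree and exploiting that each $P_p$ is $\bZ[G]$-free (so $\Hom_{\bZ[G]}(P_p, -)$ is exact and commutes with cohomology), I would then read off $E_1^{p,q} = \Hom_{\bZ[G]}(P_p, H^q(H, A))$, and thus $E_2^{p,q} = H^p(G, H^q(H, A))$, which is exactly the LHS spectral sequence appearing in the statement.

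For the degeneration and the splitting the direct-product hypothesis is indispensable. The bar resolutions satisfy the $\bZ[G \times H]$-equivariant chain-homotopy equivalence $B_\bullet(G \times H) \simeq B_\bullet(G) \otimes_\bZ B_\bullet(H)$, with quasi-inverse furnished by the equivariant Alexander-Whitney map; this equivalence respects the bigrading coming from the two tensor factors. Dualizing by $\Hom_{\bZ[G \times H]}(-, A)$ produces a chain-level splitting of the $p$-filtration on $\mathrm{Tot}^\bullet(C^{\bullet, \bullet})$ into its bigraded pieces, which forces all higher differentials $d_r$ ($r \geq 2$) to vanish and realizes each $E_2^{p,q}$ as a direct summand of $H^{p+q}(G \times H, A)$. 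Summing over $p + q = n$ delivers the desired decomposition $H^n(G \times H, A) \simeq \bigoplus_{p+q = n} H^p(G, H^q(H, A))$.

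The main obstacle is to make the Eilenberg-Zilber/Alexander-Whitney construction honestly $\bZ[G \times H]$-equivariant; this is precisely where the direct-product hypothesis enters, through the fact that the two homomorphic sections $s_G : G \hookrightarrow G \times H$, $g \mapsto (g, 1)$ and $s_H : H \hookrightarrow G \times H$, $h \mapsto (1, h)$ have commuting images inside $G \times H$, so that the shuffle and diagonal formulae preserve the group actions. Once this equivariant chain equivalence is established, the degeneration at $E_2$ and the canonical direct-sum decomposition follow formally from the splitting of the filtration at the chain level.
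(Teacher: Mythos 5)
Your first two paragraphs are correct but only set the stage: since $\bZ[G\times H]\simeq\bZ[G]\otimes_{\bZ}\bZ[H]$, the complex $P_\bullet\otimes_{\bZ}Q_\bullet$ is a free $\bZ[G\times H]$-resolution of $\bZ$, and the first filtration of the double complex $C^{p,q}=\Hom_{\bZ[G]}(P_p,\Hom_{\bZ[H]}(Q_q,A))$ gives $E_2^{p,q}=H^p(G,H^q(H,A))\Rightarrow H^{p+q}(G\times H,A)$. That is the construction of the spectral sequence, not its degeneration. The gap is in your third paragraph. The equivariant Eilenberg--Zilber/Alexander--Whitney equivalence $B_\bullet(G\times H)\simeq B_\bullet(G)\otimes_{\bZ}B_\bullet(H)$ merely exhibits two models of the same resolution; applying $\Hom_{\bZ[G\times H]}(-,A)$ to it re-proves that the total cohomology of $C^{\bullet,\bullet}$ is $H^\ast(G\times H,A)$, which you already had. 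It does not give ``a chain-level splitting of the $p$-filtration into its bigraded pieces'': the total differential contains the horizontal component $d'\colon C^{p,q}\to C^{p+1,q}$, so the total complex is not a direct sum of subcomplexes of fixed $p$, and a filtered quasi-isomorphism between two models of one and the same filtered complex can never force the differentials $d_r$, $r\ge 2$, to vanish. The degeneration and the splitting are precisely the content of the theorem, and your argument does not address them. (Note also that the paper itself gives no proof of Theorem \ref{t1.17}; it quotes \cite[Theorem 2.4.6]{NSW}.)

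In fact no formal argument of this kind can exist at the level of generality you aim for, because degeneration uses more than the direct-product structure of the group: it can fail for twisted coefficients. Take $G=\langle a\rangle\simeq C_2$, $H=\langle b\rangle\simeq C_2$, and $A=\bF_2e_1\oplus\bF_2e_2\oplus\bF_2e_3\oplus\bF_2e_4$ with $a\colon e_1\mapsto e_1+e_3,\ e_2\mapsto e_2+e_4$ and $b\colon e_1\mapsto e_1+e_4$, all other basis vectors fixed (these are commuting involutions). Then $A^H=\langle e_2,e_3,e_4\rangle$, and one computes $E_2^{1,0}=H^1(G,A^H)\simeq\bF_2$ and $E_2^{0,1}=H^1(H,A)^G\simeq(\bF_2)^{\oplus 2}$, while $H^1(G\times H,A)\simeq(\bF_2)^{\oplus 2}$ (use inflation--restriction for the normal subgroup $G$: here $H^1(G,A)=0$ and $H^1(H,A^G)\simeq(\bF_2)^{\oplus 2}$). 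So $\dim E_2^{1,0}\oplus E_2^{0,1}=3>2=\dim H^1(G\times H,A)$: the asserted decomposition fails, and the five-term exact sequence shows $d_2\colon E_2^{0,1}\to E_2^{2,0}$ is nonzero, so the spectral sequence does not degenerate for this $A$. Consequently a correct proof must use the precise hypotheses and argument of \cite[Theorem 2.4.6]{NSW} rather than the Alexander--Whitney comparison. In this paper the theorem is only ever applied with $A=\bQ/\bZ$ carrying the trivial action (Definition \ref{d2.11} and Step 3 of Section \ref{sePT}); in that case the splitting does hold and can be proved from your double-complex setup with one genuine extra input, e.g.\ divisibility of $\bQ/\bZ$, which gives $H^n(\,\cdot\,,\bQ/\bZ)\simeq\Hom(H_n(\,\cdot\,,\bZ),\bQ/\bZ)$, combined with the integral K\"unneth formula for $P_\bullet\otimes_{\bZ}Q_\bullet$. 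I would recommend either reproducing the argument of \cite{NSW} with its hypotheses, or proving exactly this trivial-coefficient case, which is all the paper needs.
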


\bigskip
Terminology and notations. Throughout this paper, we assume that $G$, $H$, $\Gamma$ are finite groups, while $\Gamma_k$ denotes the absolute Galois group of the field $k$, i.e. $\Gamma_k={\rm Gal}(k^{\rm sep}/k)$ where $k^{\rm sep}$ is the separable closure of $k$. Whenever we write $\gcd\{{\rm char} \, k,n\}=1$, we mean that either ${\rm char}=0$ or ${\rm char}=p > 0$ with $p \nmid n$. We denote $\zeta_n$ a primitive $n$-th root of unity. If we write $\zeta_n \in k$, it is assumed tacitly that $\gcd\{{\rm char} \, k,n\}=1$.

We denote by $C_n$ the cyclic group of order $n$. If $H$ is a subgroup of $G$, $Z_G(H)$ denotes the centralizer of $H$ in $G$. If $R^{\bullet} : \cdots \to R^{n-1} \to R^n \to R^{n+1} \to \cdots$ is a cochain complex and $v \in R^n$ is a cocycle, the cohomology class associated to $v$ is denoted by $[v]$, i.e. $[v]$ is an element of $H^n(R^{\bullet})$. The reader may find the definitions of unexplained terminology about bicomplexes and spectral sequences in the reference books \cite{Mac} and \cite{Ev}.

If $k$ is a field and $G$ is a finite group, the symbols $k(G)$, $\bC(G)$ is defined in Definition \ref{d1.1}; these symbols are scattered throughout this paper.

\bigskip

\section{Preliminaries: the unramified and stable cohomologies}\label{s2}

In this section we will recall the notions of the unramified Brauer group \cite{Sa1}, the Bogomolov multiplier \cite{Bo1}, the higher degree unramified cohomology groups \cite{CTO}, and the constructions designed for computing the degree three unramified cohomology group \cite{Sa5}, \cite{Pe3}.

\begin{definition}[{Saltman \cite[Definition 3.1]{Sa1}, \cite[page 56]{Sa3}}] \label{d2.1}
Let $k \subset K$ be a field extension.
The {\it unramified Brauer group} ${\rm Br}_{\rm nr}(K/k)$
of $K$ over $k$ is defined to be
\[
{\rm Br}_{\rm nr}(K/k)=\bigcap_R {\rm Image} \{ {\rm Br}(R)\to{\rm Br}(K)\}
\]
where ${\rm Br}(R)\to {\rm Br}(K)$ is the natural injective morphism of
Brauer groups, and $R$ runs over all the discrete valuation rings $R$ 
such that $k\subset R\subset K$ and $K$ is the quotient field of $R$.

We omit $k$ from the notation ${\rm Br}_{\rm nr}(K/k)$
and write simply ${\rm Br}_{\rm nr}(K)$ when the base field $k$ is clear
from the context. In particular, we will write ${\rm Br}_{\rm nr}(\bC(G))$ for ${\rm Br}_{\rm nr}(\bC(G)/\bC)$ throughout this paper.
\end{definition}

\begin{proposition}[{Saltman \cite{Sa1}, \cite[Proposition 1.8]{Sa3}}] \label{p2.2}
Suppose that $k$ is an infinite field and $K$ is retract $k$-rational. Then the natural morphism ${\rm Br}(k) \to {\rm Br}(K)$ induces an isomorphism
${\rm Br}(k)\overset{\sim}{\longrightarrow}{\rm Br}_{\rm nr}(K)$.
In particular, if $k$ is an algebraically closed field and $K$ is
retract $k$-rational, then ${\rm Br}_{\rm nr}(K)=0$.
\end{proposition}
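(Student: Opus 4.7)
The plan is to use the retract data to do two things at once: embed $K$ into a purely transcendental extension of $k$ (yielding injectivity of ${\rm Br}(k)\to{\rm Br}_{\rm nr}(K)$), and construct a specialization from ${\rm Br}_{\rm nr}(K)$ back to ${\rm Br}(k)$ that inverts the natural map (yielding surjectivity onto the unramified part).

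First I would fix the retract data: $A\subset K$ a $k$-subalgebra with $K=\operatorname{Frac}(A)$, and $k$-algebra maps $\varphi\colon A\to B:=k[x_1,\ldots,x_n][1/f]$ and $\psi\colon B\to A$ with $\psi\circ\varphi=1_A$. Since $\psi\varphi=1_A$, the map $\varphi$ is injective and extends to a $k$-embedding $K\hookrightarrow L:=k(x_1,\ldots,x_n)=\operatorname{Frac}(B)$. For \emph{injectivity}, the composition ${\rm Br}(k)\to{\rm Br}(K)\to{\rm Br}(L)$ is injective because ${\rm Br}(k)\hookrightarrow{\rm Br}(L)$ by Faddeev's exact sequence applied one variable at a time; hence ${\rm Br}(k)\to{\rm Br}(K)$ is injective, and a fortiori so is ${\rm Br}(k)\to{\rm Br}_{\rm nr}(K)$.

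For \emph{surjectivity} onto ${\rm Br}_{\rm nr}(K)$, take $\alpha\in{\rm Br}_{\rm nr}(K/k)$. I would first spread $\alpha$ out to an Azumaya class $\alpha_A\in{\rm Br}(A[1/g])$ over some basic open $\operatorname{Spec} A[1/g]\subset\operatorname{Spec} A$, exploiting that $\alpha$ is unramified at every DVR of $K/k$ (in particular at every height-one prime of a suitable regular modification of $A$). Pushing forward, $\beta:=\varphi^*(\alpha_A)\in{\rm Br}(B[1/\varphi(g)])={\rm Br}(k[x_1,\ldots,x_n][1/(f\varphi(g))])$, and the retract identity gives $\psi^*(\beta)=\alpha_A$. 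Since $k$ is infinite, choose $\mathbf a\in k^n$ at which $f\cdot\varphi(g)$ is nonzero; evaluation $x_i\mapsto a_i$ defines a $k$-algebra map $B[1/\varphi(g)]\to k$, whose pullback sends $\beta$ to a class $\alpha_0\in{\rm Br}(k)$. To conclude, I would connect the two $K$-valued points $\mathbf a\in k^n\subset K^n$ and $\mathbf b:=(\psi(x_1),\ldots,\psi(x_n))\in A^n\subset K^n$ of $\operatorname{Spec} B[1/\varphi(g)]$ by a straight line and verify that $\beta$ pulls back to an unramified, hence constant, class along that line over $K$, so its two specializations $\alpha_0\otimes_k K$ (at $\mathbf a$) and $\alpha$ (at $\mathbf b$) agree in ${\rm Br}(K)$; this exhibits $\alpha$ in the image of ${\rm Br}(k)\to{\rm Br}(K)$.

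The main obstacle is this deformation step: one must arrange that the segment from $\mathbf a$ to $\mathbf b$ avoids the ramification locus of $\beta$ inside $\mathbb A^n$, so that the pulled-back class on $\mathbb A^1_K$ is unramified and therefore descends from ${\rm Br}(K)$ by Faddeev in one variable. The needed genericity of $\mathbf a$ is available because $k$ is infinite. The preceding spreading-out step from $\alpha$ to $\alpha_A$ can be made rigorous via Grothendieck's purity on a regular alteration of $\operatorname{Spec} A$, or more elementarily by extending $\alpha$ over the DVRs that appear in the definition of ${\rm Br}_{\rm nr}$. Finally, when $k$ is algebraically closed, ${\rm Br}(k)=0$, so the established isomorphism forces ${\rm Br}_{\rm nr}(K)=0$.
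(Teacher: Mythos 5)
The paper itself offers no proof of this proposition (it is quoted from Saltman), so I am judging your argument on its own terms. Your injectivity half is fine, and the overall skeleton of the surjectivity half (spread out, use $\psi^\ast\varphi^\ast=\mathrm{id}$, specialize at a $k$-point $\mathbf a$, compare the two $K$-points $\mathbf a$ and $\mathbf b$ of $D(f\varphi(g))$) is reasonable. But the step you yourself flag as the main obstacle is a genuine gap, and the proposed fix does not work as stated: you cannot choose $\mathbf a$ so that the line through $\mathbf a$ and $\mathbf b$ avoids a codimension-one ramification locus in $\mathbb{A}^n$. A line meets every hypersurface of $\mathbb{P}^n$, and for a \emph{generic} choice of $\mathbf a$ those intersection points are honest affine points; genericity of $\mathbf a$ (which is what the infiniteness of $k$ buys you) therefore works against you here, not for you. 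Moreover, note that up to this point you have never really used the hypothesis $\alpha\in{\rm Br}_{\rm nr}(K)$: you invoked it only for the spreading-out over ${\rm Spec}\,A$, where it is not needed (any Brauer class of $K$ is Azumaya on some basic open set of $A$), and the aside about ``purity on a regular alteration of ${\rm Spec}\,A$'' is off target, since an alteration changes the function field and a regular modification of the possibly very singular ${\rm Spec}\,A$ need not be available.

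The missing idea is that unramifiedness must be used on the polynomial side, not on ${\rm Spec}\,A$. Because the embedding $K\hookrightarrow L:=k(x_1,\ldots,x_n)$ induced by $\varphi$ is a $k$-embedding, every discrete valuation ring of $L/k$ contracts to a discrete valuation ring of $K/k$ (or contains $K$), so the image $\alpha_L$ of $\alpha$ is unramified over $L$; in particular the generic fibre of your $\beta$ has \emph{empty} ramification divisor on $\mathbb{A}^n$, which is exactly what removes the obstacle. From there you can either (i) quote the classical computation ${\rm Br}_{\rm nr}(L/k)={\rm Br}(k)$ to write $\alpha_L=\alpha_0|_L$ with $\alpha_0\in{\rm Br}(k)$, and then transfer this back to $K$ using the Auslander--Goldman injectivity ${\rm Br}(B[1/\varphi(g)])\hookrightarrow{\rm Br}(L)$ for the regular ring $B[1/\varphi(g)]$ together with $\psi^\ast\varphi^\ast=\mathrm{id}$ (this is essentially Saltman's route, and it needs no line at all); or (ii) repair your deformation argument: use purity on the smooth $\mathbb{A}^n$ to extend $\beta$ to an Azumaya class off a closed subset of codimension at least $2$, and then choose $\mathbf a\in k^n$ (possible since $k^n$ is Zariski dense in $\mathbb{A}^n_K$) so that the line through $\mathbf a$ and $\mathbf b$ avoids that codimension-two locus; only then is the restriction of $\beta$ to the whole line a class of ${\rm Br}(\mathbb{A}^1_K)$, whose two specializations agree. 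As written, with the avoidance claimed for a codimension-one locus and the unramifiedness hypothesis never brought to bear where it matters, the proof does not go through.
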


The unramified Brauer group is the birational invariant introduced by Saltman to find a group of order $p^9$ such that $\bC(G)$ is not stably $\bC$-rational \cite{Sa1}. This invariant ${\rm Br}_{\rm nr}(K/k)$ is computable and coincides with Grothendieck's Brauer group ${\rm Br}(X)$ when $X$ is a smooth projective variety over $k$ with function field $K$ (see \cite[page 70, Proposition 10.5]{Sa6}). The same invariant ${\rm Br}_{\rm nr}(K/\bC)$ is isomorphic to the birational invariant
$H^3(X,\bZ)_{\rm torsion}$ where $X$ is a smooth complex projective unirational variety $X$ with function
field $K$ (see \cite[page 134, Proposition 6.17]{Vo}). The invariant $H^3(X,\bZ)_{\rm torsion}$ was used by Artin and Mumford \cite{AM} to provide an example of a complex variety which is unirational, but not rational. For details, see \cite[Theorem 1.1 and Corollary]{Bo1}.

Bogomolov found an even more effective formula for computing the unramified Brauer group ${\rm Br}_{\rm nr}(\bC(G))$.

\begin{theorem}[{Bogomolov \cite[Theorem 3.1]{Bo1}, Saltman \cite[Theorem 12]{Sa4}}]\label{t2.3}
Let $G$ be a finite group and $k$ be an algebraically closed field with
{\rm char} $k=0$ or {\rm char} $k=p$ with $p {\not |}$ $|G|$.
Then ${\rm Br}_{\rm nr}(k(G)/k)$ is isomorphic to the group $B_0(G)$ defined by
\[
B_0(G)=\bigcap_A {\rm Ker}\{{\rm res}: H^2(G,\bQ/\bZ)\to H^2(A,\bQ/\bZ)\}
\]
where $A$ runs over all the bicyclic subgroups of $G$ $($a group $A$
is called bicyclic if $A$ is either a cyclic group or a direct
product of two cyclic groups$)$.
\end{theorem}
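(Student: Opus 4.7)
The plan is to linearize via the regular representation, reduce the question to one inside $H^2(G, \bQ/\bZ)$, and then prove the two inclusions ${\rm Br}_{\rm nr}(k(G)/k) \subseteq B_0(G)$ and $B_0(G) \subseteq {\rm Br}_{\rm nr}(k(G)/k)$ separately. Setting $V = \bigoplus_{g\in G} k\cdot x_g$, one has $k(V) = k(x_g : g \in G)$ and $k(V)^G = k(G)$, with $k(V)/k(G)$ Galois of group $G$. By the no-name lemma, the $G$-module $k(V)^\times/k^\times$ is stably permutation, hence has vanishing $H^1$; combined with Hilbert 90 and the Hochschild--Serre spectral sequence for Brauer groups, this yields an inflation identification between $H^2(G, k^\times) = H^2(G, \bQ/\bZ)$ (using $k = \bar k$) and $\ker\{{\rm Br}(k(G)) \to {\rm Br}(k(V))\}$. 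Since $k(V)$ is $k$-rational, Proposition \ref{p2.2} gives ${\rm Br}_{\rm nr}(k(V)/k) = 0$; by functoriality of residues ${\rm Br}_{\rm nr}(k(G)/k)$ lies in that kernel, so embeds into $H^2(G, \bQ/\bZ)$. The question is thus to identify the unramified classes.

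For the easy inclusion ${\rm Br}_{\rm nr}(k(G)/k) \subseteq B_0(G)$, take $\alpha \in {\rm Br}_{\rm nr}(k(G)/k)$ and any bicyclic $A \le G$. Put $L := k(V)^A$, so $k(V)/L$ is Galois with group $A$ and the inflation identification applies with $A$ in place of $G$. The scalar extension $\alpha \otimes L \in {\rm Br}(L)$ corresponds to ${\rm res}^G_A \alpha \in H^2(A, \bQ/\bZ)$ under that identification, and is unramified over $L$ by functoriality. Because $A$ is abelian and $k$ contains all relevant roots of unity, Fischer's theorem (Theorem \ref{t1.2}) makes $L$ rational over $k$, so ${\rm Br}_{\rm nr}(L/k) = 0$; by injectivity of the inflation for $A$, ${\rm res}^G_A \alpha = 0$, and hence $\alpha \in B_0(G)$.

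For the converse $B_0(G) \subseteq {\rm Br}_{\rm nr}(k(G)/k)$, given $\alpha \in B_0(G)$ and a discrete valuation $v$ of $k(G)$ trivial on $k$, extend $v$ to $w$ on $k(V)$ and let $D \le G$, $I \unlhd D$ be the decomposition and inertia groups of $w$. Since $k = \bar k$, the residue field $\kappa(v)$ has transcendence degree at most one over $k$, forcing $D/I$ cyclic; and the inertia $I$ is cyclic, generated by a pseudo-reflection along the divisor cut out by $w$ on an appropriate smooth model of $V/G$. The residue map $\partial_v : {\rm Br}(k(G)) \to H^1(\kappa(v), \bQ/\bZ)$, transported to $H^2(G, \bQ/\bZ)$, is computed as a transgression; vanishing of $\partial_v \alpha$ is equivalent to ${\rm res}^G_D \alpha$ inflating from $H^2(D/I, \bQ/\bZ) \hookrightarrow H^2(D, \bQ/\bZ)$. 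Using $H^2(I, \bQ/\bZ) = 0$ for cyclic $I$ together with the five-term Lyndon--Hochschild--Serre sequence for $1 \to I \to D \to D/I \to 1$, this last condition is implied by vanishing of ${\rm res}^G_A \alpha$ on every bicyclic $A \le D$, which holds by $\alpha \in B_0(G)$.

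The main obstacle is the converse direction. One must (i) handle discrete valuations that are not a priori divisorial, by passing to a smooth projective birational model of $V/G$ on which every valuation becomes divisorial and residues behave functorially, and (ii) ensure that the cohomological condition ``${\rm res}^G_D \alpha$ inflates from $D/I$'' really is forced by bicyclic-restriction vanishing, via a careful analysis of $H^2$ of metacyclic groups and a judicious choice of bicyclic covering subgroups of $D$. Bogomolov addresses both in \cite{Bo1} through a unified cohomological computation; Saltman's alternative argument in \cite{Sa4} avoids the geometric step (i) by explicit $2$-cocycle manipulations at the level of group cohomology. In either approach, keeping careful track of the $(D, I)$-pairs under birational modifications is the technical crux.
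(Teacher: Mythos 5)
Note first that the paper does not prove this statement at all: it is quoted from Bogomolov \cite[Theorem 3.1]{Bo1} and Saltman \cite[Theorem 12]{Sa4}, so your proposal can only be measured against those standard proofs. Against them, your setup contains a genuine error: the claimed ``inflation identification'' between $H^2(G,\bQ/\bZ)$ and $\ker\{{\rm Br}(k(G))\to {\rm Br}(k(V))\}$ is false. By inflation--restriction and Hilbert 90 that kernel is $H^2(G,k(V)^\times)$, and the map from $H^2(G,k^\times)\simeq H^2(G,\bQ/\bZ)$ into it is injective --- because $k(V)^\times/k^\times\simeq {\rm Div}(\bA(V))$ is an honest permutation module (this has nothing to do with the no-name lemma), so $H^1(G,k(V)^\times/k^\times)=0$ --- but it is very far from surjective: $H^2(G,{\rm Div}(\bA(V)))\simeq\bigoplus {\rm Hom}(G_f,\bQ/\bZ)$ over stabilizers of irreducible hypersurfaces is nonzero in general, and concretely any cyclic algebra $(\chi,f)$ with $1\neq\chi\in{\rm Hom}(G,\bQ/\bZ)$ and $f\in k(G)^\times$ lies in ${\rm Br}(k(V)/k(G))$ without being a constant class. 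Consequently your reduction of the whole problem to classes in $H^2(G,\bQ/\bZ)$ --- and with it your ``easy'' inclusion ${\rm Br}_{\rm nr}(k(G)/k)\subseteq B_0(G)$ --- is not established. The missing step, which is a substantive part of both Bogomolov's and Saltman's arguments, is that an unramified class, being split by the rational field $k(V)$, lies in $H^2(G,k(V)^\times)$, and its image in $\bigoplus{\rm Hom}(G_f,\bQ/\bZ)$ is killed precisely by unramifiedness at the divisorial valuations of $k(G)$ lying below the hypersurfaces $\{f=0\}$; only then does the class come from $H^2(G,k^\times)$, after which your Fischer-plus-injectivity argument for the easy inclusion is fine.

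For the converse your skeleton (decomposition and inertia groups, residue read off the $H^1(D/I,H^1(I,\bQ/\bZ))$ component, detection by bicyclic subgroups) is the right one, but several of your supporting claims are wrong, though fortunately unnecessary: the residue field of a rank-one discrete valuation of $k(G)$ has transcendence degree up to $|G|-1$, not at most one, and $D/I$ need not be cyclic (nor is that needed); one cannot pass to a model on which every discrete valuation becomes divisorial, and one need not, since decomposition/inertia theory and the residue computation work for arbitrary rank-one valuations trivial on $k$; and $I$ is cyclic because the ramification is tame (residue characteristic prime to $|G|$), not because of pseudo-reflections. More importantly, you omit the one observation that makes the bicyclic condition the correct one: since all roots of unity of order dividing $|G|$ lie in $k\subseteq\bk_v$, the cyclotomic action of $D/I$ on $I$ is trivial, so $I$ is central in $D$; hence each subgroup $\langle I,\tau\rangle$ with $\tau\in D$ is abelian and two-generated, i.e.\ bicyclic, and because $H^1(D/I,{\rm Hom}(I,\bQ/\bZ))={\rm Hom}(D/I,{\rm Hom}(I,\bQ/\bZ))$ vanishes as soon as its restrictions to all cyclic $\langle\bar\tau\rangle$ vanish, the hypothesis $\alpha\in B_0(G)$ really does force the residue to vanish. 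Without this centrality argument, the ``judicious choice of bicyclic covering subgroups of $D$'' that you defer to the references is exactly the point left unproved.
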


Since $B_0(G)$ is a subgroup of $H^2(G,\bQ/\bZ)$
which is nothing but the Schur multiplier of $G$ (denoted by $M(G)$ in some literature), Kunyavskii called it {\it the Bogomolov multiplier of $G$}. Thus we find that $B_0(G)$ is isomorphic to ${\rm Br}_{\rm nr}(\bC(G))$; in Theorem \ref{t2.7}, we will find that ${\rm Br}_{\rm nr}(\bC(G))$ is also isomorphic to $H_{\rm nr}^2(\bC(G)/\bC,\bQ/\bZ)$ which appeared already in Theorem \ref{t1.14} and Theorem \ref{t1.15}.

Using the Bogomolov multiplier to compute ${\rm Br}_{\rm nr}(\bC(G))$, Bogomolov was able to decrease the order of the group in Saltman's counter-example. For the convenience of the reader, we record these results as follows.

\begin{theorem} \label{t2.4}
Let $p$ be any prime number and $k$ be any algebraically closed field
with ${\rm char}$ $k\ne p$.\\
{\rm (1) (Saltman \cite[Theorem 3.6]{Sa1})}
There exists a meta-abelian group $G$ of order $p^9$
such that $B_0(G)\ne 0$ and thus $k(G)$ is not $k$-rational;\\
{\rm (2) (Bogomolov \cite[Lemma 5.6]{Bo1})}
There exists a group $G$ of order $p^6$ such that $B_0(G)\ne 0$ and thus $k(G)$ is not $k$-rational.
\end{theorem}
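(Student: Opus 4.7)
The plan is to invoke Bogomolov's formula (Theorem \ref{t2.3}), which converts the statement ``$k(G)$ is not $k$-rational'' into the purely group-theoretic task of exhibiting a finite $p$-group $G$ with non-trivial Bogomolov multiplier $B_0(G)$. Once such $G$ is produced, the isomorphism $B_0(G)\simeq {\rm Br}_{\rm nr}(k(G)/k)$ together with Proposition \ref{p2.2} upgrades non-vanishing to the failure of retract $k$-rationality, and a fortiori of $k$-rationality. Thus the entire task for both parts is to find $G$ and a class $\al\in H^2(G,\bQ/\bZ)$ that is nonzero but restricts to $0$ on every cyclic or bicyclic subgroup of $G$.

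For both parts I would build $G$ as a $p$-group of nilpotency class two, presented as a central extension
\[
1 \to Z \to G \to V \to 1
\]
with $Z\subseteq Z(G)\cap [G,G]$ and both $Z$ and $V$ elementary abelian over $\bF_p$; this data is equivalent to an alternating $\bF_p$-bilinear commutator pairing $c\colon V\times V\to Z$. The Lyndon-Hochschild-Serre spectral sequence is especially transparent here because $V$ acts trivially on $Z$: to every character $\chi\in\Hom(Z,\bQ/\bZ)$ one associates the push-out extension of $V$ by $\bQ/\bZ$ along $\chi$, whose class in $H^2(V,\bQ/\bZ)$ has commutator form $\chi\circ c$, and one tracks these classes through the filtration on $H^2(G,\bQ/\bZ)$.

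The key reduction is to re-read Bogomolov's intersection formula as a linear-algebra condition on the pair $(c,\chi)$. Any bicyclic subgroup $A\le G$ is generated by two elements $a_1,a_2$ whose images $\ol{a}_1,\ol{a}_2\in V$ span a subspace of dimension $\le 2$; since $A$ is abelian the commutator $[a_1,a_2]\in Z$ must be trivial in $G$, so unwinding the spectral sequence shows that the $\chi$-class restricts to $0$ on $A$ precisely when $\chi\circ c$ kills the wedge $\ol{a}_1\wedge \ol{a}_2$. Consequently $B_0(G)$ is identified with the set of characters $\chi$ for which $\chi\circ c$ annihilates every decomposable wedge arising from a commuting pair of elements of $G$, modulo those for which $\chi\circ c$ vanishes identically as an alternating form on $V$.

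To finish part (1), I would carry out Saltman's original construction with $V=(\bF_p)^6$ and $Z=(\bF_p)^3$ (so $|G|=p^9$), choosing $c$ from a suitably generic triple of alternating bilinear forms; a direct combinatorial computation then produces a nonzero $\chi$ satisfying the criterion, because in this range the space of characters whose pullback kills every rank-two wedge is strictly larger than the space of those whose pullback is identically zero. For part (2), I would follow Bogomolov's tighter construction, which aligns $Z$ more economically with $[G,G]$ so that the ambient dimensions satisfy $\dim V+\dim Z=6$ and still furnish a witness. The main obstacle is the \emph{simultaneous} enforcement of two opposing demands: non-triviality of $\chi\circ c$ as a bilinear form, together with its vanishing on every decomposable wedge detected by a bicyclic subgroup. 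The quantifier over all bicyclic subgroups is infinite, but the saving feature, already implicit in Bogomolov's analysis, is that in class two it suffices to check vanishing on a spanning family of rank-two subspaces of $V$, reducing the verification to a finite computation in linear algebra over $\bF_p$.
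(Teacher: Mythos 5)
You should first note that the paper itself does not prove Theorem \ref{t2.4}: it records Saltman's and Bogomolov's theorems with citations, so any proof must actually exhibit the groups and verify $B_0(G)\ne 0$. Your reduction, however, misidentifies the candidate cohomology classes. For a central extension $1\to Z\to G\to V\to 1$ with commutator form $c\colon \wedge^2 V\to Z$, the class you attach to a character $\chi\in\Hom(Z,\bQ/\bZ)$ --- the pushout class in $H^2(V,\bQ/\bZ)$ whose alternating form is $\chi\circ c$ --- is exactly the transgression $d_2(\chi)$, and by the five-term exact sequence the image of $d_2$ equals the kernel of the inflation $H^2(V,\bQ/\bZ)\to H^2(G,\bQ/\bZ)$. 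Hence every such class dies in $H^2(G,\bQ/\bZ)$ and can never produce a nonzero element of $B_0(G)$. Moreover, your stated criterion is vacuous: if $a_1,a_2\in G$ commute, then $c(\ol{a}_1,\ol{a}_2)=0$, so $\chi\circ c$ kills $\ol{a}_1\wedge\ol{a}_2$ automatically for every $\chi$; read literally, your identification would give $B_0(G)\simeq \Hom(Z,\bQ/\bZ)\ne 0$ for every nonabelian extension of this type, contradicting, e.g., the vanishing of $B_0$ for extraspecial $p$-groups and for all groups of order $p^5$ outside $\Phi_{10}$ (Theorem \ref{t1.12}). The correct candidates are inflations of forms $\phi\in\wedge^2 V^{\ast}\simeq H^2(V,\bQ/\bZ)$ that are \emph{not} in the span of the components of $c$ (this is what makes them nonzero in $H^2(G,\bQ/\bZ)$) and that vanish on every decomposable element $x\wedge y$ lying in $\ker c$ (this is what makes them unramified via Theorem \ref{t2.3}). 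You have interchanged the role of characters of $Z$ with that of $2$-forms on $V$.

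Even after repairing the criterion, the actual content of Saltman's Theorem 3.6 and Bogomolov's Lemma 5.6 is the existence of a commutator map $c$ for which the annihilator of the decomposable part of $\ker c$ is strictly larger than the span of the components of $c$; your proposal replaces this with ``a suitably generic triple of alternating forms'' plus an asserted dimension inequality, with no verification. Note that each component of $c$ already annihilates all decomposable elements of $\ker c$, so the issue is to show that these decomposables span a \emph{proper} subspace of $\ker c$ --- a delicate property of the specific relations chosen by Saltman (order $p^9$, $\dim V=6$, $\dim Z=3$) and Bogomolov (order $p^6$), not something that follows from genericity without argument. As written, the proposal neither constructs the groups nor establishes $B_0(G)\ne 0$, so the essential step of the theorem is missing.
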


\bigskip
Now we begin to define the higher degree unramified cohomology groups $H_{\rm nr}^i(K/k,\mu_n^{\otimes j})$ (where $i\ge 1$) due to Colliot-Th\'el\`ene and Ojanguren \cite{CTO}. Since $H_{\rm nr}^1(K/\bC, \bQ/\bZ)=0$ if $K$ is the function field of a complex smooth projective unirational variety \cite[page 192]{Pe3}, we will concentrate on $H_{\rm nr}^i(K/k,\mu_n^{\otimes j})$ with $i \ge 2$.

Let $K$ be any field, $K^{\rm sep}$ its separable closure and $\Gamma_K$ be the absolute Galois group of $K$, i.e. $\Gamma_K:= {\rm Gal}(K^{\rm sep}/K)$. If $M$ is a (continuous) $\Gamma_K$-module, define $H^i(K, M):= H^i(\Gamma_K, M)$.

Recall the definition of the $\Gamma_K$-module $\mu_n^{\otimes j}$. Assume that $\gcd\{{\rm char} \, k,n\}=1$. Let $\mu_n$ be the group of all the $n$-th roots of unity contained in $K^{\rm sep}$; thus we may regard $\mu_n$ as a $\Gamma_K$-module. For any $j \ge 1$, $\mu_n^{\otimes j}$ becomes a $\Gamma_K$-module via the diagonal action of $\Gamma_K$. By convention, we write $\mu_n^{\otimes 0}$ for the cyclic group of order $n$, denoted by $\bZ/n \bZ$, on which $\Gamma_K$ acts trivially. For any $j < 0$, $\mu_n^{\otimes j}$ denotes the $\Gamma_K$-module ${\rm Hom} (\mu_n^{\otimes -j}, \bZ/n \bZ)$.

\begin{definition}[{Colliot-Th\'el\`ene and Ojanguren \cite{CTO}, see also \cite[Sections 2--4]{CT}}] \label{d2.5}

Let $K$ be a finitely generated field extension of a field $k$ and $R$ be a discrete valuation ring of $K$ with $k \subset R \subset K$ such that $K$ is the quotient field of $R$. Write $\bk_R$ for the residue field of $R$. By \cite[pages 15--19]{GMS}, \cite[pages 21--22, page 26]{CT}, there is a natural map
\[
r_R: H^i(K,\mu_n^{\otimes j})\to H^{i-1}(\bk_R,\mu_n^{\otimes (j-1)}).
\]
The above map is called {\it the residue map} of $K$ at the place $R$.
\end{definition}

\begin{definition}[{Colliot-Th\'el\`ene and Ojanguren \cite{CTO}, see also \cite[Sections 2--4]{CT}}]\label{d2.6}
Let $n$ be a positive integer and $k$ be a field with {\rm char} $k=0$ or {\rm char} $k=p$ with $p {\not |}$ $n$.
Let $K/k$ be a finitely generated field extension. For any positive integer $i\ge 2 $, any integer $j$, the {\it unramified cohomology group} $H^i_{\rm nr}(K/k,\mu_n^{\otimes j})$
of $K$ over $k$ of degree $i$ is defined to be

\[
H^i_{\rm nr}(K/k,\mu_n^{\otimes j}):=\bigcap_R {\rm Ker}
\{r_R: H^i(K,\mu_n^{\otimes j})\to H^{i-1}(\bk_R,\mu_n^{\otimes (j-1)})\}
\]
where $R$ runs over all the discrete valuation rings $R$ of rank one
such that $k\subset R\subset K$ and $K$ is the quotient field of $R$, and $\bk_R$ is the residue field of $R$.

By \cite[Theorem 4.1.1, page 30]{CT}, if it is assumed furthermore that $K$ is the function field of a complete smooth variety over $k$, the unramified cohomology group $H^i_{\rm nr}(K/k,\mu_n^{\otimes j})$ may be defined as well by
\[
H^i_{\rm nr}(K/k,\mu_n^{\otimes j})=\bigcap_R {\rm Image}
\{ H^i_{\rm \acute{e}t}(R,\mu_n^{\otimes j})\to H^i_{\rm \acute{e}t}(K,\mu_n^{\otimes j})\}
\]
where $R$ runs over all the discrete valuation rings $R$ of rank one
such that $k\subset R\subset K$ and $K$ is the quotient field of $R$.

In case ${\rm char} \, k =0$,
take the direct limit with respect to $n$:
\[
H^i(K/k,\bQ/\bZ(j))=
\lim_{\overset{\longrightarrow}{n}}H^i(K/k,\mu_n^{\otimes j})
\]
and we may define the unramified cohomology group
\[
H_{\rm nr}^i(K/k,\bQ/\bZ(j))
=\bigcap_R {\rm Ker}
\{r_R: H^i(K/k,\bQ/\bZ(j))\to H^{i-1}(\bk_R,\bQ/\bZ(j-1))\}.
\]

We write simply $H^i_{\rm nr}(K,\mu_n^{\otimes j})$ and $H_{\rm nr}^i(K,\bQ/\bZ(j))$ when the base field $k$ is understood.

When $k$ is an algebraically closed field with {\rm char} $k=0$, we will write $H_{\rm nr}^i(K/k,\bQ/\bZ)$ for $H_{\rm nr}^i(K/k,\bQ/\bZ(j))$. If $G$ is a finite group, it is easy to see that $H_{\rm nr}^i(\bC(G),\bQ/\bZ)$ is a subgroup of $H^i(\bC(G), \bQ/\bZ)$ by the above definition.
\end{definition}

\begin{theorem}\label{t2.7}
Let $k$ be an algebraically closed field.
\\
{\rm (1) (Colliot-Th\'el\`ene and Ojanguren \cite[Proposition 1.2]{CTO})}
Suppose that $k$ is a field with {\rm char} $k=0$ or {\rm char} $k=p$ such that $p {\not |}$ $n$. If $K$ and $L$ are stably $k$-isomorphic, then $H_{\rm nr}^i(K/k,\mu_n^{\otimes j}) \overset{\sim}{\longrightarrow} H_{\rm nr}^i(L/k,\mu_n^{\otimes j})$.
In particular, if $K$ is stably $k$-rational, then $H_{\rm nr}^i(K/k,\mu_n^{\otimes j})=0$;\\
{\rm (2) (\cite[Proposition 2.15]{Me},
see also \cite[Remarque 1.2.2]{CTO}, \cite[Sections 2--4]{CT})}
If $K$ is retract $k$-rational, then $H_{\rm nr}^i(K/k,\mu_n^{\otimes j})=0$;\\
{\rm (3) (\cite[Proposition 4.2.3, page 34]{CT})} If $k$ is a field with {\rm char} $k=0$, then ${\rm Br}_{\rm nr}(K/k)\simeq H_{\rm nr}^2(K/k,\bQ/\bZ)$.
\end{theorem}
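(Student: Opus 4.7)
The plan is to prove the three parts in order: part (1) via a residue computation on $\mathbbm{P}^1_K$, part (2) by a specialization argument that uses (1), and part (3) as a formal consequence of Kummer theory together with the torsion nature of Brauer groups of fields. For part (1), my first step is to reduce to showing that unramified cohomology is invariant under adjoining a single transcendental variable: $H^i_{\rm nr}(K(t)/k,\mu_n^{\otimes j}) \cong H^i_{\rm nr}(K/k,\mu_n^{\otimes j})$. The key input is the Faddeev/Bloch--Ogus exact sequence
\begin{equation*}
0 \to H^i(K,\mu_n^{\otimes j}) \to H^i(K(t),\mu_n^{\otimes j}) \xrightarrow{\oplus r_x} \bigoplus_{x} H^{i-1}(\kappa(x),\mu_n^{\otimes (j-1)}) \to 0,
\end{equation*}
where $x$ ranges over closed points of $\mathbbm{A}^1_K$ and $r_x$ is the residue at $x$. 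A class in $H^i(K(t),\mu_n^{\otimes j})$ is unramified over $k$ precisely when all residues at closed points of $\mathbbm{P}^1_K$ (including $\infty$) vanish and the resulting restriction to $H^i(K,\mu_n^{\otimes j})$ is itself unramified over $k$. Iterating in several variables then gives invariance under purely transcendental extensions, unwinding the definition of stably $k$-isomorphic produces the desired isomorphism, and the vanishing statement for stably $k$-rational $K$ follows by reducing to the trivial case $H^i_{\rm nr}(k/k,\mu_n^{\otimes j}) = 0$.

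For part (2), let $K$ be retract $k$-rational with data $A \xrightarrow{\varphi} B \xrightarrow{\psi} A$ satisfying $\psi\circ\varphi = {\rm id}_A$, $B = k[x_1,\ldots,x_n][1/f]$, and $K = {\rm Frac}(A)$. Since $\varphi$ is split-injective, it extends to a field inclusion $K \hookrightarrow L = k(x_1,\ldots,x_n)$, and pullback carries $H^i_{\rm nr}(K/k,\mu_n^{\otimes j})$ into $H^i_{\rm nr}(L/k,\mu_n^{\otimes j})$, which vanishes by part (1). My plan is then to construct a retraction of $\varphi^{*}$ using the description of unramified classes via \'etale cohomology of smooth integral models: any $\alpha \in H^i_{\rm nr}(K/k,\mu_n^{\otimes j})$ lifts by purity to an \'etale class on a nonempty open of ${\rm Spec}\, A$; the ring map $\psi$ induces a scheme morphism ${\rm Spec}\, A \to {\rm Spec}\, B$, and pulling back the \'etale representative of $\varphi^{*}(\alpha)$ along this morphism recovers $\alpha$ by the identity $\psi\circ\varphi = {\rm id}_A$. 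This produces $\psi^{*}\varphi^{*} = {\rm id}$ on $H^i_{\rm nr}(K/k,\mu_n^{\otimes j})$, which together with $\varphi^{*}(\alpha)=0$ forces $\alpha = 0$.

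For part (3), in characteristic zero the Brauer group ${\rm Br}(K) = H^2(\Gamma_K,(K^{\rm sep})^{\times})$ is torsion, so the Kummer sequence $1 \to \mu_n \to \mathbbm{G}_m \xrightarrow{\cdot n} \mathbbm{G}_m \to 1$ combined with Hilbert 90 yields ${\rm Br}(K)[n] \cong H^2(K,\mu_n)$, and taking the colimit over $n$ gives ${\rm Br}(K) \cong H^2(K,\bQ/\bZ(1))$. Because $k$ is algebraically closed, all roots of unity already lie in $k$, so $\mu_n \cong \bZ/n\bZ$ as $\Gamma_K$-modules and $\bQ/\bZ(1) \cong \bQ/\bZ$. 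I would then verify that the residue maps for the Brauer group match those for $H^2(-,\bQ/\bZ)$ under this identification (the compatibility of Kummer residues with the tame symbol), after which intersecting kernels over all rank-one DVRs $R$ with $k\subset R\subset K$ gives ${\rm Br}_{\rm nr}(K/k) \cong H^2_{\rm nr}(K/k,\bQ/\bZ)$.

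The hardest step will be the specialization argument in (2): one must justify carefully that an unramified class extends to an \'etale class on a sufficiently large open of a smooth integral model and that these representatives pull back compatibly under the (arbitrary) ring map $\psi$ without losing unramified information; this is the heart of Merkurjev's argument in \cite{Me} and rests on Bloch--Ogus purity together with the comparison between Galois-theoretic residues and \'etale localization sequences.
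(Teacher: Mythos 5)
You should first be aware that the paper contains no proof of Theorem \ref{t2.7}: it is a survey statement, quoted with attributions to Colliot-Th\'el\`ene--Ojanguren, Merkurjev, and Colliot-Th\'el\`ene's notes, and is then used as a black box. So your proposal is not an alternative to an argument in the paper; it is a reconstruction of the proofs behind the citations. As such, parts (1) and (3) are on track: (1) is the standard Faddeev-sequence argument, where the remaining work is the ``constant class'' step in both directions (an unramified class of $K$ stays unramified in $K(t)$, checked at valuations of $K(t)$ trivial on $K$ because the class extends over $O_w \supset K$, and at the Gauss-type extensions of valuations of $K$ via residue functoriality with the ramification multiplier $e$, exactly as in the diagram used in the proof of Lemma \ref{l5.4}); (3) is Kummer theory plus the equivalence, for a discrete valuation ring $R$ with residue characteristic prime to $n$, between ``in the image of $H^2_{\text{\'et}}(R)$'' and ``killed by the residue'', which reconciles Saltman's Definition \ref{d2.1} with Definition \ref{d2.6}.

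The step that does not work as literally written is in (2): ``pulling back the \'etale representative of $\varphi^*(\alpha)$ along ${\rm Spec}\,A\to{\rm Spec}\,B$'' is in general undefined, because that representative lives only on some dense open $V\subseteq{\rm Spec}\,B$, while the generic point of ${\rm Spec}\,A$ maps to the prime $q={\rm Ker}\,\psi$, which need not lie in $V$. This is precisely where smoothness of ${\rm Spec}\,B$ and Bloch--Ogus/Gersten must carry the argument (you name them, but they are the proof, not a technical check). A clean repair: since $\varphi^{-1}(q)=0$ (from $\psi\circ\varphi={\rm id}_A$), $\varphi$ and $\psi$ localize to ring maps $u\colon K\to B_q$ and $v\colon B_q\to K$ with $v\circ u={\rm id}_K$; then $u^*\alpha\in H^i_{\text{\'et}}(B_q,\mu_n^{\otimes j})$ is defined for any $\alpha$, and its image in $H^i(L,\mu_n^{\otimes j})$ is $\varphi^*\alpha$, which is unramified (restriction preserves unramifiedness) and hence zero by part (1) since $k$ is algebraically closed. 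Gersten injectivity $H^i_{\text{\'et}}(B_q,\mu_n^{\otimes j})\hookrightarrow H^i(L,\mu_n^{\otimes j})$ for the local ring of the smooth scheme ${\rm Spec}\,B$ then gives $u^*\alpha=0$, so $\alpha=v^*u^*\alpha=0$. With this repair your plan coincides with Merkurjev's argument; without it, the specialization step is a genuine gap rather than a deferred detail.
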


We remark that, for a smooth complex projective variety $X$, the unramified cohomology group $H^3_{\rm nr}(X,\bQ/\bZ)$ is related to the integral Hodge conjecture. The reader may look into \cite{CTV} and \cite[Section 6.2]{Vo} for the unexplained terminology of the following theorem.

\begin{theorem}[{Colliot-Th\'el\`ene and Voisin \cite{CTV}, see also \cite[Theorem 6.18]{Vo}}]\label{t2.8}
For any smooth projective complex variety $X$, there is an exact sequence
\[
0\rightarrow H^3_{\rm nr}(X,\bZ)\otimes \bQ/\bZ\rightarrow
H^3_{\rm nr}(X,\bQ/\bZ)\rightarrow {\rm Tors}(Z^4(X))\rightarrow 0
\]
where
\[
Z^4(X) = {\rm Hdg}^4(X,\bZ)/{\rm Hdg}^4(X,\bZ)_{\rm alg}
\]
and the lower index ``alg'' means the group of integral Hodge classes which are algebraic.
In particular, if $X$ is rationally connected, then we have
\[
H^3_{\rm nr}(X,\bQ/\bZ)\simeq Z^4(X).
\]
\end{theorem}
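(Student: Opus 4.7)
The plan is to derive the exact sequence from a Bockstein argument in unramified cohomology and then identify $H^4_{\rm nr}(X,\bZ)_{\rm tors}$ with $\mathrm{Tors}(Z^4(X))$ via the Deligne cycle class map. First I would apply the short exact sequence $0\to\bZ\xrightarrow{\times n}\bZ\to\bZ/n\bZ\to 0$ of constant coefficients. Combining a stalk-wise universal coefficient theorem with the Bloch--Ogus identification $H^i_{\rm nr}(X,-)\cong H^0(X,\mathcal{H}^i(-))$ on the Zariski site of the smooth variety $X$, one obtains the short exact sequence
\begin{equation*}
0\to H^3_{\rm nr}(X,\bZ)/n\to H^3_{\rm nr}(X,\bZ/n\bZ)\to H^4_{\rm nr}(X,\bZ)[n]\to 0.
\end{equation*}
Passing to the direct limit over $n$ via the transition maps $\bZ/n\bZ\hookrightarrow\bQ/\bZ$ and using that filtered colimits are exact, I arrive at the desired
\begin{equation*}
0\to H^3_{\rm nr}(X,\bZ)\otimes\bQ/\bZ\to H^3_{\rm nr}(X,\bQ/\bZ)\to H^4_{\rm nr}(X,\bZ)_{\rm tors}\to 0.
\end{equation*}

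The hard part will be identifying $H^4_{\rm nr}(X,\bZ)_{\rm tors}$ with $\mathrm{Tors}(Z^4(X))$. The key input is the Deligne cycle class map $\mathrm{cl}_{\mathcal D}\colon CH^2(X)\to H^4_{\mathcal D}(X,\bZ(2))$, whose topological component $\mathrm{cl}\colon CH^2(X)\to H^4(X,\bZ)$ has image $\mathrm{Hdg}^4(X,\bZ)_{\rm alg}$ by the very definition of the right hand side. Combining this with $H^4_{\rm nr}(X,\bZ)\cong H^0(X,\mathcal{H}^4(\bZ))$ and analyzing the coniveau spectral sequence in low degrees, one expects a natural map $\mathrm{Hdg}^4(X,\bZ)/\mathrm{Hdg}^4(X,\bZ)_{\rm alg}\to H^4_{\rm nr}(X,\bZ)$ whose restriction to torsion is an isomorphism. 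The subtlety is twofold: one must compare the Hodge filtration on Betti cohomology with the coniveau filtration coming from Bloch--Ogus, and one must control the torsion of $H^0(X,\mathcal{H}^4(\bZ))$ in terms of the mod-$n$ pieces $H^0(X,\mathcal{H}^4(\mu_n^{\otimes 2}))$, where the Bloch--Kato conjecture (Merkurjev--Suslin in the relevant degree, now the Voevodsky--Rost theorem) enters. Since torsion classes in $H^4(X,\bZ)$ are trivially Hodge, these inputs together yield $H^4_{\rm nr}(X,\bZ)_{\rm tors}\simeq\mathrm{Tors}(Z^4(X))$.

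Finally, for $X$ rationally connected I would invoke the Bloch--Srinivas decomposition of the diagonal: this forces $H^i_{\rm nr}(X,\bQ)=0$ for $i\ge 1$, so $H^3_{\rm nr}(X,\bZ)$ is a torsion abelian group, and hence $H^3_{\rm nr}(X,\bZ)\otimes\bQ/\bZ=0$ by divisibility of $\bQ/\bZ$. The same Bloch--Srinivas argument forces $Z^4(X)$ itself to be torsion, so $\mathrm{Tors}(Z^4(X))=Z^4(X)$, and the four term sequence collapses to the claimed isomorphism $H^3_{\rm nr}(X,\bQ/\bZ)\simeq Z^4(X)$.
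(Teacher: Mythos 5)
The paper does not prove this statement; it quotes it from Colliot-Th\'el\`ene--Voisin \cite{CTV} (see also \cite[Theorem 6.18]{Vo}), so your sketch must be measured against their argument, and there it has a genuine gap already at the first display. From the sheaf sequence $0\to\mathcal{H}^3(\bZ)/n\to\mathcal{H}^3(\bZ/n)\to\mathcal{H}^4(\bZ)[n]\to 0$ on $X_{\rm Zar}$ you may only use left exactness of $H^0(X_{\rm Zar},-)$: in general $H^0(X,\mathcal{H}^3(\bZ)/n)\neq H^3_{\rm nr}(X,\bZ)/n$ and the map to $H^4_{\rm nr}(X,\bZ)[n]$ need not be surjective; the error term in both places is $H^1(X_{\rm Zar},\mathcal{H}^3(\bZ))$, which is precisely where the content of the theorem lives. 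Worse, your target group is the wrong one: the Bloch--Kato theorem, which you invoke later, implies (Colliot-Th\'el\`ene--Voisin) that the Zariski sheaf $\mathcal{H}^4(\bZ)$ is torsion free, hence $H^4_{\rm nr}(X,\bZ)=H^0(X,\mathcal{H}^4(\bZ))$ has no torsion at all. Your plan would therefore give $H^3_{\rm nr}(X,\bQ/\bZ)\simeq H^3_{\rm nr}(X,\bZ)\otimes\bQ/\bZ$ and ${\rm Tors}(Z^4(X))=0$ for every $X$, contradicting the theorem exactly in the situations this paper needs it for (e.g.\ a smooth projective rationally connected $X$ with function field stably isomorphic to $\bC(G)$, $G$ in $\Phi_7$, where the left-hand term vanishes but $H^3_{\rm nr}(X,\bQ/\bZ)\simeq\bZ/3\bZ$). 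No analysis of the Deligne cycle class map can rescue the identification $H^4_{\rm nr}(X,\bZ)_{\rm tors}\simeq{\rm Tors}(Z^4(X))$, since the left side is zero; the two halves of your proposal are mutually inconsistent once the torsion-freeness consequence of Bloch--Kato is taken into account.

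The actual proof replaces your third term by $H^1(X,\mathcal{H}^3(\bZ))$. Bloch--Kato gives $\mathcal{H}^3(\bZ/n)\simeq\mathcal{H}^3(\bZ)/n$ (the Bockstein into the torsion-free $\mathcal{H}^4(\bZ)$ vanishes), and the long exact sequence in Zariski cohomology for multiplication by $n$ on $\mathcal{H}^3(\bZ)$ yields $0\to H^3_{\rm nr}(X,\bZ)/n\to H^3_{\rm nr}(X,\bZ/n\bZ)\to H^1(X,\mathcal{H}^3(\bZ))[n]\to 0$. The coniveau (Bloch--Ogus) spectral sequence, with $E_2^{p,q}=H^p(X,\mathcal{H}^q(\bZ))$ vanishing for $p>q$ and with $N^2H^4(X,\bZ)={\rm Hdg}^4(X,\bZ)_{\rm alg}$, identifies $H^1(X,\mathcal{H}^3(\bZ))\simeq N^1H^4(X,\bZ)/{\rm Hdg}^4(X,\bZ)_{\rm alg}$; since $H^4(X,\bZ)/N^1H^4(X,\bZ)$ injects into the torsion-free group $H^4_{\rm nr}(X,\bZ)$ and a rational multiple of a Hodge class is a Hodge class, the torsion of this quotient coincides with ${\rm Tors}(Z^4(X))$. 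Passing to the limit over $n$ gives the stated sequence. Your final paragraph is the part that is essentially correct: for rationally connected $X$ the Bloch--Srinivas decomposition of the diagonal kills $H^3_{\rm nr}(X,\bQ)$ (hence the first term) and shows $Z^4(X)$ is torsion, which is exactly how the isomorphism $H^3_{\rm nr}(X,\bQ/\bZ)\simeq Z^4(X)$ is deduced in \cite{CTV}.
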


The following theorem of Asok computes unramified cohomology groups with coefficients in $\mu_l$ where $l$ is a prime number.

\begin{theorem}[{Asok \cite{As}, see also \cite[Theorem 3]{AMo}}]\label{t2.9}{}~{}\\
{\rm (1) (\cite[Theorem 1]{As})}
For any $n\ge 2$,
there exists a smooth projective complex variety $X$ that is
$\bC$-unirational, for which
$H_{\rm nr}^i(\bC(X),\mu_2^{\otimes i})=0$ for each $1<i<n$, yet
$H_{\rm nr}^n(\bC(X),\mu_2^{\otimes n})\neq 0$, and so
$X$ is not $\mathbb{A}^1$-connected,
nor retract $\bC$-rational;\\
{\rm (2) (\cite[Theorem 3]{As})}
For any prime number $l$ and any $n\geq 2$,
there exists a smooth projective rationally connected complex
variety $Y$ such that
$H_{\rm nr}^n(\bC(Y),\mu_l^{\otimes n})\neq 0$.
In particular, $Y$ is not $\mathbb{A}^1$-connected,
nor retract $\bC$-rational.
\end{theorem}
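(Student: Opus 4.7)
The plan is to extend the Colliot-Th\'el\`ene--Ojanguren construction recalled in Theorem \ref{t1.6}, which produces a rationally connected complex variety with nontrivial degree-three unramified $\bQ/\bZ$-cohomology, by replacing the $3$-fold symbol used there with an $n$-fold symbol and, in part~(2), by replacing quadrics with norm varieties. The backbone of the argument will be the Bloch--Kato/Milnor conjecture (a theorem of Voevodsky--Rost), which identifies $H^i(K,\mu_l^{\otimes i})$ with the Milnor K-group $K^M_i(K)/l$ for any field $K$ of characteristic prime to $l$; this reduces the computation of unramified cohomology to a question about Milnor K-theory modulo $l$, and via the Bloch--Ogus/Gersten spectral sequence further reduces the unramified condition to the vanishing of residues at all codimension-one points.

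For part~(1), I would start with a purely transcendental extension $K_0=\bC(t_1,\ldots,t_N)$ with $N$ large and write down a generic $n$-fold Pfister symbol $s=\{a_1,\ldots,a_n\}\in K^M_n(K_0)/2$, whose nontriviality is visible by specialisation to a discrete valuation ring. Then I would iteratively pass to the function field of a tower of Pfister quadric bundles over a rational base, each new quadric chosen so that it trivialises a lower-weight unramified obstruction without killing $s$. The crucial detection statement is a theorem of Arason: the kernel of the restriction map $K^M_i/2\to K^M_i(\bC(Q_\phi))/2$ at the function field of a Pfister quadric $Q_\phi$ is generated by the symbol $\phi$. So each sub-symbol of weight strictly less than $n$ can be killed in a controlled way, while $s$ itself survives provided the symbols trivialised along the tower are never multiples of $s$. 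The resulting variety $X$ is $\bC$-unirational because every Pfister quadric is geometrically rational, hence so is an iterated Pfister quadric bundle over $\mathbb{P}^N$ up to a birational modification; resolution of singularities then gives the smooth projective model.

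Part~(2) follows the same template with Pfister quadrics replaced by Rost norm varieties attached to $l$-fold symbols in $K^M_n/l$. Rost varieties are geometrically rationally connected, and the key splitting property --- that the kernel of the restriction to the function field is generated by the defining symbol --- is exactly the content of the Bloch--Kato conjecture in this setting, now theorem of Voevodsky and Rost. A tower of such norm-variety bundles over a rational base, built to kill lower-weight obstructions without killing the top symbol, produces the desired rationally connected $Y$ with $H_{\rm nr}^n(\bC(Y),\mu_l^{\otimes n})\neq 0$. The ``non-retract-rational'' conclusions in both parts then follow formally from Theorem \ref{t2.7}(2).

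The main obstacle I foresee is the inductive bookkeeping required to guarantee that $s$ \emph{survives} the tower: at each step one must certify that the sub-symbol being killed is not equivalent modulo already-trivialised relations to a nonzero multiple of $s$ itself, and this certification rests on the full strength of the norm residue isomorphism together with a careful analysis of the residue map on Milnor K-theory at the generic points of the exceptional divisors that appear during resolution. Once this bookkeeping is in place, the remaining ingredients --- unirationality (resp.\ rational connectedness) of iterated Pfister quadric (resp.\ Rost norm variety) bundles over $\mathbb{P}^N$, and the vanishing of $H_{\rm nr}^i$ for $1<i<n$ through the same Arason--Rost splitting principle --- are fairly routine, though technically demanding.
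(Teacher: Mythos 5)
Theorem \ref{t2.9} is quoted in the paper from Asok's work \cite{As} (and \cite{AMo}) without any proof, so there is no internal argument to compare yours against. Your sketch is broadly aligned with the strategy Asok actually uses (Pfister quadrics for the mod $2$ statement, Rost norm varieties for the mod $l$ statement, detection of nonvanishing by a single symbol, non-retract-rationality then being formal from Theorem \ref{t2.7}), but as written it has a genuine gap at its core, namely the inductive tower you yourself flag as the main obstacle.

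The difficulty is not bookkeeping; the tower mechanism cannot deliver the required vanishing. To conclude $H_{\rm nr}^i(\bC(X),\mu_2^{\otimes i})=0$ for every $1<i<n$ you must control the \emph{entire} unramified cohomology of the final function field in those degrees, not merely the sub-symbols of $s$: these groups are not a priori generated by finitely many symbols, and every passage to the function field of a new quadric bundle (and every resolution used to produce the smooth projective model) can create new unramified classes in low degrees, so trivializing symbols one at a time along a finite tower never certifies the vanishing you need. In the actual argument no tower is used: the intermediate-degree vanishing for the generic Pfister quadric comes from computations of unramified cohomology of Pfister quadrics (results in the style of Kahn--Rost--Sujatha), while survival of the top class rests on the Orlov--Vishik--Voevodsky theorem that the kernel of $H^n(K,\bZ/2)\to H^n(K(Q_\phi),\bZ/2)$ is generated by $e_n(\phi)$ --- Arason's theorem only covers low degrees, so invoking Arason in all degrees is not enough. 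For part (2), the statement that the kernel of restriction to the function field of a norm variety is generated by the defining symbol is \emph{not} ``exactly the content of Bloch--Kato''; it is a separate hard theorem from the Rost--Suslin--Voevodsky circle, and the rational connectedness of the norm varieties is itself something that must be proved, not assumed. Finally, unirationality of the total space does not follow from geometric rationality of the fibres (the generic fibre can be anisotropic over the function field of the base, which is exactly why the example works); the correct argument is that adjoining $\sqrt{a_1}$ splits the Pfister form, so $X$ is dominated by a variety rational over $\bC$.
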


\bigskip
Finally we recall the constructions of Saltman and Peyre for computing $H^3_{\rm nr}(\bC(G),\bQ/\bZ)$ in \cite {Sa5} and \cite{Pe3}.

Recall that $H^i(\bC(G),\bQ/\bZ)=H^i(\Gamma_K,\bQ/\bZ)$ where $K=\bC(G)$ and
$\Gamma_K$ is the absolute Galois group of $K$. Since the group $G$ is a quotient group of $\Gamma_K$, we may consider the inflation map
\[
\iota: H^i(G,\bQ/\bZ)\rightarrow H^i(\bC(G),\bQ/\bZ).
\]
Both the kernel and the image of the inflation map $\iota$ are significant invariants. Note that $H_{\rm nr}^i(\bC(G)/\bC,\bQ/\bZ)$ is a subgroup of $H^i(\bC(G), \bQ/\bZ)$.

We will define subgroups $H_{\rm p}^3(G,\bQ/\bZ)\leq H_{\rm n}^3(G,\bQ/\bZ)\leq H_{\rm nr}^3(G,\bQ/\bZ)\leq H^3(G,\bQ/\bZ)$. The reader will find readily that the inflation map $\iota$ induces an isomorphism $H_{\rm nr}^3(G,\bQ/\bZ)/H_{\rm n}^3(G,\bQ/\bZ) \to H_{\rm nr}^3(\bC(G),\bQ/\bZ)$, and the natural map $H_{\rm nr}^3(G,\bQ/\bZ)/H_{\rm p}^3(G,\bQ/\bZ) \to H_{\rm nr}^3(G,\bQ/\bZ)/H_{\rm n}^3(G,\bQ/\bZ)$ is an isomorphism up to $2$-torsion, i.e. the index $[H_{\rm n}^3(G,\bQ/\bZ):H_{\rm p}^3(G,\bQ/\bZ)]=2^d$ for some non-negative integer $d$. One of the reasons to introduce $H_{\rm p}^3(G,\bQ/\bZ)$ is that it may be difficult to find $H_{\rm n}^3(G,\bQ/\bZ)$ directly.

\begin{definition}\label{d2.10}
By \cite[page 230, Theorem 5.3]{Sa5}, $H^3_{\rm nr}(\bC(G),\bQ/\bZ)$ is a subgroup of the image of the inflation map $\iota:H^3(G,\bQ/\bZ)\rightarrow H^3(\bC(G),\bQ/\bZ)$. Following Saltman \cite[page 220, the third paragraph]{Sa5}, we define $H_{\rm n}^3(G,\bQ/\bZ)$ by
\[
H^3_{\rm n}(G,\bQ/\bZ)={\rm Ker}\{\iota:H^3(G,\bQ/\bZ)\rightarrow H^3(\bC(G),\bQ/\bZ)\}.
\]
The elements in $H_{\rm n}^3(G,\bQ/\bZ)$ are called {\it the geometrically negligible classes}.

On the other hand, the subgroup $H_{\rm nr}^3(G,\bQ/\bZ)$ is defined in Definition \ref{d2.11} below. It is proved that $H_{\rm nr}^3(G,\bQ/\bZ)$ $= \iota^{-1}(H^3_{\rm nr}(\bC(G),\bQ/\bZ))$ by Peyre \cite[page 204, Proposition 3]{Pe3}. Hence the inflation map $\iota$ induces an isomorphism $H_{\rm nr}^3(G,\bQ/\bZ)/H_{\rm n}^3(G,\bQ/\bZ) \to H_{\rm nr}^3(\bC(G),\bQ/\bZ)$.
\end{definition}

\begin{definition}\label{d2.11}
Let $H$ be any subgroup of $G$ and $Z_G(H)$ be the centralizer of $H$ in $G$.
For any $g\in Z_G(H)$, we define a map
\begin{align*}
\partial_{H,g}: H^3(G,\bQ/\bZ)\rightarrow H^2(H,\bQ/\bZ)
\end{align*}
as follows.
Define $I=\langle g\rangle$.
The natural morphism $m:H\times I\rightarrow G, (h,i)\mapsto hi$ induces a map
\begin{align*}
m^\ast: H^3(G,\bQ/\bZ)\rightarrow H^3(H\times I,\bQ/\bZ).
\end{align*}

Let ${\rm pr}_2:H\times I\rightarrow I$ be the projection map of $H\times I$ onto the second factor $I$, and $i_2:I\rightarrow H\times I, i\mapsto (e,i)$ (where $e$ denotes the identity element of $H$) which identifies $I$ as a subgroup of $H\times I$. Then $i_2$ defines the restriction map
\begin{align*}
i_2^\ast: H^3(H\times I,\bQ/\bZ)\xrightarrow{\rm res} H^3(I,\bQ/\bZ)
\end{align*}
and ${\rm pr}_2$ induces a section of $i_2^\ast$.
This yields a morphism
\begin{align*}
S_{H,I}: H^3(H\times I,\bQ/\bZ)\rightarrow H^3(H\times I,\bQ/\bZ)_1,\
\xi\mapsto \xi-{\rm pr}_2^\ast\circ i_2^\ast(\xi)
\end{align*}
where $H^3(H\times I,\bQ/\bZ)_1$ is defined by
\begin{align*}
H^3(H\times I,\bQ/\bZ)_1
={\rm Ker}\{H^3(H\times I,\bQ/\bZ)\xrightarrow{\rm res} H^3(I,\bQ/\bZ)\}.
\end{align*}
We apply the Lyndon-Hochschild-Serre spectral sequence \cite{HS}
\begin{align*}
E_2^{p,q}=H^p(H,H^q(I,\bQ/\bZ))\Rightarrow H^{p+q}(H\times I,\bQ/\bZ).
\end{align*}
By Theorem \ref{t1.17}, the spectral sequence degenerates at the $E_2$ term and we find an isomorphism
\begin{align*}
&H^3(H\times I,\bQ/\bZ)\\
&\simeq
H^3(H,H^0(I,\bQ/\bZ))\oplus H^2(H,H^1(I,\bQ/\bZ))\oplus
H^1(H,H^2(I,\bQ/\bZ))\oplus H^0(H,H^3(I,\bQ/\bZ))\\
&\simeq
H^3(H,\bQ/\bZ)\oplus H^2(H,H^1(I,\bQ/\bZ))\oplus
0\oplus H^3(I,\bQ/\bZ)
\end{align*}
because
$H^2(I,\bQ/\bZ)=0$.
Thus we get
\begin{align*}
S_{H,I} : H^3(H\times I,\bQ/\bZ)\rightarrow H^3(H\times I,\bQ/\bZ)_1,\
\xi=(\xi_0,\xi_1,0,\xi_3)\mapsto \xi^\prime=(\xi_0,\xi_1,0,0)
\end{align*}
and also a map
\begin{align*}
\varphi: H^3(H\times I,\bQ/\bZ)_1\rightarrow H^2(H,H^1(I,\bQ/\bZ)),\
\xi^\prime=(\xi_0,\xi_1,0,0)\mapsto \xi_1.
\end{align*}

An evaluation at $g$ defines an injection
\begin{align*}
H^1(I,\bQ/\bZ)\xrightarrow{\sim}{\rm Hom}(I,\bQ/\bZ)
\simeq\bZ/|I|\bZ\simeq \frac{1}{|I|}\bZ/\bZ\hookrightarrow \bQ/\bZ
\end{align*}
which yields $\partial_{H,g}=
\partial\circ m^\ast=\psi_g\circ\varphi\circ S_{H,I}\circ m^\ast$:
\begin{align*}
\xymatrix{
\partial_{H,g}: H^3(G,\bQ/\bZ)\ar[r]^{m^\ast}&H^3(H\times I,\bQ/\bZ)
\ar[d]^{S_{H,I}} \ar[r]^{\partial} \ar@{}[dr]|\circlearrowleft
& H^2(H,\bQ/\bZ)  \\
& H^3(H\times I,\bQ/\bZ)_1 \ar[r]^{\varphi} & \ar[u]_{\psi_g} H^2(H,H^1(I,\bQ/\bZ)).
}
\end{align*}

Following Peyre \cite[page 197]{Pe3}, we define
\begin{align*}
H^3_{\rm nr}(G,\bQ/\bZ)
=\bigcap_{H\leq G\atop g\in Z_G(H)}{\rm Ker}(\partial_{H,g}).
\end{align*}
\end{definition}

\bigskip
Now we turn to $H_{\rm p}^3(G,\bQ/\bZ)$, the group of permutation negligible classes.

Let $\mu$ be the group of all the roots of unity in $\bC$; thus $\mu \simeq \bQ/\bZ$ as $G$-modules and as $\Gamma_K$-modules where $K := \bC(G)$. We also recall the definition of a permutation $\bZ[G]$-lattice: A finitely generated $\bZ[G]$-module $P$ is called {\it a permutation $\bZ[G]$-lattice} if $P$ is a free abelian group of finite rank as an abelian group and has a $\bZ$-basis permuted by $G$, i.e. $P= \oplus_{1 \le i \le n} \bZ \cdot x_i$, such that for any $\sigma \in G$, $\sigma \cdot x_i = x_j$ for some $j$ depending on $\sigma$ and $i$.

We recall a proposition of Saltman.

\begin{proposition}[{Saltman \cite[page 221, Lemma 4.6 (b)]{Sa5}}]\label{p2.12}
Let $G$ be a finite group. Then there is a $\bZ[G]$-module $P^\ast$ containing $\mu$ as a submodule such that
{\rm (i)} the quotient module $P:=P^\ast/\mu$ is a permutation $\bZ[G]$-lattice,
{\rm (ii)} $P^\ast$ is $H^1$-trivial, i.e. for all subgroups $H\leq G$, $H^1(H,P^\ast)=0$.
{\rm (iii)} for any $\bZ[G]$-morphism $\varphi: \mu\rightarrow N$,
$\varphi$ extends to $P^\ast$ if and only if, for any subgroup $H\leq G$, the map $H^1(H,\mu)\rightarrow H^1(H,N)$ is the zero map.
\end{proposition}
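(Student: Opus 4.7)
My plan is to build $P^\ast$ as the middle term of a carefully chosen extension $0\to\mu\to P^\ast\to P\to 0$, where $P$ is an explicit permutation $\bZ[G]$-lattice and the extension class is arranged so that the connecting homomorphism $P^H\to H^1(H,\mu)$ is surjective for every subgroup $H\le G$. The central formal tool is Shapiro's lemma in the form $\mathrm{Ext}^1_{\bZ[G]}(\bZ[G/H],\mu)\simeq H^1(H,\mu)$, which allows one to prescribe the ``restriction profile'' of an extension of a $G$-coset module by $\mu$.

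Concretely, I would let $\mathcal{S}$ be the finite set of pairs $(H,\chi)$ with $H\le G$ a subgroup and $\chi\in H^1(H,\mu)=\mathrm{Hom}(H,\mu)$, and set
$$P=\bigoplus_{(H,\chi)\in\mathcal{S}}\bZ[G/H].$$
Then I define $P^\ast$ to be the extension of $P$ by $\mu$ whose class in
$$\mathrm{Ext}^1_{\bZ[G]}(P,\mu)=\bigoplus_{(H,\chi)\in\mathcal{S}}H^1(H,\mu)$$
has $(H,\chi)$-component equal to $\chi$. Condition (i) is immediate by construction. For (ii), fix a subgroup $H_0\le G$ and study the long exact sequence
$$P^{H_0}\xrightarrow{\delta_{H_0}}H^1(H_0,\mu)\to H^1(H_0,P^\ast)\to H^1(H_0,P).$$
The rightmost group vanishes: by the Mackey formula each $\bZ[G/H]|_{H_0}$ is a direct sum of permutation $H_0$-lattices $\bZ[H_0/(H_0\cap gHg^{-1})]$, and $H^1$ of a finite group with trivial $\bZ$-coefficients is zero. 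For the surjectivity of $\delta_{H_0}$, the summand of $P$ labelled $(H_0,\chi)$ is $\bZ[G/H_0]$; its canonical $H_0$-fixed vector $e_{H_0}$ is sent by the connecting map exactly to $\chi$, this being the concrete realisation of the Shapiro identification for the constituent extension. Letting $\chi$ range over $H^1(H_0,\mu)$ shows $\delta_{H_0}$ is onto, and hence $H^1(H_0,P^\ast)=0$.

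For (iii), given $\varphi\colon\mu\to N$, the morphism $\varphi$ extends to $P^\ast$ if and only if the pushed-out extension class $\varphi_\ast[P^\ast]\in\mathrm{Ext}^1_{\bZ[G]}(P,N)=\bigoplus_{(H,\chi)}H^1(H,N)$ vanishes. Under the Shapiro decomposition this class has $(H,\chi)$-coordinate $\varphi_\ast(\chi)$, so its vanishing is equivalent to the requirement that the induced map $\varphi_\ast\colon H^1(H,\mu)\to H^1(H,N)$ annihilate every $\chi$, for every subgroup $H\le G$, which is exactly the criterion stated in (iii). The step I expect to be the main obstacle is the naturality bookkeeping used in (ii): one has to verify that the connecting homomorphism evaluated on the canonical fixed vector $e_{H_0}\in\bZ[G/H_0]^{H_0}$ actually returns the prescribed class $\chi$, and that the various other summands of the direct-sum construction do not interfere with this identification at the $(H_0,\chi)$-component. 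Technically this is just additivity of Ext combined with the fact that for each $H$ the Shapiro isomorphism $\mathrm{Ext}^1_{\bZ[G]}(\bZ[G/H],\mu)\xrightarrow{\sim}H^1(H,\mu)$ is realised by pullback along the $G$-map $\bZ\to\bZ[G/H]$, $1\mapsto e_H$, but carrying out the identification uniformly across all labels $(H,\chi)$ is where careful indexing is needed.
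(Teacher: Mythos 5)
Your construction is correct, and it is essentially the standard one: the paper itself does not prove this proposition (it cites Saltman \cite[Lemma 4.6(b)]{Sa5}), but the construction it recalls in Section 5 from Peyre \cite{Pe2} is exactly your $P=\bigoplus_{H\leq G}\bZ[G/H]^{d(H)}$ with $d(H)=|H^1(H,\bQ/\bZ)|$ and the extension class prescribed componentwise via the Shapiro identification $\mathrm{Ext}^1_{\bZ[G]}(\bZ[G/H],\mu)\simeq H^1(H,\mu)$. Your verification of (ii) via the connecting map on the canonical fixed vector and of (iii) via vanishing of the pushed-out class matches that argument (signs are immaterial for surjectivity), so there is nothing to add.
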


\begin{definition}[{Saltman \cite[Proposition 4.7]{Sa5}}]\label{d2.13}
Let $0 \to \mu \to Q^\ast \to Q \to 0$ be an exact sequence of $\bZ[G]$-modules such that $Q$ is a permutation $\bZ[G]$-lattice and $H^1(H,Q^\ast)=0$ for all subgroups $H\leq G$ (for examples, take $Q^\ast$ to be the $\bZ[G]$-module $P^\ast$ constructed in Proposition \ref{p2.12}).
We define {\it the group of permutation negligible classes} by
\[
H_{\rm p}^3(G,\bQ/\bZ):={\rm Ker}\{H^3(G,\bQ/\bZ)\rightarrow H^3(G,Q^\ast)\}.
\]
The name comes from the fact that
\[
H_{\rm p}^3(G,\bQ/\bZ)\simeq {\rm Image}\{H^2(G,Q)\xrightarrow{\delta}
H^3(G,\bQ/\bZ)\}
\]
where $\delta:H^2(G,Q)\rightarrow H^3(G,\bQ/\bZ)$ is the connecting homomorphism arising from $0 \to \mu \to Q^\ast \to Q \to 0$.

It seems that the above definition of $H_{\rm p}^3(G,\bQ/\bZ)$ depends on the choice of the $\bZ[G]$-module $Q^\ast$. By \cite[pages 221--222; in particular, page 222, Proposition 4.7]{Sa5}, it is shown that (i) $H_{\rm p}^3(G,\bQ/\bZ)$ is independent of the choice of the $\bZ[G]$-module $Q^\ast$, and (ii)
$H_{\rm p}^3(G,\bQ/\bZ)={\rm Ker}\{H^3(G,\bQ/\bZ)\rightarrow H^3(G,L^\times)\}$ where $L:=k(x_g: g \in G)$ is the rational function field in Definition \ref{d1.1}. In particular,
$H_{\rm p}^3(G,\bQ/\bZ)$ is contained in $H_{\rm n}^3(G,\bQ/\bZ)$.
\end{definition}

By \cite[page 191, Theorem 4.14]{Sa5}, if $G$ is a non-abelian $2$-group containing a cyclic subgroup of index $2$, then $H_{\rm p}^3(G,\bQ/\bZ)$ is a proper subgroup of $H_{\rm n}^3(G,\bQ/\bZ)$. However, Peyre is able to prove the following results.

\begin{theorem}\label{t2.14}
Let $\iota: H^3(G,\bQ/\bZ)\rightarrow H^3(\bC(G),\bQ/\bZ)$ be
the inflation map.\\
{\rm (i)} {\rm ({Peyre \cite[Theorem 1]{Pe3}})}
The size of the kernel of the surjective map
\begin{align*}
\overline{\iota}: H_{\rm nr}^3(G,\bQ/\bZ)/H_{\rm p}^3(G,\bQ/\bZ)\rightarrow
H_{\rm nr}^3(\bC(G),\bQ/\bZ)
\end{align*}
is a power of $2$, equivalently, $[H_{\rm n}^3(G,\bQ/\bZ):H_{\rm p}^3(G,\bQ/\bZ)]=2^d$ for some non-negative integer $d$.
In particular, if $|G|$ is a group of odd order, then $H_{\rm n}^3(G,\bQ/\bZ)=H_{\rm p}^3(G,\bQ/\bZ)$.
~\\
{\rm (ii)} {\rm ({Peyre \cite[pages 196--197]{Pe2}, see also \cite[Remark 2]{Pe3})}}
\begin{align*}
H_{\rm p}^3(G,\bQ/\bZ)=
\sum_{H\leq G}{\rm Cores}_H^G({\rm Image}\{H^1(H,\bQ/\bZ)^{\otimes 2}
\xrightarrow{\cup} H^3(H,\bQ/\bZ)\})
\end{align*}
where the cup-product on the right is given by
the following commutative diagram:
\begin{align*}
\xymatrix{
H^i(G,\bQ/\bZ)\times H^j(G,\bQ/\bZ)
\ar[d]^{\cup} \ar[r]^{\sim} \ar@{}[dr]|\circlearrowleft
& H^{i+1}(G,\bZ)\times H^{j+1}(G,\bZ) \ar[d]^{\cup} \\
H^{i+j+1}(G,\bQ/\bZ) \ar[r]^{\sim} & H^{i+j+2}(G,\bZ) \\
}
\end{align*}
for any $i\geq 1$ and $j\geq 1$.
\end{theorem}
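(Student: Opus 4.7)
My plan is to prove (ii) first from the concrete description in Definition 2.13 together with Shapiro's lemma, and then attack (i) by comparing the resolution $0 \to \mu \to Q^* \to Q \to 0$ with the Kummer-type exact sequence living inside $\bC(x_g : g \in G)^\times$.

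For (ii), I would choose $Q^*$ as in Saltman's construction (Proposition 2.12): let $Q^*$ be generated over $\mu$ by Kummer elements $t_{H,\chi}$, one for each pair $(H,\chi)$ with $H \leq G$ and $\chi \in H^1(H,\bQ/\bZ) = \mbox{Hom}(H,\bQ/\bZ)$, arranged so that $G$ permutes the $t_{H,\chi}$ via the coset action on $G/H$, and so that the class of the subextension $0 \to \mu \to \langle \mu, t_{H,\chi}\rangle \to \bZ \to 0$ in $H^1(H,\mu)$ is precisely $\chi$. Then $Q = \bigoplus_{(H,\chi)} \bZ[G/H]$, and Shapiro's lemma gives $H^2(G, \bZ[G/H]) \simeq H^2(H,\bZ)$. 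Under the Bockstein isomorphism $\beta \colon H^1(H,\bQ/\bZ) \overset{\sim}{\to} H^2(H,\bZ)$, the restriction of the connecting homomorphism $\delta \colon H^2(G,Q) \to H^3(G,\mu)$ to the $(H,\chi)$-summand is $\mbox{Cores}_H^G \circ (\chi \cup -) \circ \beta^{-1}$, since the connecting map for an extension is cup product with its class. Via the commutative diagram relating cup products to Bocksteins in the statement of (ii), this image equals $\mbox{Cores}_H^G(\mbox{Image}\{H^1(H,\bQ/\bZ)^{\otimes 2} \xrightarrow{\cup} H^3(H,\bQ/\bZ)\})$; summing over $(H,\chi)$ and invoking $H_{\rm p}^3(G,\bQ/\bZ) = \mbox{Image}(\delta)$ yields the formula.

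For (i), the inclusion $H_{\rm p}^3 \subseteq H_{\rm n}^3$ is automatic because each Kummer element $t_{H,\chi}$ can be realized inside $\bC(x_g)^\times$ as a concrete $|H|$-th root, so the resolution $0 \to \mu \to Q^* \to Q \to 0$ fits into a commutative diagram with the Kummer-type sequence $0 \to \mu \to N \to N/\mu \to 0$, where $N \subseteq \bC(x_g)^\times$ is the $G$-submodule generated by $\mu$ together with the chosen roots. Using Hilbert 90 ($H^1(G, \bC(x_g)^\times) = 0$) and the Hochschild-Serre spectral sequence for $\bC(x_g)/\bC(G)$, the inflation $\iota$ is controlled by this diagram, and classes already killed by $H^3(G,\mu) \to H^3(G,Q^*)$ are a fortiori killed in $H^3(\bC(G),\mu)$. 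To bound $[H_{\rm n}^3:H_{\rm p}^3]$ by a power of $2$, I would analyze the cokernel $N/Q^*$: after modding out a flasque summand whose $H^i$-cohomology vanishes on every subgroup, the remainder is a $G$-lattice whose action by $G$ is by signed permutations, with the sign ambiguity originating from the $\mu_2$-worth of choice in selecting coherent $|H|$-th roots. For odd-order $G$ this signed-permutation structure becomes strictly permutation (no order-$2$ element to enact a sign), so the $\mu_2$-ambiguity contributes nothing, forcing $d = 0$ and yielding the last sentence of (i) immediately.

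The main obstacle is making the $2$-primary bound on $H^*(G, N/Q^*)$ rigorous and showing that no further contribution to $H_{\rm n}^3/H_{\rm p}^3$ escapes from the higher differentials of the spectral sequence. Peyre's argument handles this by passing to a direct limit over enlargements of $Q^*$ (cofinal among modules satisfying Proposition 2.12) and invoking the Merkurjev-Suslin theorem to guarantee that $H^3(\bC(G),\mu_n)$ is generated by symbols of weight $1$ cup products, so that the failure of surjectivity or injectivity in the comparison $H^3(G,Q^*) \to H^3(\bC(G),\mu)$ reduces to checking the Steinberg relation $\{a,-a\} = 0$ modulo the image of $\delta$; this relation is an identity modulo $2$ but not modulo odd primes, which is precisely the source of the $2$-torsion in the index.
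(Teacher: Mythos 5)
Note first that the paper offers no proof of this theorem: both parts are quotations from Peyre, and the only in-paper material is the chain of equalities for part (ii) reproduced in Section 5 (Peyre's lattice $Q=\bigoplus_{H\leq G}\bZ[G/H]^{|H^{ab}|}$, Shapiro's lemma, and compatibility with cup products). Your argument for (ii) is essentially that argument and is correct in outline: the connecting map on the $(H,\chi)$-summand is indeed ${\rm Cores}^G_H\circ(\chi\cup-)$ under Shapiro, and summing over $(H,\chi)$ gives the formula. You should, however, make explicit that your $Q^\ast$ is $H^1$-trivial (equivalently, that $H^0(H',Q)\to H^1(H',\mu)$ is onto for every $H'\leq G$, which holds because the summand indexed by $(H',\chi)$ already hits $\chi$) and invoke the independence of $H_{\rm p}^3$ from the choice of $Q^\ast$ (Definition 2.13), so that computing with this particular resolution is legitimate.

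The genuine gap is in (i). The inclusion $H_{\rm p}^3\subseteq H_{\rm n}^3$ and the equivalence with $[H_{\rm n}^3:H_{\rm p}^3]=2^d$ are fine (though you never address surjectivity of $\overline{\iota}$, which needs Saltman's theorem that $H^3_{\rm nr}(\bC(G),\bQ/\bZ)$ lies in the image of inflation together with $H_{\rm nr}^3(G,\bQ/\bZ)=\iota^{-1}(H^3_{\rm nr}(\bC(G),\bQ/\bZ))$, cf. Definition 2.10). But your proposed mechanism for the $2$-power bound --- that $N/Q^\ast$ is, modulo a summand with vanishing cohomology on all subgroups, a ``signed permutation'' lattice whose signs come from the ambiguity in choosing coherent roots, and that for odd $|G|$ the signs disappear --- is unsubstantiated, and it is not where the discrepancy between geometrically negligible and permutation negligible classes lives. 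Writing $K=\bC(G)$ and $L=\bC(x_g:g\in G)$, one has $H_{\rm p}^3={\rm Ker}\{H^3(G,\bQ/\bZ)\to H^3(G,L^\times)\}$, and by Hochschild--Serre for $\Gamma_L\lhd\Gamma_K$ together with Hilbert 90 the quotient $H_{\rm n}^3/H_{\rm p}^3$ injects into the image of the transgression $d_3$ out of a subgroup of ${\rm Br}(L)^G$; showing that the relevant part of this image is $2$-primary is exactly the hard content of Peyre's Theorem 1, and it requires the Merkurjev--Suslin theorem and a genuine weight-two (motivic) comparison, not bookkeeping of root choices. Your closing paragraph effectively concedes this by deferring to Peyre, and even there the key point is misstated: $\{a,-a\}=0$ holds integrally in Milnor $K$-theory; the $2$-torsion enters because $\{a,a\}=\{a,-1\}$ is $2$-torsion, i.e. the weight-one cup products defining $H_{\rm p}^3$ and the weight-two symbols detected in $H^3(K,\bQ/\bZ(2))$ differ by classes killed by $2$. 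As written, the proposal establishes (ii) but reduces (i) to the very theorem of Peyre it set out to prove.
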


Finally we define the stable cohomology group.

\begin{definition}[{Bogomolov \cite[Definition 6.4, Lemma 6.5, Theorem 6.8]{Bo3}}]\label{d2.15}

The quotient group
\[
H^i_{\rm s}(G,\bQ/\bZ)=H^i(G,\bQ/\bZ)/H_{\rm n}^i(G,\bQ/\bZ)
\]
is called {\it the stable cohomology of $G$} of degree $i$
(see also \cite[page 6]{Bo2}, \cite[page 938]{BP}, \cite[page 57]{BB1},
\cite[page 212]{BB2}).
\end{definition}

When $i=3$, since $H^3_{\rm nr}(\bC(G),\bQ/\bZ)$ is a subgroup of the image of the
inflation map $\iota:H^3(G,\bQ/\bZ)\rightarrow H^3(\bC(G),\bQ/\bZ)$ by \cite[page 230, Theorem 5.3]{Sa5}, it follows that
\[
H^3_{\rm s}(G,\bQ/\bZ)\simeq
\iota(H^3(G,\bQ/\bZ))\geq H^3_{\rm nr}(\bC(G),\bQ/\bZ).
\]

Note that, in the paper of Bogomolov, Petrov and Tschinkel \cite[page 68]{BPT}
(see also \cite[Definition 8.5]{Bo3}, \cite[page 938]{BP} or \cite[page 214]{BB2}), the group $\iota(H^i(G,\bQ/\bZ))\cap H^i_{\rm nr}(\bC(G),\bQ/\bZ)$
is designated as $H^i_{\rm nr}(G,\bQ/\bZ)$, which is different from what we define in Definition \ref{d2.11}.
In our paper, we will not adopt the usage of $H^3_{\rm nr}(G,\bQ/\bZ)$ in \cite{BPT}.

\bigskip
\section{Proof of Theorem \ref{t1.14}\ and Theorem \ref{t1.15}}\label{sePT}

We will evaluate the unramified cohomology group
$H^3_{\rm nr}(\bC(G),\bQ/\bZ)$ for groups $G$ of order $3^5$, $5^5$ and $7^5$.

Let $p$ be odd prime number and $G$ be a group of order $p^5$. We explain briefly our strategy to trap the group $H^3_{\rm nr}(\bC(G),\bQ/\bZ)$. By Peyre's theorem (see Definition \ref{d2.10} and Theorem \ref{t2.14}),
we have
\[
H^3_{\rm nr}(\bC(G),\bQ/\bZ)\simeq
H_{\rm nr}^3(G,\bQ/\bZ)/H_{\rm p}^3(G,\bQ/\bZ).
\]

We use the quotient group $H^3(G,\bQ/\bZ)/H_{\rm p}^3(G,\bQ/\bZ)$ to approximate the group $H_{\rm nr}^3(G,\bQ/\bZ)/H_{\rm p}^3(G,\bQ/\bZ)$. In general, it is difficult to find the group $H_{\rm nr}^3(G,\bQ/\bZ)/H_{\rm p}^3(G,\bQ/\bZ)$ directly. However, when $p=3, 5, 7$, the group $H^3(G,\bQ/\bZ)/H_{\rm p}^3(G,\bQ/\bZ)$ is ``small" enough that it is possible to decide, by a case by case checking, which cohomology classes of the quotient $H^3(G,\bQ/\bZ)/H_{\rm p}^3(G,\bQ/\bZ)$ belong to $H_{\rm nr}^3(G,\bQ/\bZ)/H_{\rm p}^3(G,\bQ/\bZ)$ by applying Peyre's formula in Definition \ref{d2.11}. Thus we find the quotient group $H_{\rm nr}^3(G,\bQ/\bZ)/H_{\rm p}^3(G,\bQ/\bZ)$ by this indirect method.

\bigskip
Here is the proof of Theorem \ref{t1.14} and Theorem \ref{t1.15}. It is rather long and consists of five steps.

Step 1.
We will evaluate the cohomology group $H^3(G,\bQ/\bZ)$, its subgroup $H^3_{\rm p}(G,\bQ/\bZ)$, and the quotient group $H^3(G,\bQ/\bZ)/H^3_{\rm p}(G,\bQ/\bZ)$.

Note that $H^3_{\rm n}(G,\bQ/\bZ)=H^3_{\rm p}(G,\bQ/\bZ)$ because $|G|=p^5$ and $p$ is odd prime number. We will apply Theorem \ref{t2.14} (ii) to find $H^3_{\rm p}(G,\bQ/\bZ)$, and apply Theorem \ref{t2.14} (i) to find the stable cohomology group $H^3_{\rm s}(G,\bQ/\bZ)$.

In this step we will work on $H^4(G,\bZ)$, instead of $H^3(G,\bQ/\bZ)$ for the following reason. In the commutative diagram about cup products (for the situation $i=j=1$) in Theorem \ref{t2.14}, the computer package HAP for GAP takes care of cohomology groups with coefficient in $\bZ$, i.e. it works for $H^4(G,\bZ)$, but not $H^3(G,\bQ/\bZ)$) although they are isomorphic as abstract groups. Thus we will determine $H^4(G,\bZ)$ and $H^4_{\rm p}(G,\bZ)$, which is isomorphic to $H^3_{\rm p}(G,\bQ/\bZ)$ (see the definition of
$H^4_{\rm p}(G,\bZ)$ described below).

\medskip
To compute $H^4(G,\bZ)$, we first take a free resolution of the trivial $\bZ[G]$-module $\bZ$.

We use the HAP function ${\tt ResolutionNormalSeries(LowerCentralSeries(}G{\tt ,5))}$ to obtain a free resolution $RG$:
\begin{align*}
RG:\quad
\cdots\rightarrow P_5\rightarrow P_4\rightarrow P_3\rightarrow P_2
\rightarrow P_1\rightarrow P_0\rightarrow\bZ\rightarrow 0.
\end{align*}
We remark that the above resolution is not the usual bar resolution, and the $\bZ[G]$-rank of $P_i$ ($i \ge 1$)
is equal to
the binomial coefficient $\binom{i+4}{i}$.
For examples, the $\bZ[G]$-rank of
$P_0=\bZ[G]$, $P_1$, $P_2$, $P_3$, $P_4$, $P_5$
are $1$, $5$, $15$, $35$, $70$, $126$ respectively. It is obvious that such a polynomial growth resolution is more efficient for computer computing than the bar resolution. See \cite{EHS} for details.

Form the cochain complex ${\rm Hom}_{\bZ[G]}(P_{\bullet}, \bZ)$ and find the cohomology group $H^i(G,\bZ)=Z^i(G,\bZ)/B^i(G,\bZ)$ where $Z^i(G,\bZ)$ and $B^i(G,\bZ)$ are the groups of the $i$-th cocycles and the $i$-th coboundaries respectively.

By the HAP function {\tt CR\_CocyclesAndCoboundaries(}$RG${\tt ,4,true)}
we obtain bases for $Z^4(G,\bZ)$ and $B^4(G,\bZ)$.

By the command {\tt CR\_CocyclesAndCoboundaries(}$RG${\tt ,4,true).torsionCoefficients} we find the cohomology group $H^4(G,\bZ)$ (see Cases 1a--7d after the proof).

\bigskip
Now we turn to the group $H_{\rm p}^3(G,\bQ/\bZ)$. Use Peyre's notation
\begin{align*}
H_{\rm p}^3(G,\bQ/\bZ)=
\sum_{H\leq G}{\rm Cores}_H^G({\rm Image}\{H^1(H,\bQ/\bZ)^{\otimes 2}
\xrightarrow{\cup} H^3(H,\bQ/\bZ)\})
\end{align*}
in Theorem \ref{t2.14}.
Let $H_{\rm p}^4(G,\bZ)$ be the image of $H_{\rm p}^3(G,\bQ/\bZ)$ in the isomorphism $H^3(G,\bQ/\bZ) \to H^4(G,\bZ)$.
{}From the commuting diagram given in Theorem \ref{t2.14} it is clear that $H_{\rm p}^4(G,\bZ)$ may be defined as follows :
\begin{align*}
H_{\rm p}^4(G,\bZ)=
\sum_{H\leq G}{\rm Cores}_H^G({\rm Image}\{H^2(H,\bZ)^{\otimes 2}
\xrightarrow{\cup} H^4(H,\bZ))\})
\end{align*}
where $H$ runs over all the subgroups of $G$.

\bigskip
Step 2. In the above definition of $H_{\rm p}^4(G,\bZ)$, we may drop many subgroups $H\leq G$ without affecting the group $H_{\rm p}^4(G,\bZ)$.

In fact, we may take only non-perfect subgroups $H$,
i.e. $D(H)\lneq H$. The reason is that, if $D(H)=H$,
then $H^2(H,\bZ)\simeq H^1(H,\bQ/\bZ)={\rm Hom}(H,\bQ/\bZ)\simeq H/D(H)=1$.

Also, it suffices to take subgroups $H$ up to conjugation. The reason is that: if $H^\sigma:= \sigma^{-1} H \sigma$ is the conjugate of $H$ by $\sigma$, then $H$ and $H^\sigma$ have naturally isomorphic cohomology groups and are sent to the same subgroup of $H^4(G,\bZ)$ by the corestriction maps (because the conjugation map on $G$ induces the identity map on $H^q(G, \bQ/\bZ)$ by \cite[page p. 116, Proposition 3]{Se}).

Moreover, if $D(H^\prime)=D(H)$ and $H^\prime < H$ (remember that $D(H):=[H,H]$ by definition), then we do not need to count on $H^\prime$. The reason is that: for any $\chi^\prime\in H^1(H^\prime,\bQ/\bZ)$ there exists
$\chi\in H^1(H,\bQ/\bZ)={\rm Hom}(H,\bQ/\bZ)$ such that
$\chi^\prime={\rm Res}^H_{H^\prime}\chi$
(because $H$ and $H^\prime$ have the same commutator subgroup !). Thus ${\rm Cores}^G_{H^\prime}(\chi_1^\prime\cup\chi_2^\prime)
={\rm Cores}^G_H{\rm Cores}^H_{H^\prime}(\chi_1^\prime\cup\chi_2^\prime)=
{\rm Cores}^G_H{\rm Cores}^H_{H^\prime}({\rm Res}^H_{H^\prime}\chi_1
\cup{\rm Res}^H_{H^\prime}\chi_2)
={\rm Cores}^G_H{\rm Cores}^H_{H^\prime}({\rm Res}^H_{H^\prime}
\chi_1\cup\chi_2)
=[H:H^\prime]\,{\rm Cores}^G_H(\chi_1\cup\chi_2)$.

In conclusion, it suffices to evaluate
\begin{align*}
H_{\rm p}^4(G,\bZ)=
\sum_{D(H)\lneq H\leq G:\, {\rm up\, to\, conjugation}
\atop {H^\prime < H:\,{\rm maximal}\, {\rm with}
\atop D(H^\prime)=D(H)}}
{\rm Cores}_H^G({\rm Image}\{H^2(H,\bZ)^{\otimes 2}
\xrightarrow{\cup} H^4(H,\bZ)\}).
\end{align*}
The above group may be obtained by applying the following GAP function {\tt H4pFromResolution(}$RG${\tt )}
(see Cases 1a--7d after the proof):\\

\noindent
{\tt H4pFromResolution(}$RG${\tt )}
prints the number of conjugacy subgroups $H\leq G$ with $D(H)\lneq H$
which is the maximal one having the same commutator subgroup $D(H)$,
their SmallGroup IDs of GAP and the computing progress rate,
and returns the list $l=[l_1,[l_2,l_3]]$ for a free resolution
$RG$ of $G$
where $l_1$ is the abelian invariant of $H^4_{\rm p}(G,\bZ)$
with respect to Smith normal form,
$l_2$ is the abelian invariants of $H^4(G,\bZ)$
with respect to Smith normal form
and $l_3$ is generators of $H^4_{\rm p}(G,\bZ)$ in $H^4(G,\bZ)$
for a free resolution $RG$ of $G$.\\

\bigskip
Step 3.
Suppose that $\Gamma:=H \times I$ with $I=\langle g \rangle$.
We digress to explain a method of finding the cohomology groups $H^n(\Gamma, \bQ/\bZ)$ (where $n \ge 0$) as presented in the HAP command {\tt ResolutionDirectProduct(}$RH,RI${\tt )}. Note that, in Step 4, $H$ is a subgroup of $G$, $g\in Z_G(H)$, and $\Gamma=H \times I$ where $I=\langle g \rangle$.

\bigskip
First take a free resolution of $\bZ$ over $\bZ[H]$ as in Step 1:
\begin{align*}
RH:\quad \cdots\xrightarrow{d_4} P_3\xrightarrow{d_3} P_2\xrightarrow{d_2} P_1
\xrightarrow {d_1} P_0=\bZ[H]\xrightarrow{\varepsilon}\bZ\rightarrow 0.
\end{align*}

Then take a free resolution of $\bZ$ over $\bZ[I]$ (the ``norm resolution" of the cyclic group $I$):
\begin{align*}
RI:\quad \cdots
\xrightarrow{\sum_k g^k} Q_3 = \bZ[I]
\xrightarrow{g-1} Q_2 = \bZ[I]
\xrightarrow{\sum_k g^k} Q_1 = \bZ[I]
\xrightarrow{g-1} Q_0=\bZ[I]
\xrightarrow{\varepsilon}\bZ\rightarrow 0.
\end{align*}

Because $\bZ[\Gamma] \simeq \bZ[H] \otimes_{\bZ} \bZ[I]$, the group $P_i \otimes_{\bZ} Q_j$ may be regarded as a $\bZ[\Gamma]$-module as follows: Let $p_{i,1},\dots,p_{i,n_i}$ be a $\bZ[H]$-basis of $P_i$
and
$q_j$ be a $\bZ[I]$-basis of $Q_j$.
Define
$r_{i,j,k}=p_{i,k}\otimes q_j$, which becomes a free $\bZ[\Gamma]$-module
$R_{i,j}=\bZ[\Gamma]r_{i,j,1} \oplus \cdots \oplus \bZ[\Gamma]r_{i,j,n_i}$.
Thus we obtain a third quadrant bicomplex
\begin{align*}
\begin{CD}
\vdots @. \vdots @. \vdots @. \vdots\\\
@VV\sum_k g^k V @VV-\sum_k g^k V @VV\sum_k g^k V @VV-\sum_k g^k V \\
R_{0,3} @<d_1<< R_{1,3} @<d_2<< R_{2,3} @<d_3<< R_{3,3} @<d_4<< \cdots\\
@VV g-1 V @VV -(g-1) V @VV g-1 V @VV -(g-1) V \\
R_{0,2} @<d_1<< R_{1,2} @<d_2<< R_{2,2} @<d_3<< R_{3,2} @<d_4<< \cdots\\
@VV\sum_k g^k V @VV-\sum_k g^k V @VV\sum_k g^k V @VV-\sum_k g^k V \\
R_{0,1} @<d_1<< R_{1,1} @<d_2<< R_{2,1} @<d_3<< R_{3,1} @<d_4<< \cdots\\
@VV g-1 V @VV -(g-1) V @VV g-1 V @VV -(g-1) V \\
R_{0,0} @<d_1<< R_{1,0} @<d_2<< R_{2,0} @<d_3<< R_{3,0} @<d_4<< \cdots.\\\\
\end{CD}
\end{align*}
Note that $R_{i,0}\simeq R_{i,1}\simeq \cdots\simeq R_{i,j}\simeq \cdots$
as $\bZ[\Gamma]$-modules and ${\rm rank}_{\bZ[\Gamma]} R_{i,j}=n_i$.

Define $R_{i,j}^\ast={\rm Hom}_{\bZ[\Gamma]}(R_{i,j},\bQ/\bZ)$. We obtain a first quadrant bicomplex
\begin{align*}
\begin{CD}
\vdots @. \vdots @. \vdots @. \vdots\\\
@AA|I|A @AA-|I|A @AA|I|A @AA-|I|A \\
R_{0,3}^\ast @>d_1^\ast>> R_{1,3}^\ast @>d_2^\ast>> R_{2,3}^\ast @>d_3^\ast>> R_{3,3}^\ast @>d_4^\ast>>\cdots \\
@AA0A @AA0A @AA0A @AA0A \\
R_{0,2}^\ast @>d_1^\ast>> R_{1,2}^\ast @>d_2^\ast>> R_{2,2}^\ast @>d_3^\ast>> R_{3,2}^\ast @>d_4^\ast>>\cdots \\
@AA|I|A @AA-|I|A @AA|I|A @AA-|I|A \\
R_{0,1}^\ast @>d_1^\ast>>R_{1,1}^\ast @>d_2^\ast>> R_{2,1}^\ast @>d_3^\ast>> R_{3,1}^\ast @>d_4^\ast>>\cdots \\
@AA0A @AA0A @AA0A @AA0A \\
R_{0,0}^\ast @>d_1^\ast>> R_{1,0}^\ast @>d_2^\ast>> R_{2,0}^\ast @>d_3^\ast>> R_{3,0}^\ast @>d_4^\ast>>\cdots. \\\\
\end{CD}
\end{align*}

Note that the vertical differentials $R_{p,2h}^\ast \to R_{p,2h+1}^\ast$ (where $p, h \ge 0$) are the zero maps and the vertical differentials $R_{p,2h+1}^\ast \to R_{p,2h+2}^\ast$ (where $p, h \ge 0$) are the multiplication by $|I|$ or $-|I|$ because $g$ acts trivially on $H$ and $\bQ/\bZ$. Be aware that we have adjusted the signs of the vertical differentials of this bicomplex.

\medskip
We also note that $R_{p,q}^\ast={\rm Hom}_{\bZ[\Gamma]}(R_{p,q},\bQ/\bZ) \simeq {\rm Hom}_{\bZ[H]}(P_p, {\rm Hom}_{\bZ[I]}(Q_q,\bQ/\bZ))$.

Consider the spectral sequence associated to the first quadrant bicomplex $\{{\rm Hom}_{\bZ[H]}(P_p, {\rm Hom}_{\bZ[I]}(Q_q,\bQ/\bZ)): p,q \ge 0 \}$ \cite[page 341]{Mac}. Its $E_2$ term for the (first) filtration is given by $E_2^{p,q}= H^p(H^q(\bE_0, d^{\prime\prime}),d^{\prime})$ where $d^{\prime}$ and $d^{\prime\prime}$ denote the horizontal differentials and the vertical differentials respectively \cite[page 73]{Ev}. The group $H^p(H^q(\bE_0, d^{\prime\prime}),d^{\prime})$ is nothing but $H^p(H, H^q(I, \bQ/\bZ))$ in the present situation. By the Lyndon-Hochschild-Serre Theorem, the spectral sequence with $E_2^{pq}= H^p(H, H^q(I, \bQ/\bZ))$ converges to the groups $H^{p+q}(H \times I, \bQ/\bZ)$ by \cite[pages 72--73]{Ev}. Since $\Gamma$ is a direct product of $H$ and $I$, the associated spectral sequence of $\{R_{p,q}^\ast: p,q \ge 0 \}$ degenerates at the $E_2$ level and therefore the cohomology group $H^n$ is isomorphic to $\oplus_{p+q=n} E_2^{pq}$ by Theorem \ref{t1.17}. Thus the two target groups of this spectral sequence are isomorphic to $\oplus_{p+q=n} E_2^{pq}$; hence they are isomorphic.

In conclusion, $H^n(H \times I, \bQ/\bZ)$ is isomorphic to the $n$-th homology group of the cochain complex \[
0 \to R_0^\ast \to R_1^\ast \to R_2^\ast \to \cdots \to R_n^\ast \to \cdots
\]
where
$R_n^\ast:=R_{n,0}^\ast \oplus R_{n-1,1}^\ast \oplus \cdots \oplus R_{0,n}^\ast$.

We remark that an alternative proof of the above assertion may be found by using the method indicated in the paper of C. T. C. Wall \cite{Wa}, which was adopted by \cite{HAP}.

\bigskip
Step 4.
The goal is to find the quotient group $H^3_{\rm nr}(G,\bQ/\bZ)/H^3_{\rm p}(G,\bQ/\bZ)$. But it is difficult to find the group $H^3_{\rm nr}(G,\bQ/\bZ)$ as a whole by applying Definition \ref{d2.11} directly. We take a detour by computing the group $H^3(G,\bQ/\bZ)/H^3_{\rm p}(G,\bQ/\bZ)$ first, and then for each cohomology class $\gamma \in H^3(G,\bQ/\bZ)$ we apply Peyre's formula (see Definition \ref{d2.11}) to check that whether $\gamma \in H^3_{\rm nr}(G,\bQ/\bZ)$. Recall that Peyre's formula is defined as:
\begin{align*}
H^3_{\rm nr}(G,\bQ/\bZ)
=\bigcap_{H\leq G\atop g\in Z_G(H)}{\rm Ker}(\partial_{H,g}).
\end{align*}

To check whether $\gamma \in H^3_{\rm nr}(G,\bQ/\bZ)$, it suffices to check in GAP whether $\partial_{H,g}(\gamma)=0$ for any subgroup $H$ of the group $G$ and any $g\in Z_G(H)$. However, as in Step 2, it is unnecessary to check all the subgroups $H$ (see Step 5); we just need to check those ``significant" ones.

In this step, we will indicate how to carry out the map $\partial_{H,g}(\gamma)$ where $\gamma \in H^3(G,\bQ/\bZ)$, $H$ is a subgroup of $G$ and $g\in Z_G(H)$.

\bigskip
Define $\Gamma=H\times I$ where $I= \langle g \rangle$. In the sequel, if $\gamma \in H^3(G,\bQ/\bZ)$, $w$ is a $3$-cocycle in $Z^3(G,\bQ/\bZ)$ (the group of $3$-cocycles) and $\gamma$ is the cohomology class associated to $w$, we will write $\gamma=[w]$.

{}From Definition \ref{d2.11}, recall the definition that $\partial_{H,g}=
\partial\circ m^\ast=\psi_g\circ\varphi\circ S_{H,I}\circ m^\ast$:
\begin{align*}
\xymatrix{
\partial_{H,g}: H^3(G,\bQ/\bZ)\ar[r]^{m^\ast}&H^3(H\times I,\bQ/\bZ)
\ar[d]^{S_{H,I}} \ar[r]^{\partial} \ar@{}[dr]|\circlearrowleft
& H^2(H,\bQ/\bZ)  \\
& H^3(H\times I,\bQ/\bZ)_1 \ar[r]^{\varphi} & \ar[u]_{\psi_g} H^2(H,H^1(I,\bQ/\bZ)).
}
\end{align*}

Write $\xi=m^\ast(\gamma)$ and $[u]:=S_{H,I}(\xi)=\xi-{\rm pr}_2^\ast \circ i_2^\ast (\xi)\in H^3(H \times I, \bQ/\bZ)_1$ where $u \in Z^3(H \times I, \bQ/\bZ)$ (the group of $3$-cocycles in $R_3^\ast=R_{3,0}^\ast \oplus R_{2,1}^\ast
\oplus R_{1,2}^\ast \oplus R_{0,3}^\ast$). Remember that the cochain complex $R^\ast$ defined at the end of Step 3:
\[
0 \to R_0^\ast \to R_1^\ast \to R_2^\ast \to R_3^\ast \to \cdots
\]
where
$R_i^\ast:=R_{i,0}^\ast \oplus R_{i-1,1}^\ast \oplus \cdots \oplus R_{0,i}^\ast$, $d^{\prime}, d^{\prime \prime}$ are the horizontal and the vertical differentials, and the differentials of $R^\ast$ are the total differentials $d^\ast := d^{\prime}+d^{\prime \prime}$.

Consider the first filtration $\{F^pR^\ast : p \ge 0 \}$ of $R^\ast$ defined by $F^pR_n^\ast= \oplus_{p' \ge p}R_{p',n-p'}^\ast$ \cite[page 70]{Ev}. It follows that $F^p H^{p+q}(R^\ast)/F^{p+1} H^{p+q}(R^\ast) \simeq E_{\infty}^{p,q}$ \cite[page 71]{Ev}. Note that $E_{\infty}^{p,q}=E_2^{p,q}\simeq H^p(H, H^q(I, \bQ/\bZ))$ by Theorem \ref{t1.17}.

By Definition \ref{d2.11}), $[u] \in H^3(H\times I,\bQ/\bZ)_1
={\rm Ker}\{H^3(H\times I,\bQ/\bZ)\xrightarrow{\rm res} H^3(I,\bQ/\bZ)\}$, and the components in $H^1(H,H^2(I,\bQ/\bZ))$ and $H^0(H,H^3(I,\bQ/\bZ))$ of $[u]$ are zero. Thus the image of $[u]$ in $F^0 H^3(R^\ast)/F^1 H^3(R^\ast) \simeq H^0(H, H^3(I, \bQ/\bZ)$ is zero. It follows that $[u] \in F^1 H^3(R^\ast)$. Since $F^1 H^3(R^\ast)/F^2 H^3(R^\ast)=0$ (remember that $H^2(I, \bQ/\bZ)=0$), thus $[u] \in F^2 H^3(R^\ast)$. Because $F^2 H^3(R^\ast)$ is the image of $H^3(F^2R^\ast) \to H^3(R^\ast)$ by definition, we may find a $3$-cocycle $v=v_0+v_1$ (where $v_0 \in R_{3,0}^\ast$, $v_1 \in R_{2,1}^\ast$) such that $[u]=[v]$.

We conclude that $[v]=S_{H,I}\circ m^\ast(\gamma)$ where $v=v_0+v_1$ with $v_0 \in R_{3,0}^\ast$, $v_1 \in R_{2,1}^\ast$.

\bigskip
Because the vertical differential $d^{\prime \prime}$ is the zero map on $R_{3,0}^\ast$, it follows that $d^{\prime \prime}(v_0)=0$. From  $d^\ast(v_0+v_1)=0$, it is easy to verify that $d^{\prime}(v_0)=0$, $d^{\prime}(v_1)=0$ and $d^{\prime \prime}(v_1)=0$. Thus  $d^\ast(v_0)=0$ and $d^\ast(v_1)=0$, i.e. both $v_0$ and $v_1$ are $3$-cocycles in the cochain complex $R^\ast$.

In other words, if $\Phi$ is the isomorphism of $H^3(H\times I,\bQ/\bZ)$ onto $H^3(H,H^0(I,\bQ/\bZ))\oplus H^2(H,H^1(I,\bQ/\bZ))\oplus
H^1(H,H^2(I,\bQ/\bZ))\oplus H^0(H,H^3(I,\bQ/\bZ))$ provided by Theorem \ref{t1.17}, then $\Phi$ sends $[v]$ to $[v_0]+[v_1]$. We conclude that $\phi([v])=[v_1]$. Thus $\partial_{H,g}(\gamma)= \psi_g([v_1])$.

\medskip
We will transform the formula $\phi([v])=[v_1]$ into a more convenient form, which helps us understand the element $\psi_g([v_1])$ better.

Recall that $R_{i,j}^\ast={\rm Hom}_{\bZ[H\times I]}(R_{i,j},\bQ/\bZ)$. We define $P_i^\ast={\rm Hom}_{\bZ[H]}(P_i,\bQ/\bZ)$. Since $R_{i,j}$ is a free $\bZ[H\times I]$ with generators $p_{i,k} \otimes q_j$ and $P_i$ is a free $\bZ[H]$ with generators $p_{i,k}$ (where  and $1 \le k \le n_i$), morphisms in ${\rm Hom}_{\bZ[H\times I]}(R_{i,j},\bQ/\bZ)$ and in ${\rm Hom}_{\bZ[H]}(P_i,\bQ/\bZ)$ are uniquely determined by their images of these generators. Because there is a bijection of these two sets of generators, we find an isomorphism $f_{i,j}^\ast: R_{i,j}^\ast\rightarrow P_i^\ast$ as abelian groups.

Consider the following commutative diagram
\begin{align*}
\begin{CD}
R_{0,1}^\ast @>d_1^{\prime}>> R_{1,1}^\ast @>d_2^{\prime}>> R_{2,1}^\ast @>d_3^{\prime}>> R_{3,1}^{\prime} @>d_4^{\prime}>> \cdots \\
@Vf_{0,1}V\simeq V  @Vf_{1,1}V\simeq V @Vf_{2,1}^\ast V\simeq V @Vf_{3,1}V\simeq V \\
P_0^\ast @>d_1^\ast>> P_1^\ast @>d_2^\ast>> P_2^\ast @>d_3^\ast>> P_3^\ast @>d_4^\ast>> \cdots\\
\end{CD}
\end{align*}
where the differentials in the top row are the horizontal differentials of the bicomplex $\{R_{p,q}^\ast \}$ and the differentials in the bottom row are those induced by the differentials for the chain complex $\cdots\to P_3 \to P_2 \to P_1 \to P_0 \to 0$.

Let $v, v_0, v_1$ be as before. We claim that $f_{2,1}^\ast(v_1)\in Z^2(H,\frac{1}{|I|}\bZ/\bZ)$. It is routine to check that
$d_3^\ast(f_{2,1}^\ast(v_1))=f_{3,1}(d_3^{\prime}(v_1))=f_{3,1}(0)=0$ because $v_1\in Z^3(\Gamma,\bQ/\bZ)$ as we note before.
Hence $f_{2,1}^\ast(v_1)\in Z^2(H,\bQ/\bZ)$.

On the other hand, since $v_1$ is a cocycle with respect to the differential $d^\ast=d^{\prime}+d^{\prime \prime}$, it follows that $d^{\prime \prime}(v_1)=0$. Thus $0=d^{\prime \prime}(v_1)=-|I|v_1$. It follows that $|I|(f_{2,1}^\ast(v_1))=f_{2,1}^\ast(|I|v_1)=f_{2,1}^\ast(0)=0$.
Thus $f_{2,1}^\ast(v_1)\in Z^2(H,\frac{1}{|I|}\bZ/\bZ)$. Note that $Z^2(H,\frac{1}{|I|}\bZ/\bZ)\simeq Z^2(H,H^1(I,\bQ/\bZ))$.

In conclusion, we find that $[f_{2,1}^\ast(v_1)]=\varphi(S_{H,I}([v]))\in
H^2(H,H^1(I,\bQ/\bZ))\simeq H^2(H,\frac{1}{|I|}\bZ/\bZ))$.
Hence we get $\partial_{H,g}(\gamma)=\partial\circ m^\ast(\gamma)=\psi_g([f_{2,1}^\ast(v_1)])$, a formula which  will be used later.

\bigskip
Step 5.
We explain how to implement the computation of $H_{\rm nr}^3(\bC(G),\bQ/\bZ) \simeq H_{\rm nr}^3(G,\bQ/\bZ)/H_{\rm p}^3(G,\bQ/\bZ)$ when $G$ is a group of odd order.

Use the GAP computation to compute $H^4(G,\bZ)$ ($\simeq H^3(G,\bQ/\bZ)$) and $H_{\rm p}^4(G,\bZ)$, which is defined in Step 1 and Step 2 of this Section.

If $H_{\rm p}^4(G,\bZ)= H^4(G,\bZ)$, then $H_{\rm n}^3(G, \bQ/\bZ)= H^3_{\rm nr}(G,\bQ/\bZ)$ and therefore $H^3_{\rm nr}(\bC(G),\bQ/\bZ)=0$; the proof is finished. From now on we assume that $H_{\rm p}^4(G,\bZ)\lneq H^4(G,\bZ)$

For any $\overline{f}\in H^4(G,\bZ)/H^4_{\rm p}(G,\bZ)$, we will determine
whether $f\in H^4_{\rm nr}(G,\bZ)$ or not.

Take the free resolution $RG$ in Step 1. The formula in Definition \ref{d2.11} is reformulated as
\begin{align*}
H^4_{\rm nr}(G,\bZ)
:=\bigcap_{H\leq G\atop g\in Z_G(H)}{\rm Ker}(\widetilde{\partial}_{H,g})
\end{align*}
where
\begin{align*}
\xymatrix{
\widetilde{\partial}_{H,g}:H^4(G,\bZ)\ar[r]^{\widetilde{m^\ast}}
\ar[d]^{\simeq}\ar@{}[dr]|\circlearrowleft
&H^4(H\times I,\bZ) \ar[d]^{\simeq} \ar[r]^{\widetilde{\partial}}
\ar@{}[dr]|\circlearrowleft
& H^3(H,\bZ) \ar[d]^{\simeq} \\
\partial_{H,g}:H^3(G,\bQ/\bZ)\ar[r]^{m^\ast}&H^3(H\times I,\bQ/\bZ)
\ar[r]^{\partial}
& H^2(H,\bQ/\bZ).
}
\end{align*}

In applying the formula
\begin{align*}
H^4_{\rm nr}(G,\bZ)
=\bigcap_{H\leq G\atop g\in Z_G(H)}{\rm Ker}(\widetilde{\partial}_{H,g}),
\end{align*}
we may ignore many pairs $(H,I)$. In fact, it suffices to consider only
those pairs $(H,I)$ of $H\leq G$ and $I=\langle g\rangle\leq Z_G(H)$
satisfying the following properties:\\
{\rm (i)} $I=\langle g\rangle$ for some $g$;\\
{\rm (ii)} $(H,I)$ is chosen up to conjugation;\\
{\rm (iii)} $(H^\prime,I^\prime)\leq (H,I)$ is maximal, and thus we may assume that $H=Z_G(I)$ (where $I=\langle g\rangle$) and $g$ belongs to the center of $H$;\\
{\rm (iv) $H^3(H,\bZ)\neq 0$.\\
The reasons are:
{\rm (i)} If $I=\langle g\rangle=\langle g^\prime\rangle$,
then ${\rm Ker}(\widetilde{\partial}_{H,g})={\rm Ker}(\widetilde{\partial}_{H,g^\prime})$;

{\rm (ii)} 
We have ${\rm Ker}(\widetilde{\partial}_{H,g})={\rm Ker}(\widetilde{\partial}_{H^\sigma,g^\sigma})$;

{\rm (iii)} If $H^\prime\leq H$ and $I^\prime=\langle g^\prime\rangle
\leq I=\langle g\rangle$,
then ${\rm Ker}(\widetilde{\partial}_{H,g})\leq {\rm Ker}(\widetilde{\partial}_{H^\prime,g^\prime})$. Moreover, if $\langle g\rangle\leq Z_G(H)$, then $H \leq Z_G(\langle g \rangle)$; thus $(H, \langle g \rangle) \leq (Z_G(\langle g \rangle), \langle g \rangle)$. By the maximality, we may take $(Z_G(\langle g \rangle), \langle g \rangle)$ and ignore $(H, \langle g \rangle)$. Clearly $g$ belongs to the center of $Z_G(\langle g \rangle)$;

{\rm (iv)} If $H^3(H,\bZ)=0$, from the top row of the above commutative diagram, we find that ${\rm Ker}(\widetilde{\partial}_{H,g})=H^4(G,\bZ)$. Obviously $H^4(G,\bZ)$ contains $H^4_{\rm nr}(G,\bZ)$; thus we do not need to count ${\rm Ker}(\widetilde{\partial}_{H,g})$ which arises from the subgroup $H$ with $H^3(H,\bZ)=0$.

\bigskip
With the aid of (i)--(iv), we write the following command
{\tt IsUnramifiedH3(}$RG$, $l${\tt )} of GAP (HAP)
and apply it to determine the group
$H^4_{\rm nr}(G,\bZ)\simeq H^3_{\rm nr}(G,\bQ/\bZ)$:\\

\noindent
{\tt IsUnramifiedH3(}$RG$, $l${\tt )}
prints the number of pairs $(H,I)$ of $H\leq G$ and $I\leq Z(H)$
which satisfy (i)--(iv) above, the computing progress rate
and the list $l^\prime=[l_1,l_2]$ where
$l_1$ is the abelian invariant of $H^3(H,\bZ)\simeq H^2(H,\bQ/\bZ)$
with respect to Smith normal form and
$l_2$ is the generator of $\widetilde{\partial}_{H,g}(l)$ in $H^3(H,\bZ)$
and returns true (resp. false)
if the generator $l$ is in $H^4_{\rm nr}(G,\bZ)$
(resp. is not in $H^4_{\rm nr}(G,\bZ)$)
for a free resolution $RG$ of $G$ and a generator $l$ of $H^4(G,\bZ)$.\\

For some cases of groups $G$ with $|G|=5^5$ or $7^5$, to execute the command
{\tt IsUnramifiedH3(}$RG$, $l${\tt )}
requires quite a lot of time. We may improve the command by a modified one {\tt IsUnramifiedH3(}$RG$, $l${\tt :Subgroup)}; this modified command
(with {\tt Subgroup} option) works well for $p=3$, $5$, $7$. The modified command is the old command augmented by a new requirement {\rm (v)}:

{\rm (v)} $H_{\rm p}^4(H,\bZ)\lneq H^4(H,\bZ)$.\\

We explain the reason for the condition (v). If $H_{\rm p}^4(H,\bZ)=H^4(H,\bZ)$, we claim that ${\rm Ker}(\widetilde{\partial}_{H,g})=H^4(G,\bZ)$. Thus ${\rm Ker}(\widetilde{\partial}_{H,g})$ arising from such subgroup $H$ may be ignored.

Here is the proof of the claim. By condition (iii), $g$ belongs to the center of $H$. Thus we may consider (a) ${\rm Ker}(\partial_{H,g})$ (where the pair $(H, \langle g \rangle)$ arises if we regard $H$ as a subgroup of $G$, and (b) ${\rm Ker}(\partial^{\prime}_{H,g})$ (where the pair $(H, \langle g \rangle)$ arises if we regard $H$ as a subgroup of $H$ itself (the map $\partial^{\prime}_{H,g}$ is defined in an obvious way). From Definition \ref{d2.11}, it is easy to verify that $\partial^{\prime}_{H,g} \circ {\rm res} = \partial_{H,g}$ where res denotes the restriction map. Suppose that $H_{\rm p}^4(H,\bZ)=H^4(H,\bZ)$. Then $\partial^{\prime}_{H,g}$ is the zero map. Thus $\partial_{H,g}$ is also the zero map, i.e. ${\rm Ker}(\widetilde{\partial}_{H,g})=H^4(G,\bZ)$ as we claimed before.

In conclusion, the new function
{\tt IsUnramifiedH3(}$RG$, $l${\tt :Subgroup)}
with {\tt Subgroup} option
is given as follows: In addition to the function {\tt IsUnramifiedH3(}$RG$, $l${\tt )} with the conditions (i)--(iv) for subgroups $H\leq G$, an extra condition (v) is added.

\bigskip
Finally, we may check the condition
$H_{\rm p}^4(H,\bZ)=H^4(H,\bZ)$ by the function
{\tt H4pFromResolution(}$RG${\tt )}, and it holds 
for the following groups $H$:\\
$H=G(p^2,i)$ $(p\in\{3,5,7\},\ i\in \{1,2\})$;\\
$H=G(p^3,i)$ $(p\in\{3,5,7\},\ i\in \{1,2,3,4\})$;\\
$H=G(3^4,i)$ $(i\in \{1,2,3,4,5,6,8,9,10,13,14\})$;\\
$H=G(p^4,i)$ $(p\in\{5,7\},\ i\in \{1,2,3,4,5,6,9,10,13,14\})$;\\
$H=G(3^5,i)$ $(i\in \{1,3,4,5,6,7,8,9,10,11,12,19,20,21,22,23,24,25,26,27,33,35,43,44,45,46,47,49,50,66\})$;\\
$H=G(5^5,i)$ $(i\in \{1,15,16,17,24,25,26,27,28,29,42,44,51,52,53,54,55,56,57,59,60,76\})$.\\

The function {\tt IsUnramifiedH3(}$RG$, $l${\tt :Subgroup)} is similar to
{\tt IsUnramifiedH3(}$RG$, $l${\tt)}
but we exclude the above subgroups $H\leq G$ in applying the algorithm for computing $H_{\rm nr}^4(G,\bZ)$:\\

\noindent
{\tt IsUnramifiedH3(}$RG$, $l${\tt :Subgroup)}
prints the number of pairs $(H,I)$ of $H\leq G$ and $I\leq Z(H)$
which satisfy (i)--(iv) but $H$ is not in the above groups (with $H_{\rm p}^4(H,\bZ)=H^4(H,\bZ)$),
the computing progress rate
and the list $l^\prime=[l_1,l_2]$ where
$l_1$ is the abelian invariant of $H^3(H,\bZ)\simeq H^2(H,\bQ/\bZ)$
with respect to Smith normal form and
$l_2$ is the generator of $\widetilde{\partial}_{H,g}(l)$ in $H^3(H,\bZ)$
and returns true (resp. false)
if the generator $l$ is in $H^4_{\rm nr}(G,\bZ)$
(resp. is not in $H^4_{\rm nr}(G,\bZ)$)
for a free resolution $RG$ of $G$ and a generator $l$ of $H^4(G,\bZ)$.\\

This enables us to compute $H_{\rm nr}^4(G,\bZ)$ within a reasonable time.

For $p=3,5$, by using the functions
{\tt H4pFromResolution(}$RG${\tt )} and
{\tt IsUnramifiedH3(}$RG$, $l${\tt :Subgroup)},
we evaluate the groups
$H^4(G,\bZ)$, $H^4_{\rm nr}(G,\bZ)$, $H^4_{\rm p}(G,\bZ)$
and $H^3_{\rm nr}(\bC(G),\bQ/\bZ)$ for all groups $G$ of order $p^5$.
For $p=7$, it takes much more time to reach the answer by using
the computer with GAP programs
(the computer we use is just a usual personal computer).
By Theorem \ref{t1.10},
if $G_1$ and $G_2$ belongs to the same isoclinism
family, then $H^3_{\rm nr}(\bC(G_1),\bQ/\bZ)$ and
$H^3_{\rm nr}(\bC(G_2),\bQ/\bZ)$ are isomorphic.
Hence for the case where $|G|=7^5$ we choose a suitable group
$G$ in each isoclinism family $\Phi_i$ where $i\in\{5,6,7,10\}$; when $p=3,5$, we may check all the groups in an isoclinism family, because the computing time is rather short.
Note that if $G$ belongs to $\Phi_i$ where $1\leq i\leq 4$ or $8\leq i\leq 9$,
then $\bC(G)$ is $\bC$-rational (see the paragraph after Theorem \ref{t1.18}),
and hence $H^3_{\rm nr}(\bC(G),\bQ/\bZ)=0$.

This finishes the proof.

\bigskip
In the following we demonstrate the computation of $H^4_{\rm nr}(G,\bZ)\simeq
H^3_{\rm nr}(G,\bQ/\bZ)$
for groups $G$ of order $3^5$ belonging to
$\Phi_7$ (Cases 1a--1e) and $\Phi_{10}$ (Cases 2a--2c), and
for groups $G$ of order $5^5$ belonging to
$\Phi_5$ (Cases 3a--3b), $\Phi_6$ (Cases 4a--4l),
$\Phi_7$ (Cases 5a--5e), $\Phi_{10}$ (Cases 6a--6f),
and also for groups $G$ of order $7^5$ (Cases 7a--7d). \\

Recall that
\begin{align*}
H^3(G,\bQ/\bZ)
\geq H_{\rm nr}^3(G,\bQ/\bZ)
\geq H_{\rm n}^3(G,\bQ/\bZ)
\geq H_{\rm p}^3(G,\bQ/\bZ)
\end{align*}
and $H_{\rm nr}^3(\bC(G),\bQ/\bZ)\simeq H_{\rm nr}^3(G,\bQ/\bZ)/H_{\rm n}^3(G,\bQ/\bZ)$.
Remember that, if $G$ is a group of odd order, then $H_{\rm n}^3(G,\bQ/\bZ)$
$=$ $H_{\rm p}^3(G,\bQ/\bZ)$ by Peyre's theorem (Theorem \ref{t2.14}).
Also remember that, if $\bC(G)$ is (stably, retract) $\bC$-rational, then $H_{\rm nr}^3(\bC(G),\bQ/\bZ)=0$ and therefore $H_{\rm nr}^3(G,\bQ/\bZ)=H_{\rm n}^3(G,\bQ/\bZ)$.\\

\newpage
A summary for $p=3$ :\\
\begin{table}[!h]\vspace*{-2mm}
\begin{tabular}{cl|ccc|c}
\multicolumn{2}{c|}{$|G|=3^5$} & $H^3(G,\bQ/\bZ)$ & $H^3_{\rm nr}(G,\bQ/\bZ)$
 & $H^3_{\rm p}(G,\bQ/\bZ)$
& $H^3_{\rm nr}(\bC(G),\bQ/\bZ)$\\\hline
&$G(3^5,56)$ & $(\bZ/3\bZ)^{\oplus 7}$
& $(\bZ/3\bZ)^{\oplus 6}$ & $(\bZ/3\bZ)^{\oplus 5}$
& $\bZ/3\bZ$\\
&$G(3^5,57)$ & $(\bZ/3\bZ)^{\oplus 6}$
& $(\bZ/3\bZ)^{\oplus 6}$ & $(\bZ/3\bZ)^{\oplus 5}$
& $\bZ/3\bZ$\\
$\Phi_7$ & $G(3^5,58)$ & $(\bZ/3\bZ)^{\oplus 9}$
& $(\bZ/3\bZ)^{\oplus 7}$ & $(\bZ/3\bZ)^{\oplus 6}$
& $\bZ/3\bZ$\\
&$G(3^5,59)$ & $(\bZ/3\bZ)^{\oplus 6}$
& $(\bZ/3\bZ)^{\oplus 6}$ & $(\bZ/3\bZ)^{\oplus 5}$
& $\bZ/3\bZ$\\
&$G(3^5,60)$ & $(\bZ/3\bZ)^{\oplus 6}$
& $(\bZ/3\bZ)^{\oplus 6}$ & $(\bZ/3\bZ)^{\oplus 5}$
& $\bZ/3\bZ$\\\hline
&$G(3^5,28)$ & $(\bZ/3\bZ)^{\oplus 2}\oplus \bZ/9\bZ$
& $(\bZ/3\bZ)^{\oplus 3}$ & $(\bZ/3\bZ)^{\oplus 3}$ 
& 0\\
$\Phi_{10}$&$G(3^5,29)$ & $\bZ/3\bZ\oplus \bZ/9\bZ$
& $(\bZ/3\bZ)^{\oplus 2}$ & $(\bZ/3\bZ)^{\oplus 2}$ 
& 0\\
& $G(3^5,30)$ & $\bZ/3\bZ\oplus \bZ/9\bZ$
& $(\bZ/3\bZ)^{\oplus 2}$ & $(\bZ/3\bZ)^{\oplus 2}$ 
& 0
\end{tabular}\\
\vspace*{2mm}
Table $3$: $H^3_{\rm nr}(\bC(G),\bQ/\bZ)$ for groups $G$ of order $3^5$
which belong to $\Phi_7$ or $\Phi_{10}$
\end{table}

\medskip
{\bf Case 1a: $G=G(3^5,56)$ which belongs to $\Phi_7$.}\\
We have
$H^4(G,\bZ)\simeq(\bZ/3\bZ)^{\oplus 7}=\langle f_1,\ldots,f_7\rangle$,
$H^4_{\rm p}(G,\bZ)\simeq (\bZ/3\bZ)^{\oplus 5}=\langle f_1,f_2,f_3,f_5f_6,f_7\rangle$, $f_4\not\in H^4_{\rm nr}(G,\bZ)$ and
$f_5\in H^4_{\rm nr}(G,\bZ)$.
Hence we get $H^3_{\rm nr}(\bC(G),\bQ/\bZ)\simeq H^4_{\rm nr}(\bC(G),\bZ)\simeq \bZ/3\bZ$.
\medskip\begin{verbatim}
gap> Read("H3nr.gap");

gap> G56:=SmallGroup(243,56);
<pc group of size 243 with 5 generators>
gap> RG56:=ResolutionNormalSeries(LowerCentralSeries(G56),5);
Resolution of length 5 in characteristic 0 for <pc group with 243 generators> .
gap> CR_CocyclesAndCoboundaries(RG56,4,true).torsionCoefficients; # H^4(G,Z)
[ [ 3, 3, 3, 3, 3, 3, 3 ] ]
gap> H4pFromResolution(RG56);
12[ [ 27, 5 ], [ 27, 5 ], [ 27, 5 ], [ 27, 5 ], [ 27, 2 ], [ 27, 5 ],
    [ 27, 2 ], [ 81, 13 ], [ 81, 12 ], [ 81, 13 ], [ 81, 12 ], [ 243, 56 ] ]
1/12
...
12/12
[ [ 3, 3, 3, 3, 3 ],
[ [ 3, 3, 3, 3, 3, 3, 3 ],
  [ [ 1, 0, 0, 0, 0, 0, 0 ], [ 0, 1, 0, 0, 0, 0, 0 ], [ 0, 0, 1, 0, 0, 0, 0 ],
    [ 0, 0, 0, 0, 1, 1, 0 ], [ 0, 0, 0, 0, 0, 0, 1 ] ] ] ]
gap> IsUnramifiedH3(RG56,[0,0,0,1,0,0,0]:Subgroup);
1/9
[ [ 3, 3, 3 ], [ 2, 1, 1 ] ]
false
gap> IsUnramifiedH3(RG56,[0,0,0,0,1,0,0]:Subgroup);
1/9
[ [ 3, 3, 3 ], [ 0, 0, 0 ] ]
...
9/9
[ [ 3, 3, 3 ], [ 0, 0, 0 ] ]
true
\end{verbatim}
\medskip
{\bf Case 1b: $G=G(3^5,57)$ which belongs to $\Phi_7$.}\\
We have
$H^4(G,\bZ)\simeq(\bZ/3\bZ)^{\oplus 6}=\langle f_1,\ldots,f_6\rangle$,
$H^4_{\rm p}(G,\bZ)\simeq (\bZ/3\bZ)^{\oplus 5}=\langle f_1,f_2,f_3,f_4,f_6\rangle$ and $f_5\in H^4_{\rm nr}(G,\bZ)$.
Hence we get $H^3_{\rm nr}(\bC(G),\bQ/\bZ)\simeq H^4_{\rm nr}(\bC(G),\bZ)\simeq \bZ/3\bZ$.
\medskip\begin{verbatim}
gap> G57:=SmallGroup(243,57);
<pc group of size 243 with 5 generators>
gap> RG57:=ResolutionNormalSeries(LowerCentralSeries(G57),5);
Resolution of length 5 in characteristic 0 for <pc group with 243 generators> .
gap> CR_CocyclesAndCoboundaries(RG57,4,true).torsionCoefficients; # H^4(G,Z)
[ [ 3, 3, 3, 3, 3, 3 ] ]
gap> H4pFromResolution(RG57);
12[ [ 27, 2 ], [ 27, 2 ], [ 27, 5 ], [ 27, 2 ], [ 27, 2 ], [ 27, 5 ],
    [ 27, 2 ], [ 81, 13 ], [ 81, 13 ], [ 81, 13 ], [ 81, 12 ], [ 243, 57 ] ]
1/12
...
12/12
[ [ 3, 3, 3, 3, 3 ],
[ [ 3, 3, 3, 3, 3, 3 ],
  [ [ 1, 0, 0, 0, 0, 0 ], [ 0, 1, 0, 0, 0, 0 ], [ 0, 0, 1, 0, 0, 0 ],
    [ 0, 0, 0, 1, 0, 0 ], [ 0, 0, 0, 0, 0, 1 ] ] ] ]
gap> IsUnramifiedH3(RG57,[0,0,0,0,1,0]:Subgroup);
1/5
[ [ 3, 3, 3 ], [ 0, 0, 0 ] ]
...
5/5
[ [ 3, 3, 3 ], [ 0, 0, 0 ] ]
true
\end{verbatim}
\medskip
{\bf Case 1c: $G=G(3^5,58)$ which belongs to $\Phi_7$.}\\
We have
$H^4(G,\bZ)\simeq(\bZ/3\bZ)^{\oplus 9}=\langle f_1,\ldots,f_9\rangle$,
$H^4_{\rm p}(G,\bZ)\simeq (\bZ/3\bZ)^{\oplus 6}=\langle f_1,f_2f_8,f_4f_8,f_5f_9,f_6,f_7\rangle$, $f_3, f_9\not\in H^4_{\rm nr}(G,\bZ)$ and
$f_8\in H^4_{\rm nr}(G,\bZ)$.
Hence we get $H^3_{\rm nr}(\bC(G),\bQ/\bZ)\simeq H^4_{\rm nr}(\bC(G),\bZ)\simeq \bZ/3\bZ$.
\medskip\begin{verbatim}
gap> G58:=SmallGroup(243,58);
<pc group of size 243 with 5 generators>
gap> RG58:=ResolutionNormalSeries(LowerCentralSeries(G58),5);
Resolution of length 5 in characteristic 0 for <pc group with 243 generators> .
gap> CR_CocyclesAndCoboundaries(RG58,4,true).torsionCoefficients; # H^4(G,Z)
[ [ 3, 3, 3, 3, 3, 3, 3, 3, 3 ] ]
gap> H4pFromResolution(RG58);
12[ [ 27, 2 ], [ 27, 2 ], [ 27, 2 ], [ 27, 5 ], [ 27, 5 ], [ 27, 5 ],
    [ 27, 5 ], [ 81, 13 ], [ 81, 12 ], [ 81, 12 ], [ 81, 12 ], [ 243, 58 ] ]
1/12
...
12/12
[ [ 3, 3, 3, 3, 3, 3 ],
[ [ 3, 3, 3, 3, 3, 3, 3, 3, 3 ],
  [ [ 1, 0, 0, 0, 0, 0, 0, 0, 0 ], [ 0, 1, 0, 0, 0, 0, 0, 1, 0 ],
    [ 0, 0, 0, 1, 0, 0, 0, 1, 0 ], [ 0, 0, 0, 0, 1, 0, 0, 0, 1 ],
    [ 0, 0, 0, 0, 0, 1, 0, 0, 0 ], [ 0, 0, 0, 0, 0, 0, 1, 0, 0 ] ] ] ]
gap> IsUnramifiedH3(RG58,[0,0,1,0,0,0,0,0,0]:Subgroup);
1/13
[ [ 3, 3, 3, 3 ], [ 0, 0, 0, 1 ] ]
false
gap> IsUnramifiedH3(RG58,[0,0,0,0,0,0,0,1,0]:Subgroup);
1/13
[ [ 3, 3, 3, 3 ], [ 0, 0, 0, 0 ] ]
...
13/13
[ [ 3, 3, 3, 3 ], [ 0, 0, 0, 0 ] ]
true
gap> IsUnramifiedH3(RG58,[0,0,0,0,0,0,0,0,1]:Subgroup);
1/13
[ [ 3, 3, 3, 3 ], [ 0, 0, 2, 1 ] ]
false
\end{verbatim}
\medskip
{\bf Case 1d: $G=G(3^5,59)$ which belongs to $\Phi_7$.}\\
We have
$H^4(G,\bZ)\simeq(\bZ/3\bZ)^{\oplus 6}=\langle f_1,\ldots,f_6\rangle$,
$H^4_{\rm p}(G,\bZ)\simeq (\bZ/3\bZ)^{\oplus 5}=\langle f_1,f_2,f_3,f_4,f_5f_6\rangle$ and $f_5\in H^4_{\rm nr}(G,\bZ)$.
Hence we get $H^3_{\rm nr}(\bC(G),\bQ/\bZ)\simeq H^4_{\rm nr}(\bC(G),\bZ)\simeq \bZ/3\bZ$.
\medskip\begin{verbatim}
gap> G59:=SmallGroup(243,59);
<pc group of size 243 with 5 generators>
gap> RG59:=ResolutionNormalSeries(LowerCentralSeries(G59),5);
Resolution of length 5 in characteristic 0 for <pc group with 243 generators> .
gap> CR_CocyclesAndCoboundaries(RG59,4,true).torsionCoefficients; # H^4(G,Z)
[ [ 3, 3, 3, 3, 3, 3 ] ]
gap> H4pFromResolution(RG59);
12[ [ 27, 2 ], [ 27, 2 ], [ 27, 2 ], [ 27, 5 ], [ 27, 2 ], [ 27, 2 ],
  [ 27, 2 ], [ 81, 13 ], [ 81, 13 ], [ 81, 13 ], [ 81, 13 ], [ 243, 59 ] ]
1/12
...
12/12
[ [ 3, 3, 3, 3, 3 ],
[ [ 3, 3, 3, 3, 3, 3 ],
  [ [ 1, 0, 0, 0, 0, 0 ], [ 0, 1, 0, 0, 0, 0 ], [ 0, 0, 1, 0, 0, 0 ],
    [ 0, 0, 0, 1, 0, 0 ], [ 0, 0, 0, 0, 1, 1 ] ] ] ]
gap> IsUnramifiedH3(RG59,[0,0,0,0,1,0]:Subgroup);
1/1
[ [ 3, 3, 3 ], [ 0, 0, 0 ] ]
true
\end{verbatim}
\medskip
{\bf Case 1e}: $G=G(3^5,60)$ which belongs to $\Phi_7$.}\\
We have
$H^4(G,\bZ)\simeq(\bZ/3\bZ)^{\oplus 6}=\langle f_1,\ldots,f_6\rangle$,
$H^4_{\rm p}(G,\bZ)\simeq (\bZ/3\bZ)^{\oplus 5}=\langle f_1,f_2,f_3,f_4,f_6\rangle$ and $f_5\in H^4_{\rm nr}(G,\bZ)$.
Hence we get $H^3_{\rm nr}(\bC(G),\bQ/\bZ)\simeq H^4_{\rm nr}(\bC(G),\bZ)\simeq \bZ/3\bZ$.
\medskip\begin{verbatim}
gap> G60:=SmallGroup(243,60);
<pc group of size 243 with 5 generators>
gap> RG60:=ResolutionNormalSeries(LowerCentralSeries(G60),5);
Resolution of length 5 in characteristic 0 for <pc group with 243 generators> .
gap> CR_CocyclesAndCoboundaries(RG60,4,true).torsionCoefficients; # H^4(G,Z)
[ [ 3, 3, 3, 3, 3, 3 ] ]
gap> H4pFromResolution(RG60);
12[ [ 27, 2 ], [ 27, 2 ], [ 27, 5 ], [ 27, 2 ], [ 27, 2 ], [ 27, 2 ],
    [ 27, 2 ], [ 81, 14 ], [ 81, 14 ], [ 81, 13 ], [ 81, 14 ], [ 243, 60 ] ]
1/12
...
12/12
[ [ 3, 3, 3, 3, 3 ],
[ [ 3, 3, 3, 3, 3, 3 ],
  [ [ 1, 0, 0, 0, 0, 0 ], [ 0, 1, 0, 0, 0, 0 ], [ 0, 0, 1, 0, 0, 0 ],
    [ 0, 0, 0, 1, 0, 0 ], [ 0, 0, 0, 0, 0, 1 ] ] ] ]
gap> IsUnramifiedH3(RG60,[0,0,0,0,1,0]:Subgroup);
1/2
[ [ 3, 3, 3 ], [ 0, 0, 0 ] ]
2/2
[ [ 3, 3, 3 ], [ 0, 0, 0 ] ]
true
\end{verbatim}
\medskip
{\bf Case 2a: $G=G(3^5,28)$ which belongs to $\Phi_{10}$.}\\
We have
$H^4(G,\bZ)\simeq(\bZ/3\bZ)^{\oplus 2}\oplus\bZ/9\bZ
=\langle f_1,f_2,f_3\rangle$,
$H^4_{\rm p}(G,\bZ)\simeq (\bZ/3\bZ)^{\oplus 3}=\langle f_1,f_2,f_3^3\rangle$, $f_3\not\in H^4_{\rm nr}(G,\bZ)$.
Hence we get $H^3_{\rm nr}(\bC(G),\bQ/\bZ)\simeq H^4_{\rm nr}(\bC(G),\bZ)=0$.
\medskip
\begin{verbatim}
gap> G28:=SmallGroup(243,28);
<pc group of size 243 with 5 generators>
gap> RG28:=ResolutionNormalSeries(LowerCentralSeries(G28),5);
Resolution of length 5 in characteristic 0 for <pc group with 243 generators> .
gap> CR_CocyclesAndCoboundaries(RG28,4,true).torsionCoefficients; # H^4(G,Z)
[ [ 3, 3, 9 ] ]
gap> H4pFromResolution(RG28);
13[ [ 9, 2 ], [ 9, 2 ], [ 9, 1 ], [ 27, 2 ], [ 27, 2 ], [ 27, 3 ], [ 27, 4 ],
    [ 27, 3 ], [ 81, 9 ], [ 81, 9 ], [ 81, 10 ], [ 81, 4 ], [ 243, 28 ] ]
1/13
...
13/13
[ [ 3, 3, 3 ],
[ [ 3, 3, 9 ],
  [ [ 1, 0, 0 ], [ 0, 1, 0 ], [ 0, 0, 3 ] ] ] ]
gap> IsUnramifiedH3(RG28,[0,0,1]:Subgroup);
1/1
[ [ 9 ], [ 6 ] ]
false
\end{verbatim}
\medskip
{\bf Case 2b: $G=G(3^5,29)$ which belongs to $\Phi_{10}$.}\\
We have
$H^4(G,\bZ)\simeq\bZ/3\bZ\oplus\bZ/9\bZ=\langle f_1,f_2\rangle$,
$H^4_{\rm p}(G,\bZ)\simeq (\bZ/3\bZ)^{\oplus 2}=\langle f_1,f_2^3\rangle$
and $f_2\not\in H^4_{\rm nr}(G,\bZ)$.
Hence we get $H^3_{\rm nr}(\bC(G),\bQ/\bZ)\simeq H^4_{\rm nr}(\bC(G),\bZ)=0$.
\medskip\begin{verbatim}
gap> G29:=SmallGroup(243,29);
<pc group of size 243 with 5 generators>
gap> RG29:=ResolutionNormalSeries(LowerCentralSeries(G29),5);
Resolution of length 5 in characteristic 0 for <pc group with 243 generators> .
gap> CR_CocyclesAndCoboundaries(RG29,4,true).torsionCoefficients; # H^4(G,Z)
[ [ 3, 9 ] ]
gap> H4pFromResolution(RG29);
13[ [ 9, 1 ], [ 9, 1 ], [ 9, 1 ], [ 27, 2 ], [ 27, 2 ], [ 27, 4 ], [ 27, 4 ],
    [ 27, 4 ], [ 81, 10 ], [ 81, 10 ], [ 81, 10 ], [ 81, 4 ], [ 243, 29 ] ]
1/13
...
13/13
[ [ 3, 3 ],
[ [ 3, 9 ],
  [ [ 1, 0 ], [ 0, 3 ] ] ] ]
gap> IsUnramifiedH3(RG29,[0,1]:Subgroup);
1/1
[ [ 3 ], [ 2 ] ]
false
\end{verbatim}
\medskip
{\bf Case 2c: $G=G(3^5,30)$ which belongs to $\Phi_{10}$.}\\
We have
$H^4(G,\bZ)\simeq\bZ/3\bZ\oplus\bZ/9\bZ=\langle f_1,f_2\rangle$,
$H^4_{\rm p}(G,\bZ)\simeq (\bZ/3\bZ)^{\oplus 2}=\langle f_1,f_2^3\rangle$
and $f_2\not\in H^4_{\rm nr}(G,\bZ)$.
Hence we get $H^3_{\rm nr}(\bC(G),\bQ/\bZ)\simeq H^4_{\rm nr}(\bC(G),\bZ)=0$.
\medskip\begin{verbatim}
gap> G30:=SmallGroup(243,30);
<pc group of size 243 with 5 generators>
gap> RG30:=ResolutionNormalSeries(LowerCentralSeries(G30),5);
Resolution of length 5 in characteristic 0 for <pc group with 243 generators> .
gap> CR_CocyclesAndCoboundaries(RG30,4,true).torsionCoefficients; # H^4(G,Z)
[ [ 3, 9 ] ]
gap> H4pFromResolution(RG30);
13[ [ 9, 2 ], [ 9, 1 ], [ 9, 1 ], [ 27, 4 ], [ 27, 2 ], [ 27, 2 ], [ 27, 4 ],
    [ 27, 3 ], [ 81, 9 ], [ 81, 10 ], [ 81, 4 ], [ 81, 10 ], [ 243, 30 ] ]
1/13
...
13/13
[ [ 3, 3 ],
[ [ 3, 9 ],
  [ [ 1, 0 ], [ 0, 3 ] ] ] ]
gap> IsUnramifiedH3(RG30,[0,1]:Subgroup);
1/1
[ [ 3 ], [ 2 ] ]
false
\end{verbatim}

\newpage
A summary for $p=5$ :\\
\begin{table}[!h]\vspace*{-2mm}
\begin{tabular}{cl|ccc|c}
\multicolumn{2}{c|}{$|G|=5^5$} & $H^3(G,\bQ/\bZ)$ & $H^3_{\rm nr}(G,\bQ/\bZ)$
 & $H^3_{\rm p}(G,\bQ/\bZ)$
& $H^3_{\rm nr}(\bC(G),\bQ/\bZ)$\\\hline
\raisebox{-1.6ex}[0cm][0cm]{$\Phi_5$}
&$G(5^5,75)$ & $(\bZ/5\bZ)^{\oplus 15}$
& $(\bZ/5\bZ)^{\oplus 10}$ & $(\bZ/5\bZ)^{\oplus 10}$
& 0\\
&$G(5^5,76)$ & $(\bZ/5\bZ)^{\oplus 10}$
& $(\bZ/5\bZ)^{\oplus 10}$ & $(\bZ/5\bZ)^{\oplus 10}$ 
& 0\\\hline
&$G(5^5,3)$ & $(\bZ/5\bZ)^{\oplus 7}$
& $(\bZ/5\bZ)^{\oplus 7}$ & $(\bZ/5\bZ)^{\oplus 6}$ 
& $\bZ/5\bZ$\\
&$G(5^5,4)$ & $(\bZ/5\bZ)^{\oplus 5}$
& $(\bZ/5\bZ)^{\oplus 5}$ & $(\bZ/5\bZ)^{\oplus 4}$ 
& $\bZ/5\bZ$\\
&$G(5^5,5)$ & $(\bZ/5\bZ)^{\oplus 5}$
& $(\bZ/5\bZ)^{\oplus 5}$ & $(\bZ/5\bZ)^{\oplus 4}$ 
& $\bZ/5\bZ$\\
&$G(5^5,6)$ & $(\bZ/5\bZ)^{\oplus 5}$
& $(\bZ/5\bZ)^{\oplus 5}$ & $(\bZ/5\bZ)^{\oplus 4}$ 
& $\bZ/5\bZ$\\
&$G(5^5,7)$ & $(\bZ/5\bZ)^{\oplus 3}\oplus\bZ/25\bZ$
& $(\bZ/5\bZ)^{\oplus 3}\oplus\bZ/25\bZ$ & $(\bZ/5\bZ)^{\oplus 2}\oplus\bZ/25\bZ$ 
& $\bZ/5\bZ$\\
\raisebox{-1.6ex}[0cm][0cm]{$\Phi_6$}
&$G(5^5,8)$ & $(\bZ/5\bZ)^{\oplus 4}$
& $(\bZ/5\bZ)^{\oplus 4}$ & $(\bZ/5\bZ)^{\oplus 3}$ 
& $\bZ/5\bZ$\\
&$G(5^5,9)$ & $(\bZ/5\bZ)^{\oplus 4}$
& $(\bZ/5\bZ)^{\oplus 4}$ & $(\bZ/5\bZ)^{\oplus 3}$ 
& $\bZ/5\bZ$\\
&$G(5^5,10)$ & $(\bZ/5\bZ)^{\oplus 3}\oplus\bZ/25\bZ$
& $(\bZ/5\bZ)^{\oplus 3}\oplus\bZ/25\bZ$ & $(\bZ/5\bZ)^{\oplus 2}\oplus\bZ/25\bZ$ 
& $\bZ/5\bZ$\\
&$G(5^5,11)$ & $(\bZ/5\bZ)^{\oplus 4}$
& $(\bZ/5\bZ)^{\oplus 4}$ & $(\bZ/5\bZ)^{\oplus 3}$ 
& $\bZ/5\bZ$\\
&$G(5^5,12)$ & $(\bZ/5\bZ)^{\oplus 4}$
& $(\bZ/5\bZ)^{\oplus 4}$ & $(\bZ/5\bZ)^{\oplus 3}$ 
& $\bZ/5\bZ$\\
&$G(5^5,13)$ & $(\bZ/5\bZ)^{\oplus 4}$
& $(\bZ/5\bZ)^{\oplus 4}$ & $(\bZ/5\bZ)^{\oplus 3}$ 
& $\bZ/5\bZ$\\
&$G(5^5,14)$ & $(\bZ/5\bZ)^{\oplus 4}$
& $(\bZ/5\bZ)^{\oplus 4}$ & $(\bZ/5\bZ)^{\oplus 3}$ 
& $\bZ/5\bZ$\\\hline
&$G(5^5,66)$ & $(\bZ/5\bZ)^{\oplus 10}$
& $(\bZ/5\bZ)^{\oplus 7}$ & $(\bZ/5\bZ)^{\oplus 6}$ 
& $\bZ/5\bZ$\\
&$G(5^5,67)$ & $(\bZ/5\bZ)^{\oplus 6}$
& $(\bZ/5\bZ)^{\oplus 6}$ & $(\bZ/5\bZ)^{\oplus 5}$ 
& $\bZ/5\bZ$\\
$\Phi_7$&$G(5^5,68)$ & $(\bZ/5\bZ)^{\oplus 6}$
& $(\bZ/5\bZ)^{\oplus 6}$ & $(\bZ/5\bZ)^{\oplus 5}$ 
& $\bZ/5\bZ$\\
&$G(5^5,69)$ & $(\bZ/5\bZ)^{\oplus 6}$
& $(\bZ/5\bZ)^{\oplus 6}$ & $(\bZ/5\bZ)^{\oplus 5}$ 
& $\bZ/5\bZ$\\
&$G(5^5,70)$ & $(\bZ/5\bZ)^{\oplus 6}$
& $(\bZ/5\bZ)^{\oplus 6}$ & $(\bZ/5\bZ)^{\oplus 5}$ 
& $\bZ/5\bZ$\\\hline
&$G(5^5,33)$ & $(\bZ/5\bZ)^{\oplus 6}$
& $(\bZ/5\bZ)^{\oplus 5}$ & $(\bZ/5\bZ)^{\oplus 4}$ 
& $\bZ/5\bZ$\\
&$G(5^5,34)$ & $(\bZ/5\bZ)^{\oplus 4}$
& $(\bZ/5\bZ)^{\oplus 3}$ & $(\bZ/5\bZ)^{\oplus 2}$ 
& $\bZ/5\bZ$\\
\raisebox{-1.6ex}[0cm][0cm]{$\Phi_{10}$}
&$G(5^5,35)$ & $(\bZ/5\bZ)^{\oplus 4}$
& $(\bZ/5\bZ)^{\oplus 3}$ & $(\bZ/5\bZ)^{\oplus 2}$ 
& $\bZ/5\bZ$\\
&$G(5^5,36)$ & $(\bZ/5\bZ)^{\oplus 4}$
& $(\bZ/5\bZ)^{\oplus 3}$ & $(\bZ/5\bZ)^{\oplus 2}$ 
& $\bZ/5\bZ$\\
&$G(5^5,37)$ & $(\bZ/5\bZ)^{\oplus 4}$
& $(\bZ/5\bZ)^{\oplus 3}$ & $(\bZ/5\bZ)^{\oplus 2}$ 
& $\bZ/5\bZ$\\
&$G(5^5,38)$ & $(\bZ/5\bZ)^{\oplus 3}$
& $(\bZ/5\bZ)^{\oplus 3}$ & $(\bZ/5\bZ)^{\oplus 2}$ 
& $\bZ/5\bZ$\\
\end{tabular}\\
\vspace*{2mm}
Table $4$: $H^3_{\rm nr}(\bC(G),\bQ/\bZ)$ for groups $G$ of order $5^5$
which belong to $\Phi_5$, $\Phi_6$, $\Phi_7$ or $\Phi_{10}$
\end{table}

\medskip
{\bf Case 3a: $G=G(5^5,75)$ which belongs to $\Phi_5$.}\\
We have
$H^4(G,\bZ)\simeq(\bZ/5\bZ)^{\oplus 15}=\langle f_1,\ldots,f_{15}\rangle$,
$H^4_{\rm p}(G,\bZ)\simeq (\bZ/5\bZ)^{\oplus 10}=\langle f_1,f_2,f_3,f_4,f_5f_8f_{15},f_7,f_{11},f_{12},f_{13},f_{14}\rangle$ and
$f_6,f_8,f_9,f_{10},f_{15}\not\in H^4_{\rm nr}(G,\bZ)$.
Hence we get $H^3_{\rm nr}(\bC(G),\bQ/\bZ)\simeq H^4_{\rm nr}(\bC(G),\bZ)=0$.
\medskip\begin{verbatim}
gap> Read("H3nr.gap");

gap> G75:=SmallGroup(5^5,75);
<pc group of size 3125 with 5 generators>
gap> RG75:=ResolutionNormalSeries(LowerCentralSeries(G75),5);
Resolution of length 5 in characteristic 0 for <pc group with 3125 generators> .
gap> CR_CocyclesAndCoboundaries(RG75,4,true).torsionCoefficients; # H^4(G,Z)
[ [ 5, 5, 5, 5, 5, 5, 5, 5, 5, 5, 5, 5, 5, 5, 5 ] ]
gap> H4pFromResolution(RG75);
157[ [ 125, 5 ], ...,  [ 125, 5 ], [ 3125, 75 ] ]
1/157
...
157/157
[ [ 5, 5, 5, 5, 5, 5, 5, 5, 5, 5 ],
[ [ 5, 5, 5, 5, 5, 5, 5, 5, 5, 5, 5, 5, 5, 5, 5 ],
  [ [ 1, 0, 0, 0, 0, 0, 0, 0, 0, 0, 0, 0, 0, 0, 0 ],
    [ 0, 1, 0, 0, 0, 0, 0, 0, 0, 0, 0, 0, 0, 0, 0 ],
    [ 0, 0, 1, 0, 0, 0, 0, 0, 0, 0, 0, 0, 0, 0, 0 ],
    [ 0, 0, 0, 1, 0, 0, 0, 0, 0, 0, 0, 0, 0, 0, 0 ],
    [ 0, 0, 0, 0, 1, 0, 0, 1, 0, 0, 0, 0, 0, 0, 1 ],
    [ 0, 0, 0, 0, 0, 0, 1, 0, 0, 0, 0, 0, 0, 0, 0 ],
    [ 0, 0, 0, 0, 0, 0, 0, 0, 0, 0, 1, 0, 0, 0, 0 ],
    [ 0, 0, 0, 0, 0, 0, 0, 0, 0, 0, 0, 1, 0, 0, 0 ],
    [ 0, 0, 0, 0, 0, 0, 0, 0, 0, 0, 0, 0, 1, 0, 0 ],
    [ 0, 0, 0, 0, 0, 0, 0, 0, 0, 0, 0, 0, 0, 1, 0 ] ] ] ]
gap> IsUnramifiedH3(RG75,[0,0,0,0,0,1,0,0,0,0,0,0,0,0,0]:Subgroup);
1/157
[ [ 5, 5, 5, 5, 5 ], [ 0, 0, 1, 0, 0 ] ]
false
gap> IsUnramifiedH3(RG75,[0,0,0,0,0,0,0,1,0,0,0,0,0,0,0]:Subgroup);
1/157
[ [ 5, 5, 5, 5, 5 ], [ 0, 0, 0, 4, 1 ] ]
false
gap> IsUnramifiedH3(RG75,[0,0,0,0,0,0,0,0,1,0,0,0,0,0,0]:Subgroup);
1/157
[ [ 5, 5, 5, 5, 5 ], [ 0, 1, 4, 4, 4 ] ]
false
gap> IsUnramifiedH3(RG75,[0,0,0,0,0,0,0,0,0,1,0,0,0,0,0]:Subgroup);
1/157
[ [ 5, 5, 5, 5, 5 ], [ 4, 0, 0, 4, 0 ] ]
false
gap> IsUnramifiedH3(RG75,[0,0,0,0,0,0,0,0,0,0,0,0,0,0,1]:Subgroup);
1/157
[ [ 5, 5, 5, 5, 5 ], [ 0, 0, 0, 2, 0 ] ]
false
\end{verbatim}
\medskip
{\bf Case 3b: $G=G(5^5,76)$ which belongs to $\Phi_5$.}\\
We have
$H^4(G,\bZ)\simeq(\bZ/5\bZ)^{\oplus 10}=\langle f_1,\ldots,f_{10}\rangle$ and
$H^4_{\rm p}(G,\bZ)\simeq (\bZ/5\bZ)^{\oplus 10}=\langle f_1,\ldots,f_{10}\rangle$.
Hence we get $H^3_{\rm nr}(\bC(G),\bQ/\bZ)\simeq H^4_{\rm nr}(\bC(G),\bZ)=0$.
\medskip\begin{verbatim}
gap> G76:=SmallGroup(5^5,76);
<pc group of size 3125 with 5 generators>
gap> RG76:=ResolutionNormalSeries(LowerCentralSeries(G76),5);
Resolution of length 5 in characteristic 0 for <pc group with 3125 generators> .
gap> CR_CocyclesAndCoboundaries(RG76,4,true).torsionCoefficients; # H^4(G,Z)
[ [ 5, 5, 5, 5, 5, 5, 5, 5, 5, 5 ] ]
gap> H4pFromResolution(RG76);
157[ [ 125, 2 ], ..., [ 125, 2 ], [ 3125, 76 ] ]
1/157
...
157/157
[ [ 5, 5, 5, 5, 5, 5, 5, 5, 5, 5 ],
[ [ 5, 5, 5, 5, 5, 5, 5, 5, 5, 5 ],
  [ [ 1, 0, 0, 0, 0, 0, 0, 0, 0, 0 ],
    [ 0, 1, 0, 0, 0, 0, 0, 0, 0, 0 ],
    [ 0, 0, 1, 0, 0, 0, 0, 0, 0, 0 ],
    [ 0, 0, 0, 1, 0, 0, 0, 0, 0, 0 ],
    [ 0, 0, 0, 0, 1, 0, 0, 0, 0, 0 ],
    [ 0, 0, 0, 0, 0, 1, 0, 0, 0, 0 ],
    [ 0, 0, 0, 0, 0, 0, 1, 0, 0, 0 ],
    [ 0, 0, 0, 0, 0, 0, 0, 1, 0, 0 ],
    [ 0, 0, 0, 0, 0, 0, 0, 0, 1, 0 ],
    [ 0, 0, 0, 0, 0, 0, 0, 0, 0, 1 ] ] ] ]
\end{verbatim}
\medskip
{\bf Case 4a: $G=G(5^5,3)$ which belongs to $\Phi_6$.}\\
We have
$H^4(G,\bZ)\simeq(\bZ/5\bZ)^{\oplus 7}=\langle f_1,\ldots,f_7\rangle$,
$H^4_{\rm p}(G,\bZ)\simeq (\bZ/5\bZ)^{\oplus 6}=\langle f_1f_6^2,f_2f_6^4,f_3,f_4f_6,f_5,f_7\rangle$ and $f_6\in H^4_{\rm nr}(G,\bZ)$.
Hence we get $H^3_{\rm nr}(\bC(G),\bQ/\bZ)\simeq H^4_{\rm nr}(\bC(G),\bZ)\simeq \bZ/5\bZ$.
\medskip\begin{verbatim}
gap> G3:=SmallGroup(5^5,3);
<pc group of size 3125 with 5 generators>
gap> RG3:=ResolutionNormalSeries(LowerCentralSeries(G3),5);
Resolution of length 5 in characteristic 0 for <pc group with 3125 generators> .
gap> CR_CocyclesAndCoboundaries(RG3,4,true).torsionCoefficients; # H^4(G,Z)
[ [ 5, 5, 5, 5, 5, 5, 5 ] ]
gap> H4pFromResolution(RG3);
14[ [ 125, 5 ], [ 125, 5 ], [ 125, 5 ], [ 125, 5 ], [ 125, 5 ], [ 125, 5 ],
    [ 125, 5 ], [ 625, 12 ], [ 625, 12 ], [ 625, 12 ], [ 625, 12 ],
    [ 625, 12 ], [ 625, 12 ], [ 3125, 3 ] ]
1/14
...
14/14
[ [ 5, 5, 5, 5, 5, 5 ],
[ [ 5, 5, 5, 5, 5, 5, 5 ],
  [ [ 1, 0, 0, 0, 0, 2, 0 ], [ 0, 1, 0, 0, 0, 4, 0 ],
    [ 0, 0, 1, 0, 0, 0, 0 ], [ 0, 0, 0, 1, 0, 1, 0 ],
    [ 0, 0, 0, 0, 1, 0, 0 ], [ 0, 0, 0, 0, 0, 0, 1 ] ] ] ]
gap> IsUnramifiedH3(RG3,[0,0,0,0,0,1,0]:Subgroup);
1/37
[ [ 5, 5, 5 ], [ 0, 0, 0 ] ]
...
37/37
[ [ 5, 5, 5 ], [ 0, 0, 0 ] ]
true
\end{verbatim}
\medskip
{\bf Case 4b: $G=G(5^5,4)$ which belongs to $\Phi_6$.}\\
We have
$H^4(G,\bZ)\simeq(\bZ/5\bZ)^{\oplus 5}=\langle f_1,\ldots,f_5\rangle$,
$H^4_{\rm p}(G,\bZ)\simeq (\bZ/5\bZ)^{\oplus 4}=\langle f_2,f_3,f_4,f_5\rangle$ and $f_1\in H^4_{\rm nr}(G,\bZ)$.
Hence we get $H^3_{\rm nr}(\bC(G),\bQ/\bZ)\simeq H^4_{\rm nr}(\bC(G),\bZ)\simeq \bZ/5\bZ$.
\medskip\begin{verbatim}
gap> G4:=SmallGroup(5^5,4);
<pc group of size 3125 with 5 generators>
gap> RG4:=ResolutionNormalSeries(LowerCentralSeries(G4),5);
Resolution of length 5 in characteristic 0 for <pc group with 3125 generators> .
gap> CR_CocyclesAndCoboundaries(RG4,4,true).torsionCoefficients; # H^4(G,Z)
[ [ 5, 5, 5, 5, 5 ] ]
gap> H4pFromResolution(RG4);
14[ [ 125, 5 ], [ 125, 2 ], [ 125, 2 ], [ 125, 2 ], [ 125, 2 ], [ 125, 5 ],
    [ 125, 2 ], [ 625, 3 ], [ 625, 13 ], [ 625, 12 ], [ 625, 3 ], [ 625, 3 ],
    [ 625, 3 ], [ 3125, 4 ] ]
1/14
...
14/14
[ [ 5, 5, 5, 5 ],
[ [ 5, 5, 5, 5, 5 ],
  [ [ 0, 1, 0, 0, 0 ], [ 0, 0, 1, 0, 0 ],
    [ 0, 0, 0, 1, 0 ], [ 0, 0, 0, 0, 1 ] ] ] ]
gap> IsUnramifiedH3(RG4,[1,0,0,0,0]:Subgroup);
1/12
[ [ 5 ], [ 0 ] ]
...
12/12
[ [ 5, 5, 5 ], [ 0, 0, 0 ] ]
true
\end{verbatim}
\medskip
{\bf Case 4c: $G=G(5^5,5)$ which belongs to $\Phi_6$.}\\
We have
$H^4(G,\bZ)\simeq(\bZ/5\bZ)^{\oplus 5}=\langle f_1,\ldots,f_5\rangle$,
$H^4_{\rm p}(G,\bZ)\simeq (\bZ/5\bZ)^{\oplus 4}=\langle f_1f_5^4,f_2f_5^3,f_3f_5^4,f_4\rangle$ and $f_5\in H^4_{\rm nr}(G,\bZ)$.
Hence we get $H^3_{\rm nr}(\bC(G),\bQ/\bZ)\simeq H^4_{\rm nr}(\bC(G),\bZ)\simeq \bZ/5\bZ$.
\medskip\begin{verbatim}
gap> G5:=SmallGroup(5^5,5);
<pc group of size 3125 with 5 generators>
gap> RG5:=ResolutionNormalSeries(LowerCentralSeries(G5),5);
Resolution of length 5 in characteristic 0 for <pc group with 3125 generators> .
gap> CR_CocyclesAndCoboundaries(RG5,4,true).torsionCoefficients; # H^4(G,Z)
[ [ 5, 5, 5, 5, 5 ] ]
gap> H4pFromResolution(RG5);
14[ [ 125, 5 ], [ 125, 2 ], [ 125, 5 ], [ 125, 2 ], [ 125, 2 ],
    [ 125, 2 ], [ 125, 2 ], [ 625, 12 ], [ 625, 3 ], [ 625, 3 ],
    [ 625, 3 ], [ 625, 3 ], [ 625, 3 ], [ 3125, 5 ] ]
1/14
...
14/14
[ [ 5, 5, 5, 5 ],
[ [ 5, 5, 5, 5, 5 ],
  [ [ 1, 0, 0, 0, 4 ], [ 0, 1, 0, 0, 3 ],
    [ 0, 0, 1, 0, 4 ], [ 0, 0, 0, 1, 0 ] ] ] ]
gap> IsUnramifiedH3(RG5,[0,0,0,0,1]:Subgroup);
1/12
[ [ 5 ], [ 0 ] ]
...
12/12
[ [ 5, 5, 5 ], [ 0, 0, 0 ] ]
true
\end{verbatim}
\medskip
{\bf Case 4d: $G=G(5^5,6)$ which belongs to $\Phi_6$.}\\
We have
$H^4(G,\bZ)\simeq(\bZ/5\bZ)^{\oplus 5}=\langle f_1,\ldots,f_5\rangle$,
$H^4_{\rm p}(G,\bZ)\simeq (\bZ/5\bZ)^{\oplus 4}=\langle f_1,f_2f_3^4,f_4,f_5\rangle$ and $f_3\in H^4_{\rm nr}(G,\bZ)$.
Hence we get $H^3_{\rm nr}(\bC(G),\bQ/\bZ)\simeq H^4_{\rm nr}(\bC(G),\bZ)\simeq \bZ/5\bZ$.
\medskip\begin{verbatim}
gap> G6:=SmallGroup(5^5,6);
<pc group of size 3125 with 5 generators>
gap> RG6:=ResolutionNormalSeries(LowerCentralSeries(G6),5);
Resolution of length 5 in characteristic 0 for <pc group with 3125 generators> .
gap> CR_CocyclesAndCoboundaries(RG6,4,true).torsionCoefficients; # H^4(G,Z)
[ [ 5, 5, 5, 5, 5 ] ]
gap> H4pFromResolution(RG6);
14[ [ 125, 5 ], [ 125, 2 ], [ 125, 5 ], [ 125, 2 ], [ 125, 2 ],
    [ 125, 2 ], [ 125, 2 ], [ 625, 12 ], [ 625, 3 ], [ 625, 3 ],
    [ 625, 3 ], [ 625, 3 ], [ 625, 3 ], [ 3125, 6 ] ]
1/14
...
14/14
[ [ 5, 5, 5, 5 ],
[ [ 5, 5, 5, 5, 5 ],
  [ [ 1, 0, 0, 0, 0 ], [ 0, 1, 4, 0, 0 ],
    [ 0, 0, 0, 1, 0 ], [ 0, 0, 0, 0, 1 ] ] ] ]
gap> IsUnramifiedH3(RG6,[0,0,1,0,0]:Subgroup);
1/12
[ [ 5 ], [ 0 ] ]
...
12/12
[ [ 5, 5, 5 ], [ 0, 0, 0 ] ]
true
\end{verbatim}
\medskip
{\bf Case 4e: $G=G(5^5,7)$ which belongs to $\Phi_6$.}\\
We have
$H^4(G,\bZ)\simeq(\bZ/5\bZ)^{\oplus 3}\oplus\bZ/5\bZ=\langle f_1,\ldots,f_4\rangle$,
$H^4_{\rm p}(G,\bZ)\simeq (\bZ/5\bZ)^{\oplus 2}\oplus\bZ/5\bZ=\langle f_1f_4^4,f_2f_4^4,f_3f_4,f_4^5\rangle$ and $f_4\in H^4_{\rm nr}(G,\bZ)$.
Hence we get $H^3_{\rm nr}(\bC(G),\bQ/\bZ)\simeq H^4_{\rm nr}(\bC(G),\bZ)\simeq \bZ/5\bZ$.
\medskip\begin{verbatim}
gap> G7:=SmallGroup(5^5,7);
<pc group of size 3125 with 5 generators>
gap> RG7:=ResolutionNormalSeries(LowerCentralSeries(G7),5);
Resolution of length 5 in characteristic 0 for <pc group with 3125 generators> .
gap> CR_CocyclesAndCoboundaries(RG7,4,true).torsionCoefficients; # H^4(G,Z)
[ [ 5, 5, 5, 25 ] ]
gap> H4pFromResolution(RG7);
14[ [ 125, 2 ], [ 125, 5 ], [ 125, 2 ], [ 125, 2 ], [ 125, 2 ],
    [ 125, 2 ], [ 125, 2 ], [ 625, 13 ], [ 625, 3 ], [ 625, 13 ],
    [ 625, 3 ], [ 625, 3 ], [ 625, 3 ], [ 3125, 7 ] ]
1/14
...
14/14
[ [ 5, 5, 25 ],
[ [ 5, 5, 5, 25 ],
  [ [ 1, 0, 0, 4 ], [ 0, 1, 0, 4 ], [ 0, 0, 1, 1 ], [ 0, 0, 0, 5 ] ] ] ]
gap> IsUnramifiedH3(RG7,[0,0,0,1]:Subgroup);
1/7
[ [ 5 ], [ 0 ] ]
...
7/7
[ [ 5, 5, 5 ], [ 0, 0, 0 ] ]
true
\end{verbatim}
\medskip
{\bf Case 4f: $G=G(5^5,8)$ which belongs to $\Phi_6$.}\\
We have
$H^4(G,\bZ)\simeq(\bZ/5\bZ)^{\oplus 4}=\langle f_1,\ldots,f_4\rangle$,
$H^4_{\rm p}(G,\bZ)\simeq (\bZ/5\bZ)^{\oplus 3}=\langle f_1f_4^2,f_2f_4^3,f_3f_4^4\rangle$ and $f_4\in H^4_{\rm nr}(G,\bZ)$.
Hence we get $H^3_{\rm nr}(\bC(G),\bQ/\bZ)\simeq H^4_{\rm nr}(\bC(G),\bZ)\simeq \bZ/5\bZ$.
\medskip\begin{verbatim}
gap> G8:=SmallGroup(5^5,8);
<pc group of size 3125 with 5 generators>
gap> RG8:=ResolutionNormalSeries(LowerCentralSeries(G8),5);
Resolution of length 5 in characteristic 0 for <pc group with 3125 generators> .
gap> CR_CocyclesAndCoboundaries(RG8,4,true).torsionCoefficients; # H^4(G,Z)
[ [ 5, 5, 5, 5 ] ]
gap> H4pFromResolution(RG8);
14[ [ 125, 2 ], [ 125, 2 ], [ 125, 2 ], [ 125, 2 ], [ 125, 2 ],
    [ 125, 2 ], [ 125, 5 ], [ 625, 3 ], [ 625, 13 ], [ 625, 3 ],
    [ 625, 3 ], [ 625, 3 ], [ 625, 3 ], [ 3125, 8 ] ]
1/14
...
14/14
[ [ 5, 5, 5 ],
[ [ 5, 5, 5, 5 ],
  [ [ 1, 0, 0, 2 ], [ 0, 1, 0, 3 ], [ 0, 0, 1, 4 ] ] ] ]
gap> IsUnramifiedH3(RG8,[0,0,0,1]:Subgroup);
1/1
[ [ 5, 5, 5 ], [ 0, 0, 0 ] ]
true
\end{verbatim}
\medskip
{\bf Case 4g: $G=G(5^5,9)$ which belongs to $\Phi_6$.}\\
We have
$H^4(G,\bZ)\simeq(\bZ/5\bZ)^{\oplus 4}=\langle f_1,\ldots,f_4\rangle$,
$H^4_{\rm p}(G,\bZ)\simeq (\bZ/5\bZ)^{\oplus 3}=\langle f_1f_4^2,f_2f_4^3,f_3f_4^4\rangle$ and $f_4\in H^4_{\rm nr}(G,\bZ)$.
Hence we get $H^3_{\rm nr}(\bC(G),\bQ/\bZ)\simeq H^4_{\rm nr}(\bC(G),\bZ)\simeq \bZ/5\bZ$.
\medskip\begin{verbatim}
gap> G9:=SmallGroup(5^5,9);
<pc group of size 3125 with 5 generators>
gap> RG9:=ResolutionNormalSeries(LowerCentralSeries(G9),5);
Resolution of length 5 in characteristic 0 for <pc group with 3125 generators> .
gap> CR_CocyclesAndCoboundaries(RG9,4,true).torsionCoefficients; # H^4(G,Z)
[ [ 5, 5, 5, 5 ] ]
gap> H4pFromResolution(RG9);
14[ [ 125, 5 ], [ 125, 2 ], [ 125, 2 ], [ 125, 2 ], [ 125, 2 ],
    [ 125, 2 ], [ 125, 2 ], [ 625, 3 ], [ 625, 3 ], [ 625, 3 ],
    [ 625, 3 ], [ 625, 3 ], [ 625, 3 ], [ 3125, 9 ] ]
1/14
...
14/14
[ [ 5, 5, 5 ],
[ [ 5, 5, 5, 5 ],
  [ [ 1, 0, 0, 2 ], [ 0, 1, 0, 3 ], [ 0, 0, 1, 4 ] ] ] ]
gap> IsUnramifiedH3(RG9,[0,0,0,1]:Subgroup);
1/1
[ [ 5, 5, 5 ], [ 0, 0, 0 ] ]
true
\end{verbatim}
\medskip
{\bf Case 4h: $G=G(5^5,10)$ which belongs to $\Phi_6$.}\\
We have
$H^4(G,\bZ)\simeq(\bZ/5\bZ)^{\oplus 3}\oplus\bZ/5\bZ=\langle f_1,\ldots,f_4\rangle$,
$H^4_{\rm p}(G,\bZ)\simeq (\bZ/5\bZ)^{\oplus 2}\oplus\bZ/5\bZ=\langle f_1,f_2f_4,f_3,f_4^5\rangle$ and $f_4\in H^4_{\rm nr}(G,\bZ)$.
Hence we get $H^3_{\rm nr}(\bC(G),\bQ/\bZ)\simeq H^4_{\rm nr}(\bC(G),\bZ)\simeq \bZ/5\bZ$.
\medskip\begin{verbatim}
gap> G10:=SmallGroup(5^5,10);
<pc group of size 3125 with 5 generators>
gap> RG10:=ResolutionNormalSeries(LowerCentralSeries(G10),5);
Resolution of length 5 in characteristic 0 for <pc group with 3125 generators> .
gap> CR_CocyclesAndCoboundaries(RG10,4,true).torsionCoefficients; # H^4(G,Z)
[ [ 5, 5, 5, 25 ] ]
gap> H4pFromResolution(RG10);
14[ [ 125, 5 ], [ 125, 2 ], [ 125, 2 ], [ 125, 2 ], [ 125, 2 ],
    [ 125, 2 ], [ 125, 2 ], [ 625, 3 ], [ 625, 3 ], [ 625, 3 ],
    [ 625, 3 ], [ 625, 3 ], [ 625, 3 ], [ 3125, 10 ] ]
1/14
...
14/14
[ [ 5, 5, 25 ],
[ [ 5, 5, 5, 25 ],
  [ [ 1, 0, 0, 0 ], [ 0, 1, 0, 1 ], [ 0, 0, 1, 0 ], [ 0, 0, 0, 5 ] ] ] ]
gap> IsUnramifiedH3(RG10,[0,0,0,1]:Subgroup);
1/7
[ [ 5 ], [ 0 ] ]
...
7/7
[ [ 5, 5, 5 ], [ 0, 0, 0 ] ]
true
\end{verbatim}
\medskip
{\bf Case 4i: $G=G(5^5,11)$ which belongs to $\Phi_6$.}\\
We have
$H^4(G,\bZ)\simeq(\bZ/5\bZ)^{\oplus 4}=\langle f_1,\ldots,f_4\rangle$,
$H^4_{\rm p}(G,\bZ)\simeq (\bZ/5\bZ)^{\oplus 3}=\langle f_1f_4^2,f_2f_4^3,f_3f_4^4\rangle$ and $f_4\in H^4_{\rm nr}(G,\bZ)$.
Hence we get $H^3_{\rm nr}(\bC(G),\bQ/\bZ)\simeq H^4_{\rm nr}(\bC(G),\bZ)\simeq \bZ/5\bZ$.
\medskip\begin{verbatim}
gap> G11:=SmallGroup(5^5,11);
<pc group of size 3125 with 5 generators>
gap> RG11:=ResolutionNormalSeries(LowerCentralSeries(G11),5);
Resolution of length 5 in characteristic 0 for <pc group with 3125 generators> .
gap> CR_CocyclesAndCoboundaries(RG11,4,true).torsionCoefficients; # H^4(G,Z)
[ [ 5, 5, 5, 5 ] ]
gap> H4pFromResolution(RG11);
14[ [ 125, 2 ], [ 125, 2 ], [ 125, 5 ], [ 125, 2 ], [ 125, 2 ],
    [ 125, 2 ], [ 125, 2 ], [ 625, 13 ], [ 625, 13 ], [ 625, 3 ],
    [ 625, 3 ], [ 625, 3 ], [ 625, 3 ], [ 3125, 11 ] ]
1/14
...
14/14
[ [ 5, 5, 5 ],
[ [ 5, 5, 5, 5 ],
  [ [ 1, 0, 0, 2 ], [ 0, 1, 0, 3 ], [ 0, 0, 1, 4 ] ] ] ]
gap> IsUnramifiedH3(RG11,[0,0,0,1]:Subgroup);
1/1
[ [ 5, 5, 5 ], [ 0, 0, 0 ] ]
true
\end{verbatim}
\medskip
{\bf Case 4j: $G=G(5^5,12)$ which belongs to $\Phi_6$.}\\
We have
$H^4(G,\bZ)\simeq(\bZ/5\bZ)^{\oplus 4}=\langle f_1,\ldots,f_4\rangle$,
$H^4_{\rm p}(G,\bZ)\simeq (\bZ/5\bZ)^{\oplus 3}=\langle f_1f_4^2,f_2f_4^3,f_3f_4^4\rangle$ and $f_4\in H^4_{\rm nr}(G,\bZ)$.
Hence we get $H^3_{\rm nr}(\bC(G),\bQ/\bZ)\simeq H^4_{\rm nr}(\bC(G),\bZ)\simeq \bZ/5\bZ$.
\medskip\begin{verbatim}
gap> G12:=SmallGroup(5^5,12);
<pc group of size 3125 with 5 generators>
gap> RG12:=ResolutionNormalSeries(LowerCentralSeries(G12),5);
Resolution of length 5 in characteristic 0 for <pc group with 3125 generators> .
gap> CR_CocyclesAndCoboundaries(RG12,4,true).torsionCoefficients; # H^4(G,Z)
[ [ 5, 5, 5, 5 ] ]
gap> H4pFromResolution(RG12);
14[ [ 125, 5 ], [ 125, 2 ], [ 125, 2 ], [ 125, 2 ], [ 125, 2 ],
    [ 125, 2 ], [ 125, 2 ], [ 625, 3 ], [ 625, 3 ], [ 625, 3 ],
    [ 625, 3 ], [ 625, 3 ], [ 625, 3 ], [ 3125, 12 ] ]
1/14
...
14/14
[ [ 5, 5, 5 ],
[ [ 5, 5, 5, 5 ],
  [ [ 1, 0, 0, 2 ], [ 0, 1, 0, 3 ], [ 0, 0, 1, 4 ] ] ] ]
gap> IsUnramifiedH3(RG12,[0,0,0,1]:Subgroup);
1/1
[ [ 5, 5, 5 ], [ 0, 0, 0 ] ]
true
\end{verbatim}
\medskip
{\bf Case 4k: $G=G(5^5,13)$ which belongs to $\Phi_6$.}\\
We have
$H^4(G,\bZ)\simeq(\bZ/5\bZ)^{\oplus 4}=\langle f_1,\ldots,f_4\rangle$,
$H^4_{\rm p}(G,\bZ)\simeq (\bZ/5\bZ)^{\oplus 3}=\langle f_1f_4^2,f_2f_4^3,f_3f_4^4\rangle$ and $f_4\in H^4_{\rm nr}(G,\bZ)$.
Hence we get $H^3_{\rm nr}(\bC(G),\bQ/\bZ)\simeq H^4_{\rm nr}(\bC(G),\bZ)\simeq \bZ/5\bZ$.
\medskip\begin{verbatim}
gap> G13:=SmallGroup(5^5,13);
<pc group of size 3125 with 5 generators>
gap> RG13:=ResolutionNormalSeries(LowerCentralSeries(G13),5);
Resolution of length 5 in characteristic 0 for <pc group with 3125 generators> .
gap> CR_CocyclesAndCoboundaries(RG13,4,true).torsionCoefficients; # H^4(G,Z)
[ [ 5, 5, 5, 5 ] ]
gap> H4pFromResolution(RG13);
14[ [ 125, 2 ], [ 125, 2 ], [ 125, 2 ], [ 125, 2 ], [ 125, 2 ],
    [ 125, 2 ], [ 125, 5 ], [ 625, 3 ], [ 625, 13 ], [ 625, 3 ],
    [ 625, 3 ], [ 625, 3 ], [ 625, 3 ], [ 3125, 13 ] ]
1/14
...
14/14
[ [ 5, 5, 5 ],
[ [ 5, 5, 5, 5 ],
  [ [ 1, 0, 0, 2 ], [ 0, 1, 0, 3 ], [ 0, 0, 1, 4 ] ] ] ]
gap> IsUnramifiedH3(RG13,[0,0,0,1]:Subgroup);
1/1
[ [ 5, 5, 5 ], [ 0, 0, 0 ] ]
true
\end{verbatim}
\medskip
{\bf Case 4l: $G=G(5^5,14)$ which belongs to $\Phi_6$.}\\
We have
$H^4(G,\bZ)\simeq(\bZ/5\bZ)^{\oplus 4}=\langle f_1,\ldots,f_4\rangle$,
$H^4_{\rm p}(G,\bZ)\simeq (\bZ/5\bZ)^{\oplus 3}=\langle f_1,f_2,f_3\rangle$ and $f_4\in H^4_{\rm nr}(G,\bZ)$.
Hence we get $H^3_{\rm nr}(\bC(G),\bQ/\bZ)\simeq H^4_{\rm nr}(\bC(G),\bZ)\simeq \bZ/5\bZ$.
\medskip\begin{verbatim}
gap> G14:=SmallGroup(5^5,14);
<pc group of size 3125 with 5 generators>
gap> RG14:=ResolutionNormalSeries(LowerCentralSeries(G14),5);
Resolution of length 5 in characteristic 0 for <pc group with 3125 generators> .
gap> CR_CocyclesAndCoboundaries(RG14,4,true).torsionCoefficients; # H^4(G,Z)
[ [ 5, 5, 5, 5 ] ]
gap> H4pFromResolution(RG14);
14[ [ 125, 2 ], [ 125, 2 ], [ 125, 2 ], [ 125, 2 ], [ 125, 5 ],
    [ 125, 2 ], [ 125, 2 ], [ 625, 13 ], [ 625, 13 ], [ 625, 13 ],
    [ 625, 13 ], [ 625, 13 ], [ 625, 13 ], [ 3125, 14 ] ]
1/14
...
14/14
[ [ 5, 5, 5 ],
[ [ 5, 5, 5, 5 ],
  [ [ 1, 0, 0, 0 ], [ 0, 1, 0, 0 ], [ 0, 0, 1, 0 ] ] ] ]
gap> IsUnramifiedH3(RG14,[0,0,0,1]:Subgroup);
1/1
[ [ 5, 5, 5 ], [ 0, 0, 0 ] ]
true
\end{verbatim}
\medskip
{\bf Case 5a: $G=G(5^5,66)$ which belongs to $\Phi_7$.}\\
We have
$H^4(G,\bZ)\simeq(\bZ/5\bZ)^{\oplus 10}=\langle f_1,\ldots,f_{10}\rangle$,
$H^4_{\rm p}(G,\bZ)\simeq (\bZ/5\bZ)^{\oplus 6}=\langle f_2,f_3f_9^4,f_4,f_5f_9,f_6f_9f_{10},f_8f_9\rangle$, $f_1,f_7,f_{10}\not\in H^4_{\rm nr}(G\bZ)$
and $f_8\in H^4_{\rm nr}(G,\bZ)$.
Hence we get $H^3_{\rm nr}(\bC(G),\bQ/\bZ)\simeq H^4_{\rm nr}(\bC(G),\bZ)\simeq \bZ/5\bZ$.
\medskip\begin{verbatim}
gap> G66:=SmallGroup(5^5,66);
<pc group of size 3125 with 5 generators>
gap> RG66:=ResolutionNormalSeries(LowerCentralSeries(G66),5);
Resolution of length 5 in characteristic 0 for <pc group with 3125 generators> .
gap> CR_CocyclesAndCoboundaries(RG66,4,true).torsionCoefficients; # H^4(G,Z)
[ [ 5, 5, 5, 5, 5, 5, 5, 5, 5, 5 ] ]
gap> H4pFromResolution(RG66);
18[ [ 125, 5 ], [ 125, 5 ], [ 125, 5 ], [ 125, 5 ], [ 125, 5 ],
    [ 125, 5 ], [ 125, 5 ], [ 125, 5 ], [ 125, 5 ], [ 125, 5 ],
    [ 125, 5 ], [ 625, 12 ], [ 625, 12 ], [ 625, 12 ], [ 625, 12 ],
    [ 625, 12 ], [ 625, 12 ], [ 3125, 66 ] ]
1/18
...
18/18
[ [ 5, 5, 5, 5, 5, 5 ],
[ [ 5, 5, 5, 5, 5, 5, 5, 5, 5, 5 ],
  [ [ 0, 1, 0, 0, 0, 0, 0, 0, 0, 0 ], [ 0, 0, 1, 0, 0, 0, 0, 0, 4, 0 ],
    [ 0, 0, 0, 1, 0, 0, 0, 0, 0, 0 ], [ 0, 0, 0, 0, 1, 0, 0, 0, 1, 0 ],
    [ 0, 0, 0, 0, 0, 1, 0, 0, 1, 1 ], [ 0, 0, 0, 0, 0, 0, 0, 1, 1, 0 ] ] ] ]
gap> IsUnramifiedH3(RG66,[0,0,0,0,0,0,0,1,0,0]:Subgroup);
1/37
[ [ 5, 5, 5, 5 ], [ 0, 0, 0, 0 ] ]
...
37/37
[ [ 5, 5, 5 ], [ 0, 0, 0 ] ]
true
gap> IsUnramifiedH3(RG66,[1,0,0,0,0,0,0,0,0,0]:Subgroup);
1/37
[ [ 5, 5, 5, 5 ], [ 0, 0, 1, 1 ] ]
false
gap> IsUnramifiedH3(RG66,[0,0,0,0,0,0,1,0,0,0]:Subgroup);
1/37
[ [ 5, 5, 5, 5 ], [ 0, 4, 4, 0 ] ]
false
gap> IsUnramifiedH3(RG66,[0,0,0,0,0,0,0,0,0,1]:Subgroup);
1/37
[ [ 5, 5, 5, 5 ], [ 0, 0, 3, 0 ] ]
false
\end{verbatim}
\medskip
{\bf Case 5b: $G=G(5^5,67)$ which belongs to $\Phi_7$.}\\
We have
$H^4(G,\bZ)\simeq(\bZ/5\bZ)^{\oplus 6}=\langle f_1,\ldots,f_6\rangle$,
$H^4_{\rm p}(G,\bZ)\simeq (\bZ/5\bZ)^{\oplus 5}=\langle f_1,f_2,f_3,f_4,f_5f_6\rangle$, $f_1,f_7,f_{10}\not\in H^4_{\rm nr}(G\bZ)$
and $f_6\in H^4_{\rm nr}(G,\bZ)$.
Hence we get $H^3_{\rm nr}(\bC(G),\bQ/\bZ)\simeq H^4_{\rm nr}(\bC(G),\bZ)\simeq \bZ/5\bZ$.
\medskip\begin{verbatim}
gap> G67:=SmallGroup(5^5,67);
<pc group of size 3125 with 5 generators>
gap> RG67:=ResolutionNormalSeries(LowerCentralSeries(G67),5);
Resolution of length 5 in characteristic 0 for <pc group with 3125 generators> .
gap> CR_CocyclesAndCoboundaries(RG67,4,true).torsionCoefficients; # H^4(G,Z)
[ [ 5, 5, 5, 5, 5, 5 ] ]
gap> H4pFromResolution(RG67);
18[ [ 125, 5 ], [ 125, 5 ], [ 125, 5 ], [ 125, 5 ], [ 125, 5 ],
    [ 125, 5 ], [ 125, 2 ], [ 125, 2 ], [ 125, 2 ], [ 125, 2 ],
    [ 125, 2 ], [ 625, 13 ], [ 625, 12 ], [ 625, 13 ], [ 625, 13 ],
    [ 625, 13 ], [ 625, 13 ], [ 3125, 67 ] ]
1/18
...
18/18
[ [ 5, 5, 5, 5, 5 ],
[ [ 5, 5, 5, 5, 5, 5 ],
  [ [ 1, 0, 0, 0, 0, 0 ], [ 0, 1, 0, 0, 0, 0 ], [ 0, 0, 1, 0, 0, 0 ],
    [ 0, 0, 0, 1, 0, 0 ], [ 0, 0, 0, 0, 1, 1 ] ] ] ]
gap> IsUnramifiedH3(RG67,[0,0,0,0,0,1]:Subgroup);
1/7
[ [ 5, 5, 5 ], [ 0, 0, 0 ] ]
...
7/7
[ [ 5, 5, 5 ], [ 0, 0, 0 ] ]
true
\end{verbatim}
\medskip
{\bf Case 5c: $G=G(5^5,68)$ which belongs to $\Phi_7$.}\\
We have
$H^4(G,\bZ)\simeq(\bZ/5\bZ)^{\oplus 6}=\langle f_1,\ldots,f_6\rangle$,
$H^4_{\rm p}(G,\bZ)\simeq (\bZ/5\bZ)^{\oplus 5}=\langle f_1,f_2,f_3,f_4,f_5f_6^2\rangle$, $f_1,f_7,f_{10}\not\in H^4_{\rm nr}(G\bZ)$
and $f_6\in H^4_{\rm nr}(G,\bZ)$.
Hence we get $H^3_{\rm nr}(\bC(G),\bQ/\bZ)\simeq H^4_{\rm nr}(\bC(G),\bZ)\simeq \bZ/5\bZ$.
\medskip\begin{verbatim}
gap> G68:=SmallGroup(5^5,68);
<pc group of size 3125 with 5 generators>
gap> RG68:=ResolutionNormalSeries(LowerCentralSeries(G68),5);
Resolution of length 5 in characteristic 0 for <pc group with 3125 generators> .
gap> CR_CocyclesAndCoboundaries(RG68,4,true).torsionCoefficients; # H^4(G,Z)
[ [ 5, 5, 5, 5, 5, 5 ] ]
gap> H4pFromResolution(RG68);
18[ [ 125, 2 ], [ 125, 5 ], [ 125, 2 ], [ 125, 2 ], [ 125, 2 ],
    [ 125, 2 ], [ 125, 2 ], [ 125, 2 ], [ 125, 5 ], [ 125, 2 ],
    [ 125, 2 ], [ 625, 12 ], [ 625, 13 ], [ 625, 13 ], [ 625, 13 ],
    [ 625, 13 ], [ 625, 13 ], [ 3125, 68 ] ]
1/18
...
18/18
[ [ 5, 5, 5, 5, 5 ],
[ [ 5, 5, 5, 5, 5, 5 ],
  [ [ 1, 0, 0, 0, 0, 0 ], [ 0, 1, 0, 0, 0, 0 ], [ 0, 0, 1, 0, 0, 0 ],
    [ 0, 0, 0, 1, 0, 0 ], [ 0, 0, 0, 0, 1, 2 ] ] ] ]
gap> IsUnramifiedH3(RG68,[0,0,0,0,0,1]:Subgroup);
1/7
[ [ 5, 5, 5 ], [ 0, 0, 0 ] ]
...
7/7
[ [ 5, 5, 5, 5 ], [ 0, 0, 0, 0 ] ]
true
\end{verbatim}
\medskip
{\bf Case 5d: $G=G(5^5,69)$ which belongs to $\Phi_7$.}\\
We have
$H^4(G,\bZ)\simeq(\bZ/5\bZ)^{\oplus 6}=\langle f_1,\ldots,f_6\rangle$,
$H^4_{\rm p}(G,\bZ)\simeq (\bZ/5\bZ)^{\oplus 5}=\langle f_1,f_2,f_3,f_4,f_5f_6^2\rangle$, $f_1,f_7,f_{10}\not\in H^4_{\rm nr}(G\bZ)$
and $f_6\in H^4_{\rm nr}(G,\bZ)$.
Hence we get $H^3_{\rm nr}(\bC(G),\bQ/\bZ)\simeq H^4_{\rm nr}(\bC(G),\bZ)\simeq \bZ/5\bZ$.
\medskip\begin{verbatim}
gap> G69:=SmallGroup(5^5,69);
<pc group of size 3125 with 5 generators>
gap> RG69:=ResolutionNormalSeries(LowerCentralSeries(G69),5);
Resolution of length 5 in characteristic 0 for <pc group with 3125 generators> .
gap> CR_CocyclesAndCoboundaries(RG69,4,true).torsionCoefficients; # H^4(G,Z)
[ [ 5, 5, 5, 5, 5, 5 ] ]
gap> H4pFromResolution(RG69);
18[ [ 125, 2 ], [ 125, 5 ], [ 125, 2 ], [ 125, 2 ], [ 125, 2 ],
    [ 125, 2 ], [ 125, 2 ], [ 125, 2 ], [ 125, 5 ], [ 125, 2 ],
    [ 125, 2 ], [ 625, 12 ], [ 625, 13 ], [ 625, 13 ], [ 625, 13 ],
    [ 625, 13 ], [ 625, 13 ], [ 3125, 69 ] ]
1/18
...
18/18
[ [ 5, 5, 5, 5, 5 ],
[ [ 5, 5, 5, 5, 5, 5 ],
  [ [ 1, 0, 0, 0, 0, 0 ], [ 0, 1, 0, 0, 0, 0 ], [ 0, 0, 1, 0, 0, 0 ],
    [ 0, 0, 0, 1, 0, 0 ], [ 0, 0, 0, 0, 1, 2 ] ] ] ]
gap> IsUnramifiedH3(RG69,[0,0,0,0,0,1]:Subgroup);
1/7
[ [ 5, 5, 5 ], [ 0, 0, 0 ] ]
...
7/7
[ [ 5, 5, 5, 5 ], [ 0, 0, 0, 0 ] ]
true
\end{verbatim}
\medskip
{\bf Case 5e: $G=G(5^5,70)$ which belongs to $\Phi_7$.}\\
We have
$H^4(G,\bZ)\simeq(\bZ/5\bZ)^{\oplus 6}=\langle f_1,\ldots,f_6\rangle$,
$H^4_{\rm p}(G,\bZ)\simeq (\bZ/5\bZ)^{\oplus 5}=\langle f_1,f_2,f_3,f_4,f_5f_6\rangle$, $f_1,f_7,f_{10}\not\in H^4_{\rm nr}(G\bZ)$
and $f_6\in H^4_{\rm nr}(G,\bZ)$.
Hence we get $H^3_{\rm nr}(\bC(G),\bQ/\bZ)\simeq H^4_{\rm nr}(\bC(G),\bZ)\simeq \bZ/5\bZ$.
\medskip\begin{verbatim}
gap> G70:=SmallGroup(5^5,70);
<pc group of size 3125 with 5 generators>
gap> RG70:=ResolutionNormalSeries(LowerCentralSeries(G70),5);
Resolution of length 5 in characteristic 0 for <pc group with 3125 generators> .
gap> CR_CocyclesAndCoboundaries(RG70,4,true).torsionCoefficients; # H^4(G,Z)
[ [ 5, 5, 5, 5, 5, 5 ] ]
gap> H4pFromResolution(RG70);
18[ [ 125, 5 ], [ 125, 2 ], [ 125, 2 ], [ 125, 2 ], [ 125, 2 ],
    [ 125, 2 ], [ 125, 2 ], [ 125, 2 ], [ 125, 2 ], [ 125, 2 ],
    [ 125, 2 ], [ 625, 14 ], [ 625, 13 ], [ 625, 14 ], [ 625, 14 ],
    [ 625, 14 ], [ 625, 14 ], [ 3125, 70 ] ]
1/18
...
18/18
[ [ 5, 5, 5, 5, 5 ],
[ [ 5, 5, 5, 5, 5, 5 ],
  [ [ 1, 0, 0, 0, 0, 0 ], [ 0, 1, 0, 0, 0, 0 ], [ 0, 0, 1, 0, 0, 0 ],
    [ 0, 0, 0, 1, 0, 0 ], [ 0, 0, 0, 0, 1, 1 ] ] ] ]
gap> IsUnramifiedH3(RG70,[0,0,0,0,1,0]:Subgroup);
1/2
[ [ 5, 5, 5 ], [ 0, 0, 0 ] ]
2/2
[ [ 5, 5, 5 ], [ 0, 0, 0 ] ]
true
\end{verbatim}
\medskip
{\bf Case 6a: $G=G(5^5,33)$ which belongs to $\Phi_{10}$.}\\
We have
$H^4(G,\bZ)\simeq(\bZ/5\bZ)^{\oplus 6}=\langle f_1,\ldots,f_6\rangle$,
$H^4_{\rm p}(G,\bZ)\simeq (\bZ/5\bZ)^{\oplus 4}=\langle f_1,f_2f_5^3f_6^4,f_3,f_4\rangle$, $f_6\not\in H^4_{\rm nr}(G\bZ)$
and $f_5\in H^4_{\rm nr}(G,\bZ)$.
Hence we get $H^3_{\rm nr}(\bC(G),\bQ/\bZ)\simeq H^4_{\rm nr}(\bC(G),\bZ)\simeq \bZ/5\bZ$.
\medskip\begin{verbatim}
gap> G33:=SmallGroup(5^5,33);
<pc group of size 3125 with 5 generators>
gap> RG33:=ResolutionNormalSeries(LowerCentralSeries(G33),5);
Resolution of length 5 in characteristic 0 for <pc group with 3125 generators> .
gap> CR_CocyclesAndCoboundaries(RG33,4,true).torsionCoefficients; # H^4(G,Z)
[ [ 5, 5, 5, 5, 5, 5 ] ]
gap> H4pFromResolution(RG33);
19[ [ 25, 2 ], [ 25, 2 ], [ 25, 2 ], [ 25, 2 ], [ 25, 2 ], [ 125, 5 ],
    [ 125, 3 ], [ 125, 3 ], [ 125, 3 ], [ 125, 5 ], [ 125, 3 ],
    [ 125, 3 ], [ 625, 7 ], [ 625, 7 ], [ 625, 7 ], [ 625, 7 ],
    [ 625, 7 ], [ 625, 12 ], [ 3125, 33 ] ]
1/19
...
19/19
[ [ 5, 5, 5, 5 ],
[ [ 5, 5, 5, 5, 5, 5 ],
  [ [ 1, 0, 0, 0, 0, 0 ], [ 0, 1, 0, 0, 3, 4 ],
    [ 0, 0, 1, 0, 0, 0 ], [ 0, 0, 0, 1, 0, 0 ] ] ] ]
gap> IsUnramifiedH3(RG33,[0,0,0,0,1,0]:Subgroup);
1/8
[ [ 5, 5, 5 ], [ 0, 0, 0 ] ]
...
8/8
[ [ 5, 5, 5 ], [ 0, 0, 0 ] ]
true
gap> IsUnramifiedH3(RG33,[0,0,0,0,0,1]:Subgroup);
1/8
[ [ 5, 5, 5 ], [ 0, 2, 0 ] ]
false
\end{verbatim}
\medskip
{\bf Case 6b: $G=G(5^5,34)$ which belongs to $\Phi_{10}$.}\\
We have
$H^4(G,\bZ)\simeq(\bZ/5\bZ)^{\oplus 4}=\langle f_1,\ldots,f_4\rangle$,
$H^4_{\rm p}(G,\bZ)\simeq (\bZ/5\bZ)^{\oplus 2}=\langle f_1f_4,f_2\rangle$, $f_4\not\in H^4_{\rm nr}(G\bZ)$
and $f_3\in H^4_{\rm nr}(G,\bZ)$.
Hence we get $H^3_{\rm nr}(\bC(G),\bQ/\bZ)\simeq H^4_{\rm nr}(\bC(G),\bZ)\simeq \bZ/5\bZ$.
\medskip\begin{verbatim}
gap> G34:=SmallGroup(5^5,34);
<pc group of size 3125 with 5 generators>
gap> RG34:=ResolutionNormalSeries(LowerCentralSeries(G34),5);
Resolution of length 5 in characteristic 0 for <pc group with 3125 generators> .
gap> CR_CocyclesAndCoboundaries(RG34,4,true).torsionCoefficients; # H^4(G,Z)
[ [ 5, 5, 5, 5 ] ]
gap> H4pFromResolution(RG34);
19[ [ 25, 1 ], [ 25, 1 ], [ 25, 1 ], [ 25, 1 ], [ 25, 1 ], [ 125, 5 ],
    [ 125, 4 ], [ 125, 4 ], [ 125, 4 ], [ 125, 4 ], [ 125, 5 ],
    [ 125, 4 ], [ 625, 8 ], [ 625, 8 ], [ 625, 12 ], [ 625, 8 ],
    [ 625, 8 ], [ 625, 8 ], [ 3125, 34 ] ]
1/19
...
19/19
[ [ 5, 5 ],
[ [ 5, 5, 5, 5 ],
  [ [ 1, 0, 0, 1 ], [ 0, 1, 0, 0 ] ] ] ]
gap> IsUnramifiedH3(RG34,[0,0,1,0]:Subgroup);
1/8
[ [ 5 ], [ 0 ] ]
...
8/8
[ [ 5, 5, 5 ], [ 0, 0, 0 ] ]
true
gap> IsUnramifiedH3(RG34,[0,0,0,1]:Subgroup);
1/8
[ [ 5 ], [ 3 ] ]
false
\end{verbatim}
\medskip
{\bf Case 6c: $G=G(5^5,35)$ which belongs to $\Phi_{10}$.}\\
We have
$H^4(G,\bZ)\simeq(\bZ/5\bZ)^{\oplus 4}=\langle f_1,\ldots,f_4\rangle$,
$H^4_{\rm p}(G,\bZ)\simeq (\bZ/5\bZ)^{\oplus 2}=\langle f_1f_3^2f_4^4,f_2\rangle$, $f_4\not\in H^4_{\rm nr}(G\bZ)$
and $f_3\in H^4_{\rm nr}(G,\bZ)$.
Hence we get $H^3_{\rm nr}(\bC(G),\bQ/\bZ)\simeq H^4_{\rm nr}(\bC(G),\bZ)\simeq \bZ/5\bZ$.
\medskip\begin{verbatim}
gap> G35:=SmallGroup(5^5,35);
<pc group of size 3125 with 5 generators>
gap> RG35:=ResolutionNormalSeries(LowerCentralSeries(G35),5);
Resolution of length 5 in characteristic 0 for <pc group with 3125 generators> .
gap> CR_CocyclesAndCoboundaries(RG35,4,true).torsionCoefficients; # H^4(G,Z)
[ [ 5, 5, 5, 5 ] ]
gap> H4pFromResolution(RG35);
19[ [ 25, 1 ], [ 25, 1 ], [ 25, 1 ], [ 25, 1 ], [ 25, 1 ], [ 125, 5 ],
    [ 125, 4 ], [ 125, 4 ], [ 125, 4 ], [ 125, 4 ], [ 125, 5 ],
    [ 125, 4 ], [ 625, 8 ], [ 625, 8 ], [ 625, 12 ], [ 625, 8 ],
    [ 625, 8 ], [ 625, 8 ], [ 3125, 35 ] ]
1/19
...
19/19
[ [ 5, 5 ],
[ [ 5, 5, 5, 5 ],
  [ [ 1, 0, 2, 4 ], [ 0, 1, 0, 0 ] ] ] ]
gap> IsUnramifiedH3(RG35,[0,0,1,0]:Subgroup);
1/8
[ [ 5 ], [ 0 ] ]
...
8/8
[ [ 5, 5, 5 ], [ 0, 0, 0 ] ]
true
gap> IsUnramifiedH3(RG35,[0,0,0,1]:Subgroup);
1/8
[ [ 5 ], [ 2 ] ]
false
\end{verbatim}
\medskip
{\bf Case 6d: $G=G(5^5,36)$ which belongs to $\Phi_{10}$.}\\
We have
$H^4(G,\bZ)\simeq(\bZ/5\bZ)^{\oplus 4}=\langle f_1,\ldots,f_4\rangle$,
$H^4_{\rm p}(G,\bZ)\simeq (\bZ/5\bZ)^{\oplus 2}=\langle f_1f_3^2f_4^4,f_2\rangle$, $f_4\not\in H^4_{\rm nr}(G\bZ)$
and $f_3\in H^4_{\rm nr}(G,\bZ)$.
Hence we get $H^3_{\rm nr}(\bC(G),\bQ/\bZ)\simeq H^4_{\rm nr}(\bC(G),\bZ)\simeq \bZ/5\bZ$.
\medskip\begin{verbatim}
gap> G36:=SmallGroup(5^5,36);
<pc group of size 3125 with 5 generators>
gap> RG36:=ResolutionNormalSeries(LowerCentralSeries(G36),5);
Resolution of length 5 in characteristic 0 for <pc group with 3125 generators> .
gap> CR_CocyclesAndCoboundaries(RG36,4,true).torsionCoefficients; # H^4(G,Z)
[ [ 5, 5, 5, 5 ] ]
gap> H4pFromResolution(RG36);
19[ [ 25, 1 ], [ 25, 1 ], [ 25, 1 ], [ 25, 1 ], [ 25, 1 ], [ 125, 5 ],
    [ 125, 4 ], [ 125, 4 ], [ 125, 4 ], [ 125, 4 ], [ 125, 5 ],
    [ 125, 4 ], [ 625, 8 ], [ 625, 8 ], [ 625, 12 ], [ 625, 8 ],
    [ 625, 8 ], [ 625, 8 ], [ 3125, 36 ] ]
1/19
...
19/19
[ [ 5, 5 ],
[ [ 5, 5, 5, 5 ],
  [ [ 1, 0, 2, 4 ], [ 0, 1, 0, 0 ] ] ] ]
gap> IsUnramifiedH3(RG36,[0,0,1,0]:Subgroup);
1/8
[ [ 5 ], [ 0 ] ]
...
8/8
[ [ 5, 5, 5 ], [ 0, 0, 0 ] ]
true
gap> IsUnramifiedH3(RG36,[0,0,0,1]:Subgroup);
1/8
[ [ 5 ], [ 2 ] ]
false
\end{verbatim}
\medskip
{\bf Case 6e: $G=G(5^5,37)$ which belongs to $\Phi_{10}$.}\\
We have
$H^4(G,\bZ)\simeq(\bZ/5\bZ)^{\oplus 4}=\langle f_1,\ldots,f_4\rangle$,
$H^4_{\rm p}(G,\bZ)\simeq (\bZ/5\bZ)^{\oplus 2}=\langle f_1f_3^2f_4^4,f_2\rangle$, $f_4\not\in H^4_{\rm nr}(G\bZ)$
and $f_3\in H^4_{\rm nr}(G,\bZ)$.
Hence we get $H^3_{\rm nr}(\bC(G),\bQ/\bZ)\simeq H^4_{\rm nr}(\bC(G),\bZ)\simeq \bZ/5\bZ$.
\medskip\begin{verbatim}
gap> G37:=SmallGroup(5^5,37);
<pc group of size 3125 with 5 generators>
gap> RG37:=ResolutionNormalSeries(LowerCentralSeries(G37),5);
Resolution of length 5 in characteristic 0 for <pc group with 3125 generators> .
gap> CR_CocyclesAndCoboundaries(RG37,4,true).torsionCoefficients; # H^4(G,Z)
[ [ 5, 5, 5, 5 ] ]
gap> H4pFromResolution(RG37);
19[ [ 25, 1 ], [ 25, 1 ], [ 25, 1 ], [ 25, 1 ], [ 25, 1 ], [ 125, 5 ],
    [ 125, 4 ], [ 125, 4 ], [ 125, 4 ], [ 125, 4 ], [ 125, 5 ],
    [ 125, 4 ], [ 625, 8 ], [ 625, 8 ], [ 625, 12 ], [ 625, 8 ],
    [ 625, 8 ], [ 625, 8 ], [ 3125, 37 ] ]
1/19
...
19/19
[ [ 5, 5 ],
[ [ 5, 5, 5, 5 ],
  [ [ 1, 0, 2, 4 ], [ 0, 1, 0, 0 ] ] ] ]
gap> IsUnramifiedH3(RG37,[0,0,1,0]:Subgroup);
1/8
[ [ 5 ], [ 0 ] ]
...
8/8
[ [ 5, 5, 5 ], [ 0, 0, 0 ] ]
true
gap> IsUnramifiedH3(RG37,[0,0,0,1]:Subgroup);
1/8
[ [ 5 ], [ 2 ] ]
false
\end{verbatim}
\medskip
{\bf Case 6f: $G=G(5^5,38)$ which belongs to $\Phi_{10}$.}\\
We have
$H^4(G,\bZ)\simeq(\bZ/5\bZ)^{\oplus 3}=\langle f_1,f_2,f_3\rangle$,
$H^4_{\rm p}(G,\bZ)\simeq (\bZ/5\bZ)^{\oplus 2}=\langle f_1f_3^4,f_2f_3^3\rangle$ and $f_3\in H^4_{\rm nr}(G,\bZ)$.
Hence we get $H^3_{\rm nr}(\bC(G),\bQ/\bZ)\simeq H^4_{\rm nr}(\bC(G),\bZ)\simeq \bZ/5\bZ$.
\medskip\begin{verbatim}
gap> G38:=SmallGroup(5^5,38);
<pc group of size 3125 with 5 generators>
gap> RG38:=ResolutionNormalSeries(LowerCentralSeries(G38),5);
Resolution of length 5 in characteristic 0 for <pc group with 3125 generators> .
gap> CR_CocyclesAndCoboundaries(RG38,4,true).torsionCoefficients; # H^4(G,Z)
[ [ 5, 5, 5 ] ]
gap> H4pFromResolution(RG38);
19[ [ 25, 2 ], [ 25, 1 ], [ 25, 1 ], [ 25, 1 ], [ 25, 1 ], [ 125, 5 ],
    [ 125, 3 ], [ 125, 4 ], [ 125, 2 ], [ 125, 4 ], [ 125, 4 ],
    [ 125, 4 ], [ 625, 7 ], [ 625, 13 ], [ 625, 8 ], [ 625, 8 ],
    [ 625, 8 ], [ 625, 8 ], [ 3125, 38 ] ]
1/19
...
19/19
[ [ 5, 5 ],
[ [ 5, 5, 5 ],
  [ [ 1, 0, 4 ], [ 0, 1, 3 ] ] ] ]
gap> IsUnramifiedH3(RG38,[0,0,1]:Subgroup);
1/2
[ [ 5 ], [ 0 ] ]
2/2
[ [ 5, 5, 5 ], [ 0, 0, 0 ] ]
true
\end{verbatim}

\newpage
A summary for $p=7$
\footnote{The computation for $p=7$ takes much more time
than the cases $p=3,5$. For examples, in the authors's personal computer and for $p=7$,
it takes about one day for $\Phi_5$ and $\Phi_7$ each,
and about 10 days for $\Phi_6$ and $\Phi_{10}$ each.
}:\\
\begin{table}[!h]\vspace*{-2mm}
\begin{tabular}{cl|ccc|c}
\multicolumn{2}{c|}{$|G|=7^5$} & $H^3(G,\bQ/\bZ)$ & $H^3_{\rm nr}(G,\bQ/\bZ)$
 & $H^3_{\rm p}(G,\bQ/\bZ)$ & $H^3_{\rm nr}(\bC(G),\bQ/\bZ)$\\\hline
$\Phi_5$ & $G(7^5,82)$ & $(\bZ/7\bZ)^{\oplus 10}$
& $(\bZ/7\bZ)^{\oplus 10}$ & $(\bZ/7\bZ)^{\oplus 10}$
& $0$\\\hline
$\Phi_6$ & $G(7^5,7)$ & $(\bZ/7\bZ)^{\oplus 4}$
& $(\bZ/7\bZ)^{\oplus 4}$ & $(\bZ/7\bZ)^{\oplus 3}$
& $\bZ/7\bZ$\\\hline
$\Phi_7$ & $G(7^5,73)$ & $(\bZ/7\bZ)^{\oplus 6}$
& $(\bZ/7\bZ)^{\oplus 6}$ & $(\bZ/7\bZ)^{\oplus 5}$
& $\bZ/7\bZ$\\\hline
$\Phi_{10}$ & $G(7^5,38)$ & $(\bZ/7\bZ)^{\oplus 3}$
& $(\bZ/7\bZ)^{\oplus 3}$ & $(\bZ/7\bZ)^{\oplus 2}$
& $\bZ/7\bZ$\\
\end{tabular}\\
\vspace*{2mm}
Table $5$: $H^3_{\rm nr}(\bC(G),\bQ/\bZ)$ for groups $G$ of order $7^5$
which belong to $\Phi_5$, $\Phi_6$, $\Phi_7$ or $\Phi_{10}$
\end{table}

\medskip
{\bf Case 7a: $G=G(7^5,82)$ which belongs to $\Phi_5$.}\\
We have
$H^4(G,\bZ)\simeq(\bZ/7\bZ)^{\oplus 10}=\langle f_1,\ldots,f_{10}\rangle$ and
$H^4_{\rm p}(G,\bZ)\simeq (\bZ/7\bZ)^{\oplus 10}=\langle f_1,\ldots,f_{10}\rangle$.
Hence we get $H^3_{\rm nr}(\bC(G),\bQ/\bZ)\simeq H^4_{\rm nr}(\bC(G),\bZ)=0$.
\begin{verbatim}
gap> Read("H3nr.gap");

gap> G82:=SmallGroup(7^5,82);
<pc group of size 16807 with 5 generators>
gap> RG82:=ResolutionNormalSeries(LowerCentralSeries(G82),5);
Resolution of length 5 in characteristic 0 for <pc group with 16807 generators> .
gap> CR_CocyclesAndCoboundaries(RG82,4,true).torsionCoefficients; # H^4(G,Z)
[ [ 7, 7, 7, 7, 7, 7, 7, 7, 7, 7 ] ]
gap> H4pFromResolution(RG82);
401[ [ 343, 5 ], [ 343, 2 ], [ 343, 2 ], ..., [ 343, 2 ], [ 16807, "?" ] ]
1/401
..
401/401
[ [ 7, 7, 7, 7, 7, 7, 7, 7, 7, 7 ],
[ [ 7, 7, 7, 7, 7, 7, 7, 7, 7, 7 ],
  [ [ 1, 0, 0, 0, 0, 0, 0, 0, 0, 0 ], [ 0, 1, 0, 0, 0, 0, 0, 0, 0, 0 ],
    [ 0, 0, 1, 0, 0, 0, 0, 0, 0, 0 ], [ 0, 0, 0, 1, 0, 0, 0, 0, 0, 0 ],
    [ 0, 0, 0, 0, 1, 0, 0, 0, 0, 0 ], [ 0, 0, 0, 0, 0, 1, 0, 0, 0, 0 ],
    [ 0, 0, 0, 0, 0, 0, 1, 0, 0, 0 ], [ 0, 0, 0, 0, 0, 0, 0, 1, 0, 0 ],
    [ 0, 0, 0, 0, 0, 0, 0, 0, 1, 0 ], [ 0, 0, 0, 0, 0, 0, 0, 0, 0, 1 ] ] ] ]
\end{verbatim}
\medskip
{\bf Case 7b: $G=G(7^5,7)$ which belongs to $\Phi_6$.}\\
We have
$H^4(G,\bZ)\simeq(\bZ/7\bZ)^{\oplus 4}=\langle f_1,f_2,f_3,f_4\rangle$,
$H^4_{\rm p}(G,\bZ)\simeq (\bZ/7\bZ)^{\oplus 3}=\langle f_1f_4^5,f_2,f_3\rangle$ and $f_4\in H^4_{\rm nr}(G,\bZ)$.
Hence we get $H^3_{\rm nr}(\bC(G),\bQ/\bZ)\simeq H^4_{\rm nr}(\bC(G),\bZ)\simeq \bZ/7\bZ$.
\begin{verbatim}
gap> G7:=SmallGroup(7^5,7);
<pc group of size 16807 with 5 generators>
gap> RG7:=ResolutionNormalSeries(LowerCentralSeries(G7),5);
Resolution of length 5 in characteristic 0 for <pc group with 16807 generators> .
gap> CR_CocyclesAndCoboundaries(RG7,4,true).torsionCoefficients; # H^4(G,Z)
[ [ 7, 7, 7, 7 ] ]
gap> H4pFromResolution(RG7);
18[ [ 343, 2 ], [ 343, 2 ], [ 343, 2 ], [ 343, 2 ], [ 343, 2 ],
    [ 343, 2 ], [ 343, 2 ], [ 343, 2 ], [ 343, 5 ], [ 2401, 13 ],
    [ 2401, 13 ], [ 2401, 13 ], [ 2401, 13 ], [ 2401, 13 ],
    [ 2401, 13 ], [ 2401, 13 ], [ 2401, 13 ], [ 16807, "?" ] ]
1/18
...
18/18
[ [ 7, 7, 7 ],
[ [ 7, 7, 7, 7 ],
  [ [ 1, 0, 0, 5 ], [ 0, 1, 0, 0 ], [ 0, 0, 1, 0 ] ] ] ]
gap> IsUnramifiedH3(RG7,[0,0,0,1]:Subgroup);
1/1
[ [ 7, 7, 7 ], [ 0, 0, 0 ] ]
true
\end{verbatim}
\medskip
{\bf Case 7c: $G=G(7^5,73)$ which belongs to $\Phi_7$.}\\
We have
$H^4(G,\bZ)\simeq(\bZ/7\bZ)^{\oplus 6}=\langle f_1,\ldots,f_6\rangle$,
$H^4_{\rm p}(G,\bZ)\simeq (\bZ/7\bZ)^{\oplus 5}=\langle f_1,f_2f_6^6,f_3f_3f_6^3,f_4,f_5f_6^6\rangle$ and $f_6\in H^4_{\rm nr}(G,\bZ)$.
Hence we get $H^3_{\rm nr}(\bC(G),\bQ/\bZ)\simeq H^4_{\rm nr}(\bC(G),\bZ)\simeq \bZ/7\bZ$.
\begin{verbatim}
gap> G73:=SmallGroup(7^5,73);
<pc group of size 16807 with 5 generators>
gap> RG73:=ResolutionNormalSeries(LowerCentralSeries(G73),5);
Resolution of length 5 in characteristic 0 for <pc group with 16807 generators> .
gap> CR_CocyclesAndCoboundaries(RG73,4,true).torsionCoefficients; # H^4(G,Z)
[ [ 7, 7, 7, 7, 7, 7 ] ]
gap> H4pFromResolution(RG73);
24[ [ 343, 2 ], [ 343, 2 ], [ 343, 2 ], [ 343, 2 ], [ 343, 2 ], [ 343, 2 ],
    [ 343, 2 ], [ 343, 5 ], [ 343, 2 ], [ 343, 2 ], [ 343, 2 ], [ 343, 2 ],
    [ 343, 2 ], [ 343, 2 ], [ 343, 2 ], [ 2401, 14 ], [ 2401, 14 ],
    [ 2401, 14 ], [ 2401, 14 ], [ 2401, 13 ], [ 2401, 14 ],
    [ 2401, 14 ], [ 2401, 14 ], [ 16807, "?" ] ]
1/24
...
24/24
[ [ 7, 7, 7, 7, 7 ],
[ [ 7, 7, 7, 7, 7, 7 ],
  [ [ 1, 0, 0, 0, 0, 0 ], [ 0, 1, 0, 0, 0, 6 ], [ 0, 0, 1, 0, 0, 2 ],
    [ 0, 0, 0, 1, 0, 0 ], [ 0, 0, 0, 0, 1, 6 ] ] ] ]
gap> IsUnramifiedH3(RG73,[0,0,0,0,0,1]:Subgroup);
1/2
[ [ 7, 7, 7 ], [ 0, 0, 0 ] ]
2/2
[ [ 7, 7, 7 ], [ 0, 0, 0 ] ]
true
\end{verbatim}
\medskip
{\bf Case 7d: $G=G(7^5,38)$ which belongs to $\Phi_{10}$.}\\
We have
$H^4(G,\bZ)\simeq(\bZ/7\bZ)^{\oplus 3}=\langle f_1,f_2,f_3\rangle$,
$H^4_{\rm p}(G,\bZ)\simeq (\bZ/7\bZ)^{\oplus 2}=\langle f_1f_3^2,f_2f_3^5\rangle$ and $f_3\in H^4_{\rm nr}(G,\bZ)$.
Hence we get $H^3_{\rm nr}(\bC(G),\bQ/\bZ)\simeq H^4_{\rm nr}(\bC(G),\bZ)\simeq \bZ/7\bZ$.
\begin{verbatim}
gap> G38:=SmallGroup(7^5,38);
<pc group of size 16807 with 5 generators>
gap> RG38:=ResolutionNormalSeries(LowerCentralSeries(G38),5);
Resolution of length 5 in characteristic 0 for <pc group with 16807 generators> .
gap> CR_CocyclesAndCoboundaries(RG38,4,true).torsionCoefficients; # H^4(G,Z)
[ [ 7, 7, 7 ] ]
gap> H4pFromResolution(RG38);
25[ [ 49, 1 ], [ 49, 2 ], [ 49, 1 ], [ 49, 1 ], [ 49, 1 ], [ 49, 1 ],
    [ 49, 1 ], [ 343, 5 ], [ 343, 3 ], [ 343, 2 ], [ 343, 4 ], [ 343, 4 ],
    [ 343, 4 ], [ 343, 4 ], [ 343, 4 ], [ 343, 4 ], [ 2401, 7 ], [ 2401, 8 ],
    [ 2401, 8 ], [ 2401, 8 ], [ 2401, 13 ], [ 2401, 8 ], [ 2401, 8 ],
    [ 2401, 8 ], [ 16807, "?" ] ]
1/25
...
25/25
[ [ 7, 7 ],
[ [ 7, 7, 7 ],
  [ [ 1, 0, 2 ], [ 0, 1, 5 ] ] ] ]
gap> IsUnramifiedH3(RG38,[0,0,1]:Subgroup);
1/2
[ [ 7 ], [ 0 ] ]
2/2
[ [ 7, 7, 7 ], [ 0, 0, 0 ] ]
true
\end{verbatim}

\section{Tables of $H^3_{\rm nr}(\bC(G),\bQ/\bZ)$ and $H^3_{\rm s}(G,\bQ/\bZ)$ for groups $G$ of order $3^5$ and $5^5$}\label{sestable}

By using the functions {\tt H4pFromResolution($G$)}
and {\tt IsUnramifiedH3($RG$,$l$:Subgroup)} given
in the previous section,
we obtain the stable cohomology $H^3_{\rm s}(G,\bQ/\bZ)$
and the unramified cohomology $H^3_{\rm nr}(\bC(G),\bQ/\bZ)$
for all groups $G$ of order $243$ and $3125$.
The result is given in Table $6$ and Table $7$
for groups $G$ of order $243$ and $3125$ respectively:

\newpage
{\scriptsize
\begin{table}[h]
\begin{tabular}{cl|ccc|cc}
\multicolumn{2}{c|}{$|G|=3^5$} & $H^3(G,\bQ/\bZ)$ & $H^3_{\rm nr}(G,\bQ/\bZ)$
 & $H^3_{\rm p}(G,\bQ/\bZ)$
 & $H^3_{\rm s}(G,\bQ/\bZ)$ & $H^3_{\rm nr}(\bC(G),\bQ/\bZ)$\\\hline
&$G(3^5,1)$ & $\bZ/243\bZ$ & $=$ & $\bZ/243\bZ$ & 0 & 0\\
&$G(3^5,10)$ & $(\bZ/9\bZ)^{\oplus 2}\oplus\bZ/27\bZ$
& $=$ & $(\bZ/9\bZ)^{\oplus 2}\oplus\bZ/27\bZ$ & 0 & 0\\
&$G(3^5,23)$ & $(\bZ/3\bZ)^{\oplus 2}\oplus\bZ/81\bZ$
& $=$ & $(\bZ/3\bZ)^{\oplus 2}\oplus\bZ/81\bZ$ & 0 & 0\\
$\Phi_1$&$G(3^5,31)$ & $(\bZ/3\bZ)^{\oplus 4}\oplus(\bZ/9\bZ)^{\oplus 3}$
& $(\bZ/3\bZ)^{\oplus 3}\oplus(\bZ/9\bZ)^{\oplus 3}$
& $(\bZ/3\bZ)^{\oplus 3}\oplus(\bZ/9\bZ)^{\oplus 3}$ &
$\bZ/3\bZ$ & 0\\
&$G(3^5,48)$ & $(\bZ/3\bZ)^{\oplus 6}\oplus\bZ/27\bZ$
& $(\bZ/3\bZ)^{\oplus 5}\oplus\bZ/27\bZ$
& $(\bZ/3\bZ)^{\oplus 5}\oplus\bZ/27\bZ$ &
$\bZ/3\bZ$ & 0\\
&$G(3^5,61)$ & $(\bZ/3\bZ)^{\oplus 13}\oplus\bZ/9\bZ$
& $(\bZ/3\bZ)^{\oplus 9}\oplus\bZ/9\bZ$
& $(\bZ/3\bZ)^{\oplus 9}\oplus\bZ/9\bZ$ &
$(\bZ/3\bZ)^{\oplus 4}$ & 0\\
&$G(3^5,67)$ & $(\bZ/3\bZ)^{\oplus 25}$
& $(\bZ/3\bZ)^{\oplus 15}$
& $(\bZ/3\bZ)^{\oplus 15}$ &
$(\bZ/3\bZ)^{\oplus 10}$ & 0\\\hline
&$G(3^5,2)$ & $(\bZ/9\bZ)^{\oplus 4}$
& $\bZ/3\bZ\oplus (\bZ/9\bZ)^{\oplus 3}$
& $\bZ/3\bZ\oplus (\bZ/9\bZ)^{\oplus 3}$ &
$\bZ/3\bZ$ & 0\\
&$G(3^5,11)$ & $\bZ/3\bZ\oplus (\bZ/9\bZ)^{\oplus 2}$
& $=$
& $\bZ/3\bZ\oplus (\bZ/9\bZ)^{\oplus 2}$ &
0 & 0\\
&$G(3^5,12)$ & $(\bZ/3\bZ)^{\oplus 3}\oplus\bZ/27\bZ$
& $=$
& $(\bZ/3\bZ)^{\oplus 3}\oplus\bZ/27\bZ$ &
0 & 0\\
&$G(3^5,21)$ & $(\bZ/3\bZ)^{\oplus 2}\oplus\bZ/27\bZ$
& $=$
& $(\bZ/3\bZ)^{\oplus 2}\oplus\bZ/27\bZ$ &
0 & 0\\
&$G(3^5,24)$ & $\bZ/3\bZ\oplus\bZ/27\bZ$
& $=$
& $\bZ/3\bZ\oplus\bZ/27\bZ$ &
0 & 0\\
&$G(3^5,32)$ & $(\bZ/3\bZ)^{\oplus 8}\oplus\bZ/9\bZ$
& $(\bZ/3\bZ)^{\oplus 6}\oplus\bZ/9\bZ$
& $(\bZ/3\bZ)^{\oplus 6}\oplus\bZ/9\bZ$ &
$(\bZ/3\bZ)^{\oplus 2}$ & 0\\
&$G(3^5,33)$ & $(\bZ/3\bZ)^{\oplus 6}\oplus\bZ/9\bZ$
& $(\bZ/3\bZ)^{\oplus 5}\oplus\bZ/9\bZ$
& $(\bZ/3\bZ)^{\oplus 5}\oplus\bZ/9\bZ$ &
$\bZ/3\bZ$ & 0\\
$\Phi_2$
& $G(3^5,34)$ & $(\bZ/3\bZ)^{\oplus 3}\oplus(\bZ/9\bZ)^{\oplus 2}$
& $=$
& $(\bZ/3\bZ)^{\oplus 3}\oplus(\bZ/9\bZ)^{\oplus 2}$ &
0 & 0\\
&$G(3^5,35)$ & $(\bZ/3\bZ)^{\oplus 8}\oplus\bZ/9\bZ$
& $(\bZ/3\bZ)^{\oplus 6}\oplus\bZ/9\bZ$
& $(\bZ/3\bZ)^{\oplus 6}\oplus\bZ/9\bZ$ &
$(\bZ/3\bZ)^{\oplus 2}$ & 0\\
&$G(3^5,36)$ & $(\bZ/3\bZ)^{\oplus 4}\oplus\bZ/9\bZ$
& $=$
& $(\bZ/3\bZ)^{\oplus 4}\oplus\bZ/9\bZ$ &
0 & 0\\
&$G(3^5,49)$ & $(\bZ/3\bZ)^{\oplus 4}\oplus\bZ/9\bZ$
& $=$
& $(\bZ/3\bZ)^{\oplus 4}\oplus\bZ/9\bZ$ &
0 & 0\\
&$G(3^5,50)$ & $(\bZ/3\bZ)^{\oplus 3}\oplus\bZ/27\bZ$
& $=$
& $(\bZ/3\bZ)^{\oplus 3}\oplus\bZ/27\bZ$ &
0 & 0\\
&$G(3^5,62)$ & $(\bZ/3\bZ)^{\oplus 17}$
& $(\bZ/3\bZ)^{\oplus 11}$
& $(\bZ/3\bZ)^{\oplus 11}$ &
$(\bZ/3\bZ)^{\oplus 6}$ & 0\\
&$G(3^5,63)$ & $(\bZ/3\bZ)^{\oplus 11}$
& $(\bZ/3\bZ)^{\oplus 9}$
& $(\bZ/3\bZ)^{\oplus 9}$ &
$(\bZ/3\bZ)^{\oplus 2}$ & 0\\
&$G(3^5,64)$ & $(\bZ/3\bZ)^{\oplus 9}\oplus\bZ/9\bZ$
& $(\bZ/3\bZ)^{\oplus 7}\oplus\bZ/9\bZ$
& $(\bZ/3\bZ)^{\oplus 7}\oplus\bZ/9\bZ$ &
$(\bZ/3\bZ)^{\oplus 2}$ & 0\\\hline
&$G(3^5,13)$ & $(\bZ/3\bZ)^{\oplus 5}\oplus\bZ/9\bZ$
& $(\bZ/3\bZ)^{\oplus 3}\oplus\bZ/9\bZ$
& $(\bZ/3\bZ)^{\oplus 3}\oplus\bZ/9\bZ$ &
$(\bZ/3\bZ)^{\oplus 2}$ & 0\\
&$G(3^5,14)$ & $(\bZ/3\bZ)^{\oplus 2}\oplus(\bZ/9\bZ)^{\oplus 2}$
& $\bZ/3\bZ\oplus(\bZ/9\bZ)^{\oplus 2}$
& $\bZ/3\bZ\oplus(\bZ/9\bZ)^{\oplus 2}$ &
$\bZ/3\bZ$ & 0\\
&$G(3^5,15)$ & $(\bZ/3\bZ)^{\oplus 2}\oplus(\bZ/9\bZ)^{\oplus 2}$
& $(\bZ/3\bZ)^{\oplus 3}\oplus\bZ/9\bZ$
& $(\bZ/3\bZ)^{\oplus 3}\oplus\bZ/9\bZ$ &
$\bZ/3\bZ$ & 0\\
&$G(3^5,16)$ & $\bZ/3\bZ\oplus(\bZ/9\bZ)^{\oplus 2}$
& $(\bZ/3\bZ)^{\oplus 2}\oplus\bZ/9\bZ$
& $(\bZ/3\bZ)^{\oplus 2}\oplus\bZ/9\bZ$ &
$\bZ/3\bZ$ & 0\\
&$G(3^5,17)$ & $(\bZ/3\bZ)^{\oplus 4}\oplus\bZ/9\bZ$
& $(\bZ/3\bZ)^{\oplus 3}\oplus\bZ/9\bZ$
& $(\bZ/3\bZ)^{\oplus 3}\oplus\bZ/9\bZ$ &
$\bZ/3\bZ$ & 0\\
&$G(3^5,18)$ & $(\bZ/3\bZ)^{\oplus 3}\oplus\bZ/9\bZ$
& $(\bZ/3\bZ)^{\oplus 2}\oplus\bZ/9\bZ$
& $(\bZ/3\bZ)^{\oplus 2}\oplus\bZ/9\bZ$ &
$\bZ/3\bZ$ & 0\\
$\Phi_3$
&$G(3^5,19)$ & $\bZ/3\bZ\oplus\bZ/27\bZ$
& $=$
& $\bZ/3\bZ\oplus\bZ/27\bZ$ &
0 & 0\\
&$G(3^5,20)$ & $\bZ/3\bZ\oplus\bZ/27\bZ$
& $=$
& $\bZ/3\bZ\oplus\bZ/27\bZ$ &
0 & 0\\
&$G(3^5,51)$ & $(\bZ/3\bZ)^{\oplus 8}$
& $(\bZ/3\bZ)^{\oplus 6}$
& $(\bZ/3\bZ)^{\oplus 6}$ &
$(\bZ/3\bZ)^{\oplus 2}$ & 0\\
&$G(3^5,52)$ & $(\bZ/3\bZ)^{\oplus 5}\oplus\bZ/9\bZ$
& $(\bZ/3\bZ)^{\oplus 4}\oplus\bZ/9\bZ$
& $(\bZ/3\bZ)^{\oplus 4}\oplus\bZ/9\bZ$ &
$\bZ/3\bZ$ & 0\\
&$G(3^5,53)$ & $(\bZ/3\bZ)^{\oplus 9}$
& $(\bZ/3\bZ)^{\oplus 7}$
& $(\bZ/3\bZ)^{\oplus 7}$ &
$(\bZ/3\bZ)^{\oplus 2}$ & 0\\
&$G(3^5,54)$ & $(\bZ/3\bZ)^{\oplus 5}\oplus\bZ/9\bZ$
& $(\bZ/3\bZ)^{\oplus 4}\oplus\bZ/9\bZ$
& $(\bZ/3\bZ)^{\oplus 4}\oplus\bZ/9\bZ$ &
$\bZ/3\bZ$ & 0\\
&$G(3^5,55)$ & $(\bZ/3\bZ)^{\oplus 5}\oplus\bZ/9\bZ$
& $(\bZ/3\bZ)^{\oplus 4}\oplus\bZ/9\bZ$
& $(\bZ/3\bZ)^{\oplus 4}\oplus\bZ/9\bZ$ &
$\bZ/3\bZ$ & 0\\\hline
&$G(3^5,37)$ & $(\bZ/3\bZ)^{\oplus 12}$
& $(\bZ/3\bZ)^{\oplus 9}$
& $(\bZ/3\bZ)^{\oplus 9}$ &
$(\bZ/3\bZ)^{\oplus 3}$ & 0\\
&$G(3^5,38)$ & $(\bZ/3\bZ)^{\oplus 8}$
& $(\bZ/3\bZ)^{\oplus 6}$
& $(\bZ/3\bZ)^{\oplus 6}$ &
$(\bZ/3\bZ)^{\oplus 2}$ & 0\\
&$G(3^5,39)$ & $(\bZ/3\bZ)^{\oplus 7}$
& $(\bZ/3\bZ)^{\oplus 6}$
& $(\bZ/3\bZ)^{\oplus 6}$ &
$\bZ/3\bZ$ & 0\\
&$G(3^5,40)$ & $(\bZ/3\bZ)^{\oplus 7}$
& $(\bZ/3\bZ)^{\oplus 6}$
& $(\bZ/3\bZ)^{\oplus 6}$ &
$\bZ/3\bZ$ & 0\\
&$G(3^5,41)$ & $(\bZ/3\bZ)^{\oplus 5}$
& $(\bZ/3\bZ)^{\oplus 4}$
& $(\bZ/3\bZ)^{\oplus 4}$ &
$\bZ/3\bZ$ & 0\\
$\Phi_4$
&$G(3^5,42)$ & $(\bZ/3\bZ)^{\oplus 4}\oplus\bZ/9\bZ$
& $(\bZ/3\bZ)^{\oplus 3}\oplus\bZ/9\bZ$
& $(\bZ/3\bZ)^{\oplus 3}\oplus\bZ/9\bZ$ &
$\bZ/3\bZ$ & 0\\
&$G(3^5,43)$ & $(\bZ/3\bZ)^{\oplus 3}\oplus\bZ/9\bZ$
& $=$
& $(\bZ/3\bZ)^{\oplus 3}\oplus\bZ/9\bZ$ &
0 & 0\\
&$G(3^5,44)$ & $(\bZ/3\bZ)^{\oplus 4}$
& $=$
& $(\bZ/3\bZ)^{\oplus 4}$ &
0 & 0\\
&$G(3^5,45)$ & $(\bZ/3\bZ)^{\oplus 3}\oplus\bZ/9\bZ$
& $=$
& $(\bZ/3\bZ)^{\oplus 3}\oplus\bZ/9\bZ$ &
0 & 0\\
&$G(3^5,46)$ & $(\bZ/3\bZ)^{\oplus 4}$
& $=$
& $(\bZ/3\bZ)^{\oplus 4}$ &
0 & 0\\
&$G(3^5,47)$ & $(\bZ/3\bZ)^{\oplus 4}$
& $=$
& $(\bZ/3\bZ)^{\oplus 4}$ &
0 & 0\\\hline
\raisebox{-1.6ex}[0cm][0cm]{$\Phi_5$}
&$G(3^5,65)$ & $(\bZ/3\bZ)^{\oplus 15}$
& $(\bZ/3\bZ)^{\oplus 10}$
& $(\bZ/3\bZ)^{\oplus 10}$ &
$(\bZ/3\bZ)^{\oplus 5}$ & 0\\
&$G(3^5,66)$ & $(\bZ/3\bZ)^{\oplus 10}$
& $=$
& $(\bZ/3\bZ)^{\oplus 10}$ &
0 & 0\\\hline
&$G(3^5,3)$ & $(\bZ/3\bZ)^{\oplus 6}$
& $=$
& $(\bZ/3\bZ)^{\oplus 6}$ &
0 & 0\\
&$G(3^5,4)$ & $(\bZ/3\bZ)^{\oplus 5}$
& $=$
& $(\bZ/3\bZ)^{\oplus 5}$ &
0 & 0\\
&$G(3^5,5)$ & $(\bZ/3\bZ)^{\oplus 4}$
& $=$
& $(\bZ/3\bZ)^{\oplus 4}$ &
0 & 0\\
$\Phi_6$
&$G(3^5,6)$ & $(\bZ/3\bZ)^{\oplus 5}$
& $=$
& $(\bZ/3\bZ)^{\oplus 5}$ &
0 & 0\\
&$G(3^5,7)$ & $(\bZ/3\bZ)^{\oplus 4}$
& $=$
& $(\bZ/3\bZ)^{\oplus 4}$ &
0 & 0\\
&$G(3^5,8)$ & $(\bZ/3\bZ)^{\oplus 3}\oplus\bZ/9\bZ$
& $=$
& $(\bZ/3\bZ)^{\oplus 3}\oplus\bZ/9\bZ$ &
0 & 0\\
&$G(3^5,9)$ & $(\bZ/3\bZ)^{\oplus 3}\oplus\bZ/9\bZ$
& $=$
& $(\bZ/3\bZ)^{\oplus 3}\oplus\bZ/9\bZ$ &
0 & 0\\\hline
&$G(3^5,56)$ & $(\bZ/3\bZ)^{\oplus 7}$
& $(\bZ/3\bZ)^{\oplus 6}$ & $(\bZ/3\bZ)^{\oplus 5}$ &
$(\bZ/3\bZ)^{\oplus 2}$ & $\bZ/3\bZ$\\
&$G(3^5,57)$ & $(\bZ/3\bZ)^{\oplus 6}$
& $(\bZ/3\bZ)^{\oplus 6}$ & $(\bZ/3\bZ)^{\oplus 5}$ &
$\bZ/3\bZ$ & $\bZ/3\bZ$\\
$\Phi_7$ & $G(3^5,58)$ & $(\bZ/3\bZ)^{\oplus 9}$
& $(\bZ/3\bZ)^{\oplus 7}$ & $(\bZ/3\bZ)^{\oplus 6}$ &
$(\bZ/3\bZ)^{\oplus 3}$ & $\bZ/3\bZ$\\
&$G(3^5,59)$ & $(\bZ/3\bZ)^{\oplus 6}$
& $(\bZ/3\bZ)^{\oplus 6}$ & $(\bZ/3\bZ)^{\oplus 5}$ &
$\bZ/3\bZ$ & $\bZ/3\bZ$\\
&$G(3^5,60)$ & $(\bZ/3\bZ)^{\oplus 6}$
& $(\bZ/3\bZ)^{\oplus 6}$ & $(\bZ/3\bZ)^{\oplus 5}$ &
$\bZ/3\bZ$ & $\bZ/3\bZ$\\\hline
$\Phi_8$
&$G(3^5,22)$ & $\bZ/3\bZ\oplus\bZ/9\bZ$
& $=$
& $\bZ/3\bZ\oplus\bZ/9\bZ$ &
0 & 0\\\hline
&$G(3^5,25)$ & $\bZ/3\bZ\oplus\bZ/9\bZ$
& $=$
& $\bZ/3\bZ\oplus\bZ/9\bZ$ &
0 & 0\\
$\Phi_9$
&$G(3^5,26)$ & $(\bZ/3\bZ)^{\oplus 3}\oplus\bZ/9\bZ$
& $=$
& $(\bZ/3\bZ)^{\oplus 3}\oplus\bZ/9\bZ$ &
0 & 0\\
&$G(3^5,27)$ & $\bZ/3\bZ\oplus\bZ/9\bZ$
& $=$
& $\bZ/3\bZ\oplus\bZ/9\bZ$ &
0 & 0\\\hline
&$G(3^5,28)$ & $(\bZ/3\bZ)^{\oplus 2}\oplus \bZ/9\bZ$
& $(\bZ/3\bZ)^{\oplus 3}$ & $(\bZ/3\bZ)^{\oplus 3}$ & $\bZ/3\bZ$ & 0\\
$\Phi_{10}$&$G(3^5,29)$ & $\bZ/3\bZ\oplus \bZ/9\bZ$
& $(\bZ/3\bZ)^{\oplus 2}$ & $(\bZ/3\bZ)^{\oplus 2}$ & $\bZ/3\bZ$ & 0\\
& $G(3^5,30)$ & $\bZ/3\bZ\oplus \bZ/9\bZ$
& $(\bZ/3\bZ)^{\oplus 2}$ & $(\bZ/3\bZ)^{\oplus 2}$ & $\bZ/3\bZ$ & 0
\end{tabular}\vspace*{2mm}\\
{\normalsize
Table $6$: $H^3_{\rm s}(G,\bQ/\bZ)$ and
$H^3_{\rm nr}(\bC(G),\bQ/\bZ)$ for groups $G$ of order $243$
}
\end{table}
}
\newpage
{\scriptsize
\begin{table}[!h]
\begin{tabular}{cl|ccc|cc}
\multicolumn{2}{c|}{$|G|=5^5$} & $H^3(G,\bQ/\bZ)$ & $H^3_{\rm nr}(G,\bQ/\bZ)$
 & $H^3_{\rm p}(G,\bQ/\bZ)$
& $H^3_{\rm s}(G,\bQ/\bZ)$ & $H^3_{\rm nr}(\bC(G),\bQ/\bZ)$\\\hline
&$G(5^5,1)$ & $\bZ/3125\bZ$ & $=$ & $\bZ/3125\bZ$ & 0 & 0\\
&$G(5^5,15)$ & $(\bZ/25\bZ)^{\oplus 2}\oplus\bZ/125\bZ$
& $=$ & $(\bZ/25\bZ)^{\oplus 2}\oplus\bZ/125\bZ$ & 0 & 0\\
&$G(5^5,28)$ & $(\bZ/5\bZ)^{\oplus 2}\oplus\bZ/625\bZ$
& $=$ & $(\bZ/5\bZ)^{\oplus 2}\oplus\bZ/625\bZ$ & 0 & 0\\
$\Phi_1$&$G(5^5,39)$ & $(\bZ/5\bZ)^{\oplus 4}\oplus(\bZ/25\bZ)^{\oplus 3}$
& $(\bZ/5\bZ)^{\oplus 3}\oplus(\bZ/25\bZ)^{\oplus 3}$
& $(\bZ/5\bZ)^{\oplus 3}\oplus(\bZ/25\bZ)^{\oplus 3}$ &
$\bZ/5\bZ$ & 0\\
&$G(5^5,58)$ & $(\bZ/5\bZ)^{\oplus 6}\oplus\bZ/125\bZ$
& $(\bZ/5\bZ)^{\oplus 5}\oplus\bZ/125\bZ$
& $(\bZ/5\bZ)^{\oplus 5}\oplus\bZ/125\bZ$ &
$\bZ/5\bZ$ & 0\\
&$G(5^5,71)$ & $(\bZ/5\bZ)^{\oplus 13}\oplus\bZ/25\bZ$
& $(\bZ/5\bZ)^{\oplus 9}\oplus\bZ/25\bZ$
& $(\bZ/5\bZ)^{\oplus 9}\oplus\bZ/25\bZ$ &
$(\bZ/5\bZ)^{\oplus 4}$ & 0\\
&$G(5^5,77)$ & $(\bZ/5\bZ)^{\oplus 25}$
& $(\bZ/5\bZ)^{\oplus 15}$
& $(\bZ/5\bZ)^{\oplus 15}$ &
$(\bZ/5\bZ)^{\oplus 10}$ & 0\\\hline
&$G(5^5,2)$ & $(\bZ/25\bZ)^{\oplus 4}$
& $\bZ/5\bZ\oplus (\bZ/25\bZ)^{\oplus 3}$
& $\bZ/5\bZ\oplus (\bZ/25\bZ)^{\oplus 3}$ &
$\bZ/5\bZ$ & 0\\
&$G(5^5,16)$ & $\bZ/5\bZ\oplus (\bZ/25\bZ)^{\oplus 2}$
& $=$
& $\bZ/5\bZ\oplus (\bZ/25\bZ)^{\oplus 2}$ &
0 & 0\\
&$G(5^5,17)$ & $(\bZ/5\bZ)^{\oplus 3}\oplus\bZ/125\bZ$
& $=$
& $(\bZ/5\bZ)^{\oplus 3}\oplus\bZ/125\bZ$ &
0 & 0\\
&$G(5^5,26)$ & $(\bZ/5\bZ)^{\oplus 2}\oplus\bZ/125\bZ$
& $=$
& $(\bZ/5\bZ)^{\oplus 2}\oplus\bZ/125\bZ$ &
0 & 0\\
&$G(5^5,29)$ & $\bZ/5\bZ\oplus\bZ/125\bZ$
& $=$
& $\bZ/5\bZ\oplus\bZ/125\bZ$ &
0 & 0\\
&$G(5^5,40)$ & $(\bZ/5\bZ)^{\oplus 8}\oplus\bZ/25\bZ$
& $(\bZ/5\bZ)^{\oplus 6}\oplus\bZ/25\bZ$
& $(\bZ/5\bZ)^{\oplus 6}\oplus\bZ/25\bZ$ &
$(\bZ/5\bZ)^{\oplus 2}$ & 0\\
&$G(5^5,41)$ & $(\bZ/5\bZ)^{\oplus 6}\oplus\bZ/25\bZ$
& $(\bZ/5\bZ)^{\oplus 5}\oplus\bZ/25\bZ$
& $(\bZ/5\bZ)^{\oplus 5}\oplus\bZ/25\bZ$ &
$\bZ/5\bZ$ & 0\\
$\Phi_2$
& $G(5^5,42)$ & $(\bZ/5\bZ)^{\oplus 3}\oplus(\bZ/25\bZ)^{\oplus 2}$
& $=$
& $(\bZ/5\bZ)^{\oplus 3}\oplus(\bZ/25\bZ)^{\oplus 2}$ &
0 & 0\\
&$G(5^5,43)$ & $(\bZ/5\bZ)^{\oplus 8}\oplus\bZ/25\bZ$
& $(\bZ/5\bZ)^{\oplus 6}\oplus\bZ/25\bZ$
& $(\bZ/5\bZ)^{\oplus 6}\oplus\bZ/25\bZ$ &
$(\bZ/5\bZ)^{\oplus 2}$ & 0\\
&$G(5^5,44)$ & $(\bZ/5\bZ)^{\oplus 4}\oplus\bZ/25\bZ$
& $=$
& $(\bZ/5\bZ)^{\oplus 4}\oplus\bZ/25\bZ$ &
0 & 0\\
&$G(5^5,59)$ & $(\bZ/5\bZ)^{\oplus 4}\oplus\bZ/25\bZ$
& $=$
& $(\bZ/5\bZ)^{\oplus 4}\oplus\bZ/25\bZ$ &
0 & 0\\
&$G(5^5,60)$ & $(\bZ/5\bZ)^{\oplus 3}\oplus\bZ/125\bZ$
& $=$
& $(\bZ/5\bZ)^{\oplus 3}\oplus\bZ/125\bZ$ &
0 & 0\\
&$G(5^5,72)$ & $(\bZ/5\bZ)^{\oplus 17}$
& $(\bZ/5\bZ)^{\oplus 11}$
& $(\bZ/5\bZ)^{\oplus 11}$ &
$(\bZ/5\bZ)^{\oplus 6}$ & 0\\
&$G(5^5,73)$ & $(\bZ/5\bZ)^{\oplus 11}$
& $(\bZ/5\bZ)^{\oplus 9}$
& $(\bZ/5\bZ)^{\oplus 9}$ &
$(\bZ/5\bZ)^{\oplus 2}$ & 0\\
&$G(5^5,74)$ & $(\bZ/5\bZ)^{\oplus 9}\oplus\bZ/25\bZ$
& $(\bZ/5\bZ)^{\oplus 7}\oplus\bZ/25\bZ$
& $(\bZ/5\bZ)^{\oplus 7}\oplus\bZ/25\bZ$ &
$(\bZ/5\bZ)^{\oplus 2}$ & 0\\\hline
&$G(5^5,18)$ & $(\bZ/5\bZ)^{\oplus 5}\oplus\bZ/25\bZ$
& $(\bZ/5\bZ)^{\oplus 3}\oplus\bZ/25\bZ$
& $(\bZ/5\bZ)^{\oplus 3}\oplus\bZ/25\bZ$ &
$(\bZ/5\bZ)^{\oplus 2}$ & 0\\
&$G(5^5,19)$ & $(\bZ/5\bZ)^{\oplus 2}\oplus(\bZ/25\bZ)^{\oplus 2}$
& $\bZ/5\bZ\oplus(\bZ/25\bZ)^{\oplus 2}$
& $\bZ/5\bZ\oplus(\bZ/25\bZ)^{\oplus 2}$ &
$\bZ/5\bZ$ & 0\\
&$G(5^5,20)$ & $(\bZ/5\bZ)^{\oplus 2}\oplus(\bZ/25\bZ)^{\oplus 2}$
& $\bZ/5\bZ\oplus(\bZ/25\bZ)^{\oplus 2}$
& $\bZ/5\bZ\oplus(\bZ/25\bZ)^{\oplus 2}$ &
$\bZ/5\bZ$ & 0\\
&$G(5^5,21)$ & $(\bZ/5\bZ)\oplus(\bZ/25\bZ)^{\oplus 2}$
& $(\bZ/5\bZ)^{\oplus 2}\oplus\bZ/25\bZ$
& $(\bZ/5\bZ)^{\oplus 2}\oplus\bZ/25\bZ$ &
$\bZ/5\bZ$ & 0\\
&$G(5^5,22)$ & $(\bZ/5\bZ)^{\oplus 4}\oplus\bZ/25\bZ$
& $(\bZ/5\bZ)^{\oplus 3}\oplus\bZ/25\bZ$
& $(\bZ/5\bZ)^{\oplus 3}\oplus\bZ/25\bZ$ &
$\bZ/5\bZ$ & 0\\
&$G(5^5,23)$ & $(\bZ/5\bZ)^{\oplus 3}\oplus\bZ/25\bZ$
& $(\bZ/5\bZ)^{\oplus 2}\oplus\bZ/25\bZ$
& $(\bZ/5\bZ)^{\oplus 2}\oplus\bZ/25\bZ$ &
$\bZ/5\bZ$ & 0\\
$\Phi_3$
&$G(5^5,24)$ & $\bZ/5\bZ\oplus\bZ/125\bZ$
& $=$
& $\bZ/5\bZ\oplus\bZ/125\bZ$ &
0 & 0\\
&$G(5^5,25)$ & $\bZ/5\bZ\oplus\bZ/125\bZ$
& $=$
& $\bZ/5\bZ\oplus\bZ/125\bZ$ &
0 & 0\\
&$G(5^5,61)$ & $(\bZ/5\bZ)^{\oplus 10}$
& $(\bZ/5\bZ)^{\oplus 7}$
& $(\bZ/5\bZ)^{\oplus 7}$ &
$(\bZ/5\bZ)^{\oplus 3}$ & 0\\
&$G(5^5,62)$ & $(\bZ/5\bZ)^{\oplus 6}\oplus\bZ/25\bZ$
& $(\bZ/5\bZ)^{\oplus 6}$
& $(\bZ/5\bZ)^{\oplus 6}$ &
$(\bZ/5\bZ)^{\oplus 2}$ & 0\\
&$G(5^5,63)$ & $(\bZ/5\bZ)^{\oplus 5}\oplus\bZ/25\bZ$
& $(\bZ/5\bZ)^{\oplus 4}\oplus\bZ/25\bZ$
& $(\bZ/5\bZ)^{\oplus 4}\oplus\bZ/25\bZ$ &
$\bZ/5\bZ$ & 0\\
&$G(5^5,64)$ & $(\bZ/5\bZ)^{\oplus 5}\oplus\bZ/25\bZ$
& $(\bZ/5\bZ)^{\oplus 4}\oplus\bZ/25\bZ$
& $(\bZ/5\bZ)^{\oplus 4}\oplus\bZ/25\bZ$ &
$\bZ/5\bZ$ & 0\\
&$G(5^5,65)$ & $(\bZ/5\bZ)^{\oplus 5}\oplus\bZ/25\bZ$
& $(\bZ/5\bZ)^{\oplus 4}\oplus\bZ/25\bZ$
& $(\bZ/5\bZ)^{\oplus 4}\oplus\bZ/25\bZ$ &
$\bZ/5\bZ$ & 0\\\hline
&$G(5^5,45)$ & $(\bZ/5\bZ)^{\oplus 12}$
& $(\bZ/5\bZ)^{\oplus 9}$
& $(\bZ/5\bZ)^{\oplus 9}$ &
$(\bZ/5\bZ)^{\oplus 3}$ & 0\\
&$G(5^5,46)$ & $(\bZ/5\bZ)^{\oplus 8}$
& $(\bZ/5\bZ)^{\oplus 6}$
& $(\bZ/5\bZ)^{\oplus 6}$ &
$(\bZ/5\bZ)^{\oplus 2}$ & 0\\
&$G(5^5,47)$ & $(\bZ/5\bZ)^{\oplus 7}$
& $(\bZ/5\bZ)^{\oplus 6}$
& $(\bZ/5\bZ)^{\oplus 6}$ &
$\bZ/5\bZ$ & 0\\
&$G(5^5,48)$ & $(\bZ/5\bZ)^{\oplus 7}$
& $(\bZ/5\bZ)^{\oplus 6}$
& $(\bZ/5\bZ)^{\oplus 6}$ &
$\bZ/5\bZ$ & 0\\
&$G(5^5,49)$ & $(\bZ/5\bZ)^{\oplus 5}$
& $(\bZ/5\bZ)^{\oplus 4}$
& $(\bZ/5\bZ)^{\oplus 4}$ &
$\bZ/5\bZ$ & 0\\
&$G(5^5,50)$ & $(\bZ/5\bZ)^{\oplus 4}\oplus\bZ/25\bZ$
& $(\bZ/5\bZ)^{\oplus 3}\oplus\bZ/25\bZ$
& $(\bZ/5\bZ)^{\oplus 3}\oplus\bZ/25\bZ$ &
$\bZ/5\bZ$ & 0\\
$\Phi_4$
&$G(5^5,51)$ & $(\bZ/5\bZ)^{\oplus 3}\oplus\bZ/25\bZ$
& $=$
& $(\bZ/5\bZ)^{\oplus 3}\oplus\bZ/25\bZ$ &
0 & 0\\
&$G(5^5,52)$ & $(\bZ/5\bZ)^{\oplus 4}$
& $=$
& $(\bZ/5\bZ)^{\oplus 4}$ &
0 & 0\\
&$G(5^5,53)$ & $(\bZ/5\bZ)^{\oplus 4}$
& $=$
& $(\bZ/5\bZ)^{\oplus 4}$ &
0 & 0\\
&$G(5^5,54)$ & $(\bZ/5\bZ)^{\oplus 3}\oplus\bZ/25\bZ$
& $=$
& $(\bZ/5\bZ)^{\oplus 3}\oplus\bZ/25\bZ$ &
0 & 0\\
&$G(5^5,55)$ & $(\bZ/5\bZ)^{\oplus 4}$
& $=$
& $(\bZ/5\bZ)^{\oplus 4}$ &
0 & 0\\
&$G(5^5,56)$ & $(\bZ/5\bZ)^{\oplus 4}$
& $=$
& $(\bZ/5\bZ)^{\oplus 4}$ &
0 & 0\\
&$G(5^5,57)$ & $(\bZ/5\bZ)^{\oplus 4}$
& $=$
& $(\bZ/5\bZ)^{\oplus 4}$ &
0 & 0\\\hline
\raisebox{-1.6ex}[0cm][0cm]{$\Phi_5$}
&$G(5^5,75)$ & $(\bZ/5\bZ)^{\oplus 15}$
& $(\bZ/5\bZ)^{\oplus 10}$ & $(\bZ/5\bZ)^{\oplus 10}$ & $(\bZ/5\bZ)^{\oplus  5}$ & 0\\
&$G(5^5,76)$ & $(\bZ/5\bZ)^{\oplus 10}$
& $=$ & $(\bZ/5\bZ)^{\oplus 10}$ & 0 & 0\\\hline
&$G(5^5,3)$ & $(\bZ/5\bZ)^{\oplus 7}$
& $(\bZ/5\bZ)^{\oplus 7}$ & $(\bZ/5\bZ)^{\oplus 6}$ & $\bZ/5\bZ$ & $\bZ/5\bZ$\\
&$G(5^5,4)$ & $(\bZ/5\bZ)^{\oplus 5}$
& $(\bZ/5\bZ)^{\oplus 5}$ & $(\bZ/5\bZ)^{\oplus 4}$ & $\bZ/5\bZ$ & $\bZ/5\bZ$\\
&$G(5^5,5)$ & $(\bZ/5\bZ)^{\oplus 5}$
& $(\bZ/5\bZ)^{\oplus 5}$ & $(\bZ/5\bZ)^{\oplus 4}$ & $\bZ/5\bZ$ & $\bZ/5\bZ$\\
&$G(5^5,6)$ & $(\bZ/5\bZ)^{\oplus 5}$
& $(\bZ/5\bZ)^{\oplus 5}$ & $(\bZ/5\bZ)^{\oplus 4}$ & $\bZ/5\bZ$ & $\bZ/5\bZ$\\
&$G(5^5,7)$ & $(\bZ/5\bZ)^{\oplus 3}\oplus\bZ/25\bZ$
& $(\bZ/5\bZ)^{\oplus 3}\oplus\bZ/25\bZ$ & $(\bZ/5\bZ)^{\oplus 2}\oplus\bZ/25\bZ$ & $\bZ/5\bZ$ & $\bZ/5\bZ$\\
\raisebox{-1.6ex}[0cm][0cm]{$\Phi_6$}
&$G(5^5,8)$ & $(\bZ/5\bZ)^{\oplus 4}$
& $(\bZ/5\bZ)^{\oplus 4}$ & $(\bZ/5\bZ)^{\oplus 3}$ & $\bZ/5\bZ$ & $\bZ/5\bZ$\\
&$G(5^5,9)$ & $(\bZ/5\bZ)^{\oplus 4}$
& $(\bZ/5\bZ)^{\oplus 4}$ & $(\bZ/5\bZ)^{\oplus 3}$ & $\bZ/5\bZ$ & $\bZ/5\bZ$\\
&$G(5^5,10)$ & $(\bZ/5\bZ)^{\oplus 3}\oplus\bZ/25\bZ$
& $(\bZ/5\bZ)^{\oplus 3}\oplus\bZ/25\bZ$ & $(\bZ/5\bZ)^{\oplus 2}\oplus\bZ/25\bZ$ & $\bZ/5\bZ$ & $\bZ/5\bZ$\\
&$G(5^5,11)$ & $(\bZ/5\bZ)^{\oplus 4}$
& $(\bZ/5\bZ)^{\oplus 4}$ & $(\bZ/5\bZ)^{\oplus 3}$ & $\bZ/5\bZ$ & $\bZ/5\bZ$\\
&$G(5^5,12)$ & $(\bZ/5\bZ)^{\oplus 4}$
& $(\bZ/5\bZ)^{\oplus 4}$ & $(\bZ/5\bZ)^{\oplus 3}$ & $\bZ/5\bZ$ & $\bZ/5\bZ$\\
&$G(5^5,13)$ & $(\bZ/5\bZ)^{\oplus 4}$
& $(\bZ/5\bZ)^{\oplus 4}$ & $(\bZ/5\bZ)^{\oplus 3}$ & $\bZ/5\bZ$ & $\bZ/5\bZ$\\
&$G(5^5,14)$ & $(\bZ/5\bZ)^{\oplus 4}$
& $(\bZ/5\bZ)^{\oplus 4}$ & $(\bZ/5\bZ)^{\oplus 3}$ & $\bZ/5\bZ$ & $\bZ/5\bZ$\\\hline
&$G(5^5,66)$ & $(\bZ/5\bZ)^{\oplus 10}$
& $(\bZ/5\bZ)^{\oplus 7}$ & $(\bZ/5\bZ)^{\oplus 6}$ & $(\bZ/5\bZ)^{\oplus 4}$ & $\bZ/5\bZ$\\
&$G(5^5,67)$ & $(\bZ/5\bZ)^{\oplus 6}$
& $(\bZ/5\bZ)^{\oplus 6}$ & $(\bZ/5\bZ)^{\oplus 5}$ & $\bZ/5\bZ$ & $\bZ/5\bZ$\\
$\Phi_7$&$G(5^5,68)$ & $(\bZ/5\bZ)^{\oplus 6}$
& $(\bZ/5\bZ)^{\oplus 6}$ & $(\bZ/5\bZ)^{\oplus 5}$ & $\bZ/5\bZ$ & $\bZ/5\bZ$\\
&$G(5^5,69)$ & $(\bZ/5\bZ)^{\oplus 6}$
& $(\bZ/5\bZ)^{\oplus 6}$ & $(\bZ/5\bZ)^{\oplus 5}$ & $\bZ/5\bZ$ & $\bZ/5\bZ$\\
&$G(5^5,70)$ & $(\bZ/5\bZ)^{\oplus 6}$
& $(\bZ/5\bZ)^{\oplus 6}$ & $(\bZ/5\bZ)^{\oplus 5}$ & $\bZ/5\bZ$ & $\bZ/5\bZ$\\\hline
$\Phi_8$
&$G(5^5,27)$ & $\bZ/5\bZ\oplus \bZ/25\bZ$
& $=$
& $\bZ/5\bZ\oplus \bZ/25\bZ$ &
0 & 0\\\hline
&$G(5^5,30)$ & $(\bZ/5\bZ)^{\oplus 6}$
& $(\bZ/5\bZ)^{\oplus 5}$ & $(\bZ/5\bZ)^{\oplus 5}$ & $\bZ/5\bZ$ & 0\\
$\Phi_9$
&$G(5^5,31)$ & $(\bZ/5\bZ)^{\oplus 4}$
& $(\bZ/5\bZ)^{\oplus 3}$ & $(\bZ/5\bZ)^{\oplus 3}$ & $\bZ/5\bZ$ & 0\\
&$G(5^5,32)$ & $(\bZ/5\bZ)^{\oplus 4}$
& $(\bZ/5\bZ)^{\oplus 3}$ & $(\bZ/5\bZ)^{\oplus 3}$ & $\bZ/5\bZ$ & 0\\\hline
&$G(5^5,33)$ & $(\bZ/5\bZ)^{\oplus 6}$
& $(\bZ/5\bZ)^{\oplus 5}$ & $(\bZ/5\bZ)^{\oplus 4}$ & $(\bZ/5\bZ)^{\oplus 2}$ & $\bZ/5\bZ$\\
&$G(5^5,34)$ & $(\bZ/5\bZ)^{\oplus 4}$
& $(\bZ/5\bZ)^{\oplus 3}$ & $(\bZ/5\bZ)^{\oplus 2}$ & $(\bZ/5\bZ)^{\oplus 2}$ & $\bZ/5\bZ$\\
\raisebox{-1.6ex}[0cm][0cm]{$\Phi_{10}$}
&$G(5^5,35)$ & $(\bZ/5\bZ)^{\oplus 4}$
& $(\bZ/5\bZ)^{\oplus 3}$ & $(\bZ/5\bZ)^{\oplus 2}$ & $(\bZ/5\bZ)^{\oplus 2}$ & $\bZ/5\bZ$\\
&$G(5^5,36)$ & $(\bZ/5\bZ)^{\oplus 4}$
& $(\bZ/5\bZ)^{\oplus 3}$ & $(\bZ/5\bZ)^{\oplus 2}$ & $(\bZ/5\bZ)^{\oplus 2}$ & $\bZ/5\bZ$\\
&$G(5^5,37)$ & $(\bZ/5\bZ)^{\oplus 4}$
& $(\bZ/5\bZ)^{\oplus 3}$ & $(\bZ/5\bZ)^{\oplus 2}$ & $(\bZ/5\bZ)^{\oplus 2}$ & $\bZ/5\bZ$\\
&$G(5^5,38)$ & $(\bZ/5\bZ)^{\oplus 3}$
& $(\bZ/5\bZ)^{\oplus 3}$ & $(\bZ/5\bZ)^{\oplus 2}$ & $\bZ/5\bZ$ & $\bZ/5\bZ$\\
\end{tabular}\\
\vspace*{2mm}
{\normalsize
Table $7$: $H^3_{\rm s}(G,\bQ/\bZ)$ and
$H^3_{\rm nr}(\bC(G),\bQ/\bZ)$ for groups $G$ of order $3125$
}
\end{table}
}


\section{Proof of Theorem \ref{t1.16}}\label{sePT2}

Let $G$ be a non-abelian simple group. First we recall some known results of the rationality problem of $\bC(G)$.

\begin{theorem}\label{t5.1}
{\rm (1) (Hajja \cite{Haj}, Hajja and Kang \cite[pages 530--533, Theorem 6]{HK})}
Let $k$ be any field. Then $k(A_n)$ is $k$-rational for $n=3,4$ where $A_n$ denotes the alternating group of degree $n$.\\
{\rm (2) (Maeda \cite{Mae})}
Let $k$ be any field. Then $k(A_5)$ is $k$-rational where $A_5$ is the alternating group of degree $5$.\\
{\rm (3) (Kemper \cite{Ke})}
Let $PSL_2(\bF_7)$ and $PSp_{4}(\bF_3)$ be the projective special linear group over the finite field $\bF_7$ and the projective symplectic group over the finite field $\bF_3$ respectively. Then $\bQ(\sqrt{-7})(PSL_2(\bF_7))$ is rational over $\bQ(\sqrt{-7})$, and $\bQ(\sqrt{-3})(PSp_4(\bF_3))$ is rational over $\bQ(\sqrt{-3})$.
\end{theorem}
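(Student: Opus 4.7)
The statement of Theorem 5.1 is a compilation of three classical results by different authors (Hajja, Hajja--Kang, Maeda, Kemper), so my proof proposal would be to reconstruct the strategy of each part rather than to attempt a unified approach. The overarching tool in all three parts is the \emph{no-name lemma}: if a finite group $G$ acts faithfully on a $k$-vector space $V$ and $W \subseteq V$ is a faithful $k[G]$-submodule, then the quotient $V/W$ is a free $k[G]$-module outside a closed subset, and $k(V)^G$ is stably $k$-isomorphic (in fact, purely transcendental over) $k(W)^G$. Combined with Fischer's theorem (Theorem 1.2) applied to abelian subquotients, this lets one reduce $k(G)$ to the field of invariants of a much smaller representation.

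For part (1), the case $A_3 = C_3$ is the easiest: one uses that $A_3$ permutes $x_1,x_2,x_3$ cyclically, writes $k(x_1,x_2,x_3)^{A_3}$ as a quadratic extension of $k(S_3) = k(e_1,e_2,e_3)$ generated by the square root of the discriminant, and exhibits an explicit rational transcendence basis. For $A_4$, I would exploit the normal series $V_4 \trianglelefteq A_4$ with quotient $C_3$: first show $k(x_1,\ldots,x_4)^{V_4}$ is rational by invoking Fischer's theorem after a change of variables (the sign characters of $V_4$ generate a $\bZ[A_4/V_4]$-permutation lattice), and then push the induced $C_3$-action down using the no-name lemma. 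For part (2), Maeda's proof for $A_5$ is substantially harder and exploits the fact that $A_5 \simeq PSL_2(\bF_5)$ acts on a rational threefold (the Bring quintic model); the plan would be to realize $k(A_5)$ as a purely transcendental extension of the function field of a $k$-rational surface by using the permutation representation of $A_5$ on the six pairs of opposite faces of the icosahedron.

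For part (3), Kemper's approach to $PSL_2(\bF_7)$ and $PSp_4(\bF_3)$ relies on explicit low-dimensional faithful representations defined over $\bQ(\sqrt{-7})$ and $\bQ(\sqrt{-3})$, respectively. The reason the field must be enlarged is that the requisite irreducible representations (of dimensions $3$ and $4$) are only realizable after adjoining these square roots, a Brauer/Schur-index phenomenon. One applies the no-name lemma to reduce $k(G)$ to the field of invariants of such a small faithful representation, and then checks rationality by constructing an explicit slice (a transversal to the group orbits) whose field of functions is visibly purely transcendental over $k$.

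The main obstacle in each case is the explicit verification of rationality after the no-name reduction: the no-name lemma produces a lower-dimensional invariant field $k(W)^G$, but whether this is $k$-rational (as opposed to merely stably rational or retract rational) is a delicate and representation-specific question. For $A_5$ one needs Maeda's detailed understanding of the $A_5$-action on a quintic threefold; for $PSL_2(\bF_7)$ one needs Kemper's construction of an $A_1$-bundle structure on a particular $3$-dimensional quotient; and the reason $PSp_4(\bF_3)$ requires its own separate argument is that no such small faithful $\bQ$-representation exists. Consequently a ``uniform'' proof seems out of reach, and I would simply cite the three original sources after sketching the no-name reduction common to all of them.
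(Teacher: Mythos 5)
The paper offers no proof of Theorem 5.1: it is stated purely as a recollection of known results, with the citations to Hajja, Hajja--Kang, Maeda and Kemper serving as the proof, which is exactly what you ultimately propose to do. Your preliminary sketches are broadly in the spirit of those sources (modulo small slips --- e.g.\ describing $k(x_1,x_2,x_3)^{A_3}$ as generated by the square root of the discriminant breaks down in characteristic $2$, although the theorem is asserted over any field, and the icosahedral/Bring-curve description is not really how Maeda argues), but since you defer to the original papers exactly as the authors do, the approaches coincide.
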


As a result, we know that, for all $i \ge 2$, $H^i_{\rm nr}(\bC(G),\bQ/\bZ)=0$ if $G=A_5$, $PSL_2(\bF_7)$ and $PSp_{4}(\bF_3)$. We do not know whether $\bC(G)$ is $\bC$-rational if $G$ is a non-abelian simple group other than the above three groups. Note that the order of the group $PSL_2(\bF_7)$ is $168$, and the order of $PSp_{4}(\bF_3)$ is $25920$.

Now we consider the unramified cohomology groups of $\bC(G)$ where $G$ is non-abelian simple.

\begin{theorem}\label{t5.2}
Let $A_n$ be the alternating group of degree $n$ and $l$ be a prime number.\\
{\rm (1) (Bogomolov, Maciel and Petrov \cite{BMP})}
${\rm Br}_{\rm nr}(\bC(A_n))=0$ for any $n \ge 5$.\\
{\rm (2) (Kunyavskii \cite{Kuny})}
${\rm Br}_{\rm nr}(\bC(G))=0$ for any $G$ which is non-abelian simple $($resp. almost simple$)$.\\
{\rm (3) (Bogomolov and Petrov \cite[Theorem 1.2]{BP}; Kriz \cite[Theorem 2]{Kr})}
$\iota(H^i(A_n,\bZ/l\bZ))\cap H^i_{\rm nr}(\bC(A_n),\bZ/l\bZ)=0$ for any $i \ge 2$ and 
any $n \ge 5$
where $\iota: H^i(A_n,\bZ/l\bZ)\rightarrow H^i(\bC(A_n),\bZ/l\bZ)$ is
the inflation map.
In particular, $H^i_{\rm nr}(\bC(A_n),\bZ/l\bZ)=0$ for $i=2,3$.
\end{theorem}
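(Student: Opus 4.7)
The plan is to address the three parts of Theorem \ref{t5.2} in sequence, invoking increasingly sophisticated machinery from Section \ref{s2}.

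For part (1), the strategy is to apply Bogomolov's criterion from Theorem \ref{t2.3}: ${\rm Br}_{\rm nr}(\bC(A_n)) \simeq B_0(A_n)$ is the intersection over bicyclic subgroups $A \le A_n$ of the restriction kernels on $H^2(A_n,\bQ/\bZ)$. The Schur multiplier $M(A_n)$ is classically $\bZ/2\bZ$ for $n \ge 5$, $n \ne 6,7$, and $\bZ/6\bZ$ for $n \in \{6,7\}$, so it suffices to detect each non-trivial class on some bicyclic subgroup. For the $2$-torsion, the Klein four subgroup $V = \langle (12)(34),(13)(24)\rangle$ of $A_n$ works: its preimage in the binary Schur cover $\widetilde{A_n}$ is the quaternion group $Q_8$, so the restriction map $H^2(A_n,\bZ/2\bZ)\to H^2(V,\bZ/2\bZ)$ is non-zero on the generator. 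For the exceptional $3$-torsion in $A_6$ and $A_7$, a diagonally embedded $C_3 \times C_3$ generated by two disjoint $3$-cycles plays the analogous role.

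For part (2), the same criterion reduces the vanishing of $B_0(G)$ to exhibiting, for each non-trivial Schur class of a non-abelian simple $G$, a bicyclic subgroup detecting it. This proceeds case-by-case through the classification of finite simple groups: alternating groups are covered by (1); sporadic groups are handled directly from character tables; and finite groups of Lie type use root subsystems to construct bicyclic subgroups inside maximal tori that detect the universal central extension. The almost simple case $S \trianglelefteq G \le {\rm Aut}(S)$ reduces to the simple one via the Lyndon-Hochschild-Serre spectral sequence for $S \trianglelefteq G$, together with the transfer identity ${\rm cor}\circ{\rm res}=[G:S]$. The main obstacle is uniformity, which Kunyavskii circumvents via Moravec's reformulation $B_0(G)=M(G)/M^{\rm c}(G)$, where $M^{\rm c}(G)$ is generated by Schur classes supported on commuting pairs; this turns the problem into a purely group-theoretic commutator question easily verified in each family.

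For part (3) the key tool is Peyre's description of unramified cohomology via the boundary maps $\partial_{H,g}$ (Definition \ref{d2.11}) together with the exact sequence of Theorem \ref{t2.14}. The plan is to show that any $\alpha \in H^i(A_n,\bZ/l\bZ)$ with $\iota(\alpha)$ unramified has all boundaries $\partial_{H,g}(\alpha)=0$, and then to exploit the abundance of commuting elements in $A_n$ to force $\alpha \in H^i_{\rm n}(A_n,\bZ/l\bZ)$. The hard part will be a detection lemma guaranteeing that every non-negligible inflation class admits some pair $(H,g)$ with $\partial_{H,g}(\alpha)\ne 0$; the approach of Bogomolov-Petrov and Kriz handles this by restricting to a Sylow $l$-subgroup of $A_n$, which is an iterated wreath product $C_l\wr C_l\wr\cdots\wr C_l$, and then applying the no-name lemma iteratively to see that such wreath products have $\bC$-rational invariant fields, so by Theorem \ref{t2.7} their unramified cohomology vanishes and kills the Sylow restriction of any putative non-zero unramified inflation class. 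The in-particular clauses follow immediately: for $i=2$ they are a restatement of (1), while for $i=3$ they combine (3) with Saltman's inclusion $H^3_{\rm nr}(\bC(A_n),\bQ/\bZ)\subseteq \iota(H^3(A_n,\bQ/\bZ))$ recalled in Definition \ref{d2.10}.
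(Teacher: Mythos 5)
The paper itself offers no proof of Theorem \ref{t5.2}: it is stated purely as a survey of results from \cite{BMP}, \cite{Kuny} and \cite{BP}, \cite{Kr}, so there is no in-paper argument to compare yours against. Judged as a reconstruction of those external proofs, your part (1) is the standard and correct argument: by Theorem \ref{t2.3} it suffices to detect each nonzero class of $M(A_n)$ on a bicyclic subgroup, the $2$-part being seen on a Klein four subgroup whose preimage in the double cover is $Q_8$, and the exceptional $3$-part for $n=6,7$ on the bicyclic Sylow $3$-subgroup $C_3\times C_3$ (restriction to a Sylow subgroup is injective on the $p$-primary part). For part (3), the operative mechanism --- inject unramified cohomology into the direct sum over Sylow subgroups via ${\rm cor}\circ{\rm res}=[G:G_p]$ and kill each summand by rationality of $\bC(x_1,\dots,x_n)^{G_p}$ --- is indeed Kriz's route, and it is exactly what this paper packages as Lemma \ref{l5.4} and uses in Case 3 of the proof of Theorem \ref{t1.16}; your opening detour through Peyre's boundary maps $\partial_{H,g}$ is unnecessary and in any case those maps are only set up here in degree $3$.

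There are, however, two concrete defects. First, for $l=2$ the Sylow $2$-subgroup of $A_n$ is \emph{not} an iterated wreath product $C_2\wr\cdots\wr C_2$; it is the index-two ``orientation'' subgroup of the Sylow $2$-subgroup of $S_n$, and the rationality of its invariant field is precisely the nontrivial content of Kriz's Theorem 1 --- it does not follow from iterating the no-name lemma on wreath products. (For odd $l$ the Sylow subgroups of $A_n$ and $S_n$ coincide and are direct products of iterated wreath products, where Tsunogai's theorem applies; your description is off even there, but harmlessly so.) Since the $2$-local case is the whole difficulty, this step as written does not close. Second, your part (2) is a program rather than a proof: ``case-by-case through the classification'' with root subsystems and character tables names Kunyavskii's strategy but supplies no verifiable step, and the almost-simple reduction needs more than the bare identity ${\rm cor}\circ{\rm res}=[G:S]$, since $[G:S]$ need not be prime to the torsion being killed. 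Given that these are cited results, (1) and (3) are acceptable sketches once the Sylow $2$-subgroup claim is replaced by an appeal to \cite[Theorem 1]{Kr}, while (2) should simply be quoted from \cite{Kuny}.
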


Note that a group $G$ is called {\it almost simple} if there is a non-abelian simple group $H$ such that $H\leq G\leq {\rm Aut}(H)$. Kunyavskii's theorem is valid also for most of the quasisimple groups; see \cite{Kuny} for details.

We remark that there is a correction \cite{BB2} for \cite{BP}. The reader need not worry about the correction, because Sophi Kriz gave another proof for Theorem \ref{t5.2} (3) in \cite[Theorem 2]{Kr}, which will be reproduced later.

Now we start the proof of Theorem \ref{t1.16}.

The following lemma is reformulated from a result of Kriz \cite[Lemma 10]{Kr}.
For the convenience of the reader, we provide a complete proof of it.

\begin{lemma}\label{l5.4}
Let $G$ be a finite group.
For each prime divisor $p$ of $|G|$, choose a $p$-Sylow subgroup $G_p$.
Suppose that $X=\{x_1,\ldots,x_n\}$ is a finite set on which $G$ acts faithfully, i.e. $G$ is presented as a subgroup of $S_n$.
Define a rational function field $\bC(x_1,\ldots,x_n)$ with a $G$-action defined by $\sigma\cdot x_i=x_{\sigma(i)}$ for any $\sigma\in G$ and any $1\leq i\leq n$. Define $K:=\bC(x_1,\ldots,x_n)^G$, $K^{(p)}:=\bC(x_1,\ldots,x_n)^{G_p}$ and $\Gamma_K:={\rm Gal}(K^{\rm sep}/K)$.
If $M$ is a torsion $($discrete$)$ $\Gamma_K$-module on which $\Gamma_K$ acts trivially $($e.g. $M=\bQ/\bZ$ or $\bZ/n\bZ$ for some positive integer $n$$)$, then the restriction map
\begin{align}
{\rm res}:H_{\rm nr}^d(K/\bC,M)\rightarrow\bigoplus_{p\,|\,|G|}H_{\rm nr}^d(K^{(p)}/\bC,M)\label{eq1}
\end{align}
is injective for any $d\geq 2$.
Consequently, there is an injective map
\begin{align}
H_{\rm nr}^d(\bC(G)/\bC,M)\rightarrow\bigoplus_{p\,|\,|G|}H_{\rm nr}^d(\bC(G_p)/\bC,M)\label{eq2}
\end{align}
for any $d\geq 2$.
\end{lemma}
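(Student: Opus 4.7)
The plan is to establish the stronger statement (\ref{eq1}) by a restriction--corestriction argument in Galois cohomology, and then to deduce (\ref{eq2}) as a corollary via the no-name lemma. For (\ref{eq1}) the key observation is that $\bC(x_1,\ldots,x_n)/K$ is a finite Galois extension with group $G$, so $K^{(p)}/K$ is a finite separable sub-extension of degree $[K^{(p)}:K]=[G:G_p]=|G|/|G_p|$, which is coprime to $p$. In this setting the usual corestriction $\mathrm{cor}_K^{K^{(p)}}\colon H^d(K^{(p)},M)\to H^d(K,M)$ is available, and the standard identity
\begin{align*}
\mathrm{cor}_K^{K^{(p)}}\circ\mathrm{res}_K^{K^{(p)}}=[K^{(p)}:K]\cdot\mathrm{id}_{H^d(K,M)}
\end{align*}
holds; moreover restriction carries unramified classes to unramified classes.

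Now suppose $\alpha\in H_{\mathrm{nr}}^d(K/\bC,M)$ lies in the kernel of (\ref{eq1}). Since $H_{\mathrm{nr}}^d(K^{(p)}/\bC,M)$ sits inside $H^d(K^{(p)},M)$, one obtains $\mathrm{res}_K^{K^{(p)}}(\alpha)=0$ in $H^d(K^{(p)},M)$ for every prime $p\mid|G|$, and corestriction then forces $[K^{(p)}:K]\cdot\alpha=0$ in $H^d(K,M)$. The family of integers $\{|G|/|G_p|:p\mid|G|\}$ is collectively coprime: for each prime $q\mid|G|$, the $q$-part of $|G|/|G_q|$ equals $1$, so the $q$-part of the greatest common divisor equals $1$ for every such $q$. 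By B\'ezout there exist integers $c_p$ with $\sum_p c_p\,[K^{(p)}:K]=1$, whence $\alpha=\sum_p c_p\,[K^{(p)}:K]\cdot\alpha=0$, proving injectivity of (\ref{eq1}).

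To deduce (\ref{eq2}) I will invoke the no-name lemma: if a finite group $\Gamma$ acts linearly and faithfully on two complex vector spaces $V_1,V_2$, then $\bC(V_1)^\Gamma$ and $\bC(V_2)^\Gamma$ are stably $\bC$-isomorphic. Applying this to the faithful permutation representation $\bigoplus_{i=1}^n\bC\cdot x_i$ and to the regular representation $\bigoplus_{g\in G}\bC\cdot x_g$ shows that $K$ is stably $\bC$-isomorphic to $\bC(G)$. The same permutation representation remains faithful as a $G_p$-module because $G_p\leq G$ and $G$ acts faithfully on $X$, so no-name applied to $G_p$ yields that $K^{(p)}$ is stably $\bC$-isomorphic to $\bC(G_p)$. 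Theorem~\ref{t2.7}(1) now supplies canonical isomorphisms of unramified cohomology groups between stably isomorphic extensions, and combining these with (\ref{eq1}) produces the injection (\ref{eq2}).

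The only point requiring care is the existence and behaviour of corestriction on $K^{(p)}/K$, which need not itself be Galois; this is handled by viewing both $K$ and $K^{(p)}$ as subfields of the Galois extension $\bC(x_1,\ldots,x_n)/K$, whereupon the usual restriction/corestriction formulae of Galois cohomology apply. Apart from this routine bookkeeping, the argument is essentially the classical transfer together with B\'ezout, and no substantial obstacle is anticipated.
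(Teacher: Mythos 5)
Your proposal is correct and takes essentially the same route as the paper: injectivity of (\ref{eq1}) via restriction--corestriction together with the coprimality of the indices $[G:G_p]=[K^{(p)}:K]$, and then (\ref{eq2}) via the no-name lemma (\cite[Theorem 1]{HK}) and Theorem \ref{t2.7}(1) identifying $K$ with $\bC(G)$ and $K^{(p)}$ with $\bC(G_p)$ up to stable isomorphism. The only cosmetic difference is that you run the identity $\mathrm{cor}\circ\mathrm{res}=[K^{(p)}:K]\cdot\mathrm{id}$ in the ambient group $H^d(K,M)$, whereas the paper also records (via the residue-map diagrams) that corestriction preserves the unramified subgroups; both versions are fine.
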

\begin{proof}
Step 1.
We claim that $\bC(G)$ and $K$ are stably isomorphic over $\bC$.
Consider the rational function field $\bC(x_i,x(g):1\leq i\leq n,g\in G)$.
Apply \cite[Theorem 1]{HK}.
We find that $\bC(x_i,x(g):1\leq i\leq n,g\in G)^G=\bC(x_i:1\leq i\leq n)^G(u_1,\ldots,u_s)=K(u_1,\ldots,u_s)$ with $s=|G|$.
By applying \cite[Theorem 1]{HK} again (but in a different order), we also find that $\bC(x_i,x(g):1\leq i\leq n,g\in G)^G=\bC(x(g):g\in G)^G(v_1,\ldots,v_n)=\bC(G)(v_1,\ldots,v_n)$.

Similarly, $\bC(G_p)$ and $K^{(p)}$ are stably isomorphic over $\bC$.

By Theorem \ref{t2.7}, we find that $H_{\rm nr}^d(\bC(G)/\bC,M)\simeq H_{\rm nr}^d(K/\bC,M)$ and $H_{\rm nr}^d(\bC(G_p)/\bC,M)\simeq H_{\rm nr}^d(K^{(p)}/\bC,M)$.
{}From the injectivity of (\ref{eq1}) we find an injective map of (\ref{eq2}).\\

Step 2.
We will show that (\ref{eq1}) is injective.

Note that $K^{(p)}$ is a finite extension of $K$; in fact, $[K^{(p)}:K]=[G:G_p]$.

Let $v$ be a rank-one discrete $\bC$-valuation of $K$ with residue field $\bk_v$, and let $v^\prime$ be an extension of $v$ to $K^{(p)}$ with residue field $\bk_{v^\prime}$.
Then we have the following commutative diagram
\begin{align*}
\xymatrix{
&H^d(K,M)\ar[d]_{\rm res} \ar[r]^{r_v} \ar@{}[dr]|\circlearrowleft
& \ar[d]_{e} H^{d-1}(\bk_v,M)  \\
& H^d(K^{(p)},M) \ar[r]^{r_{v^\prime}} &  H^{d-1}(\bk_{v^\prime},M).
}
\end{align*}
where the left-hand-side vertical map is the restriction map and
the right-hand-side vertical map is the multiplication by $e$ which is the ramification index of $v^\prime$ over $v$ (see \cite[page 19, Proposition 8.2]{GMS}).
Consider all the discrete $\bC$-valuations of $K$.
We obtain the commutative diagram
\begin{align*}
\xymatrix{
0 \ar[r] & H_{\rm nr}^d(K/\bC,M)\ar[d]_{\rm res} \ar[r] \ar@{}[dr]|\circlearrowleft
& \ar[d]_{\rm res} H^d(K,M)  \\
0 \ar[r] & H_{\rm nr}^d(K^{(p)}/\bC,M) \ar[r] &  H^d(K^{(p)},M).
}
\end{align*}
Similarly we obtain the commutative diagram for the corestriction map
\begin{align*}
\xymatrix{
0 \ar[r] & H_{\rm nr}^d(K^{(p)}/\bC,M)\ar[d]_{\rm cor} \ar[r] \ar@{}[dr]|\circlearrowleft
& \ar[d]_{\rm cor} H^d(K^{(p)},M)  \\
0 \ar[r] & H_{\rm nr}^d(K/\bC,M) \ar[r] & H^d(K,M).
}
\end{align*}
Note that the composite map $H_{\rm nr}^d(K/\bC,M)\xrightarrow{\rm res} H_{\rm nr}^d(K^{(p)}/\bC,M)\xrightarrow{\rm cor} H_{\rm nr}^d(K/\bC,M)$ is the multiplication by $[G:G_p]$.

If $[\alpha]\in H_{\rm nr}^d(K/\bC,M)$ is a cohomological class whose image in $H_{\rm nr}^d(K^{(p)}/\bC,M)$ is zero for all $p\,|\,|G|$, then $[G:G_p]\cdot[\alpha]=0$ for all $p\,|\,|G|$.
It follows that $[\alpha]=0$ in $H_{\rm nr}^d(K/\bC,M)$.
\end{proof}

\bigskip
{\it Proof of Theorem \ref{t1.16}.}-----------------

For each group $G$ in Theorem \ref{t1.16}, we will show that $\bC(G_p)$ is $\bC$-rational or stably $\bC$-rational for all $p\,|\,|G|$.
It follows that $H_{\rm nr}^d(\bC(G_p),M)=0$ for all $d\geq 2$.

Apply Lemma \ref{l5.4}.
We find that $H_{\rm nr}^d(\bC(G),M)=0$.

For a finite group $H$, it is known that $\bC(H)$ is $\bC$-rational if $H$ satisfies one of the following conditions: (i) $H$ is an abelian group (see Theorem \ref{t1.2}); (ii) $H$ contains an abelian normal subgroup of index $\le 22$ (see \cite{Haj1}, \cite{Ka1}); (iii) $H$ is a $p$-group of order $\leq p^4$ (see Theorem \ref{t1.4}).

We will use above three criteria to show that $\bC(G_p)$ is $\bC$-rational or stably $\bC$-rational, and finish the proof of Theorem \ref{t1.16}.\\

Case 1. $G$ is the Mathieu group $M_{11}$ or $M_{12}$.

By \cite[page 190]{Wi}, $|M_{11}|=2^4\cdot 3^2\cdot 5\cdot 11$, $|M_{12}|=2^6\cdot 3^3\cdot 5\cdot 11$.

Since the $p$-Sylow subgroup of $M_{11}$ is of order $\leq p^4$, it follows that $\bC(G_p)$ is $\bC$-rational.

Consider $M_{12}$.
Let $G_2$ be a $2$-Sylow subgroup of $M_{12}$.

By \cite[page 203, Table 5.1]{Wi}, $2^{1+4}S_3$ is a maximal subgroup of $M_{12}$.
It follows that $G_2$ contains an abelian normal subgroup of index $2$.\\

Case 2. $G$ is the Janko group $J_1$.

By \cite[page 267]{Wi}, $|J_1|=2^3\cdot 3\cdot 5\cdot 7\cdot 11\cdot 19$.
Done.\\

Case 3 (Kriz). $G$ is the alternating group $A_n$.

When $G=A_3$, apply Theorem \ref{t1.2}. When $G=A_4$, the group $A_4$ contains an abelian normal subgroup $V$ ($V$ is the Klein four group) such that $[A_4 : V]= 3$. Apply \cite{Haj1}, \cite{Ka1}.

{}From now on till the end of this case, we consider $G=A_n$ with $n\geq 5$.

For any $n\geq 5$, let $G_p$ be a $p$-Sylow subgroup of $A_n$ where $p$ is any prime number with $p\,|\,|A_n|$.
We will show that $\bC(x_1,\ldots,x_n)^{G_p}$ is $\bC$-rational.
If $p=2$, then $\bC(x_1,\ldots,x_n)^{G_2}$ is $\bC$-rational by \cite[Theorem 1]{Kr}.
If $p\geq 3$, then $G_p$ is also a $p$-Sylow subgroup of $S_n$.
Since $\bC(x_1,\ldots,x_n)^{G_p}$ is $\bC$-rational by Tsunogai's Theorem (see \cite[Theorem 1.4]{KWZ}), we are done.

Note that $\bC(x_1,\ldots,x_n)^{G_p}$ and $\bC(G_p)$ are stably $\bC$-isomorphic, because we may apply \cite[Theorem 1]{HK} as in Step 1 of Lemma \ref{l5.4}.
Thus $\bC(G_p)$ is stably $\bC$-rational.\\

Case 4. $G$ is the group $SL_2(\bF_q)$, $PSL_2(\bF_q)$ or $PGL_2(\bF_q)$.

Let $G_p$ be a $p$-Sylow subgroup of $G$.
We claim that
(i) if $p\geq 3$ is odd, then $G_p$ is abelian;
(ii) if $p=2$, then $G_2$ contains an abelian normal subgroup of index $2$.
Once we prove the above result for $SL_2(\bF_q)$,
it is obvious that the the same result is valid for $PSL_2(\bF_q)$.

{}From now on till the end of the proof of this theorem, we write $G=SL_2(\bF_q)$,
$H=PGL_2(\bF_q)$, $G_p$ and $H_p$ stand for $p$-Sylow subgroups of $G$ and $H$ respectively.
Remember that $|G|=|H|=q(q^2-1)$.\\

Step 1. $G$ contains an element $T_1$ of order $q+1$ and an element $T_2$ of order $q-1$.

In fact, choose $\alpha\in\bF_q^\times$ such that $\bF_q^\times=\langle\alpha\rangle$.
Define
\begin{align*}
T_2=\left(
\begin{array}{cc}
\alpha & 0 \\
0 & \alpha^{-1}
\end{array}
\right)\in G.
\end{align*}
On the other hand, choose $\beta\in\bF_{q^2}^\times$ with
$\bF_{q^2}^\times=\langle\beta\rangle$.
Since $[\bF_q(\beta):\bF_q]=2$, the minimum polynomial of $\beta$ over $\bF_q$ is of the form
\begin{align*}
f(X)=X^2-bX-a\in\bF_q[X].
\end{align*}
Note that $f(X)=(X-\beta)(X-\beta^q)$.
The rational normal form of $f(X)$ is
\begin{align*}
T_0=\left(
\begin{array}{cc}
0 & a \\
1 & b
\end{array}
\right)\in GL_2(\bF_q).
\end{align*}
The matrix $T_0$ is conjugate to the diagonal matrix ${\rm diag}(\beta,\beta^q)$ within $GL_2(\bF_{q^2})$.
Hence $T_0^{q-1}$ is conjugate to ${\rm diag}(\beta^{q-1},\beta^{q(q-1)})$.
Define $T_1:=T_0^{q-1}$.
Clearly $T_1\in G=SL_2(\bF_q)$.\\

Step 2. We will show that $G_p$ is either abelian or contains an abelian normal subgroups of index $2$.

If $p\,|\,q$ where $q=l^m$ for some prime number $l$, then $G_p$ is isomorphic to the subgroup
\begin{align*}
\left\{\left(
\begin{array}{cc}
1 & a \\
0 & 1
\end{array}
\right)\in G:a\in\bF_q\right\}
\end{align*}
which is an elementary abelian group of order $q$.

If $p$ $\not{|}$ $q$ and $p$ is odd, then $p\,|\,q+1$ or $p$ $\not{|}$ $q-1$, but not both.
By Step 1, $G_p$ is a cyclic group.

Thus there remains only the situation: $p$ $\not{|}$ $q$ and $p=2$.

If $q+1=2^r\cdot s$ where $r\geq 2$ and $2$ $\not{|}$ $s$,
then $2\,|\,q-1$ but $2^2$ $\not{|}$ $q-1$.
Let $T_1\in SL_2(\bF_q)$ be the element of order $q+1$ in Step 1.
Then $\langle T_1^s\rangle$ is conjugate to a subgroup of $G_2$ of index $2$.

If $q-1=2^{r^\prime}\cdot s^\prime$ where $r^\prime\geq 2$ and $2$ $\not{|}$ $s^\prime$, take $\langle T_2^{s^\prime}\rangle$.\\

Step 3. We will show that $H_p$ is either abelian or contains an abelian normal subgroup of index $2$.

Let $T_1$ and $T_2$ be the matrices defined in Step 1.
As an element in $H=PGL_2(\bF_q)$, the order of $T_1$ (resp. $T_2$) may be reduced.
In fact, it is not difficult to verify that
(i) if $q$ is odd, then $T_1$ (resp. $T_2$) is of order $(q+1)/2$
(resp. $q-1$) as an element in $H$;
(ii) If $q$ is even, then $T_1$ (resp. $T_2$) is of order $q+1$
(resp. $(q-1)/2$).
The remaining proof is not difficult and is omitted.\qed

\bigskip
We remark that the method of this theorem may be applied to other finite groups, which will be discussed elsewhere.

\bigskip
Before giving the second proof of Theorem \ref{t1.16} for $H_{\rm nr}^3(\bC(G)/\bC,\bQ/\bZ)=0$ (with $G=PSL_2(\bF_8), A_6, A_7$), recall the following theorem of Saltman.

\begin{theorem}[{Saltman \cite[Theorem 4.13]{Sa5}}]\label{t5.3}
If $G$ is not a $p$-group, then $N^3(G)=0$ where $N^3(G)$ is defined as $N^3(G):=H^3_{\rm n}(G,\bQ/\bZ)/(H^3_{\rm c}(G,\bQ/\bZ)+H^3_{\rm p}(G,\bQ/\bZ))$
where $H^3_{\rm c}(G,\bQ/\bZ)=\sum_{H\lneq G}{\rm Cores}^G_H(H^3_{\rm n}(H,\bQ/\bZ))$.
\end{theorem}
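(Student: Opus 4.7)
The plan is to prove the stronger statement $H^3_{\rm n}(G,\bQ/\bZ)=H^3_{\rm c}(G,\bQ/\bZ)$ whenever $G$ is not a $p$-group, which immediately yields $N^3(G)=0$. The strategy is a standard restriction-corestriction argument with respect to Sylow subgroups, exploiting the fact that when $G$ is not a $p$-group, every Sylow subgroup is a proper subgroup of $G$.

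First I would decompose the torsion abelian group $H^3_{\rm n}(G,\bQ/\bZ)$ as the direct sum of its $p$-primary components $H^3_{\rm n}(G,\bQ/\bZ)_{(p)}$ indexed by the prime divisors $p$ of $|G|$; it suffices to show that each primary component is contained in $H^3_{\rm c}(G,\bQ/\bZ)$. Fix such a $p$, let $G_p\le G$ be a $p$-Sylow subgroup, and note that $G_p\lneq G$ precisely because $G$ is not a $p$-group---this is the only place the hypothesis is used.

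Next I would invoke the classical identity $\text{Cores}^G_{G_p}\circ\text{Res}^G_{G_p}=[G:G_p]\cdot\mathrm{id}$ on $H^3(G,\bQ/\bZ)$. For $\alpha\in H^3_{\rm n}(G,\bQ/\bZ)_{(p)}$, the order of $\alpha$ is a power of $p$, so the coprime index $[G:G_p]$ acts invertibly on $\langle\alpha\rangle$; choosing $c\in\bZ$ with $c\cdot[G:G_p]\equiv 1$ modulo the order of $\alpha$ gives
\[
\alpha \;=\; c\cdot\text{Cores}^G_{G_p}\bigl(\text{Res}^G_{G_p}(\alpha)\bigr).
\]
To conclude, I need two compatibility facts: (i) restriction preserves geometric negligibility, so that $\text{Res}^G_{G_p}(\alpha)\in H^3_{\rm n}(G_p,\bQ/\bZ)$; and (ii) by the very definition of $H^3_{\rm c}$, any corestriction of a negligible class from a proper subgroup lies in $H^3_{\rm c}(G,\bQ/\bZ)$. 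Together these give $\alpha\in H^3_{\rm c}(G,\bQ/\bZ)$, as required.

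The only nontrivial step is (i), and the main obstacle is setting up the correct compatibility diagram. The key point is the commutativity of
\[
\begin{CD}
H^3(G,\bQ/\bZ) @>\iota_G>> H^3(\bC(G),\bQ/\bZ)\\
@VV{\rm Res}V @VV{\rm res}V\\
H^3(G_p,\bQ/\bZ) @>\iota_{G_p}>> H^3(\bC(G_p),\bQ/\bZ)
\end{CD}
\]
where the right-hand vertical map is the restriction on Galois cohomology induced by the field inclusion $\bC(G)=\bC(x_g)^G\subset\bC(x_g)^{G_p}=\bC(G_p)$, a finite extension of degree $[G:G_p]$. Commutativity follows from the functoriality of the Hochschild-Serre spectral sequence associated to the surjections $\Gamma_{\bC(G)}\twoheadrightarrow G$ and $\Gamma_{\bC(G_p)}\twoheadrightarrow G_p$ compatible with the inclusion $\Gamma_{\bC(G_p)}\hookrightarrow\Gamma_{\bC(G)}$. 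Thus any class in $\ker\iota_G$ restricts to a class in $\ker\iota_{G_p}$, which is precisely what (i) asserts.
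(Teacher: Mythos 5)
The paper itself gives no proof of this statement—it is quoted from Saltman \cite[Theorem 4.13]{Sa5}—and your Sylow restriction--corestriction argument is exactly the standard route to it: the primary decomposition of the finite group $H^3_{\rm n}(G,\bQ/\bZ)$, the identity ${\rm Cores}^G_{G_p}\circ{\rm Res}^G_{G_p}=[G:G_p]$, the observation that $G_p\lneq G$ precisely when $G$ is not a $p$-group, and the fact that corestrictions from the proper subgroup $G_p$ land in $H^3_{\rm c}(G,\bQ/\bZ)$ by definition. So the architecture is sound, and in fact you prove the inclusion $H^3_{\rm n}\subseteq H^3_{\rm c}$, which is all that $N^3(G)=0$ requires (your stronger claim of equality would additionally need $H^3_{\rm c}\subseteq H^3_{\rm n}$, i.e.\ that corestriction preserves negligibility; that is true but you do not address it, and it is not needed here).

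There is, however, one step stated incorrectly, and it is the step you single out as the crux. You write $\bC(x_g)^{G_p}=\bC(G_p)$; this is false: by Definition \ref{d1.1}, $\bC(G_p)=\bC(x_h:h\in G_p)^{G_p}$ has transcendence degree $|G_p|$, whereas $\bC(x_g:g\in G)^{G_p}$ has transcendence degree $|G|$, and with the literal meaning of $\bC(G_p)$ the group $\Gamma_{\bC(G_p)}$ is not a subgroup of $\Gamma_{\bC(G)}$, so your diagram does not parse. What your commutative square actually shows is that ${\rm Res}^G_{G_p}(\alpha)$ dies in $H^3(\bC(x_g:g\in G)^{G_p},\bQ/\bZ)$, while membership in $H^3_{\rm n}(G_p,\bQ/\bZ)$ means vanishing in $H^3(\bC(G_p),\bQ/\bZ)$. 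To bridge the two you need: (a) the no-name lemma \cite[Theorem 1]{HK} (used in exactly this way in Step 1 of the proof of Lemma \ref{l5.4}), which gives that $\bC(x_g:g\in G)^{G_p}$ is purely transcendental over $\bC(G_p)$, since the restriction to $G_p$ of the regular representation of $G$ contains the regular representation of $G_p$ as a faithful subrepresentation; and (b) injectivity of the restriction $H^3(K,\bQ/\bZ)\to H^3(K(t_1,\ldots,t_m),\bQ/\bZ)$ for purely transcendental extensions, together with the (easy) compatibility of the two surjections onto $G_p$ coming from the $G_p$-stable subfield $\bC(x_h:h\in G_p)\subset\bC(x_g:g\in G)$. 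Equivalently, invoke Saltman's fact that geometric negligibility is independent of the chosen faithful representation. With this repair your proof is complete; without it, step (i) is asserted for the wrong field.
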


We find that, if $G$ is a $p$-group ($p$ is an odd prime number), then $N^3(G)=0$ (because $H^3_{\rm p}(G,\bQ/\bZ)=H^3_{\rm n}(G,\bQ/\bZ)$ by Theorem \ref{t2.14}). However, Saltman \cite[Theorem 4.14]{Sa5} showed that if $G$ is a non-abelian
$2$-group with a cyclic subgroup of index $2$,
then $N^3(G)\simeq \bZ/2\bZ$.\\

Because the computation for non-abelian simple groups need more
computational time, we modify the command {\tt H4pFromResolution(}$RG${\tt )}
to {\tt H4pFromResolution(}$RG${\tt :H1trivial)} as follows.

By Proposition \ref{p2.12} and Definition \ref{d2.13},
it is possible to find an exact sequence of $\bZ[G]$-modules
$0 \to \mu \to Q^\ast \to Q \to 0$  where $Q$ is a permutation $\bZ[G]$-lattice and
$Q^\ast$ is a $H^1$-trivial $\bZ[G]$-module,
i.e. $H^1(H,Q^\ast)=0$ for all subgroups $H\leq G$. Using this exact sequence, Peyre finds a formula for $H^3_{\rm p}(G,\bQ/\bZ)$ (see Definition \ref{d2.13} and Theorem \ref{t2.14}). Namely, in \cite[pages 196--197]{Pe2}, Peyre constructs an exact sequence $0 \to \mu \to Q^\ast \to Q \to 0$ where the permutation
$\bZ[G]$-lattice $Q$ is of the form
\begin{align*}
Q=\bigoplus_{H\leq G}\bZ[G/H]^{d(H)}
\end{align*}
with $d(H)=|H^1(H,\bQ/\bZ)|=|\,{\rm Hom}(H,\bQ/\bZ)\,|=|H/D(H)|=|H^{ab}|$
and he proves that
\begin{align*}
H_{\rm p}^3(G,\bQ/\bZ)
&={\rm Ker}\{H^3(G,\bQ/\bZ)\rightarrow H^3(G,Q^\ast)\}\\
&={\rm Image}\{H^2(G,Q)\xrightarrow{\delta}H^3(G,\bQ/\bZ)\}\\
&=\sum_{H\leq G\atop\alpha\in H^1(H,\bQ/\bZ)}
{\rm Image}\{H^2(G,\bZ[G/H])\xrightarrow{\delta_{\alpha}}H^3(G,\bQ/\bZ)\}\\
&=\sum_{H\leq G\atop\alpha\in H^1(H,\bQ/\bZ)}
{\rm Cores}^G_H({\rm Image}\{\delta_\alpha:H^2(H,\bZ)\rightarrow
H^3(H,\bQ/\bZ),\,\beta\mapsto\alpha\cup\beta\})\\
&=\sum_{H\leq G}{\rm Cores}^G_H({\rm Image}\{H^1(H,\bQ/\bZ)\otimes
H^1(H,\bQ/\bZ)\rightarrow H^3(H,\bQ/\bZ)\}).
\end{align*}
The third equality comes from Shapiro's lemma and
the fourth equality follows from the compatibility with the
cup products (see \cite[pages 196--197]{Pe2}).

To facilitate the computer computing, we will modify the exact sequence $0 \to \mu \to Q^\ast \to Q \to 0$ constructed by Peyre so that the $\bZ$-rank of $Q$ becomes small enough.

The key idea in the new function
{\tt H4pFromResolution(}$RG${\tt :H1trivial)}
with {\tt H1trivial} option
is to reduce the number of direct summands $\bZ[G/H]$ of $Q$. We begin by choosing a subgroup $H$ of $G$ and throwing away the direct summand $\bZ[G/H]^{d(H)}$ from $Q$ in Peyre's exact sequence $0 \to \mu \to Q^\ast \to Q \to 0$. Let $Q_0$ be such a permutation lattice so obtained:
\begin{align*}
Q_0=\bigoplus_{H\leq G}\bZ[G/H]^{d^{\prime}(H)}
\leq Q=\bigoplus_{H\leq G}\bZ[G/H]^{d(H)}
\end{align*}
with $d^{\prime}(H)=0$ for the subgroup $H$ we choose (while $d^{\prime}(H)=d(H)$ for other subgroups $H$). Whenever $Q_0$ is chosen, a $\bZ[G]$-module $Q_0^\ast$ is constructed by taking the pull-back diagram of $Q_0 \to Q$ with respect to the original exact sequence $0 \to \mu \to Q^\ast \to Q \to 0$. Thus we get an exact sequence
$0 \to \mu \to Q_0^\ast \to Q_0 \to 0$. We require that $Q_0$ is chosen so that that $Q_0^\ast$ is an $H^1$-trivial $\bZ[G]$-module.

To guarantee that the resulting $Q_0^\ast$ is $H^1$-trivial, it is necessary and sufficient that the map $\delta$ is surjective where $\delta$ is the connecting homomorphism:
\begin{align*}
Q_0^{H^{\prime}}=H^0(H^{\prime},Q_0)\xrightarrow{\delta}H^1(H^{\prime},\bQ/\bZ)\rightarrow
H^1(H^{\prime},Q_0^\ast)\rightarrow H^1(H^{\prime},Q_0)=0
\end{align*}
where $H^{\prime}$ runs over all subgroups of $G$. To check the surjectivity of the map $\delta$, we may apply the function
{\tt chooseHH1trivial(}$G${\tt)}.
In this way, we obtain a ``permissible" permutation lattice $Q_0$
whose $\bZ$-rank is smaller than that of $Q$.

The above procedure for obtaining $Q_0$ may be applied to any subgroup $H$ of $G$. If we start with a subgroup $H$ and end with a $Q_0^\ast$ which is not $H^1$-trivial, we give up this subgroup and try another one. Fortunately we may find a ``good" subgroup $H$ when we work on the groups in Theorem \ref{t1.16}.

{}From a ``good" subgroup $H$ and the permutation lattice $Q_0$ (so that $Q_0^\ast$ is $H^1$-trivial), we try another subgroup of $G$ and repeat the same process. Thus we find $Q_1$ and an exact sequence $0 \to \mu \to Q_1^\ast \to Q_1 \to 0$. The process is continued till we find the ``minimal" $Q_n$ where $n$ is some non-negative integer such that $Q_n^\ast$ is $H^1$-trivial. We emphasize that the computing time is rather short. The resulting exact sequence $0 \to \mu \to Q_n^\ast \to Q_n \to 0$ fits our computer computing perfectly.

\bigskip
{\it A new proof of some cases of Theorem \ref{t1.16}.} -----------------

The proof consists of three steps: Step 1 for the group $PSL_2(\bF_8)$, Step 2 for the group $A_6$, Step 3 for $A_7$.

\bigskip
Step 1. The case $G=PSL_2(\bF_8)$. Note that $|G|=504=2^3\cdot 3^2\cdot 7$.

Using the functions
{\tt H4pFromResolution(}$RG${\tt )} and
{\tt IsUnramifiedH3($RG$,$l$)}
as in Steps 1--4 of Section \ref{sePT},
we have
$H^4(G,\bZ)\simeq\bZ/126\bZ=\langle f_1\rangle$,
$H^4_{\rm p}(G,\bZ)\simeq\bZ/63\bZ=\langle f_1^2\rangle$
and $f_1\not\in H^4_{\rm nr}(G,\bZ)$.
Hence we see
$H^4_{\rm nr}(G,\bZ)=H^4_{\rm n}(G,\bZ)=H^4_{\rm p}(G,\bZ)\simeq\bZ/63\bZ$
and $H^3_{\rm nr}(\bC(G),\bQ/\bZ)\simeq H^4_{\rm nr}(\bC(G),\bZ)\simeq
H^4_{\rm nr}(G,\bZ)/H^4_{\rm n}(G,\bZ)=0$.
Here is the computer implementation.\\

\begin{verbatim}
gap> Read("H3nr.gap");
gap> RPSL28:=ResolutionFiniteGroup(PSL(2,8),5);
Resolution of length 5 in characteristic 0 for
Group([ (3,8,6,4,9,7,5), (1,2,3)(4,7,5)(6,9,8) ]) .
gap> CR_CocyclesAndCoboundaries(RPSL28,4,true).torsionCoefficients; # H^4(G,Z)
[ [ 126 ] ]
gap> H4pFromResolution(RPSL28:H1trivial);
4[ [ 8, 5 ], [ 9, 1 ], [ 18, 1 ], [ 56, 11 ] ]
1/4
...
4/4
[ [ 63 ],
[ [ 126 ],
  [ [ 2 ] ] ] ]
gap> IsUnramifiedH3(RPSL28,[1]);
1/1
[ [ 2, 2, 2 ], [ 0, 0, 1 ] ]
false
\end{verbatim}

\bigskip
Step 2. The case $G=A_6$. Note that $|A_6|=360=2^3\cdot 3^2\cdot 5$
and $A_6 \simeq PSL_2(\bF_9)$, although we do
not use this fact in the following proof.

Using
{\tt H4pFromResolution(}$RG${\tt )} and
{\tt IsUnramifiedH3($RG$,$l$)}
as in Steps 1--5 of Section \ref{sePT},
we have
$H^4(G,\bZ)\simeq\bZ/60\bZ=\langle f_1\rangle$,
$H^4_{\rm p}(G,\bZ)\simeq\bZ/30\bZ=\langle f_1^2\rangle$
and $f_1\in H^4_{\rm nr}(G,\bZ)$.
Hence we see
$H^4(G,\bZ)=H^4_{\rm nr}(G,\bZ)\simeq\bZ/60\bZ=\langle f_1\rangle$
and $H^4_{\rm p}(G,\bZ)\simeq\bZ/30\bZ=\langle f_1^2\rangle$.

In order to find $H^4_{\rm n}(G,\bZ)\simeq H^3_{\rm n}(G,\bQ/\bZ)$,
we evaluate $H^4_{\rm c}(G,\bZ)\simeq H^3_{\rm c}(G,\bQ/\bZ)$
where $H_{\rm n}^4(G,\bZ)$ (resp. $H_{\rm c}^4(G,\bZ)$)
is the image of $H_{\rm n}^3(G,\bQ/\bZ)$
(resp. $H_{\rm c}^3(G,\bQ/\bZ)$)
in the isomorphism $H^3(G,\bQ/\bZ) \to H^4(G,\bZ)$. Since $G$ is not a $p$-group, we find that $H^4_{\rm n}(G,\bZ)=H^4_{\rm c}(G,\bZ)+H^4_{\rm p}(G,\bZ)$
by Theorem \ref{t5.3}.

As in Step 2 in Section \ref{sePT}, we have
\begin{align*}
H^3_{\rm c}(G,\bQ/\bZ)=
\sum_{H\lneq G}{\rm Cores}^G_H(H^3_{\rm n}(H,\bQ/\bZ))=
\sum_{H\lneq G:\, {\rm up\, to\, conjugacy}
\atop {H^\prime < H:\,{\rm maximal}}}
{\rm Cores}_H^G(H^3_{\rm n}(H,\bQ/\bZ)).
\end{align*}
By GAP, we find exactly $5$ maximal subgroups $H_1,H_2,H_3,H_4,H_5$ of
$G$ up to conjugation:
\begin{align*}
H_1&=\langle (456),(12)(3465)\rangle\simeq S_4 ,\\
H_2&=\langle (136)(254),(12)(3456)\rangle\simeq S_4,\\
H_3&=\langle (13)(56),(1526)(34)\rangle\simeq (C_3\times C_3)\rtimes C_4,\\
H_4&=\langle (12)(34),(236)\rangle\simeq A_5,\\
H_5&=\langle (12)(34),(134)(256)\rangle\simeq A_5.
\end{align*}
Using
{\tt H4pFromResolution(}$RG${\tt )} and
{\tt IsUnramifiedH3($RG$,$l$)} again,
we obtain
\begin{align*}
&H^4(H_1,\bZ)=H^4_{\rm nr}(H_1,\bZ)\simeq\bZ/2\bZ\oplus\bZ/12\bZ=\langle g_1,g_2\rangle,\
H^4_{\rm p}(H_1,\bZ)\simeq\bZ/2\bZ\oplus\bZ/6\bZ=\langle g_1,g_2^2\rangle,\\
&H^4(H_2,\bZ)=H^4_{\rm nr}(H_2,\bZ)\simeq\bZ/2\bZ\oplus\bZ/12\bZ=\langle g_3,g_4\rangle,\
H^4_{\rm p}(H_2,\bZ)\simeq\bZ/2\bZ\oplus\bZ/6\bZ=\langle g_3,g_4^2\rangle,\\
&H^4(H_3,\bZ)=H^4_{\rm p}(H_3,\bZ)\simeq\bZ/12\bZ=\langle g_5\rangle,\\
&H^4(H_4,\bZ)=H^4_{\rm p}(H_4,\bZ)\simeq\bZ/30\bZ=\langle g_6\rangle,\\
&H^4(H_5,\bZ)=H^4_{\rm p}(H_5,\bZ)\simeq\bZ/30\bZ=\langle g_7\rangle.
\end{align*}
Note that, for $1\leq i\leq 5$,
\begin{align*}
H^4(H_i,\bZ)\geq H^4_{\rm nr}(H_i,\bZ)\geq H^4_{\rm n}(H_i,\bZ)
\geq\ &H^4_{\rm c}(H_i,\bZ)+H^4_{\rm p}(H_i,\bZ)\geq H^4_{\rm p}(H_i,\bZ),
H^4_{\rm c}(H_i,\bZ).
\end{align*}
Because $\bC(S_4)$ is $\bC$-rational, we know that
$H^3_{\rm nr}(\bC(S_4),\bQ/\bZ)=0$ and hence
$H^3_{\rm nr}(S_4,\bQ/\bZ)=H^3_{\rm n}(S_4,\bQ/\bZ)$.
It follows that for $i=1,2$,
$H^4(H_i,\bZ)=H^4_{\rm nr}(H_i,\bZ)=H^4_{\rm n}(H_i,\bZ)
=H^4_{\rm c}(H_i,\bZ)+H^4_{\rm p}(H_i,\bZ)\simeq\bZ/2\bZ\oplus\bZ/12\bZ$ and
$H^4_{\rm p}(H_i,\bZ)\simeq\bZ/2\bZ\oplus\bZ/6\bZ$.
In particular, we have
$[H^4_{\rm n}(H_i,\bZ):H^4_{\rm p}(H_i,\bZ)]=2$ for $i=1,2$.

By applying the function {\tt Cores(}$l,CRH,CRG${\tt )} which
is already used in the function {\tt H4pFromResolution(}$RG${\tt )},
we obtain that
\begin{align*}
&{\rm Cores}^G_{H_1}(g_1)=0,\
{\rm Cores}^G_{H_1}(g_2)=f_1^{45},\\
&{\rm Cores}^G_{H_2}(g_3)=0,\
{\rm Cores}^G_{H_2}(g_4)=f_1^{15},\\
&{\rm Cores}^G_{H_3}(g_5)=f_1^{50},\\
&{\rm Cores}^G_{H_4}(g_6)=f_1^{54},\\
&{\rm Cores}^G_{H_5}(g_7)=f_1^{54}.
\end{align*}
Recall that $H^4_{\rm nr}(G,\bZ)\simeq\bZ/60\bZ=\langle f_1\rangle$
and $H^4_{\rm p}(G,\bZ)\simeq\bZ/30\bZ=\langle f_1^2\rangle$.
Thus we get $H^4(G,\bZ)=H^4_{\rm nr}(G,\bZ)
=H^4_{\rm n}(G,\bZ)=H^4_{\rm c}(G,\bZ)+H^4_{\rm p}(G,\bZ)=
H^4_{\rm c}(G,\bZ)
\simeq\bZ/60\bZ=\langle f_1\rangle$
and hence we have $H^3_{\rm nr}(\bC(G),\bQ/\bZ)\simeq
H^4_{\rm nr}(\bC(G),\bZ)\simeq H^4_{\rm nr}(G,\bZ)/H^4_{\rm n}(G,\bZ)=0$.
Here is the command of computer implementation.\\

\begin{verbatim}
gap> Read("H3nr.gap");
gap> A6:=AlternatingGroup(6);
Alt( [ 1 .. 6 ] )
gap> RA6:=ResolutionFiniteGroup(A6,5);
Resolution of length 5 in characteristic 0 for Group([ (4,5,6), (1,2,3,4,5) ]) .
gap> CR_CocyclesAndCoboundaries(RA6,4,true).torsionCoefficients; # H^4(G,Z)
[ [ 60 ] ]
gap> H4pFromResolution(RA6:H1trivial);
7[ [ 5, 1 ], [ 9, 2 ], [ 12, 3 ], [ 12, 3 ], [ 24, 12 ], [ 24, 12 ], [ 36, 9 ] ]
1/7
...
7/7
[ [ 30 ],
[ [ 60 ],
  [ [ 2 ] ] ] ]
gap> IsUnramifiedH3(RA6,[1]:Subgroup);
1/1
[ [ 2 ], [ 0 ] ]
true

gap> MaxA6:=MaximalSubgroups(A6);;
gap> HH:=Filtered(List(ConjugacyClassesSubgroups2(A6),Representative),
> x->x in MaxA6);
[ Group([ (4,5,6), (1,2)(3,4,6,5) ]),
  Group([ (1,3,6)(2,5,4), (1,2)(3,4,5,6) ]),
  Group([ (1,3)(5,6), (1,5,2,6)(3,4) ]), Group([ (1,2)(3,4), (2,3,6) ]),
  Group([ (1,2)(3,4), (1,3,4)(2,5,6) ]) ]
gap> List(HH,StructureDescription);
[ "S4", "S4", "(C3 x C3) : C4", "A5", "A5" ]

gap> S4:=SymmetricGroup(4);
Sym( [ 1 .. 4 ] )
gap> RS4:=ResolutionFiniteGroup(S4,5);
Resolution of length 5 in characteristic 0 for Group([ (1,2), (1,2,3,4) ]) .
gap> H4pFromResolution(RS4:H1trivial);
5[ [ 4, 2 ], [ 4, 1 ], [ 8, 3 ], [ 12, 3 ], [ 24, 12 ] ]
1/5
...
5/5
[ [ 2, 6 ],
[ [ 2, 12 ],
  [ [ 1, 0 ], [ 0, 2 ] ] ] ]
gap> IsUnramifiedH3(RS4,[0,1]:Subgroup);
1/2
[ [ 2 ], [ 0 ] ]
2/2
[ [ 2 ], [ 0 ] ]
true

gap> RH3:=ResolutionFiniteGroup(HH[3],5);
Resolution of length 5 in characteristic 0 for Group([ (1,3)(5,6), (1,5,2,6)
(3,4) ]) .
gap> H4pFromResolution(RH3:H1trivial);
2[ [ 9, 2 ], [ 36, 9 ] ]
1/2
2/2
[ [ 12 ],
[ [ 12 ],
  [ [ 1 ] ] ] ]

gap> A5:=AlternatingGroup(5);
Alt( [ 1 .. 5 ] )
gap> RA5:=ResolutionFiniteGroup(A5,5);
Resolution of length 5 in characteristic 0 for Group([ (3,4,5), (1,2,3,4,
5) ]) .
gap> H4pFromResolution(RA5:H1trivial);
4[ [ 4, 2 ], [ 5, 1 ], [ 10, 1 ], [ 12, 3 ] ]
1/4
...
4/4
[ [ 30 ],
[ [ 30 ],
  [ [ 1 ] ] ] ]
gap> RHH:=List(HH,x->ResolutionFiniteSubgroup(RA6,x));
[ Resolution of length 5 in characteristic 0 for S4 .
    , Resolution of length 5 in characteristic 0 for S4 .
    , Resolution of length 5 in characteristic 0 for (C3 x C3) : C4 .
    , Resolution of length 5 in characteristic 0 for A5 .
    , Resolution of length 5 in characteristic 0 for A5 .
]
gap> CRA6:=CR_CocyclesAndCoboundaries(RA6,4,true);;

gap> CRHH:=List(RHH,x->CR_CocyclesAndCoboundaries(x,4,true));;
gap> List(CRHH,x->x.torsionCoefficients);
[ [ 2, 12 ], [ 2, 12 ], [ 12 ], [ 30 ], [ 30 ] ]
gap> Cores([1,0],CRHH[1],CRA6);
[ 0 ]
gap> Cores([0,1],CRHH[1],CRA6);
[ 45 ]
gap> Cores([1,0],CRHH[2],CRA6);
[ 0 ]
gap> Cores([0,1],CRHH[2],CRA6);
[ 15 ]
gap> Cores([1],CRHH[3],CRA6);
[ 50 ]
gap> Cores([1],CRHH[4],CRA6);
[ 54 ]
gap> Cores([1],CRHH[5],CRA6);
[ 54 ]
\end{verbatim}

\bigskip
Step 3. The case $G=A_7$. Note that $|G|=2520=2^3\cdot 3^2\cdot 5\cdot 7$.
We will give two proofs.
Method 1
is based on the result of $A_6$ in Step 2
and Plans's theorem \cite[Theorem 2]{Pl}.
Method 2 is similar to the proof of $H^3_{\rm nr}(\bC(A_6),\bQ/\bZ)$
in Step 2.\\

Method 1.
By the No-Name Lemma (see \cite[Theorem 1]{HK}) the field $\bC(A_n)$ is rational over $\bC(x_1, \cdots, x_n)^{A_n}$. Thus $\bC(A_6)$ and $\bC(A_7)$ are stably $\bC$-isomorphic by \cite[Theorem 2]{Pl}.
{}From Step 2 and Theorem \ref{t2.7} (1), we find that
$H^3_{\rm nr}(\bC(A_7),\bQ/\bZ)=H^3_{\rm nr}(\bC(A_6),\bQ/\bZ)=0$.\\

Method 2.
Using
{\tt H4pFromResolution(}$RG${\tt )} and
{\tt IsUnramifiedH3($RG$,$l$)}, we find that
$H^4(G,\bZ)\simeq\bZ/12\bZ=\langle f_1\rangle$,
$H^4_{\rm p}(G,\bZ)\simeq\bZ/6\bZ=\langle f_1^2\rangle$
and $f_1\in H^4_{\rm nr}(G,\bZ)$.
Hence we have
$H^4(G,\bZ)=H^4_{\rm nr}(G,\bZ)\simeq\bZ/12\bZ=\langle f_1\rangle$
and $H^4_{\rm p}(G,\bZ)\simeq\bZ/6\bZ=\langle f_1^2\rangle$.

As in Step 2, we see that $H^4_{\rm n}(G,\bZ)=H^4_{\rm c}(G,\bZ)+H^4_{\rm p}(G,\bZ)$ by Theorem \ref{t5.3} because $G$ is not a $p$-group
where $H_{\rm n}^4(G,\bZ)$ (resp. $H_{\rm c}^4(G,\bZ)$)
is the image of $H_{\rm n}^3(G,\bQ/\bZ)$
(resp. $H_{\rm c}^3(G,\bQ/\bZ)$)
in the isomorphism $H^3(G,\bQ/\bZ) \to H^4(G,\bZ)$,
and we have
\begin{align*}
H^4_{\rm c}(G,\bZ)=
\sum_{H\lneq G}{\rm Cores}^G_H(H^3_{\rm n}(H,\bQ/\bZ))=
\sum_{H\lneq G:\, {\rm up\, to\, conjugacy}
\atop {H^\prime < H:\,{\rm maximal}}}
{\rm Cores}_H^G(H^3_{\rm n}(H,\bQ/\bZ)).
\end{align*}
By GAP, there exist $5$ maximal subgroups $H_1,H_2,H_3,H_4,H_5$ of
$G$ up to conjugation:
\begin{align*}
H_1&=\langle (23)(57),(12)(4567),(23)(56)\rangle\simeq
(C_3\times A_4)\rtimes C_2,\\
H_2&=\langle (12)(37),(2654)(37)\rangle\simeq S_5,\\
H_3&=\langle (14)(23),(246)(357)\rangle\simeq PSL_3(\bF_2)\simeq PSL_2(\bF_7),\\
H_4&=\langle (13)(27),(157)(346)\rangle\simeq PSL_3(\bF_2)\simeq PSL_2(\bF_7),\\
H_5&=\langle (24)(35),(2654)(37)\rangle\simeq A_6.
\end{align*}
Using
{\tt H4pFromResolution(}$RG${\tt )} and
{\tt IsUnramifiedH3($RG$,$l$)} again,
we get
\begin{align*}
&H^4(H_1,\bZ)=H^4_{\rm nr}(H_1,\bZ)\simeq\bZ/3\bZ\oplus\bZ/6\bZ\oplus\bZ/12\bZ=\langle g_1,g_2,g_3
\rangle,\
H^4_{\rm p}(H_1,\bZ)\simeq\bZ/3\bZ\oplus(\bZ/6\bZ)^{\oplus 2}=\langle g_1,g_2,g_3^2\rangle,\\
&H^4(H_2,\bZ)=H^4_{\rm nr}(H_2,\bZ)\simeq\bZ/2\bZ\oplus\bZ/12\bZ=\langle g_4,g_5\rangle,\
H^4_{\rm p}(H_2,\bZ)\simeq\bZ/2\bZ\oplus\bZ/6\bZ=\langle g_4,g_5^2\rangle,\\
&H^4(H_3,\bZ)=H^4_{\rm nr}(H_3,\bZ)\simeq\bZ/12\bZ=\langle g_6\rangle,\
H^4_{\rm p}(H_3,\bZ)\simeq\bZ/6\bZ=\langle g_6^2\rangle,\\
&H^4(H_4,\bZ)=H^4_{\rm nr}(H_4,\bZ)\simeq\bZ/12\bZ=\langle g_7\rangle,\
H^4_{\rm p}(H_4,\bZ)\simeq\bZ/6\bZ=\langle g_7^2\rangle,\\
&H^4(H_5,\bZ)=H^4_{\rm nr}(H_5,\bZ)\simeq\bZ/60\bZ=\langle g_8\rangle,\
H^4_{\rm p}(H_5,\bZ)\simeq\bZ/30\bZ=\langle g_8^2\rangle.
\end{align*}
The result for $H_5\simeq A_6$ can be obtained by Step 2.
By the definition, we have for $1\leq i\leq 5$,
\begin{align*}
H^4(H_i,\bZ)\geq H^4_{\rm nr}(H_i,\bZ)\geq H^4_{\rm n}(H_i,\bZ)
\geq H^4_{\rm c}(H_i,\bZ)+H^4_{\rm p}(H_i,\bZ)\geq H^4_{\rm p}(H_i,\bZ),
H^4_{\rm c}(H_i,\bZ).
\end{align*}
Because $\bC(S_5)$ and $\bC(PSL_3(\bF_2))$ is $\bC$-rational
(note that $PSL_3(\bF_2)\simeq PSL_2(\bF_7)$, see Theorem \ref{t5.1} (3)),
$H^3_{\rm nr}(H_i,\bQ/\bZ)=H^3_{\rm n}(H_i,\bQ/\bZ)$ for $i=2,3,4$.
By Step 2, we already know that
$H^4_{\rm nr}(H_5,\bZ)=H^4_{\rm n}(H_5,\bZ)$.

For 
$H_1$, by the same method as in Step 2, we obtain
$H^4_{\rm nr}(H_1,\bZ)=H^4_{\rm n}(H_1,\bZ)
=H^4_{\rm c}(H_1,\bZ)+H^4_{\rm p}(H_1,\bZ)$ and
$[H^4_{\rm n}(H_1,\bZ):H^4_{\rm p}(H_1,\bZ)]=2$
(we also see $H^4_{\rm n}(H_1,\bZ)=H^4_{\rm c}(H_1,\bZ)$).
Note that we get 
$H^3_{\rm nr}(\bC(H_1),\bQ/\bZ)=0$
for the first maximal subgroup $H_1$ with $|H_1|=72$
although the $\bC$-rationality of $\bC(H_1)$ is unclear.

In conclusion, for $1\leq i\leq 5$, we get
$H^4(H_i,\bZ)=H^4_{\rm nr}(H_i,\bZ)=H^4_{\rm n}(H_i,\bZ)=H^4_{\rm c}(H_i,\bZ)+H^4_{\rm p}(H_i,\bZ)$
and $[H^4_{\rm n}(H_i,\bZ):H^4_{\rm p}(H_i,\bZ)]=2$.

By applying the function {\tt Cores(}$l,CRH,CRG${\tt )}, we get
\begin{align*}
&{\rm Cores}^G_{H_1}(g_1)=f_1^4,\
{\rm Cores}^G_{H_1}(g_2)=f_1^2,\
{\rm Cores}^G_{H_1}(g_3)=f_1^9,\\
&{\rm Cores}^G_{H_2}(g_4)=f_1^6,\
{\rm Cores}^G_{H_2}(g_5)=f_1^9,\\
&{\rm Cores}^G_{H_3}(g_6)=f_1^3,\\
&{\rm Cores}^G_{H_4}(g_7)=f_1^9,\\
&{\rm Cores}^G_{H_5}(g_8)=f_1.
\end{align*}
Recall that we have
$H^4(G,\bZ)=H^4_{\rm nr}(G,\bZ)\simeq\bZ/12\bZ=\langle f_1\rangle$
and $H^4_{\rm p}(G,\bZ)\simeq\bZ/6\bZ=\langle f_1^2\rangle$.
Thus we get $H^4(G,\bZ)=H^4_{\rm nr}(G,\bZ)
=H^4_{\rm n}(G,\bZ)=H^4_{\rm c}(G,\bZ)+H^4_{\rm p}(G,\bZ)
=H^4_{\rm c}(G,\bZ)
\simeq\bZ/12\bZ=\langle f_1\rangle$.
Note that for $H_5\simeq A_6$, we already have
$H^4_{\rm n}(G,\bZ)={\rm Cores}^G_{H_5}(H^4_{\rm n}(H_5,\bZ))\simeq
\bZ/12\bZ$ with $H^4_{\rm n}(H_5,\bZ)\simeq\bZ/60\bZ$ (see also Step 2).
Therefore we have $H^3_{\rm nr}(\bC(G),\bQ/\bZ)\simeq
H^4_{\rm nr}(\bC(G),\bZ)\simeq H^4_{\rm nr}(G,\bZ)/H^4_{\rm n}(G,\bZ)=0$.
Here is the computer implementation.\\

\begin{verbatim}
gap> Read("H3nr.gap");
gap> A7:=AlternatingGroup(7);
Alt( [ 1 .. 7 ] )
gap> RA7:=ResolutionFiniteGroup(A7,5);
Resolution of length 5 in characteristic 0 for Group([ (5,6,7), (1,2,3,4,5,6,7) ]) .
gap> CR_CocyclesAndCoboundaries(RA7,4,true).torsionCoefficients; # H^4(G,Z)
[ [ 12 ] ]

gap> H4pFromResolution(RA7:H1trivial);
6[ [ 5, 1 ], [ 7, 1 ], [ 24, 12 ], [ 36, 11 ], [ 36, 9 ], [ 120, 34 ] ]
1/6
...
6/6
[ [ 6 ],
[ [ 12 ],
  [ [ 2 ] ] ] ]
gap> IsUnramifiedH3(RA7,[1]:Subgroup);
1/3
[ [ 2 ], [ 0 ] ]
...
3/3
[ [ 2 ], [ 0 ] ]
true

gap> MaxA7:=MaximalSubgroups(A7);;
gap> HH:=Filtered(List(ConjugacyClassesSubgroups2(A7),Representative),
> x->x in MaxA7);
[ Group([ (2,3)(5,7), (1,2)(4,5,6,7), (2,3)(5,6) ]),
  Group([ (1,2)(3,7), (2,6,5,4)(3,7) ]),
  Group([ (1,4)(2,3), (2,4,6)(3,5,7) ]),
  Group([ (1,3)(2,7), (1,5,7)(3,4,6) ]),
  Group([ (2,4)(3,5), (2,6,5,4)(3,7) ]) ]
gap> List(HH,StructureDescription);
[ "(C3 x A4) : C2", "S5", "PSL(3,2)", "PSL(3,2)", "A6" ]

gap> RH1:=ResolutionFiniteGroup(HH[1],5);
Resolution of length 5 in characteristic 0 for
Group([ (2,3)(5,6), (2,3)(5,7), (1,2)(4,5,6,7) ]) .
gap> H4pFromResolution(RH1:H1trivial);
5[ [ 12, 4 ], [ 12, 1 ], [ 24, 8 ], [ 36, 11 ], [ 72, 43 ] ]
1/5
...
5/5
[ [ 3, 6, 6 ], [ [ 3, 6, 12 ], [ [ 1, 0, 0 ], [ 0, 1, 0 ], [ 0, 0, 2 ] ] ] ]
gap> IsUnramifiedH3(RH1,[0,0,1]:Subgroup);
1/4
[ [ 2 ], [ 0 ] ]
...
4/4
[ [ 2 ], [ 0 ] ]
true
gap> MaxH1:=MaximalSubgroups(HH[1]);;
gap> H1H:=Filtered(List(ConjugacyClassesSubgroups2(HH[1]),Representative),
> x->x in MaxH1);
[ Group([ (2,3)(6,7), (1,3,2), (5,6,7) ]),
  Group([ (4,6)(5,7), (4,5)(6,7), (2,3)(6,7), (1,3,2) ]),
  Group([ (4,6)(5,7), (4,5)(6,7), (2,3)(6,7), (5,6,7) ]),
  Group([ (4,6)(5,7), (4,5)(6,7), (2,3)(6,7), (1,3,2)(5,6,7) ]),
  Group([ (4,6)(5,7), (4,5)(6,7), (2,3)(6,7), (1,3,2)(5,7,6) ]),
  Group([ (4,6)(5,7), (4,5)(6,7), (1,3,2), (5,6,7) ]) ]
gap> List(H1H,StructureDescription);
[ "(C3 x C3) : C2", "(C6 x C2) : C2", "S4", "S4", "S4", "C3 x A4" ]
gap> RH1H:=List(H1H,x->ResolutionFiniteSubgroup(RH1,x));
[ Resolution of length 5 in characteristic 0 for (C3 x C3) : C2 .
    , Resolution of length 5 in characteristic 0 for (C6 x C2) : C2 .
    , Resolution of length 5 in characteristic 0 for S4 .
    , Resolution of length 5 in characteristic 0 for S4 .
    , Resolution of length 5 in characteristic 0 for S4 .
    , Resolution of length 5 in characteristic 0 for C3 x A4 .
]
gap> CRH1H:=List(RH1H,x->CR_CocyclesAndCoboundaries(x,4,true));;
gap> CRH1:=CR_CocyclesAndCoboundaries(RH1,4,true);;
gap> CRH1.torsionCoefficients;
[ 3, 6, 12 ]
gap> List(CRH1H,x->x.torsionCoefficients);
[ [ 3, 3, 6 ], [ 2, 2, 12 ], [ 2, 12 ], [ 2, 12 ], [ 2, 12 ], [ 3, 3, 6 ] ]
gap> H4pFromResolution(RH1H[1],CRH1H[1]:H1trivial);
2[ [ 9, 2 ], [ 18, 4 ] ]
1/2
2/2
[ [ 3, 3, 6 ],
[ [ 3, 3, 6 ],
  [ [ 1, 0, 0 ], [ 0, 1, 0 ], [ 0, 0, 1 ] ] ] ]
gap> RH1H2:=ResolutionFiniteGroup(H1H[2],5);
Resolution of length 5 in characteristic 0 for
Group([ (4,5)(6,7), (4,6)(5,7), (2,3)(6,7), (1,3,2) ]) .
gap> H4pFromResolution(RH1H2:H1trivial);
4[ [ 12, 5 ], [ 12, 4 ], [ 12, 1 ], [ 24, 8 ] ]
1/4
...
4/4
[ [ 2, 2, 6 ],
[ [ 2, 2, 12 ],
  [ [ 1, 0, 0 ], [ 0, 1, 0 ], [ 0, 0, 2 ] ] ] ]
gap> IsUnramifiedH3(RH1H2,[0,0,1]:Subgroup);
1/4
[ [ 2 ], [ 0 ] ]
...
4/4
[ [ 2 ], [ 0 ] ]
true
gap> H4pFromResolution(RH1H[3],CRH1H[3]:H1trivial); # RH1H[3]=RH1H[4]=RH1H[5]
5[ [ 4, 2 ], [ 4, 1 ], [ 8, 3 ], [ 12, 3 ], [ 24, 12 ] ]
1/5
...
5/5
[ [ 2, 6 ],
[ [ 2, 12 ],
  [ [ 1, 0 ], [ 0, 2 ] ] ] ]
gap> IsUnramifiedH3(RH1H[3],[0,1]:Subgroup); # RH1H[3]=RH1H[4]=RH1H[5]
1/2
[ [ 2 ], [ 0 ] ]
2/2
[ [ 2 ], [ 0 ] ]
true
gap> H4pFromResolution(RH1H[6],CRH1H[6]:H1trivial);
2[ [ 12, 5 ], [ 36, 11 ] ]
1/2
2/2
[ [ 3, 3, 6 ],
[ [ 3, 3, 6 ],
  [ [ 1, 0, 0 ], [ 0, 1, 0 ], [ 0, 0, 1 ] ] ] ]
gap> List(CRH1H,x->x.torsionCoefficients);
[ [ 3, 3, 6 ], [ 2, 2, 12 ], [ 2, 12 ], [ 2, 12 ], [ 2, 12 ], [ 3, 3, 6 ] ]
gap> Cores([1,0,0],CRH1H[1],CRH1);
[ 0, 2, 4 ]
gap> Cores([0,1,0],CRH1H[1],CRH1);
[ 0, 2, 0 ]
gap> Cores([0,0,1],CRH1H[1],CRH1);
[ 1, 2, 0 ]
gap> Cores([1,0,0],CRH1H[2],CRH1);
[ 0, 3, 6 ]
gap> Cores([0,1,0],CRH1H[2],CRH1);
[ 0, 3, 6 ]
gap> Cores([0,0,1],CRH1H[2],CRH1);
[ 0, 0, 3 ]
gap> Cores([1,0],CRH1H[3],CRH1);
[ 0, 3, 6 ]
gap> Cores([0,1],CRH1H[3],CRH1);
[ 0, 3, 3 ]
gap> Cores([1,0],CRH1H[4],CRH1);
[ 0, 3, 6 ]
gap> Cores([0,1],CRH1H[4],CRH1);
[ 0, 0, 3 ]
gap> Cores([1,0],CRH1H[5],CRH1);
[ 0, 3, 6 ]
gap> Cores([0,1],CRH1H[5],CRH1);
[ 0, 0, 3 ]
gap> Cores([1,0,0],CRH1H[6],CRH1);
[ 1, 4, 8 ]
gap> Cores([0,1,0],CRH1H[6],CRH1);
[ 2, 4, 4 ]
gap> Cores([0,0,1],CRH1H[6],CRH1);
[ 0, 0, 2 ]

gap> S5:=SymmetricGroup(5);
Sym( [ 1 .. 5 ] )
gap> RS5:=ResolutionFiniteGroup(S5,5);
Resolution of length 5 in characteristic 0 for
Group([ (1,2), (1,2,3,4,5) ]) .
gap> H4pFromResolution(RS5:H1trivial);
6[ [ 5, 1 ], [ 8, 3 ], [ 12, 3 ], [ 12, 4 ], [ 20, 3 ], [ 120, 34 ] ]
1/6
...
6/6
[ [ 2, 6 ],
[ [ 2, 12 ],
  [ [ 1, 0 ], [ 0, 2 ] ] ] ]
gap> IsUnramifiedH3(RS5,[0,1]:Subgroup);
1/2
[ [ 2 ], [ 0 ] ]
2/2
[ [ 2 ], [ 0 ] ]
true

gap> PSL32:=PSL(3,2);
Group([ (4,6)(5,7), (1,2,4)(3,6,5) ])
gap> RPSL32:=ResolutionFiniteGroup(PSL32,5);
Resolution of length 5 in characteristic 0 for
Group([ (4,6)(5,7), (1,2,4)(3,6,5) ]) .
gap> H4pFromResolution(RPSL32:H1trivial);
5[ [ 4, 1 ], [ 7, 1 ], [ 21, 1 ], [ 24, 12 ], [ 24, 12 ] ]
1/5
...
5/5
[ [ 6 ],
[ [ 12 ],
  [ [ 2 ] ] ] ]
gap> IsUnramifiedH3(RPSL32,[1]:Subgroup);
1/1
[ [ 2 ], [ 0 ] ]
true

gap> RHH:=List(HH,x->ResolutionFiniteSubgroup(RA7,x));
[ Resolution of length 5 in characteristic 0 for (C3 x A4) : C2 .
    , Resolution of length 5 in characteristic 0 for S5 .
    , Resolution of length 5 in characteristic 0 for PSL(3,2) .
    , Resolution of length 5 in characteristic 0 for PSL(3,2) .
    , Resolution of length 5 in characteristic 0 for A6 .
]
gap> CRA7:=CR_CocyclesAndCoboundaries(RA7,4,true);;
gap> CRHH:=List(RHH,x->CR_CocyclesAndCoboundaries(x,4,true));;
gap> List(CRHH,x->x.torsionCoefficients);
[ [ 3, 6, 12 ], [ 2, 12 ], [ 12 ], [ 12 ], [ 60 ] ]
gap> Cores([1,0,0],CRHH[1],CRA7);
[ 4 ]
gap> Cores([0,1,0],CRHH[1],CRA7);
[ 2 ]
gap> Cores([0,0,1],CRHH[1],CRA7);
[ 9 ]
gap> Cores([1,0],CRHH[2],CRA7);
[ 6 ]
gap> Cores([0,1],CRHH[2],CRA7);
[ 9 ]
gap> Cores([1],CRHH[3],CRA7);
[ 3 ]
gap> Cores([1],CRHH[4],CRA7);
[ 9 ]
gap> Cores([1],CRHH[5],CRA7);
[ 1 ]
\end{verbatim}

\section{Algorithm for computing $H_{\rm nr}^3(\bC(G),\bQ/\bZ)$ and $H_{\rm s}^3(G,\bQ/\bZ)$}\label{seGAP}

We give some algorithms for computing
the unramified cohomology $H_{\rm nr}^3(\bC(G),\bQ/\bZ)$ of degree $3$
and the stable cohomology $H_{\rm s}^3(G,\bQ/\bZ)$ of degree $3$.

The functions which are provided in this section,
for example {\tt IsUnramifiedH3}, {\tt H4pFromResolution},
are available from
{\tt https://www.math.kyoto-u.ac.jp/\~{}yamasaki/Algorithm/UnramDeg3/}.\\

\begin{verbatim}
# The following algorithms need
# GAP version>=4.8.7 and GAP package HAP version>=1.11.15.

LoadPackage("HAP");

ConjugacyClassesSubgroups2:= function(G)
    Reset(GlobalMersenneTwister);
    Reset(GlobalRandomSource);
    return ConjugacyClassesSubgroups(G);
end;

BlockSum:= function(l,n)
  local k,ans,i;
  k:=Length(l)/n;
  ans:=[];
  for i in [1..k] do
    Add(ans,Sum([(i-1)*n+1..i*n],x->l[x]));
  od;
  return ans;
end;

Cores := function(arg)
  local v,CRH,CRG,RH,RG,n,RGH,map,G,H,u,k;
  v:=arg[1];
  if Length(arg)=3 then
    CRH:=arg[2];
    CRG:=arg[3];
    k:=Length(CRH.cocyclesBasis[1])/Length(CRG.cocyclesBasis[1]);
    return CRG.cocycleToClass(BlockSum(CRH.classToCocycle(v),k));
  else
    RH:=arg[2];
    RG:=arg[3];
    n:=arg[4];
    G:=RG!.group;
    H:=RH!.group;
    if Length(arg)>5 then
      RGH:=arg[5];
      map:=arg[6];
    else
      if Length(arg)=5 then
        RGH:=arg[5];
      else
        RGH:=ResolutionFiniteSubgroup(RG,H);
      fi;
      map:=HomToIntegers(EquivariantChainMap(RGH,RH,IdentityMapping(H)));
    fi;
  fi;
  u:=Map(map)(v,n);
  k:=Order(G)/Order(H);
  return BlockSum(u,k);
end;

AbelianInvariantsSNF := function(G)
  local n,m,s,l;
  n:=AbelianInvariants(G);
  m:=DiagonalMat(n);
  s:=SmithNormalFormIntegerMat(m);
  return Filtered(DiagonalOfMat(s),x -> x>1);
end;

chooseH := function(G)
  local hh,hhc,hhd,hhdtf,n,ic,i,jc,j;
  hh:=[];
  hhc:=List(ConjugacyClassesSubgroups2(G),Elements);
  hhd:=List(hhc,x->List(x,y->[y,DerivedSubgroup(y)]));
  hhdtf:=List(hhd,x->x[1][1]<>x[1][2]);
  n:=Length(hhd);
  for ic in [n,n-1..2] do
    if hhdtf[ic]=true then
      i:=hhd[ic][1];
      for jc in [1..ic-1] do
        for j in hhd[jc] do
          if i[2]=j[2] and IsSubgroup(i[1],j[1]) then
            hhdtf[jc]:=false;
            break;
          fi;
        od;
      od;
    fi;
  od;
  hh:=List(Filtered([1..n],x->hhdtf[x]),y->hhd[y][1][1]);
  return hh;
end;

chooseHH1trivial := function(G)
  local hh,hhc,kd,hhctf,n,i,j,H,K,HGK,gk,orb,ker,o,im,k,hom;
  hh:=[];
  hhc:=List(ConjugacyClassesSubgroups2(G),Representative);
  kd:=List(hhc,x->[x,DerivedSubgroup(x)]);
  hhctf:=List(kd,x -> x[1]<>x[2]);
  n:=Length(hhc);
  for i in [n,n-1..2] do
    if hhctf[i]=true then
      H:=hhc[i];
      for j in [2..i-1] do
        if hhctf[j]=true then
          K:=hhc[j];
          HGK:=DoubleCosetsNC(G,H,K);
          ker:=kd[j][1];
          for orb in HGK do
            o:=RepresentativesContainedRightCosets(orb);
            o:=List(o,x->RightCoset(H,x));
            im:=[];
            gk:=GeneratorsOfGroup(ker);
            for k in gk do
              Add(im,Product(OrbitsDomain(Group(k),o,OnRight),
                x->(k^Length(x))^(Representative(x[1])^-1)));
            od;
            Apply(im,x->
              Image(NaturalHomomorphismByNormalSubgroup(H,kd[i][2]),x));
            hom:=GroupHomomorphismByImages(ker,H/kd[i][2],gk,im);
            ker:=Kernel(hom);
          od;
          kd[j][1]:=ker;
          if kd[j][1]=kd[j][2] then
            hhctf[j]:=false;
          fi;
        fi;
      od;
    fi;
  od;
  hh:=List(Filtered([1..n],x->hhctf[x]),y->hhc[y]);
  return hh;
end;

HnSubgroupBase := function(tor,im)
  local torbase,im1,v,base1,base2,ans;
  torbase:=DiagonalMat(tor);
  im1:=LatticeBasis(Concatenation(torbase,im));
  im1:=Filtered(im1,x->not x in torbase);
  ans:=[];
  base1:=torbase;
  for v in im1 do
    base2:=LatticeBasis(Concatenation(base1,[v]));
    if base2<>base1 then
      Add(ans,v);
      base1:=base2;
    fi;
  od;
  return ans;
end;

H4pFromResolution := function(arg)
  local G,RG,CRG4,hh,H,RH,CRH2,CRH4,n,RGH,map,idH,idlist,Reslist,
    im1,im2,im,I,i,j,tor,torbase,H4,H4gen;
  RG:=arg[1];
  G:=RG!.group;
  if Length(arg)>1 then
    CRG4:=arg[2];
  else
    CRG4:=CR_CocyclesAndCoboundaries(RG,4,true);
  fi;
  if ValueOption("H1trivial")=true or ValueOption("H1Trivial")=true then
    hh:=chooseHH1trivial(G);
  else
    hh:=chooseH(G);
  fi;
  im1:=[];
  Print(Length(hh));Print(List(hh,
    function(H)
      if ID_AVAILABLE(Order(H))<>fail then
        return IdSmallGroup(H);
      else
        return [Order(H),"?"];
      fi;
    end
  ));
  Print("\n");
  idlist:=[];
  Reslist:=[];
  for H in hh do
Print(Position(hh,H));Print("/");Print(Length(hh));Print("\n");
    if H=G then
      CRH2:=CR_CocyclesAndCoboundaries(RG,2,true);
      CRH4:=CRG4;
      n:=Length(CRH2.torsionCoefficients);
      if n>0 then
        I:=IdentityMat(n);
        im2:=List(UnorderedTuples(I,2),
          v->IntegralCupProduct(RG,v[1],v[2],2,2,CRH2,CRH2,CRH4));
      else
        im2:=[];
      fi;
    else
      if ID_AVAILABLE(Order(H))<>fail then
        idH:=IdSmallGroup(H);
        i:=Position(idlist,idH);
        if i<>fail then
          RH:=Reslist[i][1];
          im2:=Reslist[i][2];
        else
          if IsNilpotentGroup(H) then
            RH:=ResolutionNormalSeries(LowerCentralSeries(SmallGroup(idH[1],idH[2])),5);
          else
            RH:=ResolutionFiniteGroup(SmallGroup(idH[1],idH[2]),5);
          fi;
          CRH2:=CR_CocyclesAndCoboundaries(RH,2,true);
          CRH4:=CR_CocyclesAndCoboundaries(RH,4,true);
          n:=Length(CRH2.torsionCoefficients);
          if n>0 then
            I:=IdentityMat(n);
            im2:=List(UnorderedTuples(I,2),
              v->IntegralCupProduct(RH,v[1],v[2],2,2,CRH2,CRH2,CRH4));
            im2:=HnSubgroupBase(CRH4.torsionCoefficients,im2);
            Apply(im2,CRH4.classToCocycle);
          else
            im2:=[];
          fi;
          Add(idlist,idH);
          Add(Reslist,[RH,im2]);
        fi;
      else
        if IsNilpotentGroup(H) then
          RH:=ResolutionNormalSeries(LowerCentralSeries(H),5);
        else
          RH:=ResolutionFiniteGroup(H,5);
        fi;
        CRH2:=CR_CocyclesAndCoboundaries(RH,2,true);
        CRH4:=CR_CocyclesAndCoboundaries(RH,4,true);
        n:=Length(CRH2.torsionCoefficients);
        if n>0 then
          I:=IdentityMat(n);
          im2:=List(UnorderedTuples(I,2),
            v->IntegralCupProduct(RH,v[1],v[2],2,2,CRH2,CRH2,CRH4));
          im2:=HnSubgroupBase(CRH4.torsionCoefficients,im2);
          Apply(im2,CRH4.classToCocycle);
        else
          im2:=[];
        fi;
      fi;
      RGH:=ResolutionFiniteSubgroup(RG,H);
      map:=HomToIntegers(EquivariantChainMap(RGH,RH,IsomorphismGroups(H,RH!.group)));
      im2:=List(im2,i->CRG4.cocycleToClass(Cores(i,RH,RG,4,RGH,map)));
    fi;
    im1:=Concatenation(im1,im2);
  od;
  tor:=CRG4.torsionCoefficients;
  torbase:=DiagonalMat(tor);
  im1:=LatticeBasis(Concatenation(torbase,im1));
  im1:=LatticeBasis(Difference(im1,torbase));
  H4:=AbelianGroup(CRG4.torsionCoefficients);
  H4gen:=GeneratorsOfGroup(H4);
  im:=Group(List(im1,x->Product([1..Length(H4gen)],y->H4gen[y]^x[y])),Identity(H4));
  return [AbelianInvariantsSNF(im),[tor,im1]];
end;

mast := function(RG,RH,RI,RJ)
  local I,J,Ig,Jg,iso,HI,RHI,m,eqchmap;
  I:=RI!.group;
  J:=RJ!.group;
  Ig:=First(I,x->Order(x)=Order(I));
  Jg:=First(J,x->Order(x)=Order(J));
  iso:=GroupHomomorphismByImages(I,J,[Ig],[Jg]);
  RHI:=ResolutionFiniteDirectProduct(RH,RI);
  HI:=RHI!.group;
  m:=GroupHomomorphismByFunction(HI,RG!.group,
    x->Image(Projection(HI,1),x)*Image(iso,Image(Projection(HI,2),x)));
  eqchmap:=EquivariantChainMap(RHI,RG,m);
  return eqchmap;
end;

i2ast := function(RHI,RI)
  local i2,eqchmap;
  i2:=Embedding(RHI!.group,2);
  eqchmap:=EquivariantChainMap(RI,RHI,i2);
  return eqchmap;
end;

pr2ast := function(RI,RHI)
  local pr2,eqchmap;
  pr2:=Projection(RHI!.group,2);
  eqchmap:=EquivariantChainMap(RHI,RI,pr2);
  return eqchmap;
end;

sHI := function(xi,RHI,RI,n)
  local i2star,pr2star;
  i2star:=HomToIntegers(i2ast(RHI,RI));
  pr2star:=HomToIntegers(pr2ast(RI,RHI));
  return xi-Map(pr2star)(Map(i2star)(xi,n),n);
end;

HnZtoHnminus1QoverZfromResolution := function(arg)
  local v,R,n,p,M,Mp,l,i,j,w,basis,cols,u,c,ansp,ans;
  v:=arg[1];
  R:=arg[2];
  n:=arg[3];
  M:=[];
  for i in [1..R!.dimension(n)] do
    l:=List([1..R!.dimension(n-1)],x->0);
    w:=R!.boundary(n,i);
    for j in w do
      l[AbsInt(j[1])]:=l[AbsInt(j[1])]+SignInt(j[1]);
    od;
    Add(M,l);
  od;
  M:=TransposedMat(M);
  if Length(arg)>3 then
    p:=arg[4];
    Mp:=List(p,x->M[x]);
  else
    p:=[1..R!.dimension(n-1)];
    Mp:=M;
  fi;
  basis:=NormalFormIntMat(Mp,6);
  cols:=List([1..basis.rank],i->First([1..R!.dimension(n)],
    j->basis.normal[i][j]<>0));
  ansp:=List([1..Length(p)],x->0);
  u:=v;
  for i in [1..basis.rank] do
    c:=u[cols[i]]/basis.normal[i][cols[i]];
    ansp:=ansp+c*basis.rowtrans[i];
    u:=u-c*basis.normal[i];
  od;
  if Set(u)<>[0] then
    return fail;
  fi;
  ans:=List([1..R!.dimension(n-1)],x->0);
  for i in [1..Length(p)] do
    ans[p[i]]:=ansp[i];
  od;
  return ans;
end;

Hnminus1QoverZtoHnZfromResolution := function(v,R,n)
  local M,l,i,j,w;
  M:=[];
  for i in [1..R!.dimension(n)] do
    l:=List([1..R!.dimension(n-1)],x->0);
    w:=R!.boundary(n,i);
    for j in w do
      l[AbsInt(j[1])]:=l[AbsInt(j[1])]+SignInt(j[1]);
    od;
    Add(M,l);
  od;
  M:=TransposedMat(M);
  return v*M;
end;

HS := function(f,RHI,n)
  local fQoverZ,d1,d2;
  if n < 2 then
    Print(
     "ERROR: n must be at least 2. \n" );
    return fail;
  fi;
  d1:=RHI!.dimension(n-1)-RHI!.dimension(n-2);
  d2:=RHI!.dimension(n-1)-RHI!.dimension(n-3);
  fQoverZ:=HnZtoHnminus1QoverZfromResolution(f,RHI,n,[1..d2]);
  return fQoverZ{[d1+1..d2]};
end;

GeneratorOfCyclicGroup := function(C)
  return First(C,x->Order(x)=Order(C));
end;

chooseHI := function(G)
  local I,HI,HItf,n,i,j,ic,jc;
  I:=List(Filtered(ConjugacyClassesSubgroups2(G),
    x->IsCyclic(Representative(x))),Elements);
  HI:=List(I,x->List(x,y->[Centralizer(G,y),y]));
  HItf:=List(HI,x->true);
  n:=Length(HI);
  for ic in [n,n-1..2] do
    if HItf[ic]=true then
      i:=HI[ic][1];
      if GroupCohomology(i[1],3)=[] then
        HItf[ic]:=false;
      fi;
      for jc in [1..ic-1] do
        for j in HI[jc] do
          if i[1]=j[1] and IsSubgroup(i[2],j[2]) then
            HItf[jc]:=false;
            break;
          fi;
        od;
      od;
    fi;
  od;
  HI:=List(Filtered([1..n],x->HItf[x]),y->HI[y][1]);
  Apply(HI,x->[x[1],Group(GeneratorOfCyclicGroup(x[2]))]);
  HI:=Filtered(HI,x->Order(x[2])>1);
  return HI;
end;

chooseHISubgroup := function(G)
  local I,HI,HItf,n,i,j,ic,jc,idH,Hs3trivial27and125and343,Hs3trivial81,
    Hs3trivial625and2401,Hs3trivial243,Hs3trivial3125;
  Hs3trivial27and125and343:=[1..4];
  Hs3trivial81:=Concatenation([[1..6],[8..10],[13,14]]);
  Hs3trivial625and2401:=Concatenation([[1..6],[9,10],[13,14]]);
  Hs3trivial243:=Concatenation([[1],[3..12],[19..27],[33],[35],[43..47],[49,50],[66]]);
  Hs3trivial3125:=Concatenation([[1],[15..17],[24..29],[42],[44],[51..57],[59,60],[76]]);
  I:=List(Filtered(ConjugacyClassesSubgroups2(G),
    x->IsCyclic(Representative(x))),Elements);
  HI:=List(I,x->List(x,y->[Centralizer(G,y),y]));
  HItf:=List(HI,x->true);
  n:=Length(HI);
  for ic in [n,n-1..2] do
    if HItf[ic]=true then
      i:=HI[ic][1];
      if GroupCohomology(i[1],3)=[] then
        HItf[ic]:=false;
      fi;
      if ID_AVAILABLE(Order(i[1]))<>fail then
        idH:=IdSmallGroup(i[1]);
        if idH[1] in [9,25,49] then
          HItf[ic]:=false;
        elif idH[1] in [27,125,343] and idH[2] in Hs3trivial27and125and343 then
          HItf[ic]:=false;
        elif idH[1]=81 and idH[2] in Hs3trivial81 then
          HItf[ic]:=false;
        elif idH[1] in [625,2401] and idH[2] in Hs3trivial625and2401 then
          HItf[ic]:=false;
        elif idH[1]=243 and idH[2] in Hs3trivial243 then
          HItf[ic]:=false;
        elif idH[1]=3125 and idH[2] in Hs3trivial3125 then
          HItf[ic]:=false;
        fi;
      fi;
      for jc in [1..ic-1] do
        for j in HI[jc] do
          if i[1]=j[1] and IsSubgroup(i[2],j[2]) then
            HItf[jc]:=false;
            break;
          fi;
        od;
      od;
    fi;
  od;
  HI:=List(Filtered([1..n],x->HItf[x]),y->HI[y][1]);
  Apply(HI,x->[x[1],Group(GeneratorOfCyclicGroup(x[2]))]);
  HI:=Filtered(HI,x->Order(x[2])>1);
  return HI;
end;

IsUnramifiedH3 := function(RG,v)
  local G,CRG4,u,hi,k,H,I,RH,RI,RJ,RHI,m,mstar,mu,smu,HSmu,ZHSmu,CRH3,c;
  G:=RG!.group;
  if Length(v)=RG!.dimension(4) then
    u:=v;
  else
    CRG4:=CR_CocyclesAndCoboundaries(RG,4,true);
    u:=CRG4!.classToCocycle(v);
  fi;
  if ValueOption("subgroup")=true or ValueOption("Subgroup")=true then
    hi:=chooseHISubgroup(G);
  else
    hi:=chooseHI(G);
  fi;
  for k in hi do
    H:=k[1];
    I:=k[2];
    if H=G then
      RH:=RG;
    elif IsNilpotentGroup(H) then
      RH:=ResolutionNormalSeries(LowerCentralSeries(H),5);
    else
      RH:=ResolutionFiniteGroup(H,5);
    fi;
    RI:=ResolutionAbelianGroup([Order(I)],5);
    RJ:=ResolutionAbelianGroup(I,5);
    m:=mast(RG,RH,RI,RJ);
    RHI:=m!.source;
    mstar:=HomToIntegers(m);
    mu:=Map(mstar)(u,4);
    smu:=sHI(mu,RHI,RI,4);
    Print(Position(hi,k));Print("/");Print(Length(hi));Print("\n");
    HSmu:=HS(smu,RHI,4);
    ZHSmu:=Hnminus1QoverZtoHnZfromResolution(HSmu,RH,3);
    CRH3:=CR_CocyclesAndCoboundaries(RH,3,true);
    c:=[CRH3!.torsionCoefficients,CRH3!.cocycleToClass(ZHSmu)];
Print(c);Print("\n");
    if c[2]=fail then
      return fail;
    elif LatticeBasis(c)<>[c[1]] then
      return false;
    fi;
  od;
  return true;
end;
\end{verbatim}

%
%


\end{document}